\pgfplotsset{compat=newest}
\newtheorem{theorem}{Theorem}[section]  
\newtheorem{proposition}[theorem]{Proposition}
\newtheorem{lemma}[theorem]{Lemma}
\newtheorem{corollary}[theorem]{Corollary}
\theoremstyle{definition}
\newtheorem{definition}[theorem]{Definition}
\newtheorem{remark}[theorem]{Remark}
\newtheorem{example}[theorem]{Example}
\newenvironment{claim}
{\medskip\begin{em}\textbf{Claim:}}
{\end{em}}
\newenvironment{claimproof}
{\medskip\begin{em}Proof:\end{em}}
{$\square$}
\begin{document}

\title{Rigidity of symmetric frameworks with non-free group actions on the vertices}
\author{Alison La Porta\thanks{School of Mathematical Sciences, Lancaster University, UK, \texttt{a.laporta@lancaster.ac.uk} (corr. author)} \,
and Bernd Schulze\thanks{School of Mathematical Sciences, Lancaster University, UK, \texttt{b.schulze@lancaster.ac.uk}}}

\date{}
\maketitle

\begin{abstract}
For plane frameworks with reflection or rotational symmetries, where the  group action is not necessarily free on the vertex set, we introduce a phase-symmetric  orbit rigidity matrix for each irreducible representation of the group. 
We then use these generalised  orbit rigidity matrices to provide necessary  conditions for infinitesimal rigidity for  frameworks that are symmetric with a cyclic group that acts freely or non-freely on the vertices. Moreover, for  the reflection, the half-turn, and the three-fold rotational group in the plane, we establish complete combinatorial characterisations of symmetry-generic infinitesimally rigid frameworks. This extends  well-known characterisations for these groups to the case when the group action is not necessarily free on the vertices. 
The  presence of vertices that are fixed by non-trivial group elements requires the introduction of generalised versions of group-labelled quotient graphs and leads to  more refined types of combinatorial sparsity counts for characterising symmetry-generic infinitesimal rigidity.

\medskip

\noindent \textbf{Keywords}: bar-joint framework; infinitesimal rigidity; symmetry; non-free group action; gain graph.

\end{abstract}

\section{Introduction}


Largely motivated by problems from the applied sciences such as engineering, robotics, biophysics, materials science, and computer-aided design, where structures are often symmetric, there has recently been significant interest in studying the impact of symmetry on the infinitesimal rigidity of (bar-joint) frameworks and related geometric constraint systems. We refer the reader to \cite{connellyguest} for an introduction to the theory and to \cite{bernd2017sym,KTHandbookSch} for a summary of recent results.

One line of research in this area, which  has seen a lot of progress lately, is to study when a symmetry-generic framework (i.e. a framework that is as generic as possible under the given symmetry constraints) is ``forced-symmetric rigid", in the sense that it has no non-trivial motion that breaks the original symmetry of the framework. See \cite{sw2011,sch10,jzt2016,bernstein,mtforced,nscyl,tanigawamatroids,dewar,cnswpair,dgl,kitnixsch,clki} for some key results on this topic for finite symmetric frameworks. 
(For an overview of the corresponding results on infinite periodic or crystallographic frameworks, see \cite{KTHandbookSch}.)

Another active line of research deals with the more difficult problem to determine when a symmetry-generic framework has no non-trivial motion at all. Since in this case the original symmetry of the framework is allowed to be destroyed by a motion, the framework is sometimes said to be ``incidentally symmetric" -- a term  coined by Robert Connelly. This problem was first tackled for the special class of isostatic (i.e. minimally infinitesimally rigid) frameworks in the plane \cite{SLaman,SLaman2}. While there is ongoing work on this case (see e.g. \cite{nsw2024}), the more general question of when a symmetry-generic framework is infinitesimally rigid -- rather than just isostatic -- is more challenging, as not every symmetric framework contains an isostatic sub-framework with the same symmetry on the same vertex set.
Combinatorial characterisations of incidentally symmetric infinitesimally rigid frameworks have so far only been obtained for special classes of cyclic groups in the Euclidean plane \cite{bt2015,Ikeshita} (see also \cite{RIandST16,KCandST20}), in some non-Euclidean normed planes   \cite{kitsonschulze} and for classes of body-bar and body-hinge frameworks with $\mathbb{Z}_2\times \cdots \times \mathbb{Z}_2$ symmetry in Euclidean $d$-space \cite{schtan_bodies}. 

A central result in the theory is that the rigidity matrix of a symmetric framework can be transformed into a block-decomposed form, where each block corresponds to an irreducible representation of the group \cite{kangwai2000,sch10a,owen10,kkm}. This breaks up the infinitesimal rigidity analysis  into independent sub-problems, one for each block matrix. For forced symmetric rigidity, one only focuses on the  block matrix corresponding to the trivial irreducible representation \cite{sw2011}.
The problem of characterising incidentally symmetric  infinitesimal rigidity may be solved by characterising the maximum rank of each block matrix for a symmetry-generic framework. By setting up phase-symmetric orbit rigidity matrices that are equivalent to, but are easier to work with than the block matrices in the block-decomposed rigidity matrix, this may be achieved via a Henneberg-type recursive construction for the corresponding group-labelled quotient graphs. This has been the most common approach for solving problems on the rigidity of  incidentally symmetric frameworks (see e.g. \cite{bt2015,Ikeshita}).

However, so far, all of the work on forced and incidentally  symmetric infinitesimal rigidity (except for the results for symmetric \emph{isostatic} frameworks mentioned above, and for the result on forced-symmetric rotationally-symmetric frameworks given in \cite[Section 4.3]{mtforced}) has made the assumption that the symmetry group acts freely on the vertex set of the framework. This simplifies the definition of the group-labelled quotient graph, the structure of the corresponding orbit rigidity matrices, and the types of sparsity counts on the group-labelled quotient graphs that appear in characterisations of symmetry-generic infinitesimal rigidity. In this paper we address this gap and start to extend the theory to symmetric frameworks where vertices may be fixed by non-trivial group elements.

Closing this gap is not just of pure mathematical interest, but has  important motivations arising from practical applications. For example, tools and results from symmetric rigidity theory have recently been applied to the design and analysis of material-efficient long-span engineering structures such as gridshell roofs and cable nets \cite{smmb,mmsb,schmil}. For this application, it is crucial to understand the infinitesimal (or equivalently static) rigidity properties of the vertical 2D projections of the 3D structures, known as form diagrams. For structural optimisation reasons,  these form diagrams (which are planar bar-joint frameworks that are often symmetric) frequently have vertices that are fixed by reflections and rotations. Other application areas for which it is important to understand the infinitesimal rigidity of symmetric frameworks where the group acts non-freely on the vertex set include the design of distributed control laws for multi-robot formations \cite{zts,stz} and the analysis of geometric constraint systems appearing in computer-aided design \cite{foguow}.

In this paper, we first extend the definition of a group-labelled quotient graph (also known as a quotient ``gain graph") for all cyclic groups to include the possibility of having vertices that are fixed by non-trivial group elements (Section~\ref{sec:sym}). This allows us to define generalised phase-symmetric orbit rigidity matrices for these groups (Section~\ref{sec:orbitmat}). In Section~\ref{sec:nec} we then use these matrices to establish necessary conditions for frameworks with reflection or rotational symmetry in the plane to be infinitesimally rigid. These conditions are given in terms of sparsity counts for the corresponding quotient gain graphs.
In Sections~\ref{sec:red} and~\ref{sec:ext} we then focus on the reflection group, half-turn group and rotational group of order 3 and show that the conditions given in Section~\ref{sec:nec} are also sufficient for a symmetry-generic framework in the plane  to be infinitesimally rigid. This extends the corresponding results  for the case when the group acts freely on the vertex set of the graph given in  \cite{bt2015}. The proofs of our main results in Section~\ref{sec:ext} are based on a Henneberg-type inductive construction on the relevant quotient gain graphs, using the graph operations described in Section~\ref{sec:red}. Finally, in Section~\ref{sec:fut} we describe some avenues of future work.

In a second paper \cite{lasch}, we establish the corresponding combinatorial characterisations for all cyclic (rotational) groups of odd order up to 1000 and for the cyclic groups of order 4 and 6, extending the results of Clinch, Ikeshita and Tanigawa  \cite{Ikeshita,KCandST20} to the case where there is a vertex that is fixed by the rotation.  This is done in a separate paper, since the sparsity counts and hence the proofs become significantly more complex for these groups. In that paper we will also show that for even order groups of order at least 8, the standard sparsity counts are not sufficient.

\section{Infinitesimal rigidity of frameworks}

We start by reviewing some basic terms and results from rigidity theory. See \cite{connellyguest,WW}, for example, for further details.

A \textit{(bar-joint) framework} in $\mathbb{R}^d$ is a pair $\left(G,p\right)$ where $G$ is a finite simple graph and $p:V\left(G\right)\rightarrow\mathbb{R}^d$ is a map such that $p(u)\neq p(v)$ for all $u\neq v\in V(G)$. We  also refer to $(G,p)$ as a \textit{realisation} of the graph $G$, the \textit{underlying graph} of $(G,p)$, and to $p$ as a \textit{configuration}. We will sometimes use $p_v$ to denote $p(v)$. Unless explicitly stated otherwise, we will assume throughout the paper that $p(V(G))$ affinely spans $\mathbb{R}^d$. 

An \textit{infinitesimal motion} of a $d$-dimensional framework $(G,p)$ is a function $m:V(G)\rightarrow\mathbb{R}^d$ such that for all $\{u,v\}\in E(G),$ 
\begin{equation}
\label{inf. motion}
     (p_u-p_v)^T(m(u)-m(v))=0.
\end{equation}

An infinitesimal motion $m:V(G)\rightarrow\mathbb{R}^d$ of a framework $(G,p)$ is called \textit{trivial} if there is a skew-symmetric matrix $M\in M_d(\mathbb{R})$ and a $d$-dimensional vector $t$ such that $m(u)=Mp_u+t$ for all $u\in V(G)$. Otherwise, it is called an \textit{infinitesimal flex}. 
We say a framework $(G,p)$ in $\mathbb{R}^d$ is \textit{infinitesimally rigid} if all of its infinitesimal motions are trivial. Otherwise, we say $(G,p)$ is \textit{infinitesimally flexible}. It is a well known fact that if the points of a framework affinely span all of $\mathbb{R}^d$, then the space of infinitesimal motions has dimension $\genfrac(){0pt}{2}{d+1}{2}$.

It is sometimes useful to see a motion $m:V(G)\rightarrow\mathbb{R}^d$ as a column vector with $d|V(G)|$ entries.

In order to study the infinitesimal rigidity of a framework, we analyse its rigidity matrix. Given a $d$-dimensional framework $(G,p)$, the \textit{rigidity matrix} of $(G,p)$, denoted by $R(G,p)$, is a $|E(G)|\times d|V(G)|$ matrix, whose rows correspond to the edges of $G$ and where each vertex is represented by $d$ columns. Given an edge $e=\{u,v\}\in E(G)$, the row corresponding to $e$ in $R(G,p)$ is 

\begin{center}
    $\begin{pmatrix}
    0 & \dots & 0 & [p_u-p_v]^T & 0 & \dots & 0 & [p_v-p_u]^T & 0 & \dots & 0
    \end{pmatrix}$,
\end{center}

where the $d$-dimensional row vector $[p_u-p_v]^T$ is in the $d$ columns corresponding to $u$, $[p_v-p_u]^T$ is in the columns corresponding to $v$, and there are zeros everywhere else.

By Equation (\ref{inf. motion}), the kernel of $R(G,p)$ is the 
space of infinitesimal motions of $(G,p)$. From this follows the well-know fact that $(G,p)$ is infinitesimally rigid if and only if $\textrm{rank}\left(R(G,p)\right)=d|V(G)|-\frac{d(d+1)}{2}$ or $G=K_{|V(G)|}$ and the points $p_i$ (for $i=1,\dots,|V(G)|$) are affinely independent. 

A \textit{self-stress} of a framework $(G,p)$ is a map $\omega:E(G)\rightarrow\mathbb{R}$ such that for all $u\in V(G)$, it satisfies $\sum_{v:\{u,v\}\in E(G)}\omega(\{u,v\})(p_u-p_v)=0$. Notice that $\omega$ is a self-stress if and only if $R(G,p)^T\omega=0$. So,  $(G,p)$ has a non-zero self-stress if and only if there is a non-trivial row dependency in $R(G,p)$.

Given a graph $G$, we say a configuration $p:V(G)\rightarrow\mathbb{R}^d$ is \textit{generic} if $R(G,p)$ has maximum rank among all configurations of $G$ in $\mathbb{R}^d$.  If $p$ is generic then we say $\left(G,p\right)$ is a \textit{generic framework}. The set of all generic configurations of $G$ is a dense, open subset of $\mathbb{R}^{d|V\left(G\right)|}$.

\section{Infinitesimal rigidity of symmetric frameworks}\label{sec:sym}

In this section, we will go through what it means for a graph and a framework to be symmetric and we will introduce some of the tools we will be using to study the rigidity of such frameworks. Throughout the paper, we let $\Gamma\simeq\mathbb{Z}_k$ for some $k\in\mathbb{N}.$ We will later restrict $k$ to be 2 or 3. 

\subsection{Symmetric graphs}
The set of all automorphisms of a graph $G$ forms a group under composition called the \textit{automorphism group} of $G$ and is denoted $\text{Aut}(G)$. We say $G$ is \textit{$\mathbb{Z}_k$-symmetric} if  $\text{Aut}(G)$ contains a subgroup isomorphic to $\mathbb{Z}_k=\{0,1,\dots,k-1\}$. For notational convenience, we will often identify $\mathbb{Z}_k$ with the multiplicative group $\Gamma=\left<\gamma\right>$ via the isomorphism defined by $1\mapsto\gamma$. (Later on, geometrically, $\gamma$ will correspond to the rotation by $2\pi/k$ about the origin in the plane.) The terms $\mathbb{Z}_k$-symmetric graph and $\Gamma$-symmetric graph are used interchangeably. Throughout the paper, the order  of $\Gamma$ is $|\Gamma|=k$.

Given $\gamma\in\Gamma$, we say $\gamma$ \textit{fixes $v\in V(G)$} if $\gamma(v)=v$, and $\gamma$ \textit{fixes a subset $U\subseteq V(G)$} if it fixes all $u\in U$. Similarly, $\gamma$ \textit{fixes $e\in E(G)$ }if $\gamma(e)=e$, and it \textit{fixes a subset $F\subseteq E(G)$} if it fixes all $f\in F$. We define the stabiliser  of a vertex $v\in V(G)$ to be $S_{\Gamma}(v)=\{\gamma\in\Gamma:\gamma\text{ fixes }v\}$, and the stabiliser of an edge $e\in E(G)$ to be $S_{\Gamma}(e)=\{\gamma\in \Gamma:\gamma\text{ fixes }e\}$.

For simplicity, we will assume throughout the paper that any vertex fixed by a non-trivial element of $\Gamma$ is fixed by all elements of $\Gamma$. This is justified, as our main focus is the infinitesimal rigidity of symmetric frameworks in the plane, and the definition of a symmetric framework (see Section~\ref{sec:symfwks}) will imply, for $d=2$, that any vertex that is fixed by a non-trivial element of $\Gamma$, $|\Gamma|\geq3$, will have to be placed at the origin, and conversely, since framework configurations are injective, any vertex at the origin must be fixed by every element of the group. We define  $V_0(G):=\{v\in V(G):S_{\Gamma}(v)=\Gamma\}$ and $\overline{V(G)}:=\{v\in V(G):S_{\Gamma}(v)=\{\textrm{id}\}\}$. We also use the notation $\overline{E(G)}:=\{e\in E(G):S_{\Gamma}(e)=\{\textrm{id}\}\}$. We will refer to the elements of $V_0(G),\overline{V(G)}$ and $\overline{E(G)}$ as the \emph{fixed vertices}, \emph{free vertices} and \emph{free edges} of  $G$, respectively.

 \subsection{Gain graphs}\label{sec:gg}

Let $\Tilde{G}$ be a $\Gamma$-symmetric graph. We denote the orbit of a vertex $v$ (respectively an edge $e$) of $\tilde G$ by $\Gamma v$ (respectively $\Gamma e$). Thus, $\Gamma v=\{\gamma v|\, \gamma\in \Gamma\}$ and $\Gamma e=\{\gamma e|\, \gamma\in \Gamma\}$. The collection of all vertex orbits and edge orbits of $\tilde G$ is denoted by $V$ and $E$, respectively. The \emph{quotient graph} $\tilde G /\Gamma$ of $\tilde G$ is a multigraph $G$ with vertex set $V(G)=V$, edge set $E(G)=E$ and  incidence relation satisfying $\Gamma e=\Gamma u\Gamma v$ if some (equivalently every) edge in $\Gamma e$ is incident with  a vertex in $\Gamma u$ and a vertex in $\Gamma v$.
Notice that the partitioning of $V(\tilde{G})$  given in the previous section induces a partition of $V(G)$ into the sets $V_0(G):=\{\Gamma v\in V(G):|\Gamma v|=1\}$ and $\overline{V(G)}=\{\Gamma v\in V(G):|\Gamma v|=|\Gamma|\}$ of \emph{fixed} and \emph{free vertices} of $G$, respectively.

Let $\tilde G$ be a $\Gamma$-symmetric graph with quotient graph $G$. For each vertex orbit $\Gamma v$ we fix a representative vertex $v^{\star}\in\Gamma v$.
We also fix an orientation on the edges of the quotient graph $G$.
For each directed edge $\Gamma e = (\Gamma u, \Gamma v)$ in the directed quotient graph, we assign the following labelling (or ``gain"):
\begin{itemize}
    \item If $\Gamma u,\Gamma v\in \overline{V(G)}$, then there exists a unique $\gamma\in \Gamma$, referred to as the \emph{gain} on $\Gamma e$, such that $\{u^{\star},\gamma v^{\star}\}\in \Gamma e$.
    \item If  at least one of $\Gamma{u},\Gamma{v}$ is fixed, say $\Gamma u\in V_0(G)$, then $\Gamma e=\{\{u^{\star},\gamma v^{\star}\}|\, \gamma\in \Gamma\}$. The size of $\Gamma e$ depends on the size of $\Gamma v$. If $|\Gamma e|=|\Gamma|$, then we define the \emph{gain} on $\Gamma e$ to be any $\gamma\in\Gamma$. Otherwise, $|\Gamma e|=1$, and we define the \emph{gain} on $\Gamma e$ to be $\textrm{id}$.
\end{itemize}

This gain assignment $\psi: E(G) \to \Gamma$ is well-defined and the pair $(G,\psi)$ is called the \emph{(quotient) $\Gamma$-gain graph} of $\tilde G$. Moreover, in a slight abuse of topological terminology, $\tilde G$ is called the \emph{covering graph} (or \emph{lifting}) of $(G,\psi)$.

In each of the cases above, we could re-direct $\Gamma e$ from $\Gamma v$ to $\Gamma u$ and re-label it with the group inverse of the original label chosen. Up to this operation, and up to the choice of representatives, and of gains for edges incident with a fixed and a free vertex, this process gives a unique quotient $\Gamma$-gain graph of $\tilde{G}$. We call two $\Gamma$-gain graphs \textit{equivalent} if they are obtained from the same $\Gamma$-symmetric graph by applying this process.

In general, this process gives rise to a  class of group-labelled, directed multigraphs, called $\Gamma$-gain graphs.

\begin{definition} \label{def:gaingraph}
  A \emph{$\Gamma$-gain graph} is a pair $(G,\psi),$ where $G$ is a directed multigraph and $\psi:E(G)\rightarrow\Gamma$ is a function that assigns a label to each edge such that, for some partition $V(G)=V_0(G)\,\dot\cup\,\overline{V(G)}$, where no vertex in $V_0(G)$ has a loop or is incident to parallel edges, the following conditions are satisfied:

\begin{enumerate}
\item for all $e\in E(G)$ with both endpoints in $V_0(G),$ $\psi(e)=\textrm{id}$; 
\item if $e,f\in E(G)$ 
are parallel and have the same direction, then $\psi(e)\neq\psi(f)$. If they are parallel and have opposite directions, then $\psi(e)\neq\psi(f)^{-1}$;
\item if $e\in E(G)$ is a loop, then $\psi(e)\neq\textrm{id}$.
\end{enumerate} 
\end{definition}

We call $\psi$ the \textit{gain function} of $(G,\psi)$. The elements of $V_0(G)$ and $\overline{V(G)}$ are called, respectively, the \emph{fixed} and \emph{free vertices} of $(G,\psi)$. 
   Throughout this paper, we assume that a loop at a vertex $v$ adds $2$ to the degree of $v$.

Let $(G,\psi)$ be a $\Gamma$-gain graph. Using a generalisation of the process described in Section 3.2 of \cite{bt2015}, we can obtain a unique $\Gamma$-symmetric graph $\tilde{G}$, which we call the \textit{covering} (or \textit{lifting}) of $(G,\psi)$, as follows. 

For each $v\in V_0(G)$, $V(\tilde{G})$ contains $v$; for each $v\in\overline{V(G)}$, $V(\tilde{G})$ contains $\{\gamma v|\, \gamma\in\Gamma\}$. For each edge $(u,v)\in E(G)$ with $u\in V_0(G)$, $E(\tilde{G})$ contains $\{u,v\}$ if $v\in V_0(G)$, and $E(\tilde{G})$ contains $\{\{u,\gamma v\}|\, \gamma\in\Gamma\}$ if $v\in \overline{V(G)}$;
for each $(u,v)\in E(G)$ with $u,v\in\overline{V(G)}$ and label $\gamma$, $E(\tilde{G})$ contains $\Gamma\{u,\gamma v\}$. This process gives a unique lifting, and is inverse to the one shown at the beginning of the section. Thus, each $\Gamma$-gain graph  uniquely determines a simple $\Gamma$-symmetric graph (up to equivalence). 

When drawing a $\Gamma$-gain graph $(G,\psi)$ it is important to distinguish between the fixed and free vertices of $(G,\psi)$. We will be doing so by representing the elements of $V_0(G)$ and $\overline{V(G)}$ by black and white circles, respectively. In Figure~\ref{first gain graph example}, we consider the cyclic group $\Gamma=\{\text{id},\gamma\}$ of order 2, and we give an example of a $\Gamma$-gain graph and its lifting.

\begin{figure}[htp]
    \centering
    \begin{tikzpicture}
  [scale=.7,auto=left]
  
  \filldraw (0,0) circle (2pt);
  \filldraw (2.27,1.37) circle (2pt);
  \filldraw (-2.27,-1.37) circle (2pt);
  \filldraw (0.95,-1.35) circle (2pt);
  \filldraw (-0.95,1.35) circle (2pt);
  \draw (0,0) -- (2.27,1.37);
  \draw (0,0) -- (-2.27,-1.37);
  \draw (-2.27,-1.37) -- (-0.95,1.35);
  \draw (2.27,1.37) -- (0.95,-1.35);
  \draw (-2.27,-1.37) -- (0.95,-1.35);
  \draw (2.27,1.37) -- (-0.98,1.35);
  \node[below] at (-2.27,-1.37) {$\gamma v_1$};
  \node[above] at (2.27,1.37) {$v_1$};
  \node[above] at (-0.95,1.35) {$v_2$};
  \node[below] at (0.95,-1.35) {$\gamma v_2$};
  \node[above] at (0,0.1) {$v_0$};  

  \draw[fill=black] (-6.5,0) circle (0.15cm);
  \draw (-5,1.37) circle (0.15cm);
  \draw (-7.5,1.37) circle (0.15cm);
  \draw[->] (-6.5,0) -- (-5.1,1.27); 
  \draw[->] (-5.15,1.37) -- (-7.36,1.37); 
  \draw[->] (-7.35,1.37) .. controls (-6.25,1) .. (-5.15,1.315);
  \node[above] at (-5,1.41) {$v_1$};
  \node[above] at (-7.5,1.41) {$v_2$};
  \node[left] at (-6.6,0) {$v_0$};
  \node[right] at (-5.9,0.5) {$\textrm{id}$};
  \node[above] at (-6.4,1.4) {$\textrm{id}$};
  \node[below] at (-6.4,1.1) {$\gamma$};

\end{tikzpicture}
    \caption{A $\Gamma$-gain graph and its lifting, where $\Gamma=\{\text{id},\gamma\}$ has order 2.}
    \label{first gain graph example}
\end{figure}
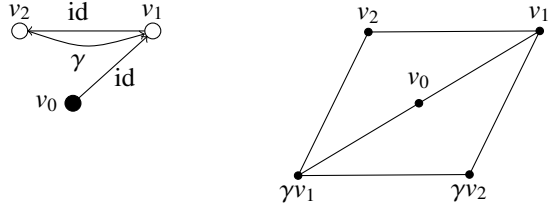

From now on, we will adopt the notation $\tilde{G}$ to denote the covering of a $\Gamma$-gain graph $(G,\psi)$. As before, given $v\in V(G)$, we use $v^{\star}$ to denote the representative of $v$ in $V(G)$. If $\psi$ is clear from the context then we often write $G$ for $(G,\psi)$. 

\subsection{Gain sparsity of a symmetric graph}\label{sec:gaincounts}
In this section we will introduce the criteria we will use to characterise infinitesimally rigid $\Gamma$-symmetric graphs. We start with the notions of balancedness and near-balancedness, which can also be found (for free group actions) in Section 4.1 of \cite{Ikeshita} and Section 1 of \cite{RIandST16} . The notion of balancedness can also be found in Section 2.2 of \cite{jzt2016}.

Let $(G,\psi)$ be a $\Gamma$-gain graph and let $W$ be a walk in $(G,\psi)$ of the form $v_1e_1v_2e_2\dots e_tv_{t+1}$. We say that the \textit{gain of $W$} is $\psi(W)=\prod_{i=1}^t\psi(e_i)^{\textrm{sign}(e_i)}$, where $\textrm{sign}(e_i)=1$ if $e_i$ is directed from $v_i$ to $v_{i+1}$, and $\textrm{sign}(e_i)=-1$ otherwise. Let $v\in \overline{V(G)}$. If $G$ is connected, we use the notation $\left<E(G)\right>_{\psi,v}$, or simply $\left<G\right>_{\psi,v}$, to denote the group generated by $\{\psi(W):W\text{ is a closed walk in }G\text{ starting at }v \text{ and with no fixed vertex}\}$. In \cite{jzt2016}, it was shown that, given two free vertices $u,v\in V(G)$, $\left<G\right>_{\psi,v}$ and $\left<G\right>_{\psi,u}$ are conjugate. Since the groups we work with are cyclic, the two subgroups are actually the same. When clear, we omit $\psi,v$ and write $\left<G\right>$. 

An edge set $E$ is \textit{balanced} if the edge set $E'$ obtained from $E$ by removing all fixed vertices and their incident edges, either has no cycle or every cycle in $E'$ has gain $\textrm{id}$. Otherwise, we say $E$ is \textit{unbalanced}.
We say $\left(G,\psi\right)$ is \textit{balanced} (respectively \textit{unbalanced}) if $E(G)$ is balanced (respectively unbalanced).
For $0\leq m\leq2,0\leq l\leq 3$, we say a $\Gamma$-gain graph $(G,\psi)$ is $(2,m,l)$-sparse if $|E(H)|\leq 2|\overline{V(H)}|+m|V_0(H)|-l$ for all subgraphs $H$ of $G$, with $E(H)\neq\emptyset$ in the case of $l=3$, and we say it is $(2,m,l)$-tight if it is $(2,m,l)$-sparse and $|E(G)|=2|\overline{V(G)}|+m|V_0(G)|-l$. We abbreviate $(2,2,l)$-sparse (equivalently, $(2,2,l)$-tight) to $(2,l)$-sparse (equivalently, $(2,l)$-tight).

\begin{remark}\label{rem:switching}
  The covering $\tilde G$ of a balanced $\Gamma$-gain graph $(G,\psi)$ will have a non-zero self-stress if $G$ is not $(2,3)$-sparse, regardless of the size of $V_0(G)$. In fact, if $|V_0(G)|\leq 1$, then clearly $\tilde G$ will have at least $|\Gamma|$  non-zero self-stresses with mutually disjoint support. See e.g. Figures~\ref{free-balanced example} (a) and (b).
    
    Note also that the choice of vertex orbit representatives for constructing the quotient $\Gamma$-gain graph has no effect on the gain of any edges incident to a fixed vertex, since those edges can have any gain. So, given a cycle $C$ of $(G,\psi)$ containing a fixed and a free vertex, for each $\gamma\in\Gamma$, there is a choice of representatives such that $\psi(C)=\gamma$ (see cycles $v_1v_3v_0$ and $v_4v_3v_0$ in Figures~\ref{free-balanced example} (b) and (c), respectively). 
\end{remark}

\begin{figure}[htp]
    \centering
    \begin{tikzpicture}[scale = 0.9]
    \filldraw(-4,0) circle (2pt);
    \filldraw(-3,2) circle (2pt);
    \filldraw(-5,1) circle (2pt);
    \filldraw(-7,0) circle (2pt);
    \filldraw(-6,2) circle (2pt);
    \filldraw(-7.5,1) circle (2pt);
    \filldraw(-2.5,1) circle (2pt);
  \draw (-4,0) -- (-5,1) -- (-6,2) -- (-7,0) -- (-5,1) -- (-3,2) -- (-4,0);
  \draw(-7.5,1) -- (-2.5,1) -- (-3,2);
  \draw(-2.5,1) -- (-4,0);
  \draw(-6,2) -- (-7.5,1) -- (-7,0);
  \node[above] at (-5,1.1){$v_0^{\star}$};
  \node[above] at (-3,2){$v_6$};
  \node[above] at (-6,2){$v_1^{\star}$};
  \node[below] at (-7,0){$v_3^{\star}$};
  \node[below] at (-4,0){$v_4$};
  \node[left] at (-7.5,1){$v_2^{\star}$};
  \node[right] at (-2.5,1){$v_5$};
  \node[below] at (-5,-1){(a)};
  \node[below] at (1,-1){(b)};
  \node[below] at (6,-1){(c)};
    
        \draw(0.5,0) circle (0.15cm);
        \draw(1.5,2) circle (0.15cm);
        \draw[fill=black](2.5,1) circle (0.15cm);
        \draw(0,1) circle (0.15cm);
  \draw[->] (0.55,0.15) -- (1.5,1.85); 
  \draw[->] (1.6,1.9) -- (2.4,1.1);
  \draw[->] (0.6,0.1) -- (2.4,0.9);
  \draw[->] (1.4,1.9) -- (0.1,1.1);
  \draw[->] (0.1,0.9) -- (0.4,0.1);
  \draw[->] (2.35,1) -- (0.15,1);
  \node at (1.5,0.22) {$\textrm{id}$};
  \node at (2.2,1.65) {$\textrm{id}$};
  \node at (0.85,1.25) {$\textrm{id}$};
  \node at (1.7,1.25) {$\textrm{id}$};
  \node at (0.6,1.75) {$\textrm{id}$};
  \node at (0,0.4) {$\textrm{id}$};
  \node[right] at (2.6,1) {$v_0$};
  \node[above] at (1.5,2.15) {$v_1$};
  \node[below] at (0.5,-0.15) {$v_3$};
  \node[left] at (-0.15,1) {$v_2$};

  \draw(5,0) circle (0.15cm);
        \draw(6,2) circle (0.15cm);
        \draw[fill=black](7,1) circle (0.15cm);
        \draw(4.5,1) circle (0.15cm);
  \draw[->] (5.05,0.15) -- (6,1.85); 
  \draw[->] (6.1,1.9) -- (6.9,1.1);
  \draw[->] (5.1,0.1) -- (6.9,0.9);
  \draw[->] (5.9,1.9) -- (4.6,1.1);
  \draw[->] (4.6,0.9) -- (4.9,0.1);
  \draw[->] (6.85,1) -- (4.65,1);
  \node at (6,0.22) {$\textrm{id}$};
  \node at (6.7,1.65) {$\textrm{id}$};
  \node at (5.45,1.25) {$\gamma$};
  \node at (6.2,1.25) {$\textrm{id}$};
  \node at (5.1,1.75) {$\gamma$};
  \node at (4.5,0.4) {$\textrm{id}$};
  \node[right] at (7.1,1) {$v_0$};
  \node[above] at (6,2.15) {$v_4$};
  \node[below] at (5,-0.15) {$v_3$};
  \node[left] at (4.35,1) {$v_2$};

    \end{tikzpicture}
    \caption{Equivalent balanced $\mathbb{Z}_2$-gain graphs (b,c) of a $\mathbb{Z}_2$-symmetric graph. (a) shows the representatives chosen in (b). In (c), we choose the same representatives, except for the representative for $\mathbb{Z}_2v_1$, which is now $v_4$.}
    \label{free-balanced example}
\end{figure}
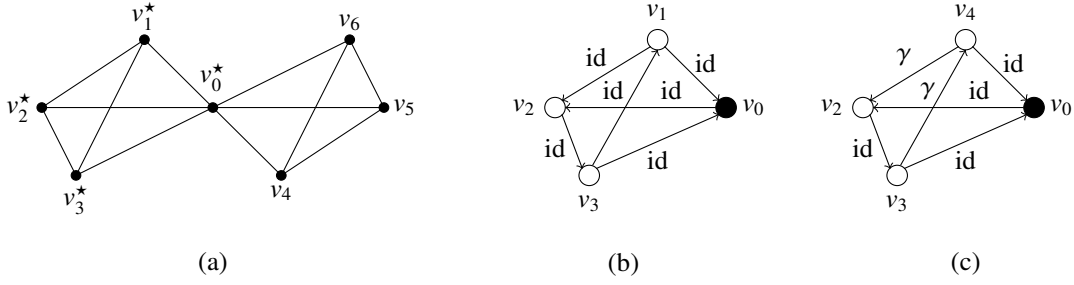

\begin{lemma} [Lemma 2.5(i) in \cite{jzt2016}]
\label{lemma: union of balanced is balanced}
Let $H_1,H_2$ be connected balanced $\Gamma$-gain graphs such that $V_0(H_1)=V_0(H_2)=\emptyset$. If $H_1\cap H_2$ is connected, then $H_1\cup H_2$ is balanced.
\end{lemma}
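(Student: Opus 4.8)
The plan is to reduce the statement to the standard fact about balanced gain graphs with free group action (the vertex-free case), where the result is classical. Since $V_0(H_1)=V_0(H_2)=\emptyset$, both $H_1$ and $H_2$ are ordinary $\Gamma$-gain graphs in the sense used in \cite{jzt2016}, and balancedness here coincides with the classical notion: every cycle has gain $\mathrm{id}$ (after a suitable choice of representatives, i.e.\ after switching). So really I would just invoke the cited lemma directly, but since the excerpt asks for a proof I would reconstruct the argument.

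First I would recall the switching characterisation of balancedness: a connected $\Gamma$-gain graph with no fixed vertices is balanced if and only if it is switching-equivalent to a gain graph in which every edge has gain $\mathrm{id}$; equivalently, there is a function $\theta\colon V\to\Gamma$ such that the switched gain $\psi^\theta(e)=\theta(u)^{-1}\psi(e)\theta(v)$ for $e=(u,v)$ is identically $\mathrm{id}$. Apply this to $H_1$ to get $\theta_1$ trivialising $H_1$, and to $H_2$ to get $\theta_2$ trivialising $H_2$. On the connected graph $H_1\cap H_2$, since both $\theta_1$ and $\theta_2$ trivialise all its gains, the function $\theta_1\theta_2^{-1}$ is constant on $V(H_1\cap H_2)$ — this is where connectedness of $H_1\cap H_2$ is used: a switching function that maps a connected trivial-gain graph to a trivial-gain graph must be constant. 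Call this constant $c\in\Gamma$.

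Next I would patch the two switching functions into a single switching function $\theta$ on $V(H_1\cup H_2)$. Replace $\theta_2$ by $c\theta_2$ (switching by a global constant does not change any gains, so $c\theta_2$ still trivialises $H_2$); then $\theta_1$ and $c\theta_2$ agree on $V(H_1\cap H_2)$, so they glue to a well-defined $\theta\colon V(H_1\cup H_2)\to\Gamma$. By construction $\theta$ restricts to a trivialising switching function on each of $H_1$ and $H_2$, hence $\psi^\theta(e)=\mathrm{id}$ for every edge $e\in E(H_1)\cup E(H_2)=E(H_1\cup H_2)$. Therefore $H_1\cup H_2$ is switching-equivalent to an all-identity gain graph, and since $H_1\cup H_2$ has no fixed vertices (as $V_0(H_1)=V_0(H_2)=\emptyset$), this means $H_1\cup H_2$ is balanced, as required.

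The only genuinely delicate point is the middle step: verifying that $\theta_1\theta_2^{-1}$ is globally constant on the connected subgraph $H_1\cap H_2$, and that the patched function $\theta$ is consistent — i.e.\ that there is no obstruction to simultaneously trivialising both pieces. This is exactly where the hypothesis that $H_1\cap H_2$ is connected is essential; without it one could pick up independent constants on different components of the intersection and the global patching would fail. Everything else is routine bookkeeping with switching functions, and in any case the statement is quoted verbatim from \cite[Lemma 2.5(i)]{jzt2016}, so for our purposes it suffices to cite it.
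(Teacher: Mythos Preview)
The paper does not give its own proof of this lemma; it is simply cited from \cite{jzt2016} without argument. Your switching-function proof is correct and is the standard argument for this fact: trivialise each balanced piece, use connectedness of the intersection to align the two switching functions by a global constant, then glue. This is presumably what appears in the cited reference, so your reconstruction is appropriate.
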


Since balancedness of a gain graph solely depends on its edges joining free vertices, it is easy to see that the following holds.

\begin{lemma}
    \label{lemma: union of balanced and free-balanced is free-balanced}
    Let $H_1,H_2$ be connected $\Gamma$-gain graphs. Assume that the graph obtained from $H_1\cap H_2$ by removing its fixed vertices is connected. If $H_1,H_2$ are balanced, then so is $H_1\cup H_2$. 
\end{lemma}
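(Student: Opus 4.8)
The plan is to reduce Lemma~\ref{lemma: union of balanced and free-balanced is free-balanced} to the already-established Lemma~\ref{lemma: union of balanced is balanced} by passing to the subgraphs induced on the free vertices. Write $H_i' $ for the graph obtained from $H_i$ by deleting all fixed vertices and their incident edges, and similarly let $(H_1\cap H_2)'$ and $(H_1\cup H_2)'$ be obtained from $H_1\cap H_2$ and $H_1\cup H_2$ by the same deletion. Since balancedness of a $\Gamma$-gain graph depends only on its edge set joining free vertices (this is exactly the definition of ``balanced" given in Section~\ref{sec:gaincounts}), we have: $H_i$ is balanced if and only if $H_i'$ is balanced, and $H_1\cup H_2$ is balanced if and only if $(H_1\cup H_2)'$ is balanced. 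So it suffices to prove that $(H_1\cup H_2)'$ is balanced given that $H_1', H_2'$ are balanced.

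The key observations are then: (i) $(H_1\cup H_2)' = H_1' \cup H_2'$, because deleting fixed vertices commutes with taking unions of gain graphs; (ii) $(H_1\cap H_2)' = H_1'\cap H_2'$, for the same reason; and (iii) by hypothesis $(H_1\cap H_2)'$ is connected, hence $H_1'\cap H_2'$ is connected. First I would dispose of the degenerate cases: if either $H_i'$ is empty (no free vertices, or no edges among free vertices) then $(H_1\cup H_2)'$ equals the other $H_j'$, which is balanced by assumption — though I should check the convention that makes ``$H_1'\cap H_2'$ connected" meaningful here, likely meaning we may assume both $H_i'$ are nonempty, and otherwise the statement is immediate. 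Then, with both $H_1', H_2'$ nonempty, connected (they inherit connectedness? — careful: $H_i$ connected does not force $H_i'$ connected in general, but here we only need that $H_i'$ is balanced, and Lemma~\ref{lemma: union of balanced is balanced} as stated wants $H_1', H_2'$ connected), I would apply Lemma~\ref{lemma: union of balanced is balanced} componentwise.

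The one genuine subtlety — and the step I expect to be the main obstacle — is the connectivity bookkeeping: Lemma~\ref{lemma: union of balanced is balanced} requires $H_1, H_2$ themselves to be connected, but $H_1'$ and $H_2'$ need not be connected even when $H_1, H_2$ are, since removing a fixed vertex can disconnect a graph. To handle this, I would argue component by component. Let $K$ be any connected component of $(H_1\cup H_2)' = H_1'\cup H_2'$. It suffices to show $K$ is balanced. Decompose $K$ into $K\cap H_1'$ and $K\cap H_2'$; each is a (possibly disconnected) balanced gain graph, so each of their components is balanced. The hypothesis that $H_1'\cap H_2'$ is connected forces the ``overlap pattern" to be tame: any component of $K\cap H_1'$ meeting a component of $K\cap H_2'$ must do so inside the single connected set $H_1'\cap H_2'$, so one can order the components and glue them one at a time along connected intersections, invoking Lemma~\ref{lemma: union of balanced is balanced} at each step. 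A clean way to present this: first show $K\cap H_1'$ and $K\cap H_2'$ are each connected — indeed if, say, $K\cap H_1'$ had two components, each would have to attach to $K$ only through $H_2'$ and ultimately through the connected set $H_1'\cap H_2'$, which forces them into one component, a contradiction — and then apply Lemma~\ref{lemma: union of balanced is balanced} directly to the connected balanced graphs $K\cap H_1'$, $K\cap H_2'$ with connected intersection $K\cap H_1'\cap H_2'$.

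Finally I would conclude: every component of $(H_1\cup H_2)'$ is balanced, hence $(H_1\cup H_2)'$ is balanced, hence $H_1\cup H_2$ is balanced, as required. I would keep the write-up short, since once the ``balancedness ignores fixed vertices" reduction is made explicit, the remainder is a routine adaptation of the proof of Lemma~\ref{lemma: union of balanced is balanced} from \cite{jzt2016}; the only thing worth spelling out is the component-chasing that upgrades ``$H_i$ connected" to the connectivity needed after deleting fixed vertices.
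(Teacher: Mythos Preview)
Your proposal is correct and follows exactly the approach the paper indicates: the paper's entire proof is the one-sentence remark ``Since balancedness of a gain graph solely depends on its edges joining free vertices, it is easy to see that the following holds,'' and your reduction to the free-vertex subgraphs $H_i'$ and appeal to Lemma~\ref{lemma: union of balanced is balanced} is precisely what that sentence is pointing at. Your extra care about the connectivity of $K\cap H_i'$ is a genuine detail the paper elides, and your argument for it is sound (indeed, since $H_1'\cap H_2'$ is connected it lies in a single component of each $H_i'$, and any other component of $H_i'$ is disjoint from $H_1'\cap H_2'$ and hence forms its own component of $H_1'\cup H_2'$), so the component-chasing you outline goes through cleanly.
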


Suppose $F$ is a connected subset of $E(G)$ with $V_0(F)=\emptyset$. We say $F$ is \textit{near-balanced} if it is unbalanced, and there exist a vertex $v$ of $G[F]$, called the \textit{base vertex} of $G[F]$, and $\gamma\in\Gamma$ such that, for all closed walks $W$ in $F$ starting from $v$ and not containing $v$ as an internal vertex, $\psi(W)\in\{\textrm{id},\gamma,\gamma^{-1}\}$. A subgraph $H$ of $(G,\psi)$ with no fixed vertex is said to be \textit{near-balanced} if $E(H)$ is near-balanced. Figure~\ref{near-balanced image} shows a near-balanced $\mathbb{Z}_5$-gain graph and its covering. If $\left<H\right>\simeq\mathbb{Z}_2$ or $\left<H\right>\simeq\mathbb{Z}_3$, then $H$ is always near-balanced. Hence, we say $H$ (equivalently, $E(H)$) is \textit{proper near-balanced} if it is near-balanced and $\left<H\right>\not\simeq\mathbb{Z}_2,\mathbb{Z}_3$. 

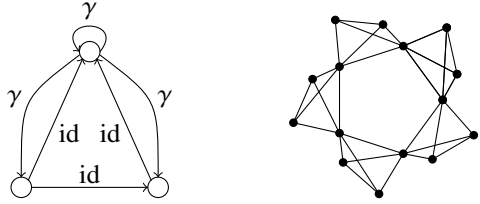
\begin{figure}[H]
    \centering
        \begin{tikzpicture}[scale = 0.9]
        \draw(0,0) circle (0.15cm);
        \draw(2,0) circle (0.15cm);
        \draw(1,2) circle (0.15cm);
  \draw[->] (1.15,2) .. controls (1.55,2.5) and (0.45,2.5) .. (0.85,2);
  \draw[->] (0.15,0) -- (1.85,0); 
  \draw[->] (1.9,0.1) -- (1.1,1.9);
  \draw[->] (1.15,1.95) .. controls (2,1.3) .. (2,0.15);
  \draw[->] (0.1,0.1) -- (0.9,1.9);
  \draw[->] (0.85,1.95) .. controls (0,1.3) .. (0,0.15);
  \node at (-0.1,1.3) {$\gamma$};
  \node at (2.1,1.3) {$\gamma$};
  \node at (1,2.6) {$\gamma$};
  \node at (0.7,0.8) {$\textrm{id}$};
  \node at (1.3,0.8) {$\textrm{id}$};
  \node at (1,0.2) {$\textrm{id}$};
    \end{tikzpicture}
    \hspace{1cm}
    \begin{subfigure}[b]{.2\textwidth}
   \centering
      \begin{tikzpicture}[scale = 0.5] 
        \foreach \x in {0,72,144,...,360} \draw (\x:1.5cm) -- (\x+72:1.5cm);
      \foreach \x in {0,72,144,...,360} \draw (\x:1.5cm) -- (\x+20:2cm) -- (\x+72:1.5cm);
      \foreach \x in {0,72,144,...,360} \draw (\x:1.5cm) -- (\x+50:2.5cm) -- (\x+72:1.5cm);
      \foreach \x in {0,72,144,...,288} \draw (\x+20:2cm) -- (\x+50:2.5cm);
     \foreach \x in {0,72,144,...,360} \node[inner sep=1pt,circle,draw,fill] at (\x:1.5cm) {};
     \foreach \x in {0,72,144,...,360} \node[inner sep=1pt,circle,draw,fill] at (\x+20:2cm) {};
     \foreach \x in {0,72,144,...,360} \node[inner sep=1pt,circle,draw,fill] at (\x+50:2.5cm) {};
      \end{tikzpicture}
\end{subfigure}
\caption{Near-balanced $\mathbb{Z}_5$-gain graph, and its covering.}
\label{near-balanced image}
\end{figure}

\begin{definition}
\label{sparsity defn.}
    Let $(G,\psi)$ be a $\Gamma$-gain graph. Let $m,l$ be non-negative integers such that $0\leq m\leq2,0\leq l\leq3,m\leq l$. $\left(G,\psi\right)$ is called \emph{$(2,m,3,l)$-gain-sparse} if 

    \begin{itemize}
    \item any balanced subgraph $H$ of $(G,\psi)$ with $E(H)\neq\emptyset$ is $(2,3)$-sparse; 
    \item $|E(H)|\leq 2|\overline{V(H)}|+m|V_0(H)|-l$ for any subgraph $H$ of $\left(G,\psi\right)$ with $E(H)\neq\emptyset$.
\end{itemize}
$(G,\psi)$ is called \textit{$(2,m,3,l)$-gain-tight} if it is  $(2,m,3,l)$-gain-sparse and $|E(G)|=2|\overline{V(G)}|+m|V_0(G)|-l$.
\end{definition}

\begin{remark}
    Let $(G,\psi)$ be a $\Gamma$-gain graph. Suppose that, for some $0\leq m\leq2,0\leq l\leq3$, $m\leq l$, $(G,\psi)$ is $(2,m,3,l)$-gain sparse, and let $H$ be a balanced subgraph of $(G,\psi)$ with $E(H)\neq\emptyset$. Then $H$ must satisfy $|E(H)|\leq2|V(H)|-3$, as well as $|E(H)|\leq2|\overline{V(H)}|+m|V_0(H)|-l$. It is easy to check that, whenever $(2-m)|V_0(H)|>3-l$, the latter condition is stronger than the former.
\end{remark}

An argument similar to the proof of Lemma 4.13 in \cite{Ikeshita} shows the following. 
\begin{lemma}
\label{lemma: the subgraph is connected}
    For $0\leq m\leq2,1\leq l\leq3$ such that $m\leq l$, any $(2,m,l)$-tight graph with non-empty edge-set has exactly one connected component with non-empty edge set (but may have other connected components consisting of isolated vertices).
\end{lemma}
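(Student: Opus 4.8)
The plan is a short counting argument by contradiction, mirroring the proof of Lemma~4.13 in \cite{Ikeshita}. Suppose, for a contradiction, that $(G,\psi)$ is $(2,m,l)$-tight with $E(G)\neq\emptyset$ but has two or more connected components with non-empty edge set; call them $C_1,\dots,C_r$ with $r\geq 2$. (The graph $G$ may also have further connected components that are single isolated vertices; these play no role except that one must keep them in mind when counting vertices.)

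First I would apply $(2,m,l)$-sparsity to each $C_i$ individually. Since $C_i$ is a subgraph of $G$ with $E(C_i)\neq\emptyset$ (so the count applies even when $l=3$), we get $|E(C_i)|\leq 2|\overline{V(C_i)}|+m|V_0(C_i)|-l$ for every $i$. Summing over $i=1,\dots,r$, and using that the vertex sets of the $C_i$ are pairwise disjoint, that every edge of $G$ lies in exactly one $C_i$, that $m\geq 0$, and that $|\overline{V(G)}|\geq\sum_{i}|\overline{V(C_i)}|$ and $|V_0(G)|\geq\sum_{i}|V_0(C_i)|$ (any isolated vertices only add to the right-hand sides), I obtain
\[
|E(G)|=\sum_{i=1}^{r}|E(C_i)|\;\leq\;2|\overline{V(G)}|+m|V_0(G)|-rl .
\]

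On the other hand, $(2,m,l)$-tightness gives $|E(G)|=2|\overline{V(G)}|+m|V_0(G)|-l$. Combining the two relations yields $rl\leq l$, and since $l\geq 1$ this forces $r\leq 1$, contradicting $r\geq 2$. Hence $G$ has at most one connected component with non-empty edge set, and since $E(G)\neq\emptyset$ it has exactly one.

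I expect no serious obstacle here: the argument is elementary. The only points that need a little care are the vertex bookkeeping — checking that isolated vertices and the sign of $m$ make the summed inequality point in the stated direction — and the role of the hypothesis $l\geq 1$, which is precisely what turns $rl\leq l$ into $r\leq 1$ (for $l=0$ the conclusion fails, consistent with the standing restriction $1\leq l\leq 3$). One should also record the convention that, throughout, the sparsity inequality is imposed on subgraphs with non-empty edge set, so that applying it to each $C_i$ is legitimate.
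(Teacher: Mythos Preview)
Your argument is correct and essentially identical to the paper's own proof: both proceed by contradiction, apply the $(2,m,l)$-sparsity bound to each connected component with non-empty edge set, sum, and compare with the tightness equality to force at most one such component via $l\geq 1$. The only cosmetic difference is notation (the paper writes $c-c_0$ for your $r$); your explicit remarks on the role of $m\geq 0$ and on isolated vertices are accurate and match what the paper uses implicitly.
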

\begin{proof}
    Fix $0\leq m\leq2,1\leq l\leq3$ such that $m\leq l$. Let $c_0\geq0,c\geq1$ be integers such that $c-c_0\geq1$, and $(G,\psi)$ be a $(2,m,l)$-tight graph with connected components $H_1,\dots,H_c$, of which $H_1,\dots,H_{c_0}$ are isolated vertices, and $H_{c_0+1},\dots,H_c$ have non-empty edge sets. Assume, by contradiction, that $c-c_0\geq2$. Then,
    \begin{equation*}
        |E(G)|=\sum_{i=c_0+1}^c|E(H_i)|
        \leq2\sum_{i=c_0+1}^c|\overline{V(H_i)}|+m\sum_{i=c_0+1}^c|V_0(H_i)|-(c-c_0)l
        \leq2|\overline{V(G)}|+m|V_0(G)|-(c-c_0)l,
    \end{equation*}
    where the last inequality holds with equality if $c_0=0$. Since $(c-c_0)\geq2,l\geq1$, this is strictly less than $2|\overline{V(G)}|+m|V_0(G)|-l$, which contradicts the fact that $(G,\psi)$ is $(2,m,l)$-tight. 
    \end{proof}

If $|\Gamma|\geq4,2\leq j\leq|\Gamma|-2$, then there are additional conditions that $\Gamma$-gain graphs must satisfy in order for their liftings to have $\rho_j$-symmetrically isostatic realisations. Hence, we introduce  more refined  sparsity conditions. First, we need the following notions, which may also be found in Section 4.3 of \cite{Ikeshita} and Section 2 of \cite{RIandST16}. See also \cite{KCandST20}. 

Let $k:=|\Gamma|\geq4$ and $(G,\psi)$ be a $\Gamma$-gain graph. For $2\leq j\leq k-2,-1\leq i\leq 1$, we define the following sets:
\begin{equation*}
    S_i(k,j)=\begin{cases}
        \{n\in\mathbb{N}:2\leq n, n|k,j\equiv i(\bmod n)\} & \text{if }j\text{ is even}\\
        \{n\in\mathbb{N}:2<n, n|k,j\equiv i(\bmod n)\} & \text{if }j\text{ is odd}
    \end{cases}
\end{equation*}
We say a connected subset $F$ of $E(G)$ (equivalently, a connected subgraph $H$ of $G$) is $S_0(k,j)$ if $\left<F\right>\simeq\mathbb{Z}_n$ (equivalently, $\left<H\right>\simeq\mathbb{Z}_n$) for some $n\in S_0(k,j).$ Similarly, we say $F$ (equivalently, $H$) is $S_{\pm1}(k,j)$ if $\left<F\right>\simeq\mathbb{Z}_n$ (equivalently, $\left<H\right>\simeq\mathbb{Z}_n$) for some $n\in S_{-1}(k,j)\cup S_1(k,j).$ We say $F$ (equivalently, $H$) is $S(k,j)$ if it is either $S_0(k,j)$ or $S_{\pm1}(k,j)$.

We also define the function $f_k^j$ on $2^{E(G)}$ by
\begin{equation*}
    f_k^j(F)=\sum_{X\in C(F)}\left\{2|V(X)|-3+\alpha_k^j(X)\right\},
\end{equation*}
where $F$ is a subset of $E(G)$, $C(F)$ denotes the set of connected components of $F$, and
\begin{equation*}
    \alpha_k^j(X)=\begin{cases}
        0 & \text{if }X\text{ is balanced}\\
        1 & \text{if }j\text{ is odd and }\left<X\right>\simeq\mathbb{Z}_2\\
        2-|V_0(X)| & \text{if }X\text{ is }S_{\pm1}(k,j)\\
        2-2|V_0(X)| & \text{if }X\text{ is }S_0(k,j)\text{ or } |V_0(X)|=0 \text{ and } X \text { is proper near-balanced}\\
        3-2|V_0(X)| & \text{otherwise}
    \end{cases}
\end{equation*}
Recall that the concept of near-balancedness is only defined on graphs with no fixed vertices. Hence, if $X$ is near-balanced, we assume by default that it has no fixed vertices. It was shown in \cite[Lemma 4.19(d)]{Ikeshita} that $S_0(k,j)\cap S_i(k,j)=\emptyset$ for $i=1,-1$, and hence the functions $\alpha_k^j$ and $f_k^j$ are well-defined. See also \cite{PhDThesis}.

\begin{definition}
\label{definition of sparsity, higher order rotation}
    Let $k:=|\Gamma|\geq4,2\leq j\leq k-2$ and $(G,\psi)$ be a $\Gamma$-gain graph. $(G,\psi)$ is said to be $\mathbb{Z}^{j}_k$-gain sparse if $|E(H)|\leq f_k^j(E(H))$ for all non-empty subgraphs $H$ of $G$. It is said to be $\mathbb{Z}^{j}_k$-gain tight if it is  $\mathbb{Z}^{j}_k$-gain sparse and $|E(G)|=f_k^j(E(G))$.
\end{definition} 

\subsection{Symmetric frameworks}\label{sec:symfwks}
Let $\Tilde{G}$ be a $\Gamma$-symmetric graph, and $\tau:\Gamma\rightarrow O(\mathbb{R}^d)$ be a faithful representation. We say a \textit{realisation} $(\Tilde{G},\Tilde{p})$ of $\Tilde{G}$ is \textit{$\tau(\Gamma)$-symmetric} if $\tau(\gamma)\Tilde{p}(v)=\Tilde{p}(\gamma v)$ for all $\gamma\in\Gamma,v\in V(\Tilde{G}).$ 

Notice that we are now realising the group $\Gamma$ geometrically. For instance, if $|\Gamma|=2$, then $\tau(\Gamma)$ can be identified either with the rotation group $\mathcal{C}_2:=\{\textrm{id},C_2\}$, where $C_2$ is the rotation of $\mathbb{R}^2$ by $\pi$ around the origin, or $\mathcal{C}_s=\{\textrm{id},\sigma\}$, where $\sigma$ is a reflection whose mirror line goes through the origin. By applying a rotation, we may always assume that the mirror line is the $y$-axis. The $\Gamma$-symmetric graph in Figure~\ref{first gain graph example}, for example, can be interpreted as a $\mathcal{C}_2$-symmetric framework. Similarly, for $k:=|\Gamma|\geq 3$, we  identify $\tau(\Gamma)$ with the group $\mathcal{C}_k$ which is generated by a counterclockwise rotation about the origin by $2\pi/k$. 

Consider a $\tau(\Gamma)$-symmetric framework $(\tilde{G},\tilde{p})$. By definition, $\tilde G$ must be a $\Gamma$-symmetric graph and so it has a quotient $\Gamma$-gain graph $(G,\psi)$. Then, for all $v\in V(G)$, we can define $p_v:=\tilde{p}\left(v^{\star}\right)$, where $v^{\star}$ is the representative of $v$ in $V(\tilde{G})$. This allows us to define the \textit{(quotient) $\tau(\Gamma)$-gain framework} of $(\Tilde{G},\Tilde{p})$ to be the triplet $(G,\psi,p)$. 

We say $p$ (or, equivalently $\tilde{p}$, $(\tilde{G},\tilde{p})$, $(G,\psi,p)$) is \textit{$\tau(\Gamma)$-generic} if $\textrm{rank}(R(\tilde{G},\Tilde{p}))\geq\textrm{rank}(R(\tilde{G},\Tilde{q}))$ for all $\tau(\Gamma)$-symmetric realisations $(\tilde{G},\Tilde{q})$ of $\Tilde{G}$. The set of all $\tau(\Gamma)$-generic configurations of $\tilde{G}$ is a dense, open subset of the set of $\tau(\Gamma)$-symmetric configurations of $\tilde{G}$. 

\subsection{Block-diagonalisation of the rigidity matrix}
Recall that $\Gamma=\left<\gamma\right>$ is isomorphic to $\mathbb{Z}_k$ through an isomorphism which sends $\gamma$ to 1. From group representation theory we know that $\Gamma$ has $k$ irreducible representations $\rho_0,\dots,\rho_{k-1}$ such that  $\rho_j:\Gamma\rightarrow\mathbb{C}\setminus\{0\}$ sends $\gamma^r\in\Gamma$ to $\omega^{rj}$, where $\omega=e^{\frac{2\pi i}{k}}$. For some faithful representation $\tau:\Gamma\rightarrow O(\mathbb{R}^d)$, let $(\tilde{G},\tilde{p})$ be a $\tau(\Gamma)$-symmetric framework. We define $P_{V(\tilde{G})}:\Gamma\rightarrow GL(\mathbb{R}^{|V(\tilde{G})|})$ to be the linear representation of $\Gamma$ that sends an element $\gamma'\in\Gamma$ to the matrix $\left[\delta_{\tilde{u},\gamma'\tilde{v}}\right]_{\tilde{u},\tilde{v}}$, where $\delta$ denotes the Kronecker delta symbol. We also define $P_{E(\tilde{G})}:\Gamma\rightarrow GL(\mathbb{R}^{|E(\tilde{G})|})$ to be the linear representation of $\Gamma$ that sends an element $\gamma'\in\Gamma$ to the matrix $\left[\delta_{\tilde{e},\gamma'\tilde{f}}\right]_{\tilde{e},\tilde{f}}$. Theorem 3.1 in \cite{sch10a} shows the following.

\begin{lemma}
\label{Lemma: rigidity matrix block diagonalises}
For all $\gamma\in\Gamma$,
\begin{equation*}
 P_{E(\tilde{G})}^{-1}(\gamma)R(\tilde{G},\tilde{p})(\tau\otimes P_{V(\tilde{G})})(\gamma)=R(\tilde{G},\tilde{p}).
\end{equation*}
\end{lemma}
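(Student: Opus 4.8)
The plan is to verify the intertwining relation $P_{E(\tilde G)}^{-1}(\gamma)\,R(\tilde G,\tilde p)\,(\tau\otimes P_{V(\tilde G)})(\gamma)=R(\tilde G,\tilde p)$ entrywise, by chasing the action of $\gamma$ on the index sets of rows (edges) and columns (vertices) of the rigidity matrix. First I would fix the convention that multiplying $R(\tilde G,\tilde p)$ on the left by the permutation matrix $P_{E(\tilde G)}^{-1}(\gamma)=P_{E(\tilde G)}(\gamma)^{T}$ permutes the rows so that the row previously indexed by the edge $\tilde e$ now sits in position $\gamma\tilde e$; equivalently, the $\tilde e$-row of the left-hand product equals the $\gamma^{-1}\tilde e$-row of $R(\tilde G,\tilde p)(\tau\otimes P_{V(\tilde G)})(\gamma)$. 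So it suffices to show that for every edge $\tilde e=\{\tilde u,\tilde v\}$, the $\gamma\tilde e$-row of $R(\tilde G,\tilde p)(\tau\otimes P_{V(\tilde G)})(\gamma)$ equals the $\tilde e$-row of $R(\tilde G,\tilde p)$.

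Next I would compute the right multiplication by $(\tau\otimes P_{V(\tilde G)})(\gamma)$ on a single row. The operator $\tau\otimes P_{V(\tilde G)}$ acts on $\mathbb{R}^{d|V(\tilde G)|}\cong \bigoplus_{\tilde w\in V(\tilde G)}\mathbb{R}^d$ by sending the block of columns indexed by vertex $\tilde w$ into the block indexed by $\gamma\tilde w$, with the $d\times d$ factor $\tau(\gamma)$ acting within the block. Take the row of $R(\tilde G,\tilde p)$ indexed by $\gamma\tilde e=\{\gamma\tilde u,\gamma\tilde v\}$: its only nonzero blocks are $(\tilde p_{\gamma\tilde u}-\tilde p_{\gamma\tilde v})^{T}$ in the $\gamma\tilde u$-block and $(\tilde p_{\gamma\tilde v}-\tilde p_{\gamma\tilde u})^{T}$ in the $\gamma\tilde v$-block. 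Multiplying on the right by $(\tau\otimes P_{V(\tilde G)})(\gamma)$ moves these entries back to the $\tilde u$-block and $\tilde v$-block respectively (using $\gamma\tilde w=\gamma\tilde w$ injectivity of the group action on vertices, and that $\tilde u\ne\tilde v\Rightarrow\gamma\tilde u\ne\gamma\tilde v$), and replaces $(\tilde p_{\gamma\tilde u}-\tilde p_{\gamma\tilde v})^{T}$ by $(\tilde p_{\gamma\tilde u}-\tilde p_{\gamma\tilde v})^{T}\tau(\gamma)$. Now invoke $\tau$-symmetry of the framework: $\tilde p_{\gamma\tilde u}=\tau(\gamma)\tilde p_{\tilde u}$ and $\tilde p_{\gamma\tilde v}=\tau(\gamma)\tilde p_{\tilde v}$, so $(\tilde p_{\gamma\tilde u}-\tilde p_{\gamma\tilde v})^{T}\tau(\gamma)=(\tilde p_{\tilde u}-\tilde p_{\tilde v})^{T}\tau(\gamma)^{T}\tau(\gamma)=(\tilde p_{\tilde u}-\tilde p_{\tilde v})^{T}$, since $\tau(\gamma)\in O(\mathbb{R}^d)$ is orthogonal. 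This is exactly the $\tilde u$-block entry of the $\tilde e$-row of $R(\tilde G,\tilde p)$, and symmetrically for the $\tilde v$-block; all other blocks are zero on both sides. Hence the two rows agree, which is what we needed.

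The only genuinely delicate point is bookkeeping the row permutation coming from $P_{E(\tilde G)}^{-1}(\gamma)$ consistently with the column permutation inside $\tau\otimes P_{V(\tilde G)}$: one must be careful that $P_{E(\tilde G)}(\gamma)=[\delta_{\tilde e,\gamma\tilde f}]$ sends $\tilde f\mapsto\gamma\tilde f$ and therefore its inverse (transpose) sends a row indexed by $\tilde e$ to being read off from the $\gamma^{-1}\tilde e$-row — getting this direction right is what makes the $\tau(\gamma)^{T}\tau(\gamma)=I$ cancellation land on the correct row rather than an unrelated one. Since the group action need not be free on the vertices or edges, I would also note explicitly that the argument is entirely local to a single edge and its image and nowhere uses freeness of the action; edges or vertices fixed by $\gamma$ simply give $\gamma\tilde e=\tilde e$ or $\gamma\tilde w=\tilde w$, for which the same computation trivially specialises (and for a fixed vertex at the origin, $\tau(\gamma)\tilde p_{\tilde w}=\tilde p_{\tilde w}$ is consistent). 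Finally, I would remark that the identity for a single generator $\gamma$ propagates to all of $\Gamma$ by composition, so stating it for all $\gamma\in\Gamma$ as in Lemma~\ref{Lemma: rigidity matrix block diagonalises} is immediate, and that this intertwining relation is precisely what Schur's lemma then turns into the block-diagonalisation of $R(\tilde G,\tilde p)$ over the irreducible representations $\rho_0,\dots,\rho_{k-1}$.
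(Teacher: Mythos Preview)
Your proposal is correct and is precisely the standard entrywise verification; the paper itself does not prove this lemma but simply cites Theorem~3.1 of \cite{sch10a}, where essentially the same computation appears. One minor bookkeeping slip: in your first paragraph you write that left multiplication by $P_{E(\tilde G)}^{-1}(\gamma)$ sends the row indexed by $\tilde e$ to position $\gamma\tilde e$, and then that the $\tilde e$-row of the left-hand side equals the $\gamma^{-1}\tilde e$-row of $R(\tau\otimes P_{V})(\gamma)$ --- in fact $(P_{E}^{-1}(\gamma)M)_{\tilde e}=M_{\gamma\tilde e}$, so both of these directional claims are inverted. Fortunately your ``it suffices to show'' statement and the subsequent block computation are stated in the correct direction, so the argument goes through; just tidy up the first paragraph so the two sentences agree with one another and with the computation that follows.
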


By Schur's lemma, this implies that the rigidity matrix of $(\tilde{G},\tilde{p})$ block-decomposes with respect to suitable symmetry-adapted bases, which subdivides the column space into the direct sum of the spaces $V^0,\dots,V^{k-1}$, where each $V^j$ is the $(\tau\otimes P_{E(\tilde{V})})$-invariant subspace corresponding to $\rho_j$. Similarly, the row space can be written as the direct sum of $k$ spaces $W^0,\dots,W^{k-1}$, each being the $P_{E(\tilde{G})}$-invariant subspace corresponding to $\rho_j$ (for details, see Section 3.2 of \cite{sch10a}). Hence, we can write the rigidity matrix in the form

\begin{center}
$\tilde{R}(\tilde{G},\tilde{p})=\begin{pmatrix}
\tilde{R}_0(\tilde{G},\tilde{p})&&0\\
&\ddots& \\
0& &\tilde{R}_{k-1}(\tilde{G},\tilde{p})
\end{pmatrix}$,
\end{center}

where each $\tilde{R}_j(\tilde{G},\tilde{p})$ is determined by some $\rho_j$. This decomposition into subspaces also allows us to define the following.

\begin{definition}
With the notation above, we say an infinitesimal motion $\tilde{m}:V(\tilde{G})\rightarrow\mathbb{C}^{d\left|V(\tilde{G})\right|}$ is \textit{symmetric with respect to $\rho_j$} (or $\rho_j$-symmetric) if it lies in $V^j$.
\end{definition}

\begin{remark}
Since all irreducible representations $\rho_0,\dots,\rho_{k-1}$ of $\Gamma$ are 1-dimensional, an infinitesimal motion $\tilde{m}:V(\tilde{G})\rightarrow\mathbb{C}^{d|V(\tilde{G})|}$ is $\rho_j$-symmetric if and only if for all $\gamma\in\Gamma$ and all $v\in V(\tilde{G})$,
\begin{equation}
\label{defn of anti-symmetric motion}
    \tilde{m}\left(\gamma v\right)=\overline{\rho_j(\gamma)}\tau(\gamma)\tilde{m}(v),
\end{equation}
where $\overline{\rho_j(\gamma)}$ indicates the complex conjugate of $\rho_j(\gamma)$ (for details, see Section 4.1.2 in \cite{bt2015}). We usually refer to $\rho_0$-symmetric infinitesimal motions as \textit{fully-symmetric motions}, since they exhibit the full symmetry. If $k=2$, we refer to $\rho_1$-symmetric infinitesimal motions as \textit{anti-symmetric motions}, since the motion vectors are reversed by the non-trivial element of the group.
\end{remark}

\begin{definition}
    Let $k:=|\Gamma|\geq2$, $(\tilde{G},\tilde{p})$ be a $\tau(\Gamma)$-symmetric framework and $0\leq j\leq k-1$. We say $(\tilde{G},\tilde{p})$ is \textit{$\rho_j$-symmetrically isostatic} if  all $\rho_j$-symmetric infinitesimal motions of $(\tilde{G},\tilde{p})$ are trivial and $\tilde{R}_j(\tilde{G},\tilde{p})$ has no non-trivial row dependence. We usually refer to a $\rho_0$-symmetrically isostatic framework as a \textit{fully-symmetrically isostatic framework.} If $k=2$, we refer to a $\rho_1$-symmetrically isostatic framework as an \textit{anti-symmetrically isostatic framework.} 
\end{definition}

\begin{remark}
    For $k:=|\Gamma|\geq2,0\leq j\leq k-1$, let $(G,\psi,p)$ be a $\rho_j$-symmetrically isostatic $\tau(\Gamma)$-gain framework. Then, by definition of $\tau(\Gamma)$-genericity (recall Section~\ref{sec:symfwks}), any $\tau(\Gamma)$-generic realisation $(G,\psi,q)$ of $(G,\psi)$ is $\rho_j$-symmetrically isostatic. 
\end{remark}

We will now start working in $\mathbb{R}^2$. The following is a well known fact and can be read off standard character tables \cite{atk70}. See also the proof of Theorem 6.3 and Lemma 6.7 in \cite{bt2015}. 

\begin{proposition}\label{prop:trivmotions}
Let $\tau:\Gamma\rightarrow O(\mathbb{R}^2)$ be a faithful representation. Given a $\tau(\Gamma)$-symmetric framework $(\Tilde{G},\Tilde{p})$, the following hold:
\begin{itemize}
    \item[(i)] If $\tau(\Gamma)=\mathcal{C}_s$, all infinitesimal rotations of $(\Tilde{G},\Tilde{p})$ are $\rho_1$-symmetric. The space of infinitesimal translations  of $(\Tilde{G},\Tilde{p})$ decomposes into two $1$-dimensional subspaces, one consisting of $\rho_0$-symmetric and the other of $\rho_1$-symmetric translations (as shown in Figure~\ref{figure: trivial motions of reflection framework}). 
    \item[(ii)] If $\tau(\Gamma)=\mathcal{C}_2$, all infinitesimal rotations of $(\Tilde{G},\Tilde{p})$ are $\rho_0$-symmetric and all infinitesimal translations of $(\Tilde{G},\Tilde{p})$  are $\rho_1$-symmetric.
    \item[(iii)] If $\tau(\Gamma)=\mathcal{C}_k$ for some $k\geq3$, all infinitesimal rotations of $(\Tilde{G},\Tilde{p})$ are $\rho_0$-symmetric. The space of infinitesimal translations of $(\Tilde{G},\Tilde{p})$ decomposes into two $1$-dimensional subspaces, one consisting of $\rho_1$-symmetric  and the other of $\rho_{k-1}$-symmetric translations.
\end{itemize}
\end{proposition}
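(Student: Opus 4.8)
\textbf{Proof proposal for Proposition~\ref{prop:trivmotions}.}

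The plan is to compute, for each of the three groups $\mathcal{C}_s$, $\mathcal{C}_2$, $\mathcal{C}_k$ ($k\geq 3$), how the $3$-dimensional space of trivial infinitesimal motions of $(\tilde G,\tilde p)$ in $\mathbb{R}^2$ decomposes into $\rho_j$-isotypic components, and then to identify which known $1$-parameter families of trivial motions (the two translations and the single rotation about the origin) lie in which component. Since every trivial motion has the form $\tilde m(v)=M\tilde p(v)+t$ with $M\in M_2(\mathbb{R})$ skew-symmetric and $t\in\mathbb{R}^2$, and since the group of trivial motions is the Lie algebra of the Euclidean group $\mathrm{SE}(2)$ acting on configurations, the relevant representation of $\Gamma$ on trivial motions is the restriction of the standard action of $\mathrm{SE}(2)$, which splits as (translations)~$\oplus$~(rotations about the origin). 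Concretely, for $\gamma\in\Gamma$ with $\tau(\gamma)\in O(\mathbb{R}^2)$, the push-forward of a trivial motion $t$ (pure translation) is again a translation by $\tau(\gamma)t$, so the translation subspace carries the defining $2$-dimensional real representation $\tau$ of $\Gamma$; the push-forward of the infinitesimal rotation $\tilde m(v)=M_0\tilde p(v)$ (with $M_0=\begin{psmallmatrix}0&-1\\1&0\end{psmallmatrix}$) is $\tau(\gamma)M_0\tau(\gamma)^{-1}\tilde p(\gamma^{-1}\cdot)$-type computation, and since $\tau(\gamma)M_0\tau(\gamma)^{-1}=\det(\tau(\gamma))M_0$, the rotation subspace carries the $1$-dimensional representation $\gamma\mapsto\det\tau(\gamma)$ of $\Gamma$.

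From here each case is a short character computation. First I would verify via the defining relation \eqref{defn of anti-symmetric motion} that a trivial motion $\tilde m$ lies in $V^j$ exactly when $\tilde m(\gamma v)=\overline{\rho_j(\gamma)}\tau(\gamma)\tilde m(v)$ for all $\gamma,v$; for a pure translation $t$ this reads $t=\overline{\rho_j(\gamma)}\tau(\gamma)t$, and for the rotation $M_0\tilde p$ it reads $\det(\tau(\gamma))M_0\tilde p(\gamma v)=\overline{\rho_j(\gamma)}\tau(\gamma)M_0\tilde p(v)$, i.e. $\det\tau(\gamma)=\overline{\rho_j(\gamma)}$. For $\mathcal{C}_s$ ($k=2$): $\det\sigma=-1=\rho_1(\gamma)$, so the rotation is $\rho_1$-symmetric; diagonalising the reflection $\sigma$ (eigenvalues $+1,-1$ on the mirror direction and its normal) splits translations into one $\rho_0$-eigenvector (along the mirror) and one $\rho_1$-eigenvector (normal to it). For $\mathcal{C}_2$: $\det C_2=+1=\rho_0(\gamma)$, giving $\rho_0$ for rotations; $C_2 t=-t$ for every $t$, matching $\overline{\rho_1(\gamma)}=-1$, so both translations are $\rho_1$-symmetric. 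For $\mathcal{C}_k$, $k\geq 3$: $\det$ of a rotation is $+1=\rho_0(\gamma)$, so rotations are $\rho_0$-symmetric; the rotation matrix $\tau(\gamma)$ by $2\pi/k$ has complex eigenvalues $e^{\pm 2\pi i/k}=\omega^{\pm 1}$, and the condition $t=\overline{\rho_j(\gamma)}\tau(\gamma)t$ on the corresponding eigenlines forces $\overline{\omega^{j}}=\omega^{\mp1}$, i.e. $j\equiv 1$ or $j\equiv k-1\pmod k$; hence the two (complex) translation directions are $\rho_1$- and $\rho_{k-1}$-symmetric respectively.

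The only mildly delicate points, rather than genuine obstacles, are bookkeeping ones: making sure the complex conjugate in \eqref{defn of anti-symmetric motion} is tracked correctly (it is what distinguishes $\rho_1$ from $\rho_{k-1}$ for the two translation directions in case (iii)), and noting that the translations and the origin-rotation together span all of the $3$-dimensional trivial motion space, so that the stated decompositions are exhaustive and there is nothing left over. I would also remark that this matches the standard character-table computation for the representation $\tau\oplus(\det\tau)$ of the point groups $\mathcal{C}_s,\mathcal{C}_2,\mathcal{C}_k$, which is the route taken in \cite{atk70} and in the proof of Theorem~6.3 and Lemma~6.7 of \cite{bt2015}; citing that computation could shorten the argument to essentially the eigenvalue observations above.
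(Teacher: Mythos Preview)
Your proposal is correct and is essentially an explicit version of what the paper does: the paper does not give a proof but simply states that the result ``can be read off standard character tables'' and refers to \cite{atk70} and \cite{bt2015}. Your direct verification via the eigenvalue condition $t=\overline{\rho_j(\gamma)}\tau(\gamma)t$ for translations and $\det\tau(\gamma)=\overline{\rho_j(\gamma)}$ for the rotation is exactly the computation underlying those character-table arguments, so there is no meaningful difference in approach.
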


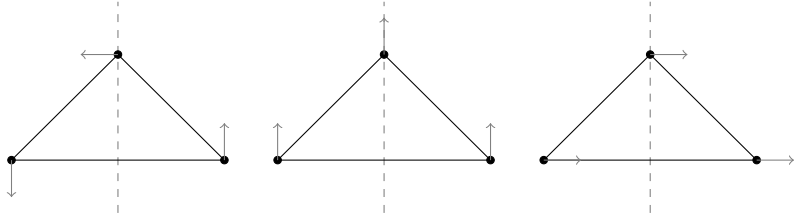
\begin{figure}[H]
    \centering
    \begin{tikzpicture}
  [scale=0.7,auto=left]
\draw[dashed, gray] (0,-1)--(0,3);
\filldraw(0,2) circle(2pt);
\filldraw(2,0) circle(2pt);
\filldraw(-2,0) circle(2pt);
\draw(0,2) -- (2,0) -- (-2,0) -- (0,2);
\draw[gray, ->](0,2) -- (-0.7,2);
\draw[gray, ->](-2,0) -- (-2, -0.7);
\draw[gray, ->](2,0) -- (2,0.7);

\draw[dashed, gray] (5,-1)--(5,3);
\filldraw(5,2) circle(2pt);
\filldraw(7,0) circle(2pt);
\filldraw(3,0) circle(2pt);
\draw(5,2) -- (7,0) -- (3,0) -- (5,2);
\draw[gray, ->](5,2) -- (5,2.7);
\draw[gray, ->](3,0) -- (3, 0.7);
\draw[gray, ->](7,0) -- (7,0.7);

\draw[dashed, gray] (10,-1)--(10,3);
\filldraw(10,2) circle(2pt);
\filldraw(12,0) circle(2pt);
\filldraw(8,0) circle(2pt);
\draw(10,2) -- (12,0) -- (8,0) -- (10,2);
\draw[gray, ->](10,2) -- (10.7,2);
\draw[gray, ->](8,0) -- (8.7,0);
\draw[gray, ->](12,0) -- (12.7,0);
\end{tikzpicture}
\caption{Trivial infinitesimal motions of a $\mathcal{C}_s$-symmetric framework $(\tilde{G},\tilde{p})$. From left to right $(\tilde{G},\tilde{p})$ is rotated, translated in the direction of the symmetry line and translated in the direction perpendicular to the symmetry line. The second motion maintains symmetry, the other two break the symmetry.}
\label{figure: trivial motions of reflection framework}
\end{figure}

\section{Phase-symmetric orbit matrices} \label{sec:orbitmat}

We now establish `orbit matrices' which are equivalent to the matrices composing the block-diagonalised version of $(\tilde{G},\tilde{p})$, and which we will denote $O_j(G,\psi,p)$. These matrices can be written down directly without using representation theory and they allow us to establish a recursive construction of $(2,m,3,l)$-sparse graphs. Moreover, their structure allows us to work with $\Gamma$-gain graphs rather than $\Gamma$-symmetric graphs, deleting any redundancies that would be present in the rigidity matrix.

\subsection{Dimensions of the block matrices}
Since the diagonalisation of the rigidity matrix follows from the fact that it intertwines $\tau\otimes P_{V(\tilde{G})}$ and $P_{E(\tilde{G})}$ (recall Lemma~\ref{Lemma: rigidity matrix block diagonalises}),  
the dimension of each block can be determined by studying the decomposition of $\tau\otimes P_{V(\tilde{G})}$ and $P_{E(\tilde{G})}$.

Let $|\Gamma|=k$. Let $(\tilde{G},\tilde{p})$ be a $\tau(\Gamma)$-symmetric framework, where $\tau:\Gamma\rightarrow O(\mathbb{R}^2)$ is a faithful representation.

Let $\rho_{\textrm{reg}}:\Gamma\rightarrow GL(\mathbb{R}^k)$ denote the regular representation of $\Gamma$, that sends $\gamma\in\Gamma$ to the matrix $\left[\delta_{g,\gamma h}\right]_{g,h}$, where $\delta$ again represents the Kronecker delta symbol. From representation theory, we know that $\rho_{\textrm{reg}}$ is the direct sum of the irreducible representations of $\Gamma$. It is also easy to see that $P_{V(\tilde{G})}$ is the direct sum of $|\overline{V(G)}|$ copies of $\rho_{\textrm{reg}}$ and $|V_0(G)|$ copies of the $1\times1$ identity matrix, and so
\begin{equation*}
    P_{V(\tilde{G})}\simeq|\overline{V(G)}|\rho_{\textrm{reg}}\oplus|V_0(G)|I_1\simeq\bigoplus_{j=0}^{k-1}|\overline{V(G)}|\rho_j\oplus |V_0(G)|I_1.
\end{equation*}

Given $0\leq j,j'\leq k-1$, the character of $\rho_j\otimes\rho_{j'}$ is the coordinate-wise product of $\rho_j$ and $\rho_{j'}$. So, the multiplicity of $\rho_j$ in $\tau\otimes\rho_{\textrm{reg}}$ is $\textrm{Tr}(\tau(\textrm{id}))=2$. Hence,              
              
\begin{equation*}
    \tau\otimes P_{V(\tilde{G})}\simeq|\overline{V(G)}|[\tau\otimes\rho_{\textrm{reg}}]\oplus|V_0(G)|\tau\simeq\bigoplus_{j=0}^{k-1}2|\overline{V(G)}|\rho_j\oplus|V_0(G)|\tau.
\end{equation*}
 So, for each free vertex $v\in V(G)$, every block matrix contains two columns. Where the columns corresponding to the fixed vertices lie depends on the map $\tau:\Gamma\rightarrow O(\mathbb{R}^2)$.

\begin{proposition}
\label{proposition: dimension of the blocks}
   Let $(\Tilde{G},\Tilde{p})$ be a $\tau(\Gamma)$-symmetric framework for some faithful representation $\tau:\Gamma\rightarrow O(\mathbb{R}^2)$. The following statements hold:
   \begin{itemize}
       \item[(i)] If $\tau(\Gamma)=\mathcal{C}_s$, $\tilde{R}_0(\tilde{G},\tilde{p})$ and $\tilde{R}_1(\tilde{G},\tilde{p})$ both have $2|\overline{V(G)}|+|V_0(\Tilde{G})|$ columns.
       \item[(ii)] If $\tau(\Gamma)=\mathcal{C}_2$, then $\Tilde{R}_0(\Tilde{G},\Tilde{p})$ has $2|\overline{V(G)}|$ columns and $\Tilde{R}_1(\Tilde{G},\Tilde{p})$ has $2|V(G)|$ columns.
       \item[(iii)] If $\tau(\Gamma)=\mathcal{C}_k$ for some $k\geq3$, then $\Tilde{R}_1(\Tilde{G},\Tilde{p})$ and $\Tilde{R}_{k-1}(\Tilde{G},\Tilde{p})$ both have $2|\overline{V(G)}|+|V_0(\Tilde{G})|$ columns, and all the other blocks have $2|\overline{V(G)}|$ columns. 
       \item[(iv)] For all $\tau(\Gamma)$ and all even $0\leq j\leq|\Gamma|-1$, $\Tilde{R}_j(\Tilde{G},\Tilde{p})$ has $|E(G)|$ rows and for all odd $0\leq j\leq|\Gamma|-1$, $\Tilde{R}_j(\Tilde{G},\Tilde{p})$ has $|\overline{E(G)}|$ rows (notice that when $|\Gamma|$ is odd, $E(G)=\overline{E(G)}$). 
   \end{itemize}
\end{proposition}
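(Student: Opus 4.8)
The plan is to read off the number of columns and the number of rows of each block $\tilde R_j(\tilde G,\tilde p)$ from the decompositions into irreducible representations of $\tau\otimes P_{V(\tilde G)}$ and of $P_{E(\tilde G)}$, respectively: as noted above, $\tilde R_j$ has as many columns as the multiplicity of $\rho_j$ in $\tau\otimes P_{V(\tilde G)}$ and as many rows as the multiplicity of $\rho_j$ in $P_{E(\tilde G)}$.

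For the columns I would start from the decomposition $\tau\otimes P_{V(\tilde G)}\simeq\bigoplus_{j=0}^{k-1}2|\overline{V(G)}|\rho_j\oplus|V_0(G)|\tau$ established above, so that $\tilde R_j$ has $2|\overline{V(G)}|+\mu_j|V_0(G)|$ columns, where $\mu_j$ is the multiplicity of $\rho_j$ in $\tau$ and $|V_0(G)|=|V_0(\tilde G)|$ (distinct fixed vertices of the quotient lift to distinct fixed vertices of $\tilde G$). It then remains to decompose $\tau$ in each of the three cases, a short character computation: for $\tau(\Gamma)=\mathcal C_s$ the character of $\tau$ has values $2$ and $0$ on the identity and the reflection, so $\tau\simeq\rho_0\oplus\rho_1$ and $\mu_0=\mu_1=1$, giving (i); for $\tau(\Gamma)=\mathcal C_2$ the character has values $2$ and $-2$, so $\tau\simeq 2\rho_1$, i.e. $\mu_0=0$ and $\mu_1=2$, and since $2|\overline{V(G)}|+2|V_0(G)|=2|V(G)|$ this gives (ii); for $\tau(\Gamma)=\mathcal C_k$ with $k\geq 3$, diagonalising the rotation by $2\pi/k$ over $\mathbb C$ gives eigenvalues $\omega$ and $\omega^{-1}$, so $\tau\simeq\rho_1\oplus\rho_{k-1}$ (with $\rho_1\not\simeq\rho_{k-1}$), i.e. $\mu_1=\mu_{k-1}=1$ and $\mu_j=0$ otherwise, giving (iii).

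For the rows I would decompose $P_{E(\tilde G)}$ as the direct sum, over the edge orbits of $\tilde G$, of the permutation representations on each orbit; the permutation representation on the orbit of an edge $e$ with stabiliser $S:=S_\Gamma(e)$ is $\mathrm{Ind}_S^\Gamma\mathbf 1$, which by Frobenius reciprocity contains $\rho_j$ with multiplicity $1$ precisely when $\rho_j|_S=\mathbf 1$. The crux is then to classify $S$ for $e=\{u,v\}$: an element of $S$ either fixes both $u$ and $v$, which for a non-trivial group element forces $u,v\in V_0(\tilde G)$ (by the paper's standing assumption on vertex stabilisers) and hence $S=\Gamma$; or it swaps $u$ and $v$, in which case its square fixes the free vertices $u,v$, so it has order $2$ and $|S|\leq 2$. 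Consequently every edge orbit is one of: a \emph{free} orbit, with $S=\{\mathrm{id}\}$, contributing every $\rho_j$; an orbit with $|S|=2$ whose non-trivial element equals $\gamma_0^{k/2}$ for a generator $\gamma_0$ (so $k$ is even, and $\rho_j|_S=\mathbf 1$ iff $(-1)^j=1$, i.e. $j$ even), contributing exactly the even $\rho_j$; or an orbit with $S=\Gamma$, which is an edge between two distinct fixed vertices (possible only when $\tau(\Gamma)=\mathcal C_s$) and contributes only $\rho_0$, again an even block. Hence every edge orbit contributes $\rho_j$ for each even $j$, while only the free orbits contribute $\rho_j$ for odd $j$, so $\tilde R_j$ has $|E(G)|$ rows for $j$ even and $|\overline{E(G)}|$ rows for $j$ odd, which is (iv). (If $k$ is odd there are no order-$2$ group elements, and since a fixed vertex must lie at the rotation centre there is at most one such vertex; so all edge orbits are free and $E(G)=\overline{E(G)}$.)

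I expect the main obstacle to be the edge-stabiliser classification in the row count: the swapped-endpoint orbits are the genuinely new phenomenon relative to the free-action case, and one must pin down exactly which group element effects the swap in order to evaluate the irreducible characters on it, while carefully invoking the standing assumption on vertex stabilisers to rule out the remaining possibilities. The column count, by contrast, reduces to the standard character tables of $\mathcal C_s$, $\mathcal C_2$ and $\mathcal C_k$ together with the routine identification $|V_0(G)|=|V_0(\tilde G)|$.
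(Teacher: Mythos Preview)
Your proposal is correct and the column count is essentially identical to the paper's: both start from $\tau\otimes P_{V(\tilde G)}\simeq\bigoplus_{j}2|\overline{V(G)}|\rho_j\oplus|V_0(G)|\tau$ and then decompose $\tau$; the paper writes out the matrices for $\tau$ explicitly (with an explicit change of basis for $\mathcal C_k$) while you compute the same decompositions via characters/eigenvalues.

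For the row count your route genuinely differs. The paper does not argue from first principles but invokes Section~4.3 of \cite{bt2015} for the fact that a non-free edge orbit contributes a row to $\tilde R_j$ precisely when $\rho_j(\gamma^{k/2})=1$, then evaluates $\rho_j(\gamma^{k/2})=e^{\pi i j}$. Your argument is self-contained: you decompose $P_{E(\tilde G)}$ orbit by orbit, identify each orbit's permutation representation as $\mathrm{Ind}_S^\Gamma\mathbf 1$, and use Frobenius reciprocity to reduce the question to whether $\rho_j|_S$ is trivial. Your edge-stabiliser trichotomy ($S=\{\mathrm{id}\}$, $S=\{\mathrm{id},\gamma^{k/2}\}$, $S=\Gamma$) is correct, and you rightly observe that the $S=\Gamma$ case only arises for $\mathcal C_s$, where $k=2$ forces the single even index $j=0$, so the ``even $j$ gets every orbit'' conclusion survives. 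What the paper's approach buys is brevity via citation; what yours buys is independence from \cite{bt2015} and a uniform treatment that makes explicit why the statement is compatible with edges between fixed vertices, a case the paper handles only implicitly by noting that rotational groups allow at most one fixed vertex.
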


\begin{proof}
    First, let $|\Gamma|=2$ (so, $\tau(\Gamma)$ is either $\mathcal{C}_s$ or $\mathcal{C}_2$), and recall that $\Gamma$ has irreducible representations $\rho_0,\rho_1$, where $\rho_0$ is the identity representation, and $\rho_1$ sends the non-identity element $\gamma$ of $\Gamma$ to $-1$. Let $\tau_{\textrm{ref}}:\Gamma\rightarrow O(\mathbb{R}^2)$ be the reflection homomorphism that maps $\gamma$ to $\textrm{diag}(-1,1)$ and let $\tau_{\textrm{rot}}:\Gamma\rightarrow O(\mathbb{R}^2)$ be the two-fold rotation homomorphism that maps $\gamma$ to $\textrm{diag}(-1,-1)$. It is easy to see that $\tau_{\textrm{ref}}=\rho_0\oplus\rho_1$ and so
    \begin{equation*}
        \tau_{\textrm{ref}}\otimes P_{V(\tilde{G})}\simeq\bigoplus_{j=0}^{k-1}2|\overline{V(G)}|\rho_j\oplus|V_0(G)|\tau_\textrm{ref}\simeq\bigoplus_{j=0,1}(2|\overline{V(G)}|+|V_0(G)|)\rho_j.
    \end{equation*}
    Similarly, we have $\tau_{\textrm{rot}}=\rho_1\oplus\rho_1$. So, 
    \begin{equation*}
        \tau_{\textrm{rot}}\otimes P_{V(\tilde{G})}\simeq\bigoplus_{j=0}^{k-1}2|\overline{V(G)}|\rho_j\oplus|V_0(G)|\tau_\textrm{rot}\simeq(2|\overline{V(G)}|)\rho_0\oplus(2|V(G)|)\rho_1.
    \end{equation*}
    (i) and (ii) follow.

    Now, let $|\Gamma|=k\geq3$, so that $\tau(\Gamma)=\mathcal{C}_k$. Let $\alpha=\frac{2\pi}{k}$ and $\Gamma=\langle\gamma\rangle$, where $\gamma\in\Gamma\mapsto1\in\mathbb{Z}_k$ through an isomorphism. The standard $k$-fold rotation homomorphism $\tau:\Gamma\rightarrow O(\mathbb{R}^2)$ is given by  $\tau\left(\gamma\right)=\begin{pmatrix}\cos(\alpha)&-\sin(\alpha)\\ \sin(\alpha) &\cos(\alpha)\end{pmatrix}.$ We apply a complexification of the Euclidean plane with a change of basis from $\mathcal{B}_1=\left\{\begin{pmatrix}
1&0
\end{pmatrix}^T,\begin{pmatrix}
0&1
\end{pmatrix}^T\right\}$ to $\mathcal{B}_2=\left\{\frac{1}{2}\begin{pmatrix}
-1-i&1-i
\end{pmatrix}^T,\frac{1}{2}\begin{pmatrix}
-1+i&1+i
\end{pmatrix}^T\right\}$, with the change of basis matrix $$M_{1\rightarrow2}=\frac{1}{2}\begin{pmatrix}
    -1-i & -1+i \\ 1-i & 1+i
\end{pmatrix}.$$ Then,
\begin{equation*}    \tau(\gamma)_{\mathcal{B}_2}=M_{1\rightarrow2}\tau(\gamma)_{\mathcal{B}_1}M_{1\rightarrow2}^{-1}=\begin{pmatrix}
    \cos(\alpha)-i\sin(\alpha) & 0\\ 0 & \cos(\alpha)+i\sin(\alpha)
\end{pmatrix}=\begin{pmatrix}
    \overline{\omega} & 0 \\ 0 & \omega
\end{pmatrix}.
\end{equation*}
It follows that $\tau=\rho_1\oplus \rho_{k-1}$. Hence,
\begin{center}
 $\tau\otimes P_{V(\tilde{G})}\simeq\bigoplus_{j=0}^{k-1}2|\overline{V(G)}|\rho_j\oplus|V_0(G)|(\rho_1\oplus\rho_{k-1})$.   
\end{center}

Hence, (iii) holds. Finally, we prove (iv). Recall that, for some integer $k\geq2$, $\Gamma=\left<\gamma\right>\simeq\mathbb{Z}_k$ through the isomorphism which maps $\gamma$ to 1. Consider the edge set $E(\Tilde{G})$.
If $k=2$, any edge is clearly either free or fixed by that whole group. If $k\geq3$, any edge is either free or fixed uniquely by $\textrm{id}$ and $\delta:=\gamma^{k/2}$. It was shown in Section 4.3 of \cite{bt2015} that for each $e\in E(G)\setminus\overline{E(G)}$, all the blocks $\Tilde{R}_j(\Tilde{G},\Tilde{p})$ such that $\rho_j(\delta)=1$ have one row, and all the other blocks have no rows (this argument does not use the fact that the action is free on the vertex set). It was also shown that each block $\Tilde{R}_j(\Tilde{G},\Tilde{p})$ has a row for each edge in $|\overline{E(G)}|$. Since $\rho_j(\delta)=\rho_j(\gamma^{k/2})=\exp(\frac{2\pi ikj}{2k})=\exp(\pi ij)$, it follows that $\rho_j(\delta)$ is 1 if and only if $j$ is even. Hence, for all even $j$, $\Tilde{R}_j(\Tilde{G},\Tilde{p})$ has $|E(G)|$ rows, and for all odd $j$ $\Tilde{R}_j(\Tilde{G},\Tilde{p})$ has $|\overline{E(G)}|$ rows.
\end{proof}

\begin{example}
\label{remark: no edges on the symmetry line}
Let $\Gamma=\left<\gamma\right>$ be isomorphic to $\mathbb{Z}_k$ through the isomorphism which sends $\gamma$ to 1. Suppose $k$ is even. An edge $e=\{u,v\}$ of $\tilde{G}$ is non-free if either both $u,v\in V_0(\tilde{G})$, in which case $S_{\Gamma}(e)=\Gamma$, or if $v=\gamma^{k/2}u$, in which case $S_{\Gamma}(e)=\{\textrm{id},\gamma^{k/2}\}$. If $\tau(\Gamma)=\mathcal{C}_k$, $|V_0(\tilde{G})|\leq1$, so there are no edges between fixed vertices. 

In Figure~\ref{fixed edges image} we show realisations of non-free edges in $\mathcal{C}_s$-symmetric (a,b,c) and $\mathcal{C}_2$-symmetric frameworks (d). Figures (a,b,d) show $\rho_1$-symmetric motions  of such bars, whereas (c) shows a $\rho_0$-symmetric motion. For any $\rho_{1}$-symmetric velocity assignment $\tilde{m}$ to the vertices, the equation $(\tilde{m}(v)-\tilde{m}(u))\cdot(\tilde{p}(v)-\tilde{p}(u))=0$ always holds. Hence, the edge $e$ constitutes no constraint for $\rho_{1}$-symmetric infinitesimal rigidity. This is not the case for a $\rho_{0}$-symmetric velocity assignment. (Note that in (c) the equation only holds because the velocities are parallel to the mirror line.) The edge in (d) can also be seen as a subgraph of a $\mathcal{C}_k$-symmetric framework $(\tilde{G},\tilde{p})$, where $k\geq4$ is even. Given an odd $j$ with $2\leq j\leq k-2$, (d) shows a $\rho_j$-symmetric motion of $(\tilde{G},\tilde{p})$, restricted to that edge. 
\begin{figure}[H]
    \centering
    \begin{tikzpicture}
  [scale=.8,auto=left]
  \draw[dashed](-1,-0.5) -- (-1,2.5);    
    \draw(-1,0) -- (-1,2);
    \draw[->,gray](-1,0) -- (-1.7,0);
    \draw[->,gray](-1,2) -- (-0.3,2);
    \filldraw (-1,0) circle (2pt);
    \filldraw (-1,2) circle (2pt);
    \node[left] at (-1,2){$u$};
    \node[right] at (-1,0){$v$};
    \node[below] at (-1,-1) {(a)};

    \draw[dashed](2,-0.5) -- (2,2.5); 
    \draw[->, gray](1,1) -- (1.5,0.5);
    \draw[->, gray](3,1) -- (3.5,1.5);
    \draw(1,1) -- (3,1);
    \filldraw(1,1) circle (2pt);
    \filldraw(3,1) circle (2pt);
    \node[above] at (1,1) {$u$};
    \node[below] at (3,1) {$v$};
    \node[below] at (2,-1) {(b)};

    \draw[->, gray](5,1) -- (5,1.5);
    \draw[->, gray](7,1) -- (7,1.5);
    \draw[dashed](6,-0.5) -- (6,2.5);  
    \draw(5,1) -- (7,1);
    \filldraw(5,1) circle (2pt);
    \filldraw(7,1) circle (2pt);
    \node[below] at (5,1){$u$};
    \node[below] at (7,1){$v$};
    \node[below] at (6,-1) {(c)};

    \draw[->, gray] (9,0.5) -- (9.4,1);
    \draw[->, gray] (11,1.5) -- (11.4,2);
    \filldraw (9,0.5) circle (2pt);
    \filldraw (11,1.5) circle (2pt);
    \filldraw (10,1) circle (1pt);
    \draw (9,0.5) -- (11,1.5);
    \node[below] at (9,0.5) {$u$};
    \node[below] at (11,1.5) {$v$};
    \node[below] at (10.3,1) {$(0,0)$};
    \node[below] at (10,-1) {(d)};
\end{tikzpicture}
    \caption{(a,b) Fixed bars of $\mathcal{C}_s$-symmetric frameworks with anti-symmetric motions. (c) Fixed bar of a $\mathcal{C}_s$-symmetric framework with a fully-symmetric motion. (d) Fixed bar of a $\mathcal{C}_2$-symmetric framework and a $\rho_1$-symmetric motion of $(\tilde{G},\tilde{p})$ applied to the bar.}
    \label{fixed edges image}
\end{figure}
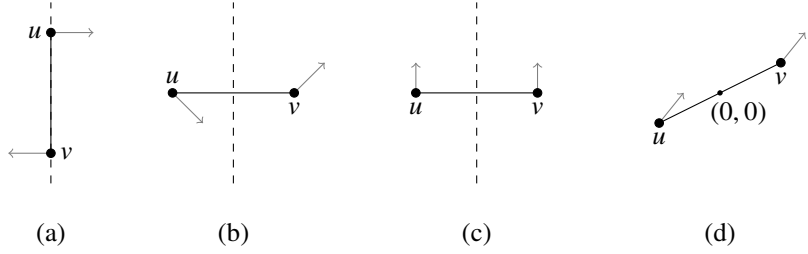
\end{example}

\subsection{Orbit matrices}\label{sec:orbitmatrices}
For each block matrix, we now construct an orbit matrix which has the same dimension and an isomorphic kernel. Thus, it will have the same nullity and rank and can be used for the corresponding symmetric rigidity analysis.

The orbit matrix for $j=0$ is given in Definition 5.1 of \cite{sw2011}. In Section 4.1.2 of \cite{bt2015}, the orbit matrix was defined for all other $1\leq j\leq|\Gamma|-1$, for the special case where $V_0(G)=\emptyset$. We model our definition of orbit matrices based on the definitions in \cite{bt2015} and \cite{sw2011}.

Let $k:=|\Gamma|$, and let $(G,\psi,p)$ be the $\tau(\Gamma)$-gain framework of a $\tau(\Gamma)$-symmetric framework $(\Tilde{G},\Tilde{p})$, with respect to a faithful representation $\tau:\Gamma\rightarrow O(\mathbb{R}^2)$. Let $U\subseteq V(G)$ be equal to $V_0(G)$ if $\tau(\Gamma)=\mathcal{C}_k$, and $0\leq j\leq k-2,j\neq1$, and $U=\emptyset$ otherwise. For $u\in V(G)\setminus U$, let 
\begin{equation*}
    M^{j}_{\tau(\Gamma)}(u)=\begin{cases}
    \begin{pmatrix}
        0 & 1
    \end{pmatrix}^T & \text{if }u\in V_0(G)\text{ and }\tau(\Gamma)=\mathcal{C}_s,j=0\\
        \begin{pmatrix}
            1 & 0
        \end{pmatrix}^T & \text{if }u\in V_0(G)\text{ and } \tau(\Gamma)=\mathcal{C}_s,j=1\\
        \begin{pmatrix}
            1 & -i
        \end{pmatrix}^T & \text{if }u\in V_0(G)\text{ and }\tau(\Gamma)=\mathcal{C}_k \text{ for some } k\geq3,j=1 \\
        \begin{pmatrix}
            1 & i
        \end{pmatrix}^T & \text{if }u\in V_0(G)\text{ and }\tau(\Gamma)=\mathcal{C}_k \text{ for some } k\geq3,j=k-1 \\
        I_2 & \text{otherwise}.
    \end{cases}
\end{equation*}

For each $0\leq j\leq k-1$ and $u\in V(G)\setminus U$, let $c_{\tau(\Gamma)}^j(u)=\textrm{rank}(M^{j}_{\tau(\Gamma)}(u))$.  When clear, we will often omit the symmetry group and simply write $M_u^j$ and $c_u^j$ for $ M^{j}_{\tau(\Gamma)}(u)$ and $c_{\tau(\Gamma)}^j(u)$, respectively. 

\begin{remark}
    Let $u\in V(G)\setminus U$ and $A_{\tau(\Gamma)}^j(u):=\big\{\tilde{m}(u^{\star}): \tilde{m}\text{ is a }\rho_j-\text{symmetric motion of }(\tilde{G},\tilde{p})\big\}$.  (For $\mathcal{C}_s$, for instance, if $u\in V_0(G)$, then $A_{\mathcal{C}_s}^0(u)=\Big\{\begin{pmatrix}0 & a\end{pmatrix}^T:a\in\mathbb{R}\Big\}$.) Then there is some basis $\mathcal{B}$ of $A_{\tau(\Gamma)}^j(u)$ such that $M_{\tau(\Gamma)}^j(u)$ is the matrix whose columns are the elements of $\mathcal{B}$. This can be easily checked by applying Equation~(\ref{defn of anti-symmetric motion}). For details, see \cite{PhDThesis}.
\end{remark}

\begin{definition}
\label{definition of orbit matrices}
    With the same notation as above, for $0\leq j\leq k-1$, the $\rho_j$-orbit matrix $O_j(G,\psi,p)$ of $(G,\psi,p)$ is a matrix with $c_u^j$ columns for each $u\in V(G)\setminus U$. If $j$ is even, $O_j(G,\psi,p)$ has $|E(G)|$ rows. Otherwise, it has $|\overline{E(G)}|$ rows. Each edge $e=(u,v)\in E(G)$ which has a row in $O_j(G,\psi,p)$, has row
    \begin{equation*}
    \begin{matrix}  
    \begin{matrix}
    u& \text{ } & \text{} & \text{} & \text{} & \text{} & \text{} & \text{} & \text{ } & \text{ } \text{ } & \text{ } & \text{ } & v  & \text{}
    \end{matrix}\\
    \begin{pmatrix}
    \dots & \left(p_u-\tau(\psi(e))\left(p_v\right)\right)^TM^{j}_u
    &\dots & \rho_j(\psi(e))(p_v-\tau(\psi^{-1}(e))\left(p_u\right))^TM^j_v&\dots
    \end{pmatrix}
\end{matrix}
\end{equation*}
if $u\neq v$, and it has row 
\begin{equation*}
\begin{matrix}
\begin{matrix}
    u
    \end{matrix}\\
        \begin{pmatrix}
    \dots & \left(p_u+\rho_j(\psi(e))p_u-\tau(\psi(e))p_u-\rho_j(\psi(e))\tau(\psi^{-1}(e))p_u\right)^T&\dots
    \end{pmatrix}.
\end{matrix}
\end{equation*}
otherwise. If $u$ (respectively, $v$) lies in $U$, then the columns corresponding to $u$ (respectively, $v$) vanish. 
\end{definition}

\begin{remark}
\label{remark: free-action case of orbit matrix}
    For $j=0$, Definition~\ref{definition of orbit matrices} coincides with the definition of the orbit matrix given in \cite{sw2011}. In the same paper, it was shown that $\tilde{R}_0(\tilde{G},\tilde{p})$ is equivalent to $O_0(G,\psi,p)$.    
    Similarly, if $e=(u,v)$ is such that $u,v\in\overline{V(G)}$, the row of $e$ under Definition~\ref{definition of orbit matrices} coincides with the row of $e$ under the definition of the phase-symmetric orbit matrix $O_j(G,\psi,p)$ defined in \cite{bt2015}, for all $0\leq j\leq k-1$. In the same paper, it was shown that $\tilde{R}_j(\tilde{G},\tilde{p})$ and $O_j(G,\psi,p)$ have the same size and $\ker \tilde{R}_j(\tilde{G},\tilde{p})=\ker O_j(G,\psi,p)$ for all $0\leq j\leq k-1$ whenever $V_0(G)=\emptyset$.
\end{remark}

\begin{lemma}
\label{lemma: kernels of orbit matrix and block equal}
    Let $k:=|\Gamma|$, let $(\tilde{G},\tilde{p})$ be a $\Gamma$-symmetric framework and let $0\leq j \leq k-1$. Let $\mathcal{O}$ denote the set of vertex orbit representatives of $\tilde{G}.$ Define the subset $\mathcal{O}'\subseteq V_0(\tilde{G})$ to be $V_0(\tilde{G})$ if $\tau(\Gamma)=\mathcal{C}_k$ and $0\leq j\leq k-2,j\neq1$, and $\emptyset$ otherwise.
    For some fixed $u^{\star}\in\mathcal{O}$ and some free $v^{\star}\in\mathcal{O}$, let $\{u^{\star},v^{\star}\}\in E(\tilde{G})$. For each $\delta\in\Gamma$, let $(G,\psi_{\delta},p)$ be a $\Gamma$-gain framework of $(\tilde{G},\tilde{p})$ such that the edge $e=(u,v)$ has gain $\delta$. Fix $\gamma\in \Gamma$.  Then a vector $m$ lies in $\ker O_j(G,\psi_{\gamma},p)$  if and only if $\tilde{m}':\mathcal{O}\setminus\mathcal{O'}\rightarrow\mathbb{R}^2$ defined by $\Tilde{m}'(w^{\star})= M^j_w m(w)$  is the restriction of a $\rho_j$-symmetric motion $\tilde{m}$ of $(\tilde{G},\tilde{p})$ to $\mathcal{O}\setminus\mathcal{O'}$. 
\end{lemma}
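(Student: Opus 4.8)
The plan is to show the equivalence by unwinding both sides into explicit coordinate equations and matching them row-by-row, using the relation~\eqref{defn of anti-symmetric motion} that characterises $\rho_j$-symmetric motions. First I would establish the bijective correspondence between the data: a vector $m$ with $c^j_w$ entries per vertex $w \in V(G) \setminus \mathcal{O}'$ determines $\tilde m'(w^\star) := M^j_w m(w) \in \mathbb{C}^2$, and since by the Remark following the definition of $M^j_{\tau(\Gamma)}(u)$ the columns of $M^j_w$ form a basis of the space $A^j_{\tau(\Gamma)}(w)$ of admissible values $\tilde m(w^\star)$ of $\rho_j$-symmetric motions, this map $m \mapsto \tilde m'$ is a linear isomorphism onto $\prod_{w} A^j_{\tau(\Gamma)}(w)$. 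Then I would observe that any assignment on the orbit representatives $\mathcal{O} \setminus \mathcal{O}'$ lying in the correct subspaces extends uniquely to a $\rho_j$-symmetric function $\tilde m : V(\tilde G) \to \mathbb{C}^2$ via $\tilde m(\gamma w^\star) = \overline{\rho_j(\gamma)}\,\tau(\gamma)\,\tilde m(w^\star)$; here one must check this is well-defined, i.e. consistent with the stabiliser of $w^\star$ — for $w^\star$ free this is automatic, and for $w^\star$ fixed (so $w \notin \mathcal{O}'$ only occurs in the cases where $A^j$ is nonzero) the constraint $\tilde m(w^\star) = \overline{\rho_j(\gamma)}\tau(\gamma)\tilde m(w^\star)$ for all $\gamma$ is exactly the condition that $\tilde m(w^\star) \in A^j_{\tau(\Gamma)}(w)$, which holds by construction; for $w \in \mathcal{O}'$ the only admissible value is $0$, consistent with those columns vanishing from $O_j$.

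Next, the core computation: I would fix an edge $e = (u,v) \in E(G)$ that contributes a row to $O_j(G,\psi_\gamma,p)$ and compare the vanishing of that row applied to $m$ with the rigidity equation~\eqref{inf. motion} for the corresponding edge(s) of $\tilde G$. Concretely, the orbit $\Gamma e$ (or, when $e$ is a loop, the relevant edges) of $\tilde G$ consists of edges of the form $\{\gamma' u^\star, \gamma'\psi_\gamma(e) v^\star\}$; writing the rigidity equation for the representative edge $\{u^\star, \tau(\psi_\gamma(e))p_v\}$ — note $p_u = \tilde p(u^\star)$ and $\tilde p(\psi_\gamma(e) v^\star) = \tau(\psi_\gamma(e)) p_v$ — and substituting $\tilde m(u^\star) = M^j_u m(u)$ and $\tilde m(\psi_\gamma(e) v^\star) = \overline{\rho_j(\psi_\gamma(e))}\,\tau(\psi_\gamma(e))\, M^j_v m(v)$, one gets
\[
\big(p_u - \tau(\psi_\gamma(e)) p_v\big)^T\big(M^j_u m(u) - \overline{\rho_j(\psi_\gamma(e))}\,\tau(\psi_\gamma(e))\,M^j_v m(v)\big) = 0.
\]
Using that $\tau(\psi_\gamma(e))$ is orthogonal, $\tau(\psi_\gamma(e))^T = \tau(\psi_\gamma(e))^{-1} = \tau(\psi_\gamma^{-1}(e))$, the second term rearranges to $\overline{\rho_j(\psi_\gamma(e))}\,(p_v - \tau(\psi_\gamma^{-1}(e))p_u)^T M^j_v m(v)$; taking the complex conjugate of the whole scalar equation (the first term is real) reproduces exactly the row of $O_j$ in Definition~\ref{definition of orbit matrices}, since $\overline{\overline{\rho_j(\psi_\gamma(e))}} = \rho_j(\psi_\gamma(e))$. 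The loop case is analogous with $v^\star = u^\star$ and the two contributions from the stabiliser combining into the displayed loop row. Finally I would argue that the rigidity equations for all edges in a single orbit $\Gamma e$ are equivalent to one another once $\tilde m$ is $\rho_j$-symmetric: applying $\tau(\gamma')$ to~\eqref{inf. motion} for edge $\{u^\star, \psi_\gamma(e)v^\star\}$ and using $\rho_j$-symmetry of $\tilde m$ and orthogonality of $\tau(\gamma')$ shows the equation for $\{\gamma' u^\star, \gamma' \psi_\gamma(e) v^\star\}$ holds too (for even $j$; for odd $j$ and a non-free edge the equation is automatically satisfied, which is why $O_j$ has no row for it — this matches Example~\ref{remark: no edges on the symmetry line}). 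Hence $m \in \ker O_j(G,\psi_\gamma,p)$ iff the extended $\tilde m$ satisfies~\eqref{inf. motion} on every edge of $\tilde G$, iff $\tilde m$ is a $\rho_j$-symmetric infinitesimal motion with restriction $\tilde m' = M^j_\bullet m$ to $\mathcal{O}\setminus\mathcal{O}'$.

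I expect the main obstacle to be the bookkeeping around the fixed vertices and non-free edges: making precise which columns/rows are present in $O_j$ for each combination of $\tau(\Gamma) \in \{\mathcal{C}_s, \mathcal{C}_2, \mathcal{C}_k\}$ and parity/value of $j$, verifying that the "missing" rows (odd $j$, non-free edges) and "missing" columns ($w \in \mathcal{O}'$, or the rank-one $M^j_u$ for fixed $u$) correspond precisely to constraints that are vacuous or variables that are forced to zero under $\rho_j$-symmetry, and checking the well-definedness of the extension $\tilde m$ at fixed vertices. The genuinely computational part — matching the edge row — is a short orthogonality-and-conjugation manipulation once the substitution $\tilde m(\gamma w^\star) = \overline{\rho_j(\gamma)}\tau(\gamma) M^j_w m(w)$ is in place, and the independence of the statement from the choice of gain representative $\delta = \gamma$ falls out because different gains for $e$ correspond to different orbit-representative choices for $v$, which is already absorbed into the freedom in defining $(G,\psi_\delta,p)$.
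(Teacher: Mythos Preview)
Your proposal follows the same route as the paper: extend $\tilde m'$ to all of $V(\tilde G)$ via the $\rho_j$-symmetric formula (setting it to zero on $\mathcal{O}'$), then match each row of $O_j$ against the rigidity equation for a representative edge in the corresponding orbit, using orthogonality of $\tau$ and $\rho_j$-symmetry of $\tilde m$ to see that all edges in one orbit impose equivalent constraints. The paper separates the row check into cases (both endpoints free: cites \cite{bt2015}; both fixed: reduces to the known $j=0$ case; one fixed and one free: the new explicit computation exploiting $p_u=\tau(\delta)p_u$), whereas you handle all edges uniformly --- this is only a presentational difference. One caution: your step ``take the complex conjugate of the whole scalar equation (the first term is real)'' is not correct as written, since for complex $m$ the term $(p_u-\tau(\psi_\gamma(e))p_v)^T M^j_u\, m(u)$ need not be real and conjugating would produce an equation in $\bar m$ rather than $m$; the paper negotiates the same $\overline{\rho_j}$-versus-$\rho_j$ discrepancy via an inner-product convention when it ``moves $\overline{\rho_j(\gamma)}$ to the left'', so this is a conventions issue between Definition~\ref{definition of orbit matrices} and equation~\eqref{defn of anti-symmetric motion} rather than a gap unique to your argument, and it does not affect the rank conclusion of Corollary~\ref{corollary: rank of orbit matrix and blocks are equal}.
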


\begin{proof}
    Let $\tilde{m}:V(\tilde{G})\rightarrow\mathbb{C}^2$ be defined by 
    \begin{equation*}
        \tilde{m}(\gamma w^{\star})=\begin{cases}
        \overline{\rho_j(\gamma)}\tau(\gamma)\tilde{m}'(w^{\star}) & \text{for all }w^{\star}\in \mathcal{O}\setminus\mathcal{O'},\gamma\in\Gamma\\
        \begin{pmatrix}
            0 & 0
        \end{pmatrix}^T & \text{for all }w^{\star}\in\mathcal{O'},\gamma\in\Gamma.
        \end{cases}
    \end{equation*}
   Clearly, $\tilde{m}'$ is a restriction of $\tilde m$ to $\mathcal{O}\setminus\mathcal{O'}$. Moreover, it is easy to see that $\tilde{m}$ is a $\rho_j$-symmetric motion of $(\tilde{G},\tilde{p})$ if and only if it is an infinitesimal motion of $(\tilde{G},\tilde{p})$. 
   
   View $m$ as a column vector. For each row $r$ in $O_j(G,\psi_{\gamma},p)$ that represents an edge $e=(u_1,u_2)\in E(G)$, we check that $rm$ is zero if and only if $\tilde{m}$ satisfies the conditions of being an infinitesimal motion of the framework on the subgraph induced by the elements of the orbit $e$.
    
   If $u_1,u_2\in\overline{V(G)}$, this has been shown in Section 4.1.2 in \cite{bt2015}. If $u_1,u_2\in V_0(G)$, then $\tau(\Gamma)=\mathcal{C}_s$, since $\tau(\Gamma)=\mathcal{C}_k$ implies $|V_0(G)|\leq1$ by definition of a framework. Since the row corresponding to $\{u_1^{\star},u_2^{\star}\}$ in $\tilde{R}_1(\tilde{G},\tilde{p})$ is zero, we need only consider the case where $j=0$. However, by Remark~\ref{remark: free-action case of orbit matrix}, this case was already proven. Hence, we may assume that $u_1\in V_0(G),u_2\in\overline{V(G)}$. Without loss of generality, we consider the edge $e=(u,v)$, where $u^{\star},v^{\star}$ are as defined in the statement. Note that the orbit of $e$ is $\{\{u^{\star},\delta v^{\star}\}:\delta\in\Gamma\}$. Let $r$ be the row of $e$ in $O_j(G,\psi_{\gamma},p)$. 
    
    The map $\tilde{m}$ satisfies the conditions of being an infinitesimal motion of the framework on the subgraph induced by the elements of the orbit $e$ if and only if, for all $\delta\in\Gamma$
    \begin{equation*}
        \left<\tilde{p}(u^{\star})-\tilde{p}(\delta v^{\star}),\tilde{m}(u^{\star})-\tilde{m}(\delta v^{\star})\right>=0.
    \end{equation*}
    Since $\delta$ runs through all the elements of $\Gamma$, so does $\delta\gamma$. Hence, this is equivalent to saying that, for all $\delta\in\Gamma$,
    \begin{equation*}
        \left<\tilde{p}(u^{\star})-\tilde{p}(\delta\gamma v^{\star}),\tilde{m}(u^{\star})-\tilde{m}(\delta\gamma v^{\star})\right>=0.
    \end{equation*}
    Since $u$ is fixed, $\tilde m(u^{\star})=\tilde m(\delta u^{\star})$ and so, by the definitions of $\tilde{m}$ and a $\tau(\Gamma)$-symmetric framework, this is equivalent to saying that, for all $\delta\in\Gamma$,
    \begin{equation*}
        \left<p_u-\tau(\delta\gamma)p_v,\overline{\rho_j(\delta)}\tau(\delta)M_u^jm(u)-\overline{\rho_j(\delta\gamma)}\tau(\delta\gamma)m(v)\right>=0.
    \end{equation*}
    (If $M_u^j$ is not defined, we ignore terms involving $M_u^j$.) This is equivalent to saying that, for all $\delta\in\Gamma$,
    \begin{equation*}
        \overline{\rho_j(\delta)}\left(\left<p_u-\tau(\delta\gamma)p_v,\tau(\delta)M_u^jm(u)\right>+\left<\tau(\delta\gamma)p_v-p_u,\overline{\rho_j(\gamma)}\tau(\delta\gamma)m(v)\right>\right)=0.
    \end{equation*}
    Notice that, since $u$ is fixed, $p_u=\tau(\delta)p_u$ for all $\delta\in\Gamma$. Hence, since each $\tau(\delta)$ is an orthogonal matrix, we may remove the $\tau(\delta)$'s from the inner products, and multiply each equation by $\rho_j(\delta)$, to see that this set of equations holds if and only if
    \begin{equation*}
        \left<p_u-\tau(\gamma)p_v,M_u^jm(u)\right>+\left<\tau(\gamma)p_v-p_u,\overline{\rho_j(\gamma)}\tau(\gamma)m(v)\right>=0.
    \end{equation*}
    Similarly, since $u$ is fixed, $p_u=\tau(\gamma)p_u$, and so we may remove $\tau(\gamma)$ from the second inner product, and move the factor of $\overline{\rho_j(\gamma)}$ in the second inner product to the left, to obtain the equivalent equation
    \begin{equation*}
        \left<p_u-\tau(\gamma)p_v,M_u^jm(u)\right>+\left<\rho_j(\gamma)[p_v-p_u],m(v)\right>=0.
    \end{equation*}
    Again, since $u$ is fixed, $p_u=\tau(\gamma^{-1})p_u$, and hence the above equation  is equivalent to 
    \begin{equation*}
     [p_u-\tau(\gamma)p_v]^TM_u^jm(u)+\rho_j(\gamma)[p_v-\tau(\gamma^{-1})p_u]^Tm(v)=0,
    \end{equation*} 
    i.e. $rm=0$, as required.
\end{proof}

In Section 4.1, we saw that $O_j(G,\psi_{\gamma},p)$ and $\tilde{R}_j(\tilde{G},\tilde{p})$ have the same dimension. Then, by Lemma~\ref{lemma: kernels of orbit matrix and block equal},  the following holds.

\begin{corollary}
\label{corollary: rank of orbit matrix and blocks are equal}
    With the same notation as in Lemma~\ref{lemma: kernels of orbit matrix and block equal}, for any $0\leq j \leq k-1$, $\textrm{rank }(O_j(G,\psi_{\delta},p))$ and $\textrm{rank }(\tilde{R}_j(\tilde{G},\tilde{p}))$ coincide for all $\delta\in\Gamma$.
\end{corollary}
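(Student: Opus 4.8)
The plan is to obtain the rank equality as an immediate consequence of the rank--nullity theorem, once we have shown two things: that $O_j(G,\psi_\delta,p)$ and $\tilde R_j(\tilde G,\tilde p)$ have the same number of columns, and that their kernels are linearly isomorphic (for every $\delta\in\Gamma$). Both ingredients are, in essence, already available from Proposition~\ref{proposition: dimension of the blocks} and Lemma~\ref{lemma: kernels of orbit matrix and block equal}, so what remains is to assemble them carefully.

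First I would record that the two matrices have the same size. The row counts agree by Proposition~\ref{proposition: dimension of the blocks}(iv) together with Definition~\ref{definition of orbit matrices}: both matrices have $|E(G)|$ rows when $j$ is even and $|\overline{E(G)}|$ rows when $j$ is odd. For the columns, recall from the computation preceding Proposition~\ref{proposition: dimension of the blocks} that the number of columns of $\tilde R_j(\tilde G,\tilde p)$ equals the multiplicity of $\rho_j$ in $\tau\otimes P_{V(\tilde G)}\simeq\bigoplus_{j'=0}^{k-1}2|\overline{V(G)}|\rho_{j'}\oplus|V_0(G)|\tau$, while the number of columns of $O_j(G,\psi_\delta,p)$ is $\sum_{u\in V(G)\setminus U}c_u^j$. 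Going through the cases $\tau(\Gamma)=\mathcal C_s,\mathcal C_2,\mathcal C_k$ ($k\geq3$) --- which is exactly what Proposition~\ref{proposition: dimension of the blocks}(i)--(iii) does --- these two numbers coincide: the set $U$ is defined to equal $V_0(G)$ precisely when $\rho_j$ does not occur as a summand of $\tau$, and in the complementary situations $c_u^j=1$ for each fixed vertex $u$, matching the $|V_0(G)|\tau$ contribution to the multiplicity.

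Next I would identify both kernels with the space of $\rho_j$-symmetric infinitesimal motions of $(\tilde G,\tilde p)$, that is, with $\ker R(\tilde G,\tilde p)\cap V^j$, where $V^j$ denotes the $\rho_j$-isotypic subspace of the column space. On the block side, the block-decomposition of $R(\tilde G,\tilde p)$ with respect to symmetry-adapted bases (Lemma~\ref{Lemma: rigidity matrix block diagonalises} and Schur's lemma) shows that $\ker\tilde R_j(\tilde G,\tilde p)$ is a coordinate description of $\ker R(\tilde G,\tilde p)\cap V^j$, so its dimension is the dimension of the space of $\rho_j$-symmetric motions. On the orbit side, Lemma~\ref{lemma: kernels of orbit matrix and block equal} provides, for each fixed $\delta\in\Gamma$, the assignment $m\mapsto\tilde m$ --- where $\tilde m$ is the $\rho_j$-symmetric velocity field with $\tilde m(w^\star)=M_w^j m(w)$ for each orbit representative $w^\star$ outside $U$ and $\tilde m\equiv 0$ on the orbits of $U$ --- and states that this assignment restricts to a bijection between $\ker O_j(G,\psi_\delta,p)$ and the $\rho_j$-symmetric motions. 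I would then verify that $m\mapsto\tilde m$ is a linear isomorphism onto $V^j$: it is the composite of the manifestly linear map $m\mapsto(M_w^j m(w))_{w}$ with the linear extension determined by Equation~(\ref{defn of anti-symmetric motion}); it is injective because each $M_w^j$ has linearly independent columns (that is precisely the identity $c_u^j=\textrm{rank}(M_u^j)$); and it is surjective because the columns of $M_w^j$ span $A_{\tau(\Gamma)}^j(w)$, so that the value at $w^\star$ of any $\rho_j$-symmetric motion lies in the image of $M_w^j$, while every such motion vanishes on the orbits of $U$. Restricting this isomorphism to $\ker O_j(G,\psi_\delta,p)$ then yields $\ker O_j(G,\psi_\delta,p)\cong\ker\tilde R_j(\tilde G,\tilde p)$, for every $\delta\in\Gamma$.

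Finally, since $O_j(G,\psi_\delta,p)$ and $\tilde R_j(\tilde G,\tilde p)$ have the same number of columns and the same nullity, rank--nullity gives $\textrm{rank}(O_j(G,\psi_\delta,p))=\textrm{rank}(\tilde R_j(\tilde G,\tilde p))$ for all $\delta\in\Gamma$, as claimed. The only point requiring real care is the case analysis behind the column count and the verification that the orbit representatives lying in $U$ contribute nothing on either side; since Proposition~\ref{proposition: dimension of the blocks} and Lemma~\ref{lemma: kernels of orbit matrix and block equal} have already dealt with these, the remainder is routine bookkeeping and I do not anticipate a serious obstacle.
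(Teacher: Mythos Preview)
Your proposal is correct and follows essentially the same approach as the paper: the paper's proof is a one-line observation that the two matrices have the same dimensions (from Section~\ref{sec:orbitmat}, specifically Proposition~\ref{proposition: dimension of the blocks}) and isomorphic kernels (from Lemma~\ref{lemma: kernels of orbit matrix and block equal}), whence rank--nullity gives the result. Your version simply spells out the bookkeeping behind these two ingredients in more detail than the paper does.
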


Recall that, when defining the gain graph associated to a symmetric graph, the gain of an edge incident to a fixed vertex and a free vertex could be chosen arbitrarily. As result of Lemma~\ref{lemma: kernels of orbit matrix and block equal} and Corollary~\ref{corollary: rank of orbit matrix and blocks are equal}, the choice of such a gain does not affect the rank of any of the orbit rigidity matrices.
For convenience, we usually choose gain $\textrm{id}$ for edges incident to fixed vertices.

\section{Necessity of the sparsity conditions} \label{sec:nec}

Let $(G,\psi)$ be a $\Gamma$-gain graph. A \textit{switching} at a free vertex $v$ with $\gamma\in\Gamma$ is an operation that generates a new function $\psi':E(G)\rightarrow\Gamma$ by letting $\psi'(e)=\gamma\psi(e)$ if $e$ is a non-loop edge directed from $v$, $\psi'(e)=\psi(e)\gamma^{-1}$ if $e$ is a non-loop edge directed to $v$, and $\psi'(e)=\psi(e)$ otherwise. We say two maps $\psi,\psi':E(G)\rightarrow\Gamma$ are \textit{equivalent} if one can be obtained from the other by applying a sequence of switchings, and/or by changing the gains of the edges incident to a fixed and a free vertex (recall the definition of equivalence given in Section~\ref{sec:gg}). In the proof of Lemma 5.2 in \cite{jzt2016}, it was shown that the rank of the fully-symmetric orbit  matrix is invariant under switchings whenever $V_0(G)=\emptyset$. Proposition 5.2 in \cite{bt2015} states that this is also true for all phase-symmetric  orbit matrices whenever $V_0(G)=\emptyset$. Since the proof is not explicitly given in \cite{bt2015}, we include it here for completeness, and we drop the restriction on $V_0(G)$. Together with Corollary~\ref{corollary: rank of orbit matrix and blocks are equal} this will then show that the rank of any orbit matrix is invariant under equivalence.

\begin{proposition}
\label{prop: switching maintains rank}
Let $|\Gamma|=k$, let $(G,\psi,p)$ be a $\tau(\Gamma)$-gain framework and let $\gamma\neq\textrm{id}\in\Gamma$. Let $\psi'$ be obtained from $\psi$ by applying a switching at a free vertex $v$ with $\gamma$. Let $p':V(G)\rightarrow\mathbb{R}^2$ be defined by $p'_v=\tau(\gamma)p_v$ and $p'_u=p_u$ for all $u\neq v$ in $V(G)$. Then for all $1\leq j=0\leq k-1$,
\begin{center}
    $\textrm{rank }O_j(G,\psi,p)=\textrm{rank }O_j(G,\psi',p').$
\end{center}
\end{proposition}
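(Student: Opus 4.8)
The plan is to exhibit an explicit invertible linear map between the row and column spaces of $O_j(G,\psi,p)$ and $O_j(G,\psi',p')$ that witnesses the rank equality. Since switching at $v$ only changes gains on edges incident to $v$ (and only on non-loop edges, with loops at a free vertex having gain $\neq\mathrm{id}$ untouched — but a loop at $v$ would be affected, so I will need to handle loops at $v$ as well), the two matrices have the same size (same rows indexed by the appropriate edge set, same column blocks $c_u^j$ since $M_u^j$ depends only on whether $u$ is fixed/free, not on $\psi$). So it suffices to produce a nonsingular column-operation matrix $C$ and a nonsingular row-operation matrix $D$ with $D\, O_j(G,\psi,p)\, C = O_j(G,\psi',p')$.

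First I would define the column transformation. On the two columns corresponding to the vertex $v$, apply the block $N := \overline{\rho_j(\gamma)}\,\tau(\gamma)^T M_v^j (M_v^j)^{+}$ (or more cleanly, since $v$ is free so $M_v^j = I_2$, just the $2\times 2$ block $\overline{\rho_j(\gamma)}\tau(\gamma)^{-1}$, noting $\tau(\gamma)\in O(\mathbb{R}^2)$ so $\tau(\gamma)^{-1}=\tau(\gamma)^T$); on every other column block use the identity. This $C$ is clearly invertible. For the rows, define $D$ to be diagonal: the row of an edge $e$ incident to $v$ and directed \emph{into} $v$ gets multiplied by a suitable power/unit scalar coming from $\rho_j$, edges directed \emph{out of} $v$ and edges not incident to $v$ get scaled by $1$ (the precise scalars fall out of the computation below); $D$ is invertible as a product of nonzero scalars. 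I then verify $D\,O_j(G,\psi,p)\,C = O_j(G,\psi',p')$ row by row, using the three cases: (a) an edge $e=(u,w)$ with $u,w\neq v$ — here the entries only involve $p$-values at $u,w$, which are unchanged ($p'=p$ off $v$), and the gain is unchanged, so the row is literally identical; (b) an edge $e=(u,v)$ directed into $v$ with gain $\delta$, so $\psi'(e)=\delta\gamma^{-1}$ — I substitute $p'_v = \tau(\gamma)p_v$ into the $v$-block entry $\rho_j(\psi(e))(p_v-\tau(\psi^{-1}(e))p_u)^T M_v^j$ of the new matrix and check, using $\tau(\psi'(e))=\tau(\delta)\tau(\gamma)^{-1}$, $\rho_j(\psi'(e))=\rho_j(\delta)\overline{\rho_j(\gamma)}$ (since $\rho_j$ is a homomorphism into $\mathbb C^\times$ with $\rho_j(\gamma^{-1})=\overline{\rho_j(\gamma)}$ as $|\rho_j(\gamma)|=1$), and the fact that $\tau(\gamma)$ is orthogonal, that it equals the old $v$-block entry times $N$ up to the scalar absorbed by $D$; the $u$-block entry of the new row must likewise match the old one after the row scaling, which forces the choice of diagonal entries of $D$; (c) an edge $e=(v,w)$ directed out of $v$ with gain $\delta$, so $\psi'(e)=\gamma\delta$ — symmetric computation, with the $v$-block entry now being the first entry $(p_v-\tau(\psi(e))p_w)^T M_v^j$, and one checks $\tau(\psi'(e))=\tau(\gamma)\tau(\delta)$ combines with the column block $N$ and $p'_v = \tau(\gamma)p_v$ to reproduce the new matrix; (d) a loop at $v$ with gain $\delta\neq\mathrm{id}$, which switching sends to $\gamma\delta\gamma^{-1}=\delta$ since $\Gamma$ is abelian — wait, in the abelian case a loop's gain is unchanged by switching, and $p'_v=\tau(\gamma)p_v$ must be shown to leave the loop row entry $(p_v+\rho_j(\delta)p_v-\tau(\delta)p_v-\rho_j(\delta)\tau(\delta^{-1})p_v)^T$ invariant up to the $D$-scaling and $C$-block; here I use $\tau(\gamma)$ orthogonal and commuting with $\tau(\delta)$ to push it through.

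The main obstacle I anticipate is case (b)/(c): getting the bookkeeping of the scalars exactly right so that a \emph{single} diagonal row-scaling matrix $D$ simultaneously corrects all the $v$-incident rows, in \emph{both} the $v$-block and the far-endpoint block of each such row. Concretely, after applying $N$ on the $v$-columns, the $v$-block of row $e=(u,v)$ already matches (by orthogonality of $\tau(\gamma)$ and the homomorphism property), but the $u$-block entry $(p_u-\tau(\psi(e))p_v)^T M_u^j$ changes because $\tau(\psi(e))p_v$ involves $p_v$, which got replaced — so I must double-check that $\tau(\psi'(e))p'_v=\tau(\delta\gamma^{-1})\tau(\gamma)p_v=\tau(\delta)p_v=\tau(\psi(e))p_v$, i.e. the $u$-block is actually \emph{unchanged}, meaning $D$ can be the identity on such rows after all. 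I expect the upshot is that $D$ can in fact be taken to be the identity and only the column operation $C$ is needed, but I would verify this carefully, since an off-by-one in which blocks $M_u^j$ are nontrivial (the fixed-vertex cases, where $U$ and the special $M_u^j$ matrices enter) is exactly the kind of thing that breaks when $V_0(G)\neq\emptyset$. I would also note at the end that combining this with Corollary~\ref{corollary: rank of orbit matrix and blocks are equal} and the already-established invariance under changing gains of fixed–free edges gives the full statement that orbit-matrix rank is an invariant of the equivalence class of $\psi$.
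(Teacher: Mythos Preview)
Your approach is essentially the same as the paper's: exhibit an invertible column operation on the $v$-block (your $N=\overline{\rho_j(\gamma)}\tau(\gamma)^{-1}$ is exactly the inverse of the paper's block $\rho_j(\gamma)\tau(\gamma)$) together with a diagonal row-scaling, and verify row by row. One small correction: your late expectation that $D$ can be taken to be the identity is not quite right---for non-loop edges directed \emph{into} $v$ it is, as you compute in case (b), but for loops at $v$ (and symmetrically for non-loop edges directed \emph{out of} $v$) the column operation alone leaves a stray factor of $\overline{\rho_j(\gamma)}$ on the row, so those rows need the diagonal scaling $\rho_j(\gamma)$; this is exactly the row scaling the paper inserts for loop rows, and it keeps $D$ diagonal and invertible, so your argument goes through.
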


\begin{proof}
    Clearly, any edge non-incident with $v$  has the same row in $O_j(G,\psi,p)$ as in $O_j(G,\psi',p')$.

    Let $e:=(u,v)\in E(G)$ with $\psi(e)=\delta$ for some $\delta\in\Gamma$ and, for $0\leq j\leq k-1$, let $r_j$ be the row representing $e$ in $O_j(G,\psi',p')$. Notice that $\psi'(e)=\psi(e)\gamma^{-1}=\delta\gamma^{-1}$. Then, $$\tau(\psi'(e))p'_v=\tau(\delta\gamma^{-1})\tau(\gamma) p_v=\tau(\delta)p_v,$$ and so, if $u\neq v$,
    \begin{equation*}
    \begin{split}
        r_j&=\begin{pmatrix}
        \dots & (p_u-\tau(\delta)p_v)^TM_{u}^j & \dots & \rho_j(\delta\gamma^{-1})[\tau(\gamma)p_v-\tau((\delta\gamma^{-1})^{-1})p_u)]^T & \dots
    \end{pmatrix}\\
    &=\begin{pmatrix}
        \dots & (p_u-\tau(\delta)p_v)^TM_{u}^j & \dots & \rho_j(\gamma^{-1})\rho_j(\delta)[\tau(\gamma)(p_v-\tau(\delta^{-1})p_u)]^T & \dots
    \end{pmatrix}
    \end{split}
    \end{equation*}
   whenever $M_u^j$ is defined. (If $M_u^j$ is not defined, then there are no columns representing $u$; recall Section~\ref{sec:orbitmatrices}.) If $u=v$, then $\psi'(\delta)=\psi(\delta)$, and so
   \begin{equation*}
   \begin{split}
    r_j&=\begin{pmatrix}
        \dots & [\tau(\gamma)p_v-\tau(\delta\gamma)p_v+\rho_j(\delta)\tau(\gamma)p_v-\rho_j(\delta)\tau(\delta^{-1}\gamma)p_v]^T & \dots 
    \end{pmatrix}\\
    &=\begin{pmatrix}
        \dots & [p_v-\tau(\delta)p_v+\rho_j(\delta)p_v-\rho_j(\delta)\tau(\delta^{-1})p_v]^T\tau(\gamma)^T & \dots 
    \end{pmatrix}
    \end{split}
    \end{equation*}
    Multiply each row representing a loop at $v$ by the scalar $\rho_j(\gamma^{-1})$. Let $s$ be the number of columns of $O_j(G,\psi,p)$, and $t,t+1$ be the columns representing $v$ in $O_j(G,\psi,p)$. Define $A$ to be the square matrix of dimension $s$ such that the $2\times2$ submatrix with entries $A_{t,t},A_{t,t+1},A_{t+1,t},A_{t+1,t+1}$ is $\rho_j(\gamma)\tau(\gamma)$, all other diagonal entries of $A$ are 1, and all other entries 0. Then, $O_j(G,\psi',p')A=O_j(G,\psi,p)$. Since $A$ is an orthogonal matrix, this implies that $\textrm{rank }O_j(G,\psi,p)=\textrm{rank }O_j(G,\psi',p')$, as required.
\end{proof}

In addition, one can easily generalise the proof of Proposition 2.3 and Lemma 2.4 in \cite{jzt2016} to show Lemma~\ref{lemma: forests can have identity gain} (see \cite{PhDThesis} for details). 

\begin{lemma} 
\label{lemma: forests can have identity gain}
Let $(G,\psi)$ be a $\Gamma$-gain graph. 
\begin{itemize}
    \item[(i)] For any forest $T$ in $E(G)$, there is some $\psi'$ equivalent to $\psi$ such that $\psi'(e)=\textrm{id}$ for all $e\in T$.
    \item[(ii)] A subgraph $H$ of $G$ is balanced if and only if there is a gain $\psi'$ equivalent to $\psi$ such that $\psi'(e)=\textrm{id}$ for all $e\in E(H)$. 
\end{itemize}
\end{lemma}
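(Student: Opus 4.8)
The statement is a standard fact about gain graphs (see \cite{jzt2016} in the free case), and the plan is to prove part (i) first and then deduce part (ii). For part (i), I would proceed by induction on the number of vertices of $G$ incident with an edge of $T$, or more conveniently by processing $T$ edge by edge. Since $T$ is a forest, fix a root in each tree component and process the vertices in a breadth-first (or depth-first) order away from the roots. When we reach a vertex $v$ via the unique tree-edge $e$ connecting it to the part of $T$ already handled, we want $\psi'(e)=\textrm{id}$; if $e=(u,v)$ is directed toward $v$ with current gain $\psi(e)=\delta$, apply a switching at $v$ with $\gamma=\delta$ (or $\gamma=\delta^{-1}$ if $e$ is directed away from $v$), which replaces $\psi(e)$ by $\delta\gamma^{-1}=\textrm{id}$ (respectively $\gamma\,\psi(e)=\textrm{id}$). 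The key observation is that switching at $v$ only alters gains of edges incident with $v$, and among the tree-edges the only one incident with $v$ that has been processed so far is $e$ itself (all other tree-edges at $v$ lead to not-yet-visited vertices), so the identity gains we have already arranged on previously processed tree-edges are untouched. One subtlety: switchings are only defined at \emph{free} vertices, so I need to note that no tree-edge of $T$ has an endpoint in $V_0(G)$ forcing a bad switch --- but edges incident with a fixed vertex can be given any gain (by the ``changing gains of edges incident to a fixed and a free vertex'' clause in the definition of equivalence, recall Section~\ref{sec:gg}), so for such edges we simply relabel directly rather than switching, and there is no fixed--fixed tree-edge issue since those already carry gain $\textrm{id}$ by condition (1) of Definition~\ref{def:gaingraph}. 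Iterating over all tree-edges, all edges of $T$ end up with gain $\textrm{id}$ under the resulting equivalent gain function $\psi'$.

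For part (ii), the ``if'' direction is immediate: if $\psi'\sim\psi$ has $\psi'(e)=\textrm{id}$ for all $e\in E(H)$, then every cycle of $H$ (and in particular every cycle of the graph $H'$ obtained from $H$ by deleting fixed vertices and their incident edges) has gain $\textrm{id}$, so $H$ is balanced; note balancedness is an invariant of the equivalence class, since switchings do not change the gain of any closed walk through a free vertex, and relabelling edges at fixed vertices does not affect $H'$. For the ``only if'' direction, suppose $H$ is balanced. Let $H'$ be $H$ with its fixed vertices (and incident edges) removed; by definition $H'$ is either a forest or every cycle of $H'$ has gain $\textrm{id}$. Pick a spanning forest $T$ of $H'$ together with \emph{all} edges of $H$ incident with a fixed vertex --- this set $F$ is still a forest, since fixed vertices carry no loops and no parallel edges (Definition~\ref{def:gaingraph}) and each connects to the rest of $H$ only through tree-type edges in the quotient sense; more carefully, one takes $T$ a spanning forest of $H'$ and adjoins the fixed vertices as new leaves via one incident edge each, which keeps $F$ acyclic. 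Apply part (i) to $F$ to get $\psi'\sim\psi$ with $\psi'\equiv\textrm{id}$ on $F$. Now take any edge $e=(u,v)\in E(H)\setminus F$. If $e$ is incident with a fixed vertex we simply relabel it to $\textrm{id}$ (permitted by the definition of equivalence, and this does not disturb $F$ or other already-fixed edges since such relabellings are independent per edge). Otherwise $u,v$ are free and $T+e$ contains a unique cycle $C$ lying in $H'$; since $H$ (hence $H'$) is balanced, $\psi'(C)=\textrm{id}$, and as all edges of $C$ other than $e$ are in $T$ with gain $\textrm{id}$, we get $\psi'(e)=\textrm{id}$ already. Hence $\psi'\equiv\textrm{id}$ on all of $E(H)$.

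The main obstacle I expect is bookkeeping around the fixed vertices: since the notions of switching, of cycle gains, and of balancedness are all defined ``modulo'' the fixed vertices, one must be careful that (a) switchings are never required at a fixed vertex, (b) the relabelling of edges at fixed vertices can be done freely and independently without breaking previously arranged gains, and (c) the forest $F$ chosen in part (ii) genuinely remains acyclic once the fixed vertices and their edges are adjoined (using the no-loop, no-parallel-edge property at fixed vertices from Definition~\ref{def:gaingraph}). Once these points are handled cleanly --- which is essentially the content of ``generalising Proposition 2.3 and Lemma 2.4 of \cite{jzt2016}'' referenced in the excerpt --- the argument is the routine forest/cycle manipulation, so I would keep the exposition brief and refer to \cite{PhDThesis} for the full details, exactly as the paper does.
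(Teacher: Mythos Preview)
Your proposal is correct and follows exactly the standard route the paper has in mind: it explicitly says the result is obtained by ``generalising the proof of Proposition 2.3 and Lemma 2.4 in \cite{jzt2016}'' and defers details to \cite{PhDThesis}, and your argument is precisely that generalisation --- process a forest by switchings at free vertices (using direct relabelling for edges at fixed vertices), then for (ii) reduce to (i) via a spanning forest of the free part and observe that non-tree edges are forced to carry trivial gain by balancedness. Your identification of the only real bookkeeping issue (handling fixed vertices without ever needing to switch at them, and keeping the auxiliary set $F$ acyclic by attaching each fixed vertex as a leaf via a single edge) is exactly the point where the non-free case differs from \cite{jzt2016}, and you resolve it correctly.
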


\subsection{Frameworks symmetric with respect to a group of order 2}
\label{sec:nec(2)}

We first consider reflection symmetry $\mathcal{C}_s$.
\begin{proposition}
\label{Necessary conditions for reflection}
Let $(\Tilde{G},\Tilde{p})$ be a $\mathcal{C}_s$-symmetric framework with $\mathcal{C}_s$-gain framework $(G,\psi,p)$. Then:
\begin{enumerate}
    \item[(1)] If $(\Tilde{G},\Tilde{p})$ is fully-symmetrically isostatic, then $(G,\psi)$ is $(2,1,3,1)$-gain tight.
    \item[(2)] If $(\Tilde{G},\Tilde{p})$ is anti-symmetrically isostatic, then $(G,\psi)$ is $(2,1,3,2)$-gain-tight.
\end{enumerate}
\end{proposition}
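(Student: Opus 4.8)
The plan is to establish the two counting conditions directly from the structure of the $\rho_0$- and $\rho_1$-orbit matrices (Definition~\ref{definition of orbit matrices}), using the column counts from Proposition~\ref{proposition: dimension of the blocks}(i) together with Proposition~\ref{prop:trivmotions}(i), which records which trivial motions survive in each block. For part (1), the key observation is that $O_0(G,\psi,p)$ has $|E(G)|$ rows and exactly $2|\overline{V(G)}| + |V_0(G)|$ columns, since each free vertex contributes $c_u^0 = 2$ columns and each fixed vertex contributes $c_u^0 = \operatorname{rank}\begin{pmatrix}0 & 1\end{pmatrix}^T = 1$ (the fixed vertex can only move along the mirror line). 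By Proposition~\ref{prop:trivmotions}(i), the $\rho_0$-symmetric trivial motions form a $1$-dimensional space (the translation along the mirror line), so fully-symmetric isostaticity forces $\operatorname{rank} O_0(G,\psi,p) = 2|\overline{V(G)}| + |V_0(G)| - 1$ with no row dependence, giving $|E(G)| = 2|\overline{V(G)}| + |V_0(G)| - 1$, i.e. the $(2,1,1)$-count holds with equality on all of $G$.

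For the sparsity (sub-)counts in part (1), I would argue that any violating subgraph $H$ yields a row dependence in $O_0(G,\psi,p)$ and hence in $\tilde R_0(\tilde G,\tilde p)$ (by Corollary~\ref{corollary: rank of orbit matrix and blocks are equal}), contradicting isostaticity. Concretely: first, if $H$ is a balanced subgraph with $|E(H)| > 2|V(H)| - 3$, then the corresponding sub-framework of the covering $\tilde G$ already carries a (non-symmetric) self-stress, but using Lemma~\ref{lemma: forests can have identity gain}(ii) one can switch so that $\psi$ is identity on $E(H)$, and then the rows of $O_0$ indexed by $E(H)$ are exactly the rows of an ordinary planar rigidity matrix on $|V(H)|$ points (with fixed vertices restricted to the line), which has rank at most $2|V(H)| - 3$; hence $|E(H)| \le 2|V(H)|-3$, and when $(2-1)|V_0(H)| > 3 - 1$ the stronger bound $|E(H)| \le 2|\overline{V(H)}| + |V_0(H)| - 1$ is what we need — this matches the $(2,1,3,1)$-gain-sparse definition. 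For a general subgraph $H$, counting columns of the submatrix of $O_0$ on the vertices of $H$ gives at most $2|\overline{V(H)}| + |V_0(H)|$; if $H$ is connected it contains the support of at least one trivial $\rho_0$-motion (the line-translation restricted to $V(H)$, which is non-zero as long as $V(H)\neq\emptyset$), so the rank on those columns is at most $2|\overline{V(H)}| + |V_0(H)| - 1$, forcing $|E(H)| \le 2|\overline{V(H)}| + |V_0(H)| - 1$; for disconnected $H$ apply this componentwise as in Lemma~\ref{lemma: the subgraph is connected}.

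Part (2) is entirely parallel, with the bookkeeping shifted by one. The matrix $O_1(G,\psi,p)$ has $|\overline{E(G)}|$ rows — but since $|\Gamma| = 2$ and $\tau(\Gamma) = \mathcal C_s$, a fixed edge would need both endpoints fixed, and by Definition~\ref{def:gaingraph} no edge joins two fixed vertices, while an edge on the mirror line between a fixed and a free vertex is the one subtle point: as explained in Example~\ref{remark: no edges on the symmetry line}, such a fixed bar imposes no $\rho_1$-symmetric constraint, so it contributes a zero row; I would handle this by noting it is consistent with the count since such configurations are non-generic / the edge can be treated as absent in $O_1$. The column count is $2|\overline{V(G)}| + |V_0(G)|$ (a fixed vertex contributes $c_u^1 = \operatorname{rank}\begin{pmatrix}1 & 0\end{pmatrix}^T = 1$, i.e. it can only move perpendicular to the mirror), and by Proposition~\ref{prop:trivmotions}(i) the $\rho_1$-symmetric trivial motions are $2$-dimensional (the rotation and the perpendicular translation), so anti-symmetric isostaticity gives $\operatorname{rank} O_1 = 2|\overline{V(G)}| + |V_0(G)| - 2$ with no row dependence, hence $|E(G)| = 2|\overline{V(G)}| + |V_0(G)| - 2$. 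The subgraph bounds follow as in part (1): balanced subgraphs still obey $|E(H)| \le 2|V(H)| - 3$, and a connected $H$ with $V(H)\neq\emptyset$ again kills at least the perpendicular-translation trivial motion restricted to $V(H)$, giving $|E(H)| \le 2|\overline{V(H)}| + |V_0(H)| - 2$ (for the case $l=3$ one uses the balanced/rigidity bound, so $E(H)\neq\emptyset$ is irrelevant here since $l=2$), and disconnected $H$ is handled componentwise. The main obstacle I anticipate is the careful treatment of the edges incident to a fixed vertex — both verifying that the row-count claims of Proposition~\ref{proposition: dimension of the blocks}(iv) are being applied correctly (especially whether a mirror-line edge between a fixed and free vertex counts toward $\overline{E(G)}$ or is a zero row in $O_1$), and making sure the generic subframework on the vertices of $H$ actually attains the rank upper bound coming from the trivial motions, rather than just the dimension-counting upper bound; this requires invoking $\tau(\Gamma)$-genericity of $p$ and the known base cases (e.g. that a single fixed vertex with enough generic free vertices realises the relevant isostatic count).
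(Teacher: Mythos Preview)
Your approach is essentially the same as the paper's: derive the global equality from the column count of $O_j$ plus the dimension of $\rho_j$-symmetric trivial motions (Propositions~\ref{proposition: dimension of the blocks}(i) and~\ref{prop:trivmotions}(i)), get the general subgraph bound from row-independence together with the restricted trivial translation (the paper compresses this to ``would imply a row dependency''), and handle balanced subgraphs by switching to identity gains via Lemma~\ref{lemma: forests can have identity gain}(ii) and comparing with a standard rigidity matrix.

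Two corrections to your bookkeeping on fixed edges. First, Definition~\ref{def:gaingraph} does \emph{not} forbid edges between two fixed vertices; it only forbids loops and parallel edges at fixed vertices, and item~(1) explicitly allows a single edge between two fixed vertices with gain $\textrm{id}$. Second, an edge between a fixed vertex and a free vertex is \emph{not} a fixed edge: its orbit in the covering has size $2$. The fixed edges for $\mathcal{C}_s$ are exactly (a) edges between two fixed vertices in the gain graph, and (b) loops at free vertices (corresponding to $\{u,\sigma u\}$ in the covering). Both types contribute no row to $O_1$ by Proposition~\ref{proposition: dimension of the blocks}(iv), so if the framework is anti-symmetrically isostatic you still need a one-line argument that no such edges exist. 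The cleanest way is to observe that either type on its own already violates $(2,1,2)$-sparsity (a loop gives $1 > 2\cdot 1 - 2$, an edge between two fixed vertices gives $1 > 1\cdot 2 - 2$), so their presence would force the corresponding subgraph bound to fail---but you derive that bound from $O_1$, which doesn't see those edges. Instead, argue directly: a fixed edge is a zero row (equivalently, a redundant constraint) in the full rigidity matrix restricted to the $\rho_1$-block, contradicting the ``no non-trivial row dependence'' clause of $\rho_1$-isostaticity; hence $E(G)=\overline{E(G)}$ and your count goes through. Finally, you do not need to invoke $\tau(\Gamma)$-genericity anywhere---isostaticity is the hypothesis, and the rank bounds follow from it directly.
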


\begin{proof}
If $(\Tilde{G},\Tilde{p})$ is fully-symmetrically isostatic, then $\textrm{null}(O_0(G,\psi,p))=1$, by Proposition~\ref{prop:trivmotions}(i).  Since $O_0(G,\psi,p)$ has dimension $2|\overline{V(G)}|+|V_0(G)|$, by the rank-nullity theorem, we deduce that $|E(G)|=2|\overline{V(G)}|+|V_0(G)|-1$. Moreover, there is no subgraph $(H,\psi|_{E(H)})$ of $(G,\psi)$ such that $|E(H)|>2\left|\overline{V(H)}\right|+\left|V_0(H)\right|-1,$ as this would imply a row dependency in the orbit matrix. 

Similarly, by Proposition~\ref{prop:trivmotions}(i), if $(\Tilde{G},\Tilde{p})$ is anti-symmetrically isostatic, then $|\overline{E(G)}|=2|\overline{V(G)}|+|V_0(G)|-2$ and 
$|\overline{E(H)}|\leq2\left|\overline{V(H)}\right|+\left|V_0(H)\right|-2,$ for all subgraphs $(H,\psi|_{E(H)})$ of $(G,\psi)$. 

Now, let $j=0,1$ and suppose by contradiction that $(\Tilde{G},\Tilde{p})$ is $\rho_j$-symmetrically isostatic and there is a balanced subgraph $(H,\psi|_{E(H)})$ of $(G,\psi)$ such that $|E(H)|>2|V(H)|-3$. Let $M$ be the submatrix of $O_j\left(G,\psi,p\right)$ obtained by removing all columns corresponding to the elements of $V(G)\setminus V(H)$, together with the rows corresponding to their incident edges. By Corollary~\ref{corollary: rank of orbit matrix and blocks are equal}, Proposition~\ref{prop: switching maintains rank} and Lemma~\ref{lemma: forests can have identity gain}(ii), we can assume that $\psi(e)=\textrm{id}$ for all $e\in E(H)$. 

$M$ is a submatrix of a standard rigidity matrix for a graph $F$ with $|E(F)|>2|V(F)|-3$, obtained by removing zero or more columns (depending on $|V_0(H)|$; one column is removed for each vertex in $V_0(H)$). But, by row independence, we must have $|E(F)|\leq2|V(F)|-3$, a contradiction. Hence, the result holds. 
\end{proof}

The proof of the following result for half-turn symmetry $\mathcal{C}_2$  is completely analogous to that of Proposition~\ref{Necessary conditions for reflection}, and uses Proposition~\ref{prop:trivmotions} (ii). 

\begin{proposition}
\label{necessary conditions for 2-fold rotation}
Let $(\Tilde{G},\Tilde{p})$ be a $\mathcal{C}_2$-symmetric framework with $\mathcal{C}_2$-gain framework $(G,\psi,p).$ Then:
\begin{enumerate}
    \item[(1)] If $(\Tilde{G},\Tilde{p})$ is fully-symmetrically isostatic, then $\left(G,\psi\right)$ is $(2,0,3,1)$-gain-tight.
    \item[(2)] If $(\Tilde{G},\Tilde{p})$ is anti-symmetrically isostatic, then $\left(G,\psi\right)$ is $(2,2,3,2)$-gain-tight.
\end{enumerate}
\end{proposition}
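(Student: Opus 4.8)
The plan is to mirror exactly the proof of Proposition~\ref{Necessary conditions for reflection}, substituting the reflection representation $\tau_{\mathrm{ref}}$ by the half-turn representation $\tau_{\mathrm{rot}}$ and invoking Proposition~\ref{prop:trivmotions}(ii) in place of (i). First I would treat the fully-symmetric case. If $(\tilde G,\tilde p)$ is fully-symmetrically isostatic, then by Proposition~\ref{prop:trivmotions}(ii) the only $\rho_0$-symmetric trivial infinitesimal motions are the rotations, so $\mathrm{null}(O_0(G,\psi,p))=1$. By Proposition~\ref{proposition: dimension of the blocks}(ii), $O_0(G,\psi,p)$ (equivalently $\tilde R_0(\tilde G,\tilde p)$, via Corollary~\ref{corollary: rank of orbit matrix and blocks are equal}) has $2|\overline{V(G)}|$ columns and $|E(G)|$ rows, so the rank--nullity theorem forces $|E(G)|=2|\overline{V(G)}|-1 = 2|\overline{V(G)}|+0\cdot|V_0(G)|-1$. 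Row independence of $O_0$, restricted to any subgraph $H$ (delete the columns of $V(G)\setminus V(H)$ and the rows of edges leaving $V(H)$), gives $|E(H)|\le 2|\overline{V(H)}|-1$ for all non-empty $H$. This is precisely the $(2,0,3,1)$ upper count.

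Next I would handle the anti-symmetric case. Here Proposition~\ref{prop:trivmotions}(ii) says the two infinitesimal translations are the only $\rho_1$-symmetric trivial motions, so $\mathrm{null}(O_1(G,\psi,p))=2$. By Proposition~\ref{proposition: dimension of the blocks}(ii) and (iv), $O_1(G,\psi,p)$ has $2|V(G)|=2|\overline{V(G)}|+2|V_0(G)|$ columns and $|\overline{E(G)}|$ rows; note that since $|\Gamma|=2$, every edge between two fixed vertices, or of the form $\{u,\gamma u\}$, is non-free, so the rows of $O_1$ are indexed by $\overline{E(G)}$. Rank--nullity then yields $|\overline{E(G)}| = 2|\overline{V(G)}|+2|V_0(G)|-2$, and row independence on subgraphs gives $|\overline{E(H)}|\le 2|\overline{V(H)}|+2|V_0(H)|-2$ for all non-empty $H$.

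The one genuinely substantive point in both cases — and the step I expect to be the main obstacle, exactly as in Proposition~\ref{Necessary conditions for reflection} — is the balanced-subgraph condition of Definition~\ref{sparsity defn.}, namely that every balanced $H$ with $E(H)\ne\emptyset$ is $(2,3)$-sparse. Suppose for contradiction that some balanced $H$ has $|E(H)|>2|V(H)|-3$. Using Corollary~\ref{corollary: rank of orbit matrix and blocks are equal}, Proposition~\ref{prop: switching maintains rank}, and Lemma~\ref{lemma: forests can have identity gain}(ii), I can assume $\psi(e)=\mathrm{id}$ on all of $E(H)$ without changing any orbit-matrix rank. For $\tau(\Gamma)=\mathcal{C}_2$ one must check that the block of $O_j(G,\psi,p)$ on the vertices of $H$, with all gains trivial, is a submatrix of an ordinary rigidity matrix $R(F,q)$ for some planar framework $(F,q)$ on $|V(H)|$ vertices (one simply takes the quotient vertices with their configuration points $p_w$, which are affinely spanning); the columns possibly deleted are at most the two columns of each fixed vertex — deleted entirely in the $\rho_0$-case, kept in the $\rho_1$-case since there $M^1_u=I_2$. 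Either way $M$ is obtained from $R(F,q)$ by deleting columns, hence row independence of $O_j$ forces row independence of a submatrix of $R(F,q)$ on all of $F$'s edges, contradicting $|E(F)|=|E(H)|>2|V(H)|-3=2|V(F)|-3$. This establishes the balanced condition, and combined with the upper counts above shows $(G,\psi)$ is $(2,0,3,1)$-gain-tight in case (1) and $(2,2,3,2)$-gain-tight in case (2). I would simply write ``the proof is completely analogous to that of Proposition~\ref{Necessary conditions for reflection}, using Proposition~\ref{prop:trivmotions}(ii) and Proposition~\ref{proposition: dimension of the blocks}(ii)'' rather than repeating all of it.
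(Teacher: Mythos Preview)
Your proposal is correct and takes essentially the same approach as the paper; in fact the paper does not even write out the proof, stating only that it ``is completely analogous to that of Proposition~\ref{Necessary conditions for reflection}, and uses Proposition~\ref{prop:trivmotions}(ii)'', which is precisely what you do. Your additional explicit remarks about which columns survive for fixed vertices in the $\rho_0$ versus $\rho_1$ cases for $\mathcal{C}_2$ (none versus both, since $M^1_u=I_2$) are accurate and make the ``analogous'' step concrete.
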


\subsection{Higher order rotational-symmetric frameworks}
In this subsection, we consider the case where $k\geq3$. First, recall  that near-balancedness is only defined for a graph with no fixed vertices, so we may directly use the following result from \cite{Ikeshita}.

\begin{lemma} (Lemma 5.5 in \cite{Ikeshita})
\label{lemma on near-balanced necessity}
    Let $k:=|\Gamma|\geq4,0\leq j\leq k-1$, $\tau:\Gamma\rightarrow\mathcal{C}_k$ be a faithful representation, $(G,\psi)$ be a $\Gamma$-gain graph, and $p:V(G)\rightarrow\mathbb{R}^2$. If $O_j(G,\psi,p)$ is row independent, $|E(H)|\leq2|V(H)|-1$ for any near-balanced subgraph $H$ of $G$.
\end{lemma}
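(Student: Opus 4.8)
## Proof Plan for Lemma 5.5 (near-balanced necessity)

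The plan is to mimic the structure of the proof of Proposition~\ref{Necessary conditions for reflection}: reduce the statement about the block matrix $\tilde R_j$ to a statement about the orbit matrix $O_j(G,\psi,p)$, then reduce that to a count on a suitable (ordinary) rigidity matrix. Suppose, for contradiction, that $O_j(G,\psi,p)$ is row independent but there is a near-balanced subgraph $H$ of $G$ with $|E(H)|>2|V(H)|-1$. By definition of near-balancedness, $H$ has no fixed vertices, so $V_0(H)=\emptyset$ and $\overline{V(H)}=V(H)$; this means we are entirely in the free-action regime on $H$, and the refined columns coming from fixed vertices play no role. Let $M$ be the submatrix of $O_j(G,\psi,p)$ obtained by deleting all columns for vertices not in $V(H)$ and all rows for edges not in $E(H)$. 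Since the rows of $O_j(G,\psi,p)$ are independent, so are the rows of $M$; it therefore suffices to bound the number of rows of $M$ by $2|V(H)|-1$.

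Next I would exploit the near-balanced structure to normalise the gains on $H$. By the definition of near-balanced, there is a base vertex $v$ and an element $\gamma\in\Gamma$ so that every closed walk in $E(H)$ based at $v$ and not revisiting $v$ has gain in $\{\mathrm{id},\gamma,\gamma^{-1}\}$. Using switchings (which preserve the rank of $O_j$, hence the row independence of $M$, by Proposition~\ref{prop: switching maintains rank} together with Corollary~\ref{corollary: rank of orbit matrix and blocks are equal}) and Lemma~\ref{lemma: forests can have identity gain}, I would pick a spanning tree of $H$ rooted at $v$, make all tree edges have gain $\mathrm{id}$, and arrange that every non-tree edge has gain in $\{\mathrm{id},\gamma,\gamma^{-1}\}$. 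At this point, following the argument in \cite{Ikeshita}, the orbit matrix $M$ (with a suitable choice of the geometric data $p$, justified by $\tau(\Gamma)$-genericity) can be compared to the rigidity matrix of a single ordinary framework: the vertices of $H$ together with one extra ``pinned'' point corresponding to the axis of the partial rotation by $\gamma$, or equivalently $M$ fits inside a rigidity matrix of a framework on $|V(H)|$ points in the plane plus one fixed point, whose row count is bounded by $2|V(H)| - 1$ by the standard Maxwell count (the $-1$ rather than $-3$ because the $\gamma$-near-balanced structure kills two of the three trivial motions). Row independence of $M$ then forces $|E(H)|\le 2|V(H)|-1$, the desired contradiction.

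The main obstacle is making the comparison in the previous paragraph fully rigorous: one must show that, after the gain normalisation, the rows of $M$ corresponding to the edges of $H$ are row-dependent whenever $|E(H)| > 2|V(H)|-1$, by producing an explicit self-stress-like dependency. Concretely, near-balancedness means the covering graph $\tilde H$ ``collapses'' onto a configuration that, while not literally $\Gamma$-symmetric with a fixed vertex, behaves like a framework with a single degree-of-freedom axis of rotational symmetry; one has to track carefully how the phase factors $\rho_j(\psi(e))\in\{1,\omega^{\pm j}\}$ interact with the geometric rotations $\tau(\psi(e))$ and verify that the resulting linear system has a $(2|V(H)|-1)$-dimensional ``trivial-motion-like'' solution space that forces a row dependency once the edge count is too large. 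Since the paper explicitly says ``near-balancedness is only defined for a graph with no fixed vertices, so we may directly use'' the result from \cite{Ikeshita}, I expect the intended proof is simply to invoke Lemma 5.5 of \cite{Ikeshita} verbatim, observing that the presence of fixed vertices elsewhere in $G$ is irrelevant because $H$ itself has none and $M$ is literally the orbit matrix of the induced free-action subconfiguration; the genuinely new content of the present paper lies in the counts that do involve fixed vertices, handled in the surrounding results rather than here.
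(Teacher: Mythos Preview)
Your final paragraph is exactly right: the paper does not prove this lemma at all. It is stated as ``Lemma 5.5 in \cite{Ikeshita}'' and simply quoted, with the one-line justification you identified --- since near-balancedness is only defined for subgraphs with no fixed vertices, the free-action result from \cite{Ikeshita} applies verbatim to the submatrix $M$ of $O_j(G,\psi,p)$ indexed by $V(H)$ and $E(H)$. Your reduction argument (pass to the submatrix $M$, observe $V_0(H)=\emptyset$ so $M$ coincides with the free-action orbit matrix of $H$, then invoke the cited lemma) is precisely the intended use; the sketch in your second paragraph of how one might reprove the cited result is plausible in outline but, as you note, is not what the paper does and is not needed here.
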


We now give necessary conditions for the infinitesimal rigidity of $\mathcal{C}_k$-symmetric frameworks, where $k\geq3$. Recall that $\mathbb{Z}^j_k$-gain sparsity was defined in Definition~\ref{definition of sparsity, higher order rotation}.

\begin{proposition}
\label{necessary conditions for higher rotation}
For $k\geq3$, let $(\Tilde{G},\Tilde{p})$ be a $\mathcal{C}_k$-symmetric framework with $\mathcal{C}_k$-gain framework $(G,\psi,p).$ Then,
\begin{enumerate}
    \item[(1)] If $(\Tilde{G},\Tilde{p})$ is fully-symmetrically isostatic, then $(G,\psi)$ is $(2,0,3,1)$-gain tight.
    \item[(2)] If $(\Tilde{G},\Tilde{p})$ is $\rho_1$-symmetrically isostatic or $\rho_{k-1}$-symmetrically isostatic, then $(G,\psi)$ is $(2,1,3,1)$-gain tight.
    \item[(3)] If $k\geq4$ and $(\Tilde{G},\Tilde{p})$ is $\rho_j$-symmetrically isostatic for some $2\leq j\leq k-2$, then $(G,\psi)$ is $\mathbb{Z}^j_k$-gain tight. 
\end{enumerate}
\end{proposition}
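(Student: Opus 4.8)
The plan is to mirror the structure of the proofs of Propositions~\ref{Necessary conditions for reflection} and~\ref{necessary conditions for 2-fold rotation}, transferring the rigidity analysis from the block matrix $\tilde R_j(\tilde G,\tilde p)$ to the orbit matrix $O_j(G,\psi,p)$ via Corollary~\ref{corollary: rank of orbit matrix and blocks are equal}, and then reading off the required sparsity counts from the dimensions of $O_j$ given in Proposition~\ref{proposition: dimension of the blocks} together with the count of trivial infinitesimal motions in each isotypic component from Proposition~\ref{prop:trivmotions}. For part (1), $\tau(\Gamma)=\mathcal{C}_k$ acts on the fixed vertices through $\rho_1\oplus\rho_{k-1}$, so the trivial space contributes only the single infinitesimal rotation to the $\rho_0$-component; since $O_0(G,\psi,p)$ has $2|\overline{V(G)}|$ columns and no contribution from $V_0(G)$, fully-symmetric isostaticity forces $|E(G)|=2|\overline{V(G)}|-1$, and row independence on every subgraph gives $(2,0)$-sparsity. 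For part (2), by Proposition~\ref{prop:trivmotions}(iii) the $\rho_1$- (resp.\ $\rho_{k-1}$-) component carries exactly the rotation plus one of the two translational directions, i.e.\ a $2$-dimensional trivial space; since $O_1$ (resp.\ $O_{k-1}$) has $2|\overline{V(G)}|+|V_0(G)|$ columns by Proposition~\ref{proposition: dimension of the blocks}(iii), the rank-nullity theorem gives $|E(G)|=2|\overline{V(G)}|+|V_0(G)|-2$; but here one must be careful, as $|E(G)|=|\overline{E(G)}|$ only when $k$ is odd --- wait, rows for $\rho_1$ are indexed by $\overline{E(G)}$ in all cases, so the count is on $|\overline{E(G)}|$, and when $k$ is even the non-free edges contribute no rows to $\rho_1$. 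In either case one obtains $(2,1,3,1)$-gain tightness, with the $(2,3)$-sparsity of balanced subgraphs argued exactly as in Proposition~\ref{Necessary conditions for reflection}: restrict $O_j$ to a balanced subgraph $H$, use Proposition~\ref{prop: switching maintains rank} and Lemma~\ref{lemma: forests can have identity gain}(ii) to assume all gains on $E(H)$ are $\mathrm{id}$, observe the restricted matrix is a submatrix of an ordinary rigidity matrix, and invoke $|E(F)|\le 2|V(F)|-3$.

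For part (3) with $2\le j\le k-2$, the overall strategy is the same but the bookkeeping is heavier because the $\mathbb{Z}_k^j$-sparsity function $f_k^j$ is built from the refined quantities $\alpha_k^j(X)$, which depend on whether each connected subgraph $X$ is balanced, $S_0(k,j)$, $S_{\pm1}(k,j)$, near-balanced, or generic, and on $|V_0(X)|$. I would argue as follows. First, the rank of the full matrix: by Proposition~\ref{proposition: dimension of the blocks} the block $\tilde R_j$ has $2|\overline{V(G)}|$ columns (since $j\ne 1,k-1$, the fixed-vertex columns vanish, i.e.\ $U=V_0(G)$ in the orbit matrix), and by Proposition~\ref{prop:trivmotions}(iii) the $\rho_j$-component of the trivial motions is $0$-dimensional for $2\le j\le k-2$; hence isostaticity forces $\mathrm{rank}\,O_j(G,\psi,p)=2|\overline{V(G)}|$, which is exactly $|E(G)|=f_k^j(E(G))$ once one checks that $f_k^j(E(G))$ evaluates to $2|\overline{V(G)}|$ for the whole graph --- this is the identity to verify carefully, using Lemma~\ref{lemma: the subgraph is connected} (the tight graph has a single component with edges, plus isolated vertices, so the sum over $C(E(G))$ collapses) and the value of $\alpha_k^j$ on $G$ itself. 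Then, for the sparsity inequality $|E(H)|\le f_k^j(E(H))$ on an arbitrary subgraph $H$, I would first reduce to the case where $H$ is connected (summing over components), and then, for a connected $H$ with connectivity structure $\langle H\rangle$, construct from the row independence of $O_j$ a bound of the form $|E(H)|\le (\text{number of columns of }O_j\text{ restricted to }H)$. The number of such columns is $2|\overline{V(H)}| = 2(|V(H)|-|V_0(H)|)$, which on its own is generally larger than $2|V(H)|-3+\alpha_k^j(H)$, so a naive column count is not enough; the extra slack $3-\alpha_k^j(H)$ (or $3+2|V_0(H)|-\alpha_k^j(H)-2|V_0(H)|$, rearranged) must come from a dimension count of the space of $\rho_j$-symmetric \emph{trivial} motions supported on $H$ together with the kernel directions forced by the structure of $\langle H\rangle$ and the placement of the fixed vertices at the origin. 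Concretely: if $H$ is balanced one can switch all its gains to $\mathrm{id}$ and compare with the ordinary rigidity matrix as before, getting $2|V(H)|-3$; if $\langle H\rangle$ is nontrivial, the $\rho_j$-symmetric trivial motions of the lift $\tilde H$ (rotation, translations, depending on whether $j$ hits the relevant residue classes $\pm1,0\pmod n$ for the divisors $n$ of $k$ in $S_i(k,j)$) give exactly the correction terms $\alpha_k^j(H)$, and the fixed vertices of $H$ being pinned at the origin removes $2|V_0(H)|$ degrees of freedom from whatever trivial space one would otherwise have --- this is precisely why $\alpha_k^j$ subtracts $|V_0(X)|$ or $2|V_0(X)|$. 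I would organise this as a case analysis on the five cases in the definition of $\alpha_k^j$, in each case exhibiting the appropriate dimension of $\ker O_j$ restricted to $H$ coming from symmetric trivial motions, invoking row independence to bound $|E(H)|$ from above.

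The main obstacle I expect is the case analysis in part (3): establishing, for each type of connected subgraph $H$, that the restriction of $O_j(G,\psi,p)$ to $H$ has nullity at least $3-\alpha_k^j(H)-2|V_0(H)|+(\text{the } 2|V_0(H)| \text{ already absent as columns})$, i.e.\ that the relevant $\rho_j$-symmetric trivial motions genuinely lie in the kernel even when some vertices of $H$ are fixed. This requires a careful geometric identification of which infinitesimal rotations and translations are $\rho_j$-symmetric (via Equation~\eqref{defn of anti-symmetric motion}), how they restrict to a subgraph, and how pinning the fixed vertices at the origin interacts with them --- in particular one must check that a $\rho_j$-symmetric rotation or translation, when it exists, is compatible with $\tilde m$ vanishing on $V_0$, which is where the divisibility conditions defining $S_i(k,j)$ enter. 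The identity $f_k^j(E(G))=2|\overline{V(G)}|$ for a tight graph, and more generally the consistency of the two bullet points in Definition~\ref{sparsity defn.}-style reasoning with $f_k^j$, is a second technical point that needs the explicit evaluation of $\alpha_k^j$ on $G$; I would handle it by noting that a $\rho_j$-symmetrically isostatic $G$ (for $2\le j\le k-2$) has no $\rho_j$-symmetric trivial motions, forcing $\alpha_k^j(G)=3-2|V_0(G)|$ unless $\langle G\rangle$ is small, and then checking the arithmetic in each sub-case. Throughout, the tools are already in place: Corollary~\ref{corollary: rank of orbit matrix and blocks are equal} to move between $\tilde R_j$ and $O_j$, Proposition~\ref{prop: switching maintains rank} and Lemma~\ref{lemma: forests can have identity gain} to normalise gains on forests and balanced subgraphs, Lemma~\ref{lemma on near-balanced necessity} for the near-balanced case, and Lemma~\ref{lemma: the subgraph is connected} to pass between the whole graph and its components.
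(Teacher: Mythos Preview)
Your count of trivial motions in part~(2) is wrong. Proposition~\ref{prop:trivmotions}(iii) says that for $\mathcal{C}_k$, $k\ge 3$, the infinitesimal \emph{rotation} is $\rho_0$-symmetric, not $\rho_1$- or $\rho_{k-1}$-symmetric; only one translation direction lies in $\rho_1$ and the other in $\rho_{k-1}$. Thus the $\rho_1$-symmetric trivial space is $1$-dimensional, and rank--nullity with the column count $2|\overline{V(G)}|+|V_0(G)|$ gives $|E(G)|=2|\overline{V(G)}|+|V_0(G)|-1$, not $-2$. This is why the target is $(2,1,3,1)$-gain tightness (your $-2$ would have forced $(2,1,3,2)$). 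Also, the balanced-subgraph argument here is not literally ``as in Proposition~\ref{Necessary conditions for reflection}'': when $V_0(H)=\{v_0\}$ the restricted $O_1$ has a single column $-x_i+iy_i$ for the fixed vertex, and the paper replaces this one complex column by two real columns $(x_i,y_i)$ to embed the restriction into an ordinary rigidity matrix before invoking $|E(F)|\le 2|V(F)|-3$. You do not address this step.

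For part~(3), your proposed direct case analysis on kernel dimensions is not how the paper proceeds, and it is substantially vaguer. The paper's key device is a \emph{reduction to smaller rotation groups}: if $H$ is connected with $\langle H\rangle\simeq\mathbb{Z}_n$ for some $n\mid k$, write $j=i+mn$ with $i\in\{-1,0,1\}$ (or $i$ determined by $j\bmod n$); then for every edge $e$ of $H$ one checks $\rho_j(\psi(e))=\rho'_i(\psi(e))$ and $\tau(\psi(e))=\tau'(\psi(e))$, so that $O_j(H,\psi,p)$ is literally the $\rho'_i$-orbit matrix of a $\mathcal{C}_n$-gain framework. Row independence of $O_j$ restricted to $H$ then lets you invoke the already-established necessary counts for $\mathcal{C}_n$: part~(1) or Proposition~\ref{necessary conditions for 2-fold rotation}(1) when $i\equiv 0$, part~(2) or Proposition~\ref{necessary conditions for 2-fold rotation}(2) when $i\equiv\pm 1$. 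Together with Lemma~\ref{lemma on near-balanced necessity} for the near-balanced case and the balanced argument above, this yields exactly the bounds encoded in $\alpha_k^j$. Your plan of ``exhibiting the appropriate dimension of $\ker O_j$ restricted to $H$ coming from symmetric trivial motions'' is morally related, but the paper's reduction avoids re-deriving these kernels from scratch and makes the divisibility conditions $S_i(k,j)$ transparent.
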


\begin{proof}
The proof of (1) is the same as the proof of Proposition~\ref{necessary conditions for 2-fold rotation}(1), so we will only prove (2) and (3), starting with (2). Since the proofs for $\rho_1$-symmetrically isostatic and $\rho_{k-1}$-symmetrically isostatic frameworks are the same, we will only look at the former case.

So, suppose $(\Tilde{G},\Tilde{p})$ is $\rho_1$-symmetrically isostatic. Using the rank-nullity theorem, together with Proposition~\ref{prop:trivmotions} (iii), we see that $|E(G)|=2|\overline{V(G)}|+|V_0(G)|-1$, and that $|E(H)|\leq2|\overline{V(H)}|+|V_0(H)|-1$ for all subgraphs $H$ of $G$ with $E(H)\neq\emptyset$. Assume, by contradiction, that there is a balanced subgraph $\left(H,\psi|_{E(H)}\right)$ of $(G,\psi)$ such that $|E(H)|>2|V(H)|-3.$ Let $M$ be the submatrix of $O_1\left(G,\psi,p\right)$ obtained by removing all the columns representing the vertices that are not in $V(H)$, together with the rows corresponding to their incident edges. By Corollary~\ref{corollary: rank of orbit matrix and blocks are equal}, Proposition~\ref{prop: switching maintains rank} and Lemma~\ref{lemma: forests can have identity gain}(ii), we may assume that $\psi(e)=\textrm{id}$ for all $e\in E(H).$ If $V(H)=\emptyset$, $M$ is a standard rigidity matrix for a graph $F$ with $|E(F)|>2|V(F)|-3$, contradicting the row independence of $O_1(G,\psi,p)$. So, we may assume that $V_0(H)=\{v_0\}$. Let $v_1,\dots,v_t$ be the vertices that are incident with $v_0$ in $H$ and, for $1\leq i\leq t$, let $p_i:=p(v_i)=\begin{pmatrix}x_i&y_i\end{pmatrix}^T$.
Then, $M$ has the form

\begin{center}
$\left(\begin{array}{@{}c|c@{}}
\hspace{2mm}\begin{matrix}
-x_1+iy_1\\
\vdots\\
-x_t+iy_t
\end{matrix}
& \begin{matrix}
p_1 & \dots & 0 &\\
\vdots  & \ddots & \vdots &\\
0 & \dots & p_t &
\end{matrix}
\hspace{0.1mm}\vline\hspace{0.1mm}
\begin{matrix}
& 0 & \dots & 0\\
& \vdots & \ddots & \vdots\\
& 0 & \dots & 0
\end{matrix}\\
\cmidrule[0.4pt]{1-2}
\hspace{2mm}\begin{matrix}
0\\
\vdots\\
0\end{matrix}&\begin{matrix}
     & \vdots & & \vdots\\
     \ldots & p_i-p_j & \dots & p_j-p_i & \dots\\
     & \vdots & & \vdots
\end{matrix}
\end{array}\right)$.
\end{center}
Let $M'$ be the matrix obtained from $M$ by replacing the first column with the following two columns:
\begin{equation*}
    \begin{pmatrix}
    x_1 & y_1 \\
    \vdots & \vdots \\
    x_t & y_t \\
    0 & 0\\
    \vdots & \vdots\\
    0 & 0 
\end{pmatrix}.
\end{equation*}
Since $M$ is row independent, so is $M'$. But $M'$ is a standard rigidity matrix for a graph $F$ with $|E(F)|>2|V(F)|-3$, contradicting the row independence of $O_1(G,\psi,p)$. This proves (2). 

For (3), let $2\leq j\leq k-2$ and assume $(\tilde{G},\tilde{p})$ is $\rho_j$-symmetrically isostatic. By the rank-nullity theorem and by Proposition~\ref{prop:trivmotions}(iii), $|E(G)|=2|\overline{V(G)}|$ and $|E(H)|\leq2|\overline{V(H)}|$ for all subgraphs $H$ of $G$ with $E(H)\neq\emptyset$. The same argument as that in the proof of Proposition~\ref{necessary conditions for 2-fold rotation} also shows that all balanced subgraphs $H$ of $G$ must satisfy $|E(H)|\leq2|V(H)|-3$. By Lemma~\ref{lemma on near-balanced necessity}, all near-balanced subgraphs $H$ of $G$ must satisfy $|E(H)|\leq2|V(H)|-1$. So we only need to consider the subgraphs of $G$ which are $S(k,j)$ and, in the case where $j$ is odd, the subgraphs $H$ of $G$ with $\left<H\right>\simeq\mathbb{Z}_2$.

So, suppose that $H$ is a subgraph of $G$ with $\left<H\right>\simeq\mathbb{Z}_n$ for some $n\in S_0(k,j)\cup S_{-1}(k,j)\cup S_1(k,j)\cup\{2\}$, where $n=2$ only if $j$ is odd. Recall that $\mathbb{Z}_k\simeq\Gamma$ with the isomorphism mapping $1$ to $\gamma$. So the group $\left<H\right>$ is the group $\Gamma'$ of order $n$ generated by $\gamma^{k/n}$. Moreover, $j\equiv i(\bmod n)$, where $i=0$ if $n\in S_0(k,j)$ and $i=\pm1$ otherwise. Hence, there is some integer $m\geq1$ such that $j=i+mn$. 

Let $\rho_i'$ be the irreducible representation of $\Gamma'$ which sends the generator $\gamma^{k/n}$ to $\exp\Big(\frac{2\pi i\sqrt{-1}}{n}\Big)$, and let $\tau':\Gamma'\rightarrow\mathcal{C}_n$ be the homomorphism which sends $\gamma^{k/n}$ to the rotation $C_n$. Let $e=(u,v)\in E(H)$. Then, $\psi(e)=\gamma^{sk/n}$ for some $0\leq s\leq n-1$. Since $j=i+mn$, we have
\begin{equation*}
    \rho_j(\psi(e))=\textrm{exp}\Big(\frac{2\pi(i+mn)\sqrt{-1}}{k}\frac{sk}{n}\Big)=\textrm{exp}\Big(\frac{2\pi i\sqrt{-1}}{n}s\Big)\textrm{exp}\Big(2\pi ms\sqrt{-1}\Big)=\textrm{exp}\Big(\frac{2\pi i\sqrt{-1}}{n}s\Big)=\rho'_i(\psi(e)).
\end{equation*}
Thus, we have 
\begin{equation*}
    p_u-\tau(\psi(e))p_v=p_u-\tau'(\psi(e))p_v
\end{equation*}
and
\begin{equation*}
    \rho_j(\psi(e))(p_v-\tau(\psi(e))^{-1}p_u)=\rho'_i(\psi(e))(p_v-\tau'(\psi(e))^{-1}p_u).
\end{equation*}

(See also the proofs of Lemma 5.4 in \cite{Ikeshita} and Lemma 6.13 in \cite{bt2015} for the free action case.) Hence, $O_j(H,\psi,p)$ is the $\rho_i'$-orbit matrix of a $\mathcal{C}_n$-symmetric framework. If $i\equiv0\bmod n$, this implies that $H$ must satisfy $|E(H)|\leq2|\overline{V(H)}|-1$ by (1) and by Proposition~\ref{necessary conditions for 2-fold rotation}(1). If $i\equiv\pm1\bmod n$, this implies that $H$ must satisfy $|E(H)|\leq2|V(H)|-2$ when $n=2$ (see Proposition~\ref{necessary conditions for 2-fold rotation}(2)), and it must satisfy $|E(H)|\leq2|\overline{V(H)}|+|V_0(H)|-1$ when $k\geq3$, by (2). This gives the result.
\end{proof}

\section{Gain graph extensions}\label{sec:red}
When proving the sufficiency of the sparsity conditions, we will use an inductive argument on the order of the gain graph. To do so, we introduce certain operations on gain graphs, called \textit{extensions}. As the name suggests, extensions add vertices to the gain graph. Each extension has an inverse operation, called \textit{reduction}. For the inductive arguments to hold, extensions must maintain the symmetry-generic isostatic properties of a gain graph, and reductions must maintain the relevant sparsity counts. In this section, we will consider the extension operations. The corresponding reductions will be considered in Section~\ref{sec:ext}.

Throughout this section, we let $(G,\psi)$ be a $\Gamma$-gain graph. We will construct a $\Gamma$-gain graph $(G',\psi')$ by applying an extension to $(G,\psi)$. Depending on the extension we are working with, we may apply restrictions on the order of $\Gamma$, in which case we will specify it.

\subsection{Adding a vertex of degree 1}

The following move will only be used to study the infinitesimal rigidity of $\mathcal{C}_s$-symmetric frameworks.
\begin{definition}
\label{defn: fix-0-extension}
A \textit{fix-0-extension} chooses a vertex $u\in V(G)$, adds a new fixed vertex $v$ to $V_0(G)$, and connects it to $u$ with a new edge $e$. We label $e$ arbitrarily, unless $u\in V_0(G)$, in which case $\psi'(e)=\textrm{id}$, and we let $\psi'(f)=\psi(f)$ for all $f\in E(G)$. The inverse operation of a fix-0-extension is called a \textit{fix-0-reduction}. See Figure~\ref{fix-0-extension image} for an illustration.
\end{definition}

\begin{figure}[H]
    \centering
    \begin{tikzpicture}
  [scale=.9,auto=left]
  \draw (2.75,0) circle (1.5cm);
  \draw(2.75,0) circle (0.15cm);
  \node[below] at (2.75,-0.15){$u$};
  \node[left] at (1.5, 1.2) {$(G,\psi)$};

  \draw[->, very thick] (4.5,0.25) -- (7.5,0.25);
  \node[above] at (6, 0.25) {\textit{Extension}};
  \draw[->, very thick] (7.5,-0.25) -- (4.5,-0.25);
  \node[below] at (6, -0.25) {\textit{Reduction}};

  \draw(9.25,0) circle (0.15cm);  
  \draw (9.25,0) circle (1.5cm);
  \draw[fill=black] (9.25,2) circle (0.15cm);
  \node[below] at (9.25,-0.15){$u$};
  \node[right] at (9.4, 2){$v$};
  \draw[->] (9.25, 2) -- (9.25, 0.15);
  \node[left] at (9.25,0.85) {$\alpha$};
  \node[left] at (12.3, 1.2) {$(G',\psi')$};
\end{tikzpicture}
    \caption{Example of a fix-0-extension, where $u$ is free and $\alpha$ is an arbitrary gain.}
    \label{fix-0-extension image}
\end{figure}
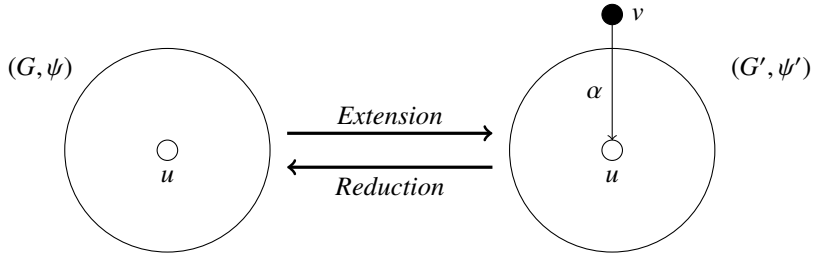

\begin{lemma}
\label{lemma: fix-0-extensions don't change the rank}
For $0\leq j\leq1$, let $(G,\psi,p)$ be a $\rho_j$-symmetrically isostatic $\mathcal{C}_s$-gain framework. Suppose $(G',\psi')$ is obtained by applying a fix-0-extension to $(G,\psi)$. Suppose further that, whenever $j=1$, the fix-0-extension from which we obtain $(G',\psi')$ connects the new fixed vertex to a free vertex. Then there is a map $p':V(G')\rightarrow\mathbb{R}^2$ such that $(G',\psi',p')$ is a $\rho_j$-symmetrically isostatic $\mathcal{C}_s$-gain framework. 
\end{lemma}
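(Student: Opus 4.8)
The plan is to proceed by producing an explicit configuration $p'$ that extends $p$ and then verifying, via the orbit matrix, that rigidity is preserved. First I would observe that since we are adding a single fixed vertex $v$ of degree $1$, the orbit matrix $O_j(G',\psi',p')$ is obtained from $O_j(G,\psi,p)$ by adding columns for $v$ (there are $c_v^j$ of them, and for $\mathcal{C}_s$ this is $1$ for both $j=0$ and $j=1$, with $M_v^j$ equal to $(0\ 1)^T$ when $j=0$ and $(1\ 0)^T$ when $j=1$) together with one new row for the edge $e=(u,v)$. The new row has its only nonzero entries in the columns of $u$ and of $v$ (the latter being a single entry). The dimension count works out: a fix-$0$-extension adds one edge, one fixed vertex, and (for $\mathcal{C}_s$) one column to each of $O_0$ and $O_1$; since $(G,\psi)$ was $(2,1,3,1)$-gain-tight (resp. $(2,1,3,2)$-gain-tight) by Proposition~\ref{Necessary conditions for reflection}, $(G',\psi')$ satisfies the corresponding count with equality, so it remains only to verify that a generic choice of $p_v$ makes $O_j(G',\psi',p')$ have full rank, i.e. row-independent with the correct rank.

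The key step is to choose $p'$ correctly. I would set $p'_w = p_w$ for all $w \in V(G)$, and choose $p'_v$ on the mirror line (i.e.\ on the $y$-axis, since $v$ is fixed), generically. Then I argue by a rank/nullity argument on the kernel side, which is cleaner: a vector $m' = (m, m_v)$ lies in $\ker O_j(G',\psi',p')$ iff $m \in \ker O_j(G,\psi,p)$ extended so that the new edge constraint $[p_u - \tau(\psi'(e))p_v]^T M_u^j m(u) + \rho_j(\psi'(e))[p_v - \tau(\psi'^{-1}(e))p_u]^T M_v^j m_v = 0$ holds. Since $v$ is fixed, $\tau(\gamma)p_v = p_v$, so this reduces to a single scalar equation linear in $m_v$ with coefficient $(p_v - p_u)^T M_v^j$. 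The point is that for generic $p_v$ on the mirror line this coefficient is nonzero (here I would use that $M_v^j$ is a nonzero vector and $p_u$, being a vertex of an affinely spanning framework placed away from $p_v$, is not orthogonal to it for generic choices), so $m_v$ is uniquely determined by $m$. Hence $\dim\ker O_j(G',\psi',p') = \dim\ker O_j(G,\psi,p)$. By Proposition~\ref{prop:trivmotions}(i), this nullity equals $1$ for $j=0$ and (for the anti-symmetric case) the space of $\rho_1$-symmetric trivial motions, and since adding a fixed vertex on the mirror line does not add any new trivial $\rho_j$-symmetric motions, $(G',\psi',p')$ is $\rho_j$-symmetrically isostatic — provided we also check $O_j(G',\psi',p')$ is row-independent, which follows since the new row is nonzero in the $v$-column (generic $p_v$) and that column is zero in all old rows.

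Actually, to keep the exposition uniform with later lemmas, I would phrase the row-independence argument directly: suppose $\sum_{f} \omega_f r_f = 0$ is a row dependence of $O_j(G',\psi',p')$. Restricting attention to the single $v$-column, which is nonzero only in the row $r_e$, forces $\omega_e \cdot (p_v - p_u)^T M_v^j = 0$, hence $\omega_e = 0$ for generic $p_v$. Then $\sum_{f \neq e} \omega_f r_f = 0$ is a row dependence of $O_j(G,\psi,p)$, which is trivial since $(G,\psi,p)$ is $\rho_j$-symmetrically isostatic. So all $\omega_f = 0$. Combined with the kernel dimension count, $(G',\psi',p')$ is $\rho_j$-symmetrically isostatic, and then genericity of $p'$ (or rather, semi-continuity of rank) upgrades this to a $\tau(\Gamma)$-generic realisation if needed — though in fact we only need to exhibit \emph{some} such $p'$, which the explicit construction does.

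The main obstacle I anticipate is handling the case distinctions cleanly, particularly: (a) ensuring the argument covers $u \in V_0(G)$ as well as $u$ free (when $u$ is fixed and $j=1$, the edge $e$ lies between two fixed vertices, both $M_u^1$ and $M_v^1$ are defined as $(1\ 0)^T$, and one must check the scalar coefficient $(p_v-p_u)^T(1\ 0)^T$ is nonzero, which holds since distinct points on the $y$-axis differ in their second coordinate only — wait, this is \emph{zero}! — so this is exactly why the hypothesis "whenever $j=1$, the extension connects to a \emph{free} vertex" is imposed, and I must make explicit use of this hypothesis). And (b) making precise what "generic $p_v$ on the mirror line" buys us: the set of bad $p_v$ is a proper algebraic subset (the zero set of the product of the relevant polynomial coefficients), hence avoidable. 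These are the subtle points; the rest is the routine dimension bookkeeping outlined above.
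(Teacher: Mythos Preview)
Your proposal is correct and follows essentially the same approach as the paper: place $p'_v$ on the mirror line, observe that $O_j(G',\psi',p')$ is obtained from $O_j(G,\psi,p)$ by adding one row and one column, and note that the new row is the only one with a nonzero entry in the new column (which entry is $y_v-y_u$ for $j=0$ and $-x_u$ for $j=1$), so row independence is preserved. The paper's proof is slightly more terse, writing the block structure explicitly rather than via kernel/row-dependence arguments, but the content is identical; in particular your ``wait, this is zero!'' observation for $j=1$ with $u$ fixed is exactly why the hypothesis is there, and the paper uses it in precisely the same way (noting $x_u\neq 0$ since $u$ is free).
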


\begin{proof}
With the same notation as in Definition~\ref{defn: fix-0-extension}, we define $p':V(G')\rightarrow\mathbb{R}^2$ so that $p'_w=p_w$ for all $w\in V(G)$, $p'_v$ lies on the $y$-axis, and the $y$-coordinates of $p'_v,p_u$ differ. Let $p'_v=\begin{pmatrix}0 & y_v\end{pmatrix}$ and $p'_u=\begin{pmatrix}x_u & y_u\end{pmatrix}$. If $j=1$, then $x_u\neq0$ since $u\in\overline{V(G)}$. We may assume that $\psi(e)=\textrm{id}$, by Proposition~\ref{prop: switching maintains rank} and Lemma~\ref{lemma: forests can have identity gain}(i). We have
\begin{center}
$O_0\left(G',\psi',p'\right)=\left(\begin{array}{@{}c|c@{}}
\begin{matrix}
\text{      }y_{v}-y_u
\end{matrix}
& \begin{matrix}
\star
\end{matrix}\\
\cmidrule[0.4pt]{1-2}
\begin{matrix}
0\end{matrix}
& \begin{matrix}
O_0\left(G,\psi,p\right)
\end{matrix}
\end{array}\right)$ \hspace{3mm}and\hspace{3mm}
$O_1\left(G',\psi',p'\right)=\left(\begin{array}{@{}c|c@{}}
\begin{matrix}
\text{ }-x_u
\end{matrix}
& \begin{matrix}
\star
\end{matrix}\\
\cmidrule[0.4pt]{1-2}
\begin{matrix}
\text{  }0\end{matrix}
& \begin{matrix}
O_1\left(G,\psi,p\right)
\end{matrix}
\end{array}\right)$.
\end{center}
For $j=0,1$, we have added one row and one column to $O_j(G,\psi,p)$. Hence, it suffices to show that the rows of the new matrices are independent. This follows from the fact that $y_u\neq y_v$ and $x_u\neq0$. 
\end{proof}

\begin{remark}
    Notice that Lemma~\ref{lemma: fix-0-extensions don't change the rank} does not take into consideration the case where $j=1$ and $u\in V_0(G)$. This is because, by Proposition~\ref{Necessary conditions for reflection} (2), if $(\Tilde{G},\tilde{p})$ is a $\rho_1$-symmetrically isostatic $\mathcal{C}_s$-symmetric framework, then its $\Gamma$-gain graph $(G,\psi)$ is $(2,1,3,2)$-gain tight.  In particular, any two vertices in $V_0(G)$ cannot be joined by an edge. (Recall also Example~\ref{remark: no edges on the symmetry line}.) Hence, when proving the sufficiency of the sparsity conditions for this case, if we apply a fix-0-reduction at a fixed vertex $v\in V_0(G)$, we may assume that the vertex $v$ is adjacent to is free.
\end{remark}

\subsection{Adding a vertex of degree 2}
\begin{definition}
\label{defn: 0-extension}
A \textit{0-extension} chooses two vertices $v_1,v_2\in V(G)$ (we may choose $v_1=v_2$ provided that $v_1\in\overline{V(G)}$) and adds a free vertex $v$, together with two edges $e_1=(v,v_1),e_2=(v,v_2)$. We let $\psi'(e)=\psi(e)$ for all $e\in E(G)$. If $v_1,v_2$ coincide, we choose $\psi'$ such that $\psi'(e_1)\neq\psi'(e_2)$. In all other cases, we label $e_1,e_2$ freely. The inverse operation of a $0$-extension is called a \textit{$0$-reduction}. See Figures~\ref{0-extension image} and~\ref{0-extension image pt.2} for an illustration. 
\end{definition}

\begin{figure}[H]
    \centering
    \begin{tikzpicture}
  [scale=.9,auto=left]
  \draw (2.75,0) circle (1.5cm);
  \draw(2.75,0) circle (0.15cm);
  \node[below] at (2.75,-0.15){$v_1=v_2$};
  \node[left] at (1.5, 1.2) {$(G,\psi)$};

  \draw[->, very thick] (4.5,0.25) -- (7.5,0.25);
  \node[above] at (6, 0.25) {\textit{Extension}};
  \draw[->, very thick] (7.5,-0.25) -- (4.5,-0.25);
  \node[below] at (6, -0.25) {\textit{Reduction}};

  \draw(9.25,0) circle (0.15cm);  
  \draw (9.25,0) circle (1.5cm);
  \draw (9.25,2) circle (0.15cm);
  \node[below] at (9.25,-0.15){$v_1=v_2$};
  \node[right] at (9.4, 2){$v$};
  \draw[->] (9.25, 1.85) .. controls (9,1.115) .. (9.1, 0.13);
  \draw[->] (9.25, 1.85) .. controls (9.5,1.115) .. (9.4, 0.13);
    \node[left] at (9.05,0.85) {$\alpha$};
    \node[right] at (9.4,0.85) {$\beta$};
  \node[left] at (12.3, 1.2) {$(G',\psi')$};
\end{tikzpicture}
    \caption{Example of a 0-extension where $v_1$ and $v_2$ coincide. Here we must  have $\alpha\neq \beta$.} 
    \label{0-extension image}
\end{figure}
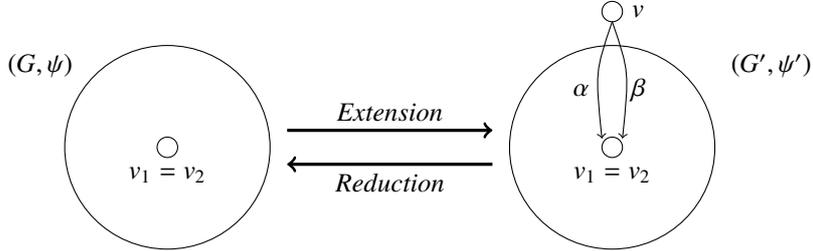

\begin{figure}[H]
    \centering
    \begin{tikzpicture}
  [scale=.9,auto=left]
  \draw (2.75,0) circle (1.5cm);
  \draw(2,0) circle (0.15cm);
  \draw[fill = black](3.5,0) circle (0.15cm);
  \node[below] at (2,-0.15){$v_1$};
  \node[below] at (3.5,-0.15){$v_2$};
  \node[left] at (1.5, 1.2) {$(G,\psi)$};

  \draw[->, very thick] (4.5,0.25) -- (7.5,0.25);
  \node[above] at (6, 0.25) {\textit{Extension}};
  \draw[->, very thick] (7.5,-0.25) -- (4.5,-0.25);
  \node[below] at (6, -0.25) {\textit{Reduction}};

  \draw(8.5,0) circle (0.15cm);
  \draw[fill = black](10,0) circle (0.15cm);
  \draw (9.25,0) circle (1.5cm);
  \draw (9.25,2) circle (0.15cm);
  \node[below] at (8.5,-0.15){$v_1$};
  \node[below] at (10,-0.15){$v_2$};
  \node[right] at (9.4, 2){$v$};
  \draw[->] (9.28, 1.88) -- (9.9, 0.15);
  \draw[->] (9.22, 1.88) -- (8.6, 0.15);
  \node[left] at (8.9,0.85) {$\alpha$};
  \node[right] at (9.65,0.85) {$\beta$};
  \node[left] at (12.3, 1.2) {$(G',\psi')$};
\end{tikzpicture}
    \caption{Example of a 0-extension where $v_1$ is free and $v_2$ is fixed. Here the gains $\alpha$ and $\beta$ are arbitrary (the cases where $v_1$ and $v_2$ are both free or both fixed are also allowed).}
    \label{0-extension image pt.2}
\end{figure}
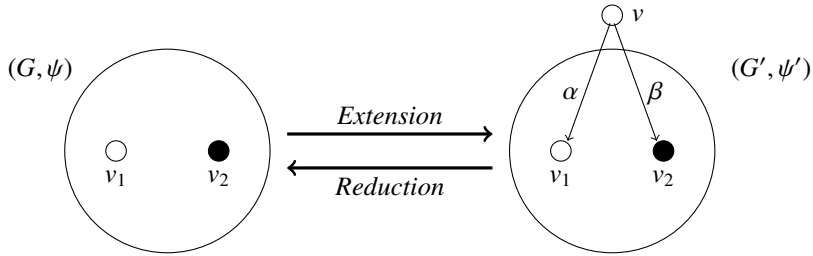

Defining $p':V(G')\rightarrow\mathbb{R}^2$ such that $p'_w=p_w$ for all $w\in V(G)$ and $p'_v$ does not lie on the line through $\tau(\psi(e_1))p(v_1)$ and $\tau(\psi(e_2))p(v_2)$, we can prove the following result in a similar way  as we proved Lemma~\ref{lemma: fix-0-extensions don't change the rank}. 

\begin{lemma}
\label{lemma 0-extension maintain rank}
Given an irreducible representation $\rho$ of $\Gamma$ and a faithful representation $\tau:\Gamma\rightarrow O(\mathbb{R}^2)$, let $(G,\psi,p)$ be a $\rho$-symmetrically isostatic $\tau(\Gamma)$-gain framework. If $(G',\psi')$ is obtained by applying a 0-extension to $(G,\psi)$, then there is a map $p':V(G')\rightarrow\mathbb{R}^2$ such that $(G',\psi',p')$ is a $\rho$-symmetrically isostatic $\tau(\Gamma)$-gain framework.
\end{lemma}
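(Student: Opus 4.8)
The plan is to mimic the proof of Lemma~\ref{lemma: fix-0-extensions don't change the rank}. Define $p'\colon V(G')\to\mathbb R^2$ by $p'_w=p_w$ for all $w\in V(G)$ and choose the new free vertex $v$ generically: so that $p'_v$ does not lie on the line through the points $\tau(\psi'(e_1))p_{v_1}$ and $\tau(\psi'(e_2))p_{v_2}$, so that the covering framework $(\tilde G',\tilde p')$ is still a valid $\tau(\Gamma)$-symmetric framework (i.e. $\tilde p'$ is injective) and, when $\tau(\Gamma)=\mathcal C_k$ with $k\ge3$, so that $p'_v\ne 0$. Each of these is an open dense condition, so such a $p'_v$ exists. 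Using Proposition~\ref{prop: switching maintains rank} and Lemma~\ref{lemma: forests can have identity gain}(i), since $\{e_1\}$ (and $\{e_1,e_2\}$ when $v_1\ne v_2$) is a forest in $G'$, we may additionally assume $\psi'(e_1)=\mathrm{id}$, and $\psi'(e_2)=\mathrm{id}$ whenever $v_1\ne v_2$. Note also that, as $v$ is free, the edges $e_1,e_2$ are free, so they contribute a row to $O_j(G',\psi',p')$ for every $j$.

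Next I would record the key geometric observation that $\tau(\psi'(e_1))p_{v_1}\ne\tau(\psi'(e_2))p_{v_2}$, by a short case check. If $v_1\ne v_2$, these are the images under the injective map $\tilde p$ of two distinct vertices of $\tilde G$, hence distinct. If $v_1=v_2$, then by Definition~\ref{defn: 0-extension} this vertex is free and $\psi'(e_1)\ne\psi'(e_2)$; equality of the two points would force $p_{v_1}$ to be fixed by the non-trivial element $\tau(\psi'(e_2)^{-1}\psi'(e_1))\ne\mathrm{id}$, which is impossible since a free vertex is placed neither on the mirror line (for $\mathcal C_s$) nor at the origin (for $\mathcal C_k$). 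Consequently the condition we imposed on $p'_v$ says precisely that the $2\times2$ matrix
\[
C=\begin{pmatrix}(p'_v-\tau(\psi'(e_1))p_{v_1})^{T}\\[1mm](p'_v-\tau(\psi'(e_2))p_{v_2})^{T}\end{pmatrix}
\]
is invertible.

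Then I would write $O_j(G',\psi',p')$ in block-triangular form by listing the rows $e_1,e_2$ and the two columns of $v$ first: old edges are not incident with $v$, so they contribute zeros in the $v$-columns; the $v$-rows/$v$-columns block equals $C$ (using $M^j_v=I_2$, as $v$ is free); and the remaining block is $O_j(G,\psi,p)$. Since $(\tilde G,\tilde p)$ is $\rho$-symmetrically isostatic, $O_j(G,\psi,p)$ has independent rows (by Corollary~\ref{corollary: rank of orbit matrix and blocks are equal}). Invertibility of $C$ now gives row independence of $O_j(G',\psi',p')$: any left null vector must vanish on the $e_1,e_2$-coordinates by inspecting the $v$-columns (using $C^{-1}$), and then vanishes everywhere by row independence of $O_j(G,\psi,p)$. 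Finally, the $0$-extension adds exactly two rows and two columns, so the nullity of $O_j(G',\psi',p')$ equals that of $O_j(G,\psi,p)$; since the space of trivial $\rho_j$-symmetric motions has the same dimension for $(\tilde G',\tilde p')$ as for $(\tilde G,\tilde p)$ — it is governed by Proposition~\ref{prop:trivmotions}, and $p'(V(\tilde G'))$ still affinely spans $\mathbb R^2$ — and lies in the kernel, all $\rho_j$-symmetric infinitesimal motions of $(\tilde G',\tilde p')$ are trivial. Hence $(G',\psi',p')$ is $\rho$-symmetrically isostatic.

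The main (indeed the only) delicate point is the case distinction guaranteeing $\tau(\psi'(e_1))p_{v_1}\ne\tau(\psi'(e_2))p_{v_2}$, in particular the sub-case $v_1=v_2$, which needs both the constraint $\psi'(e_1)\ne\psi'(e_2)$ from the definition of a $0$-extension and the fact that free vertices are never fixed by a non-trivial group element; once this is established, everything else — the block structure, the rank count, and the invariance of the trivial-motion dimension — runs exactly as in the proof of Lemma~\ref{lemma: fix-0-extensions don't change the rank}.
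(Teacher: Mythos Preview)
Your proposal is correct and follows exactly the approach the paper indicates: choose $p'_v$ off the line through $\tau(\psi'(e_1))p_{v_1}$ and $\tau(\psi'(e_2))p_{v_2}$, obtain a block-triangular orbit matrix with invertible $2\times2$ top-left block, and conclude as in Lemma~\ref{lemma: fix-0-extensions don't change the rank}. In fact you supply more detail than the paper does---in particular the case check ensuring $\tau(\psi'(e_1))p_{v_1}\neq\tau(\psi'(e_2))p_{v_2}$ when $v_1=v_2$, which the paper leaves implicit; your switching step to trivialise the gains on $e_1,e_2$ is harmless but unnecessary, since the invertibility of $C$ holds regardless.
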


The following extension will only be used to study the infinitesimal rigidity of $\mathcal{C}_s$-symmetric frameworks.

\begin{definition}
\label{fix-1-extension defn.}
  A \textit{fix-1-extension} chooses two distinct vertices $u_1,u_2\in E(G)$ and an edge $e\in E(G)$ which can either be $(u_1,u_2)$ or, if $u_1$ (respectively, $u_2$) is free, a loop at $u_1$ (respectively, $u_2$). It removes $e$, and adds a fixed vertex $v$, together with the edges $e_1=(v,u_1),e_2=(v,u_2)$. We label $e_1$ and $e_2$ freely, and we let $\psi'(f)=\psi(f)$ for all $f\in E(G)$. The inverse operation of a fix-1-extension is called a \textit{fix-1-reduction}.  See Figure~\ref{fix-1-extension image} for an illustration.
\end{definition}

\begin{figure}[H]
    \centering
    \begin{tikzpicture}
  [scale=.9,auto=left]
  \draw (2.75,0) circle (1.5cm);
  \draw(2,0) circle (0.15cm);
  \draw(3.5,0) circle (0.15cm);
  \node[below] at (2,-0.15){$u_1$};
  \node[below] at (3.5,-0.15){$u_2$};
  \draw[->] (2.15,0) -- (3.35,0);
  \node[left] at (1.5, 1.2) {$(G,\psi)$};

  \draw[->, very thick] (4.5,0.25) -- (7.5,0.25);
  \node[above] at (6, 0.25) {\textit{Extension}};
  \draw[->, very thick] (7.5,-0.25) -- (4.5,-0.25);
  \node[below] at (6, -0.25) {\textit{Reduction}};

  \draw(8.5,0) circle (0.15cm);
  \draw(10,0) circle (0.15cm);
  \draw (9.25,0) circle (1.5cm);
  \draw[fill=black] (9.25,2) circle (0.15cm);
  \node[below] at (8.5,-0.15){$u_1$};
  \node[below] at (10,-0.15){$u_2$};
  \node[right] at (9.4, 2){$v$};
  \draw[->] (9.28, 1.88) -- (9.9, 0.15);
  \draw[->] (9.22, 1.88) -- (8.6, 0.15);
    \node[left] at (8.85,0.85) {$\alpha$};
    \node[right] at (9.65,0.85) {$\beta$};
  \node[left] at (12.3, 1.2) {$(G',\psi')$};
\end{tikzpicture}
    \caption{Example of a fix-1-extension, where $\alpha$ and $\beta$ are arbitrary gains. The vertices $u_1,u_2$ are allowed to be fixed, although for a $\rho_1$-symmetrically isosatic framework, there is no edge joining fixed vertices 
    (recall  Example~\ref{remark: no edges on the symmetry line}).} 
    \label{fix-1-extension image}
\end{figure}
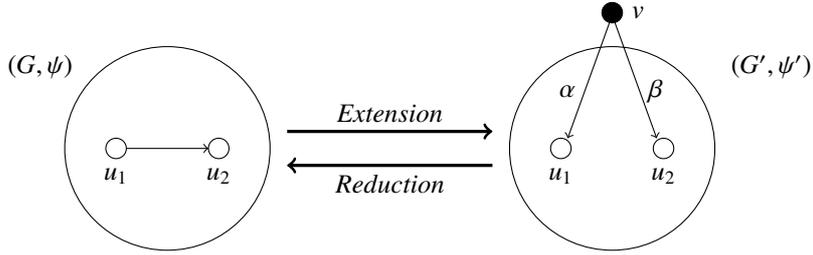

\begin{lemma}
\label{lemma fix-1-extension maintain rank}
Let $\Gamma=\left<\gamma\right>$ be the cyclic group of order 2, $0\leq j\leq1$, and let $(G,\psi,p)$ be a $\rho_j$-symmetrically isostatic $\mathcal{C}_s$-gain framework. Let $(G',\psi')$ be obtained by applying a fix-1-extension to $(G,\psi)$. With the same notation as in Definition~\ref{fix-1-extension defn.}, assume that if $e=(u_1,u_2)$, then the line through $p(u_1)$ and $\tau(\psi(e))p(u_2)$ and the line through $\sigma p(u_1)$ and $\sigma\tau(\psi(e))p(u_2)$ meet in at least one point. Assume further that if $e$ is a loop, then $p(u_1),p(u_2)$ do not share the same $y$-coordinate. Then there is a map $p':V(G')\rightarrow\mathbb{R}^2$ such that $(G',\psi',p')$ is a $\rho_j$-symmetrically isostatic $\mathcal{C}_s$-gain framework.
\end{lemma}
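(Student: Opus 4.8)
\medskip
\noindent\emph{Proof plan.}
The plan is to reduce the statement to a single row‑independence assertion and then to prove that assertion by the symmetric analogue of the classical $1$‑extension (edge‑split) argument. First I would record the bookkeeping: by Definition~\ref{definition of orbit matrices}, passing from $(G,\psi)$ to $(G',\psi')$ replaces $O_j(G,\psi,p)$ by the matrix obtained by deleting the row of $e$, adjoining the rows of $e_1=(v,u_1)$ and $e_2=(v,u_2)$, and adjoining one new column for the fixed vertex $v$ (whose block $M^j_v$ has rank $1$). Thus both the number of rows and the number of columns grow by exactly $1$; note in passing that, since a loop at a free vertex contributes no row to $O_1$ while $e_1,e_2$ contribute two rows to it, the case where $e$ is a loop can only occur when $j=0$, and for $j=1$ the vertices $u_1,u_2$ must both be free (otherwise $e_1$ or $e_2$ joins two fixed vertices, which by Proposition~\ref{Necessary conditions for reflection}(2) rules out $\rho_1$‑isostaticity). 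Since the dimension of the space of trivial $\rho_j$‑symmetric motions depends only on the group and on $j$ (Proposition~\ref{prop:trivmotions}) and is unchanged, and since by Corollary~\ref{corollary: rank of orbit matrix and blocks are equal} row‑independence of $O_j(G',\psi',p')$ is equivalent to $\tilde R_j(\tilde G',\tilde p')$ having no non‑trivial row dependence, it suffices to produce one $p'$ for which $O_j(G',\psi',p')$ has independent rows. Finally, row‑independence is an open condition and the hypotheses survive a small generic perturbation of $p|_{V(G)}$ that keeps $(G,\psi)$ $\rho_j$‑isostatic, so I may assume $p$ is in general position; in particular, when $e=(u_1,u_2)$ I may assume $p(u_1)$ and $p(u_2)$ have different $y$‑coordinates.

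For the construction I would keep $p'_w=p_w$ for $w\in V(G)$ and, using that the gains of $e_1,e_2$ (incident to the fixed vertex $v$) do not affect any orbit‑matrix rank, take $\psi'(e_1)=\mathrm{id}$, $\psi'(e_2)=\psi(e)$ and $\psi'(f)=\psi(f)$ otherwise. Let $\ell$ denote the supporting line of a removed edge of $\tilde G$: the line through $p(u_1)$ and $\tau(\psi(e))p(u_2)$ when $e=(u_1,u_2)$, and the (horizontal) line through $p(u_1)$ and $\sigma p(u_1)$ when $e$ is a loop at $u_1$. The stated hypothesis rules out the only obstruction to $\ell$ meeting the mirror line, namely $\ell$ being a vertical line distinct from the mirror (then the two lines of the hypothesis would be parallel and disjoint), so I place $p'_v$ at a point of $\ell$ on the mirror line — a single point unless $\ell$ is the mirror itself, in which case a generic point of the mirror — chosen distinct from every $p_w$. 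Because $p'_v$ lies on the mirror we have $\sigma p'_v=p'_v\in\sigma\ell$, so $v$ is collinear with the two endpoints of each of the removed cover‑edges of $\tilde G$.

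The heart of the argument is then the transfer of self‑stresses, exactly as in the non‑symmetric $1$‑extension proof. Suppose $\omega$ is a non‑trivial row dependence of $O_j(G',\psi',p')$, i.e.\ a nonzero self‑stress of $(\tilde G',\tilde p')$ that is constant on edge‑orbits up to the phases $\rho_j$. Equilibrium at $v$, after the contributions of an orbit and its $\sigma$‑image are combined, yields a single scalar relation between $\omega_{e_1}$ and $\omega_{e_2}$. I would then define $\omega'$ on $(\tilde G,\tilde p)$ to agree with $\omega$ on all edges not incident with $v$ and to carry stress $\eta$ (and its $\rho_j$‑phase) on the restored cover‑edge(s) of $e$; checking equilibrium at the endpoints of the restored edge(s) forces $\eta=a_1\omega_{e_1}=a_2\omega_{e_2}$, where $a_1,a_2$ are the nonzero ratios determined by $p'_v$ lying on $\ell$ (nonzero because $p'_v$, being on the mirror, differs from the off‑mirror points $p(u_1)$ and $\tau(\psi(e))p(u_2)$, or because $\ell$ is not vertical). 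A short computation shows that the equilibrium‑at‑$v$ relation is precisely what makes the two demands $a_1\omega_{e_1}=\eta=a_2\omega_{e_2}$ compatible (in the loop case it instead forces $\omega_{e_2}=0$ directly, this being where the hypothesis $y_{u_1}\neq y_{u_2}$ is used, and $\eta=a_1\omega_{e_1}$). Equilibrium at every other vertex is inherited from $\omega$, so $\omega'$ is a nonzero‑or‑zero $\rho_j$‑symmetric self‑stress of $(\tilde G,\tilde p)$; since $(G,\psi,p)$ is $\rho_j$‑symmetrically isostatic, $\omega'=0$, whence $\eta=0$, $\omega$ vanishes off the edges at $v$, and $\omega_{e_1}=\omega_{e_2}=0$ (as $a_1,a_2\neq0$). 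Thus $\omega=0$, $O_j(G',\psi',p')$ has independent rows, and by the first paragraph $(G',\psi',p')$ is $\rho_j$‑symmetrically isostatic.

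I expect the main obstacle to be geometric rather than algebraic. One must guarantee that the mirror actually meets $\ell$ (this is exactly the content of the hypothesis on the two lines), that $p'_v$ can be chosen off all existing vertices, and — in the case $e=(u_1,u_2)$ with $u_1,u_2$ free and $j=0$ — that $p(u_1)$ and $p(u_2)$ sit at different heights, since otherwise the unique candidate position on the mirror along $\ell$ makes $v$ collinear with all four of its neighbours, producing a non‑trivial $\rho_0$‑symmetric flex and destroying isostaticity. This last condition is implicit in the statement but holds automatically when $p$ is $\tau(\Gamma)$‑generic, which is the situation in which the lemma is applied, and can in any case be arranged by a perturbation that leaves the other hypotheses intact; once these non‑degeneracies are in place the argument above goes through uniformly in all the sub‑cases (loop vs.\ non‑loop edge, free vs.\ fixed endpoints, $j=0$ vs.\ $j=1$).
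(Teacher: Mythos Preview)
Your argument is correct and reaches the same conclusion as the paper's, using the same geometric construction: place $p'_v$ on the mirror line at its intersection with the supporting line $\ell$ of the deleted cover edge. Where the proofs differ is in the verification step. The paper writes out $O_j(G',\psi',p')$ explicitly in coordinates, scales the two new rows by $(x_1+(-1)^t x_2)/x_i$, and performs a single row operation to recover the deleted row of $e$ inside the lower-right block, leaving one extra row whose $v$-column entry is shown to be nonzero; row independence then follows directly from that of $O_j(G,\psi,p)$. You instead argue dually via self-stresses: a row dependence $\omega$ of $O_j(G',\psi',p')$ is resolved at $u_1,u_2$ (using collinearity of $p'_v$, $p(u_1)$, $\tau(\psi(e))p(u_2)$) into a $\rho_j$-symmetric self-stress $\omega'$ of $(G,\psi,p)$, which must vanish by isostaticity. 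The two arguments are exact duals—the paper's row operation is precisely your stress transfer written in coordinates—so neither is more general, but your version is more conceptual and uniform across sub-cases, while the paper's is fully explicit. You also correctly flag the degenerate case $y_1=y_2$ with $j=0$ (where equilibrium at $v$ gives no constraint, equivalently the paper's leftmost-column entry vanishes) and dispose of it by a genericity perturbation; the paper's justification at that point is slightly loose for $j=0$, so your treatment is if anything more careful.
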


\begin{proof}
    Throughout the proof, we use the same notation as that in Definition~\ref{fix-1-extension defn.} and, for $1\leq i\leq2$, we let $x_i$ and $y_i$ be, respectively, the $x$-coordinate and $y$-coordinate of $p(u_i)$. We let $H$ be the subgraph obtained from $G$ by removing $e$. Since $v$ is fixed, we may assume that $\psi(e_1)=\psi(e_2)=\text{id}$.
    
    We first show the result holds when $e$ is a loop. Assume, without loss of generality, that $u_1$ is free and that $e$ is a loop at $u_1$, and notice that $\psi(e)=\gamma$. By assumption, $y_1-y_2\neq0$. Moreover, since $(G,\psi)$ is $\rho_j$-symmetrically isostatic and $G$ contains a loop edge, we know that $j=0$ (recall Example~\ref{remark: no edges on the symmetry line}). Define $p':V(G')\rightarrow\mathbb{R}^2$ such that $p'_u=p_u$ for all $u\in V(G)$ and $p'_v$ be the mid-point of the segment between $p(u_1)$ and $\sigma p(u_1)$. Then, $p'_v$ lies on the $y$-axis and has $y$-coordinate $y_1$, so that
    \begin{equation*}
        O_0(G',\psi',p')=\left(\begin{array}{@{}c|c@{}}
\hspace{2.5mm}\begin{matrix}
0\\
y_1-y_2
\end{matrix}
& \begin{matrix}
x_1 & 0 &  0\\
0 & 0 & [p(u_2)-p'_v]^TM_{u_2}^0
\end{matrix}\\
\cmidrule[0.4pt]{1-2}
\hspace{2.5mm}\begin{matrix}
0\end{matrix}
& \begin{matrix}
O_0\left(H,\psi|_{E(H)},p|_{V(H)}\right)
\end{matrix}
\end{array}\right).
    \end{equation*}
    Multiplying the first row by $4$, we obtain the row corresponding to $e$ which, added to the bottom right block, forms $O_0(G,\psi,p)$. Since $O_0(G',\psi',p')$ is obtained by adding one row and one column to $O_0(G,\psi,p)$, it suffices to show that the additional row does not add a dependence. This follows from the fact that $y_1-y_2\neq0$. Hence, the result holds whenever $e$ is a loop.

    Now, assume that $e=(u_1,u_2)$. Let $t:=0$ if $\psi(e)=\gamma$ and $t:=1$ if $\psi(e)=\textrm{id}$. Since the line through $p(u_1)$ and $\tau(\psi(e))p(u_2)$ and the line through $\sigma p(u_1)$ and $\sigma\tau(\psi(e))p(u_2)$ meet, they must meet in a point $P$ that lies on the $y$-axis. Simple calculations show that the $y$-coordinate of $P$ is 
    \begin{equation}
    \label{values of P}
        y=-\frac{y_1-y_2}{x_1+(-1)^{t}x_2}x_1+y_1=(-1)^{t+1}\frac{y_2-y_1}{x_1+(-1)^{t}x_2}x_2+y_2.
    \end{equation}
Define $p':V(G')\rightarrow\mathbb{R}^2$ such that $p'_u=p_u$ for all $u\in V(G)$ and $p'_v=P$. Then, we have

\begin{center}
    $O_j(G',\psi',p')=\left(\begin{array}{@{}c|c@{}}
\hspace{2.5mm}\begin{matrix}
\begin{pmatrix}
-x_1 & y-y_1 
\end{pmatrix}M_v^j\\
\begin{pmatrix}
-x_2 & y-y_2 
\end{pmatrix}M_v^j
\end{matrix}
& \begin{matrix}
[p(u_1)-P]^TM_{u_1}^j &  0\\
0 & [p(u_2)-P]^TM_{u_2}^j
\end{matrix}\\
\cmidrule[0.4pt]{1-2}
\hspace{2.5mm}\begin{matrix}
0\end{matrix}
& \begin{matrix}
O_j\left(H,\psi|_{E(H)},p|_{V(H)}\right)
\end{matrix}
\end{array}\right)$.
\end{center}

So, multiplying the row corresponding to $e_i$ by $\frac{x_1+(-1)^{t}x_2}{x_i}$ for $1\leq i\leq2$, and using (\ref{values of P}), we see that $O_j(G',\psi',p')$ is
\begin{center}
$\left(\begin{array}{@{}c|c@{}}
\begin{matrix}
\begin{pmatrix}
-x_1+(-1)^{t+1}x_2 & y_2-y_1 
\end{pmatrix}M_v^j\\
\begin{pmatrix}
-x_1+(-1)^{t+1}x_2 & (-1)^{t+1}(y_2-y_1) 
\end{pmatrix}M_v^j
\end{matrix}
& \begin{matrix}
x_1+(-1)^{t}x_2 & y_1-y_2 & 0 & 0\\
0 & 0 & x_1+(-1)^{t}x_2 & (-1)^{t}(y_2-y_1)
\end{matrix}\\
\cmidrule[0.4pt]{1-2}
\begin{matrix}
0\end{matrix}
& \begin{matrix}
O_j\left(H,\psi|_{E(H)},p|_{V(H)}\right)
\end{matrix}
\end{array}\right)$
\end{center}
where the first column corresponding to $u_1$ (respectively, $u_2$) in $O_0(G',\psi',p')$ vanishes if $u_1$ (respectively, $u_2$) is fixed, and the second column corresponding to $u_1$ (respectively, $u_2$) in $O_1(G',\psi',p')$ vanishes if $u_1$ (respectively, $u_2$) is fixed. Apply the following row operations: if $j=t=0$, add the second row to the first; in all other cases, subtract the second row from the first. Then, we obtain the row corresponding to $e$ which, added to the bottom right block, forms $O_j(G,\psi,p)$. 
Similarly as in the case where $e$ is a loop, it suffices to show that the second row does not add a dependence to $O_j(G,\psi,p)$. This follows from the fact that the line through $p(u_1)$ and $\tau(\psi(e))p(u_2)$ and the line through $\sigma p(u_1)$ and $\sigma\tau(\psi(e))p(u_2)$ meet at a point, which implies that the entry in the leftmost column is not zero.
\end{proof}

\subsection{Adding a vertex of degree 3}
\begin{definition}
\label{defn: loop-1-extension}
A \textit{loop-1-extension} adds a free vertex $v$ to $V(G)$ together with an edge $e=(v,u)$ for some $u\in V(G)$ and a loop $e_{L}=(v,v)$. We let $\psi'(f)=\psi(f)$ for all $f\in E(G)$. $\psi(e_{L})$ can be any non-identity element of $\Gamma$ and $\psi(e)$ can be chosen freely. The inverse operation of a loop-1-extension is called a \textit{loop-1-reduction}. See Figure~\ref{loop-1-extension image} for an illustration.
\end{definition}

\begin{figure}[H]
    \centering
    \begin{tikzpicture}
  [scale=.9,auto=left]
  \draw (2.75,0) circle (1.5cm);
  \draw(2.75,0) circle (0.15cm);
  \node[below] at (2.75,-0.15){$u$};
  \node[left] at (1.5, 1.2) {$(G,\psi)$};

  \draw[->, very thick] (4.5,0.25) -- (7.5,0.25);
  \node[above] at (6, 0.25) {\textit{Extension}};
  \draw[->, very thick] (7.5,-0.25) -- (4.5,-0.25);
  \node[below] at (6, -0.25) {\textit{Reduction}};

  \draw(9.25,0) circle (0.15cm);  
  \draw (9.25,0) circle (1.5cm);
  \draw (9.25,2) circle (0.15cm);
  \node[below] at (9.25,-0.15){$u$};
  \node[right] at (9.4, 2){$v$};
  \draw[->] (9.25, 1.85) -- (9.25, 0.15);
  \node[left] at (9.25,0.85) {$\beta$};
  \draw[->] (9.4,2) .. controls (9.8,2.5) and (8.7,2.5) .. (9.1,2); 
  \node[left] at (9,2.2) {$\alpha$};
  \node[left] at (12.3, 1.2) {$(G',\psi')$};
\end{tikzpicture}
    \caption{Example of a loop-1-extension, where $\alpha$ is a non-identity gain, and $\beta$ is an arbitrary gain. (The case where $u$ is fixed is also allowed).}
    \label{loop-1-extension image}
\end{figure}
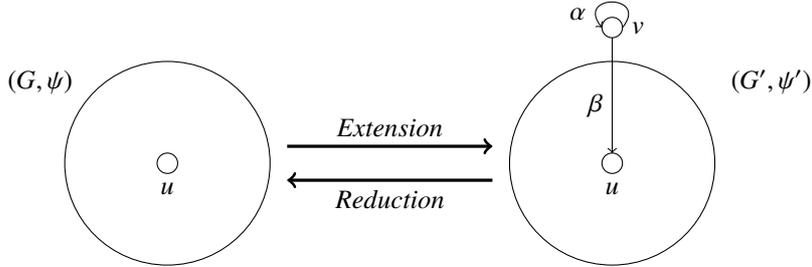

\begin{lemma}
\label{lemma: loop-1-extensions don't change the rank}
Let $\Gamma$ be a cyclic group of order $k\geq2$, $\tau:\Gamma\rightarrow O(\mathbb{R}^2)$ be a faithful representation and $(G,\psi,p)$ be a $\rho_j$-symmetrically isostatic $\tau(\Gamma)$-gain framework for some $0\leq j\leq k-1$ such that $j=0$ when $k=2$. Let  $\gamma\in \Gamma$ correspond to the $k$-fold rotation (or the reflection) under $\tau$. Let $(G',\psi')$ be obtained from $(G,\psi)$ by applying a loop-1-extension. With the same notation as Definition~\ref{defn: loop-1-extension}, let $g:=\psi'(e_L)$ and $h:=\psi'(e)$. Assume the following hold:
\begin{itemize}
    \item[(i)] if $k$ is even and $j$ is odd, then $g\neq\gamma^{k/2}$;
    \item[(ii)] if $\tau(\Gamma)=\mathcal{C}_k$ and $j=0$, then $u$ is free;
    \item[(iii)] if $k\geq4,2\leq j\leq k-2$ and $u$ is fixed, then there is no $n\in S_0(k,j)$ such that $\left<g\right>\simeq\mathbb{Z}_n$.
\end{itemize}
Then there is a map $p':V(G')\rightarrow\mathbb{R}^2$ such that $(G',\psi',p')$ is a $\rho_j$-symmetrically isostatic $\tau(\Gamma)$-gain framework.
\end{lemma}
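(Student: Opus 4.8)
The proof follows the pattern of Lemmas~\ref{lemma: fix-0-extensions don't change the rank},~\ref{lemma 0-extension maintain rank} and~\ref{lemma fix-1-extension maintain rank}. The plan is to place the new vertex at a suitably generic point, keeping $p'_w=p_w$ for $w\in V(G)$ and $\psi'$ as in Definition~\ref{defn: loop-1-extension}, and to show that the two rows that $e$ and $e_L$ contribute to $O_j(G',\psi',p')$ are independent of each other and of the rows of $O_j(G,\psi,p)$. Since $v$ is free, $M^j_v=I_2$, so $c^j_v=2$ and exactly two columns are added; moreover $e$ is always a free edge, and by hypothesis~(i) so is the loop $e_L$ whenever $j$ is odd, so both new rows really occur in $O_j$. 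As the rows of $O_j(G,\psi,p)$ have no entries in the two new $v$-columns, while $r_{e_L}$ has entries only in those columns, after permuting rows and columns the new matrix has the block form
\[
O_j(G',\psi',p')\ \sim\ \begin{pmatrix} A & B\\ 0 & O_j(G,\psi,p)\end{pmatrix},
\]
where $A$ is the $2\times2$ matrix whose rows are the restrictions of $r_{e_L}$ and $r_e$ to the $v$-columns.

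First I would reduce the claim to showing that $A$ can be made nonsingular. If $A$ is nonsingular, then any row dependency of $O_j(G',\psi',p')$, restricted to the $v$-columns, is a left kernel vector of $A$, hence the coefficients of $r_{e_L}$ and $r_e$ vanish; what remains is a row dependency of $O_j(G,\psi,p)$, which is impossible since $(G,\psi,p)$ is $\rho_j$-symmetrically isostatic. So $O_j(G',\psi',p')$ has independent rows, its rank is $\textrm{rank}\,O_j(G,\psi,p)+2$, and — since it has two more columns — its nullity equals $\textrm{null}\,O_j(G,\psi,p)$, i.e.\ the dimension of the space of $\rho_j$-symmetric trivial motions of $(\tilde G,\tilde p)$. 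By Proposition~\ref{prop:trivmotions} this dimension depends only on $\tau(\Gamma)$ and $j$, not on the graph, so it is also the trivial-motion dimension for $(\tilde G',\tilde p')$; by Corollary~\ref{corollary: rank of orbit matrix and blocks are equal} this makes $(G',\psi',p')$ $\rho_j$-symmetrically isostatic. By Proposition~\ref{prop: switching maintains rank} and Lemma~\ref{lemma: forests can have identity gain}(i) we may moreover assume $\psi'(e)=\textrm{id}$ (switching at $v$ leaves the loop gain $g$ unchanged, by definition of switching).

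It remains to choose $p'_v$ so that $A$ is nonsingular. By Definition~\ref{definition of orbit matrices}, the $v$-columns of $r_{e_L}$ are $\big(L\,p'_v\big)^T$, where
\[
L:=(1+\rho_j(g))I_2-\tau(g)-\rho_j(g)\tau(g^{-1}),
\]
and, as $\psi'(e)=\textrm{id}$, those of $r_e$ are $(p'_v-p_u)^T$. Diagonalising $\tau(g)$ over $\mathbb{C}$, the eigenvalues of $L$ are $1+\rho_j(g)-\lambda-\rho_j(g)\lambda^{-1}$ for $\lambda$ a unit-modulus eigenvalue of $\tau(g)$; since $g\neq\textrm{id}$, they are both zero only when $j$ is odd and $g=\gamma^{k/2}$, which~(i) excludes, so $L\neq0$, and they are equal — i.e.\ $L$ is a scalar matrix — precisely when $\rho_j(g)=1$. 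If $u\in\overline{V(G)}$, then $u$ is not on the fixed locus of $\tau(\Gamma)$, so $p_u\neq0$, and a short determinant computation shows that $\{p'_v:\ L\,p'_v\parallel p'_v-p_u\text{ or }L\,p'_v=0\}$ is a proper subvariety, so a generic $p'_v$ (also off the fixed locus and distinct from the existing points) makes $A$ nonsingular. If $u\in V_0(G)$ and $\tau(\Gamma)=\mathcal C_k$, then $p_u$ is the origin, so $r_e$ contributes $(p'_v)^T$ to the $v$-columns and $A$ is nonsingular iff $p'_v$ is not an eigenvector of $L$; this is possible iff $L$ is non-scalar, i.e.\ $\rho_j(g)\neq1$, which the hypotheses guarantee: (ii) forbids $j=0$ with $u$ fixed; for $j\in\{1,k-1\}$ faithfulness of $\rho_j$ forces $\rho_j(g)\neq1$ (and $L\neq0$, using~(i) when $k$ is even and $j$ odd); and for $2\leq j\leq k-2$, $k\geq4$, one checks that $\rho_j(g)=1$ is equivalent to $\langle g\rangle\simeq\mathbb Z_n$ for some $n\in S_0(k,j)$, excluded by~(iii). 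The only remaining fixed-$u$ case, $\tau(\Gamma)=\mathcal C_s$ with $j=0$, has $L=\textrm{diag}(4,0)$, which is non-scalar, so $A$ can again be made nonsingular by a generic choice.

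The main obstacle is this last step: writing down $L$ and checking, via the eigenvalue computation, that hypotheses~(i)--(iii) are exactly what is needed to prevent $A$ from being forced singular — which happens precisely when $u$ contributes no column to the $v$-part of $r_e$ and $L$ is a scalar matrix (equivalently $\rho_j(g)=1$), or $L=0$.
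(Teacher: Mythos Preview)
Your proposal is correct and follows essentially the same approach as the paper: write $O_j(G',\psi',p')$ in block form, show that the $2\times2$ block coming from the two new rows restricted to the $v$-columns can be made nonsingular by analysing the matrix $L=(1+\rho_j(g))I_2-\tau(g)-\rho_j(g)\tau(g^{-1})$, using (i) to ensure $L\neq0$ and (ii)--(iii) to ensure $L$ is non-scalar when $u$ is fixed. The only difference is cosmetic --- you diagonalise $L$ over $\mathbb{C}$ and argue via its eigenvalues, while the paper computes the entries of $L$ explicitly in trigonometric form; one small sloppiness is that your claim ``$L$ is scalar precisely when $\rho_j(g)=1$'' also needs (i) (to rule out $g=\gamma^{k/2}$ with $j$ even giving $\tau(g)=-I_2$), but since (i) is already in force and in that subcase $\rho_j(g)=1$ anyway, the logic is unaffected.
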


\begin{proof}
    With the same notation as in Definition~\ref{defn: loop-1-extension}, let $p':V(G')\rightarrow\mathbb{R}^2$ be defined such that $p'_w=p_w$ for all $w\in V(G)$. We have 
    \begin{center}
$O_j(G',\psi',p')=\left(\begin{array}{@{}c|c@{}}
\begin{matrix}
[I_2+\rho_j(g)I_2-\tau(g)-\rho_j(g)\tau(g^{-1})](p'_v)^T\\
[p_v'-\tau(h)p_u]^T
\end{matrix}
& \begin{matrix}
0\\
\star
\end{matrix}\\
\cmidrule[0.4pt]{1-2}
\begin{matrix}
0\end{matrix}
& \begin{matrix}
O_j\left(G,\psi,p\right)
\end{matrix}
\end{array}\right)$.
\end{center}
   So $O_j(G',\psi',p')$ is obtained from $O_j(G,\psi,p)$ by adding two rows and two columns. Since $O_j(G,\psi,p)$ has full rank by assumption, it is enough to show that the first two rows of $O_j(G',\psi',p')$ are linearly independent for some choice of $p'_v$. Let $A$ be the matrix $I_2+\rho_j(g)I_2-\tau(g)-\rho_j(g)\tau(g^{-1})$. If $\tau(\Gamma)=\mathcal{C}_s$, then $j=0$ and $\tau(g)=\sigma$ by assumption, so $A$ is the $2\times2$ matrix whose only non-zero entry is $(A)_{1,1}=4$. If we choose $p'_v$ such that it does not share the same $y$-coordinate as $p'_u$, it is then easy to see that the first two rows of the matrix are linearly independent. Hence, $O_j(G',\psi',p')$ has full rank, as required.
   
   So, we may assume that $\tau(\Gamma)=\mathcal{C}_k$. Let $g=\gamma^t$ for some $1\leq t\leq k-1$, $\alpha=\frac{2\pi t}{k},$ and $\omega=\exp \frac{2\pi i }{k}$. Then,
   \begin{equation*}
       A=\begin{pmatrix}
           (1-\cos(\alpha))(1+\omega^{jt}) & \sin(\alpha)(1-\omega^{jt}) \\
           -\sin(\alpha)(1-\omega^{jt})&
           (1-\cos(\alpha))(1+\omega^{jt})
       \end{pmatrix}.
   \end{equation*}
   We show that $A$ is not the zero matrix. Assume, by contradiction, that $A$ is the zero matrix. Since $1\leq t\leq k-1$, we know $\cos(\alpha)\neq1$ and so $\omega^{jt}=-1$, i.e. there is some odd integer $m$ such that $\frac{2\pi jt}{k}=m\pi$. Moreover, $\sin(\alpha)(1-\omega^{jt})=2\sin(\alpha)=0$. Since $1\leq t\leq k-1$, $\alpha=\pi$, and so $t=\frac{k}{2}$. It follows that $j=m$, and so $j$ is odd. This contradicts (i), so, as claimed, $A$ is not the zero matrix. 
   
   If $u$ is free, then, by the injectivity of $p$, the vector $p_u$, and hence also the vector $\tau(h)p_u$, cannot be zero. So unless $q$ is a multiple of $\tau(h)p_u$, the affine map $q\mapsto q-\tau(h)p_u$ applied to $\lambda q$ gives vectors of different directions for each scalar $\lambda$. The linear map $A$ applied to $\lambda q$, however, only produces vectors that are multiples of the vector $Aq$.
      This implies that $\{Aq,q-\tau(h)p_u\}$ is linearly independent for some $q$, and so we may choose $p'_v$ to be such $q$. 
   Then $O_j(G',\psi',p')$ is linearly independent, as required.
   
   So, assume that $u$ is fixed. By (ii), we may also assume that $1\leq j\leq k-1$. In particular, this implies, by assumption, that $k\neq2$.  Assume, by contradiction, that there is no choice of $p'_v$ such that the first two rows of $O_j(G',\psi',p')$ are linearly independent. This implies that $A$ is a scalar multiple of $I_2$. This happens exactly when $\sin(\alpha)(1-\omega^{jt})=0$. If $\sin(\alpha)=0$, then $\alpha=\pi$, and $t=\frac{k}{2}$. Hence, $\omega^{jt}=\exp(\pi ij)$. By (i), $j$ must be even, and so $\omega^{jt}=1$. If $\sin(\alpha)\neq0$, then clearly $\omega^{jt}=1$. Hence, in both cases we have $\omega^{jt}=1$, i.e. $jt=mk$ for some integer $m$. If $j=1$, this implies that $t$ is a multiple of $k$, contradicting the fact that $1\leq t\leq k-1$. Hence, $j\neq1$. Similarly, $j\neq k-1$: if $j=k-1$, then $\omega^{jt}=\omega^{-t}$. Since $\omega^{jt}$ is real, this equals $\omega^t=1$, and so $t=k$, a contradiction. Hence, $k\geq4$ and $2\leq j\leq k-2$. We show that there is an integer $n\in S_0(k,j)$ such that $\left<g\right>\simeq\mathbb{Z}_n$, contradicting (iii).

   Let $n=\frac{k}{\gcd(k,t)}=\frac{\textrm{lcm}\,(k,t)}{t}$. Then, we know from group theory (see e.g. \cite{dummit}) that $\left<g\right>=\left<\gamma^t\right>\simeq\mathbb{Z}_n$, and that $m'=\frac{mk}{\textrm{lcm}\,(k,t)}$ is an integer (since $mk$ is a multiple of both $k$ and $t$), and so, since $j=\frac{mk}{t}=nm'$, we have $j\equiv0\mod n$. Moreover, $k=n\gcd(k,t)$, so $n|k$. Hence, $n\in S_0(k,j)$, as required. This contradicts (iii). Thus, there is a choice of $p'_v$ such that the first two rows of $O_j(G',\psi',p')$ are linearly independent. It follows that $O_j(G',\psi',p')$ has full rank.    
\end{proof}

\begin{remark} It was shown in \cite[Section 4.3]{mtforced} that for the  $\Gamma$-gain graph of  a rotationally symmetric framework in the plane, an edge joining the fixed vertex $u$ with a free vertex $v$ gives the same constraint in the fully-symmetric orbit rigidity matrix as a loop edge on $v$  which corresponds  to a regular $|\Gamma|$-polygon in the covering framework. Hence we have condition (ii) in Lemma~\ref{lemma: loop-1-extensions don't change the rank}. This is clear geometrically, because both of these edges force the vertices in the orbit of $v$ to keep their distance to the origin in any symmetry-preserving motion. Thus, for analysing fully-symmetric infinitesimal rigidity, one may always reduce the problem to the case when the group acts freely on the vertices. However, Lemma~\ref{lemma: loop-1-extensions don't change the rank} shows that this simple reduction is  not possible for the reflection group nor for analysing  ``incidentally symmetric" infinitesimal rigidity for any rotational group. 
\end{remark}

\begin{definition}
\label{defn: 1-extension}
A \textit{1-extension} chooses a vertex $u\in V(G)$ and an edge $e=\left(v_1,v_2\right)\in E(G)$ (any pair of free vertices in $\{v_1,v_2,u\}$ are allowed to coincide; further, $v_1,v_2,u$ are all allowed to coincide, provided they are not fixed and $|\Gamma|\geq3$), removes $e$ and adds a new free vertex $v$ to $V(G),$ together with three edges $e_1=(v,v_1),e_2=(v,v_2),e_3=(v,u)$. We let $\psi'(f)=\psi(f)$ for all $f\in E(G)$. The edges $e_1,e_2$ are labelled such that $\psi'(e_1)^{-1}\psi'(e_2)=\psi(e)$. The label of $e_3$ is chosen such that it is locally unbalanced, i.e. every two-cycle $e_ie_j^{-1},$ if it exists, is unbalanced. The inverse operation of a $1$-extension is called a \textit{$1$-reduction}. See Figure~\ref{1-extension image} for an illustration.
\end{definition}

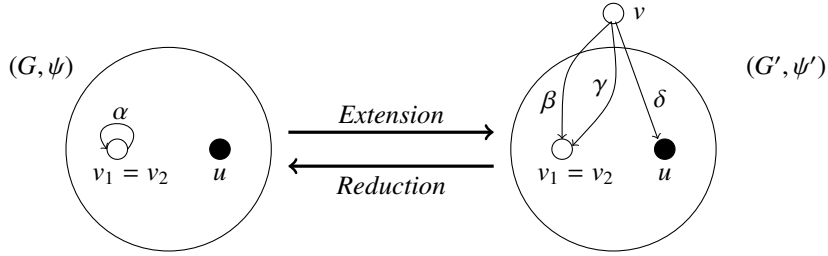
\begin{figure}[H]
    \centering
    \begin{tikzpicture}
  [scale=.9,auto=left]
  \draw (2.75,0) circle (1.5cm);
  \draw(2,0) circle (0.15cm);
  \draw[fill = black](3.5,0) circle (0.15cm);
  \node[below] at (3.5,-0.15){$u$};
  \node[left] at (1.5, 1.2) {$(G,\psi)$};
  \draw[->] (2.15,0) .. controls (2.55,0.5) and (1.45,0.5) .. (1.85,0);
  \node[below] at (2.2,-0.15){$v_1=v_2$};
  \node[above] at (2.05, 0.3){$\alpha$};

  \draw[->, very thick] (4.5,0.25) -- (7.5,0.25);
  \node[above] at (6, 0.25) {\textit{Extension}};
  \draw[->, very thick] (7.5,-0.25) -- (4.5,-0.25);
  \node[below] at (6, -0.25) {\textit{Reduction}};

  \draw(8.5,0) circle (0.15cm);  
  \draw (9.25,0) circle (1.5cm);
  \draw[fill=black] (10,0) circle (0.15cm);
  \draw (9.25, 2) circle (0.15cm);
  \node[right] at (9.4, 2){$v$};
  \node[below] at (8.7,-0.15){$v_1=v_2$};
  \node[below] at (10, -0.15){$u$};
  \node[left] at (12.5, 1.2) {$(G',\psi')$};
  \draw[->] (9.28, 1.88) -- (9.9, 0.15);
  \draw[->] (9.22,1.88) .. controls (9.35,0.8) .. (8.65, 0.05);
  \draw[->] (9.22,1.88) .. controls (8.5,1.2) .. (8.5, 0.15);
  \node[right] at (9.7,0.75) {$\delta$};
  \node[left] at (8.55,0.7) {$\beta$};
  \node[left] at (9.3,0.9) {$\gamma$};
\end{tikzpicture}
    \caption{Example of a 1-extension, where $\alpha=\beta\gamma^{-1}$ and $\delta$ is an arbitrary gain. In this example, we can see that $v_1$ and $v_2$ are allowed to coincide.}
    \label{1-extension image}
\end{figure}

\begin{lemma}
\label{lemma: 1-extension maintain rank}
Let $\Gamma$ be a cyclic group of order $k$, $\tau:\Gamma\rightarrow O(\mathbb{R}^2)$ be a faithful representation, and $(G,\psi,p)$ be a $\rho_j$-symmetrically isostatic $\tau(\Gamma)$-gain framework for some $0\leq j\leq k-1$. With the same notation as in Definition~\ref{defn: 1-extension}, assume that the points $\tau(\psi(e_1))p(v_1),\tau(\psi(e_2))p(v_2)$ and $\tau(\psi(e_3))p(u)$ do not lie on the same line. If $(G',\psi')$ is obtained from $(G,\psi)$ by applying a 1-extension, then there is a map $p':V(G')\rightarrow\mathbb{R}^2$ such that $(G',\psi',p')$ is a $\rho_j$-symmetrically isostatic $\tau(\Gamma)$-gain framework.
\end{lemma}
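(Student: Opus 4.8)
# Proof proposal for Lemma (1-extension maintains rank)

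The plan is to follow the same blueprint used for the other extension lemmas (Lemmas~\ref{lemma: fix-0-extensions don't change the rank},~\ref{lemma: loop-1-extensions don't change the rank}): exhibit an explicit configuration $p'$, write down the new orbit matrix $O_j(G',\psi',p')$ in block form with the two new rows on top and the two new columns on the left, and argue that the $2\times 2$ "pivot block" in the top-left corner is nonsingular. Since $O_j(G,\psi,p)$ has full row rank by hypothesis and $O_j(G',\psi',p')$ is obtained from it by deleting the row corresponding to $e$, adding the two rows $r_1,r_2,r_3$ of $e_1,e_2,e_3$, and adding two columns for the new free vertex $v$, it will suffice to show: (a) some integer combination of $r_1,r_2,r_3$ recovers (the negative of) the removed row of $e$ in the columns indexed by $V(G)$, so that row-reducing "folds" $O_j(G',\psi',p')$ back onto $O_j(G,\psi,p)$ in those columns; and (b) after this folding, the remaining two rows, restricted to the two new $v$-columns, are linearly independent. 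Part (a) is exactly where the labelling condition $\psi'(e_1)^{-1}\psi'(e_2)=\psi(e)$ enters: it guarantees that the "$v$-endpoint" contributions of $e_1$ and $e_2$ cancel in the right way after multiplying by appropriate powers of $\rho_j$, mirroring the standard $1$-extension computation in the free-action case (cf.\ the proof in \cite{bt2015}).

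For the construction of $p'$, I would set $p'_w=p_w$ for all $w\in V(G)$ and choose $p'_v$ to be the point such that the (signed) sum of the three "spoke" vectors from $v$ to the images $\tau(\psi'(e_i))p(w_i)$ (here $w_1=v_1,w_2=v_2,w_3=u$) behaves generically; concretely, using Proposition~\ref{prop: switching maintains rank} and Lemma~\ref{lemma: forests can have identity gain}(i) I may first normalise $\psi'(e_1)=\mathrm{id}$ (hence $\psi'(e_2)=\psi(e)$), and then the condition in the hypothesis — that $\tau(\psi'(e_1))p(v_1),\tau(\psi'(e_2))p(v_2),\tau(\psi'(e_3))p(u)$ are not collinear — is precisely the statement that the three rows $r_1,r_2,r_3$, restricted to the $v$-columns, span a $2$-dimensional space. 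So for generic $p'_v$ (one avoiding the finitely many bad directions), the $2\times 2$ matrix obtained after killing one linear combination of the $r_i$'s is invertible. One must handle the coincidence cases allowed in Definition~\ref{defn: 1-extension} ($v_1=v_2$, or all of $v_1,v_2,u$ equal when $|\Gamma|\ge 3$, or $u$ fixed): in the fixed-$u$ case the relevant column block $M_u^j$ may have reduced rank (or vanish when $u\in U$), but then the non-collinearity hypothesis still forces the two surviving rows to be independent in the $v$-columns, and when two of the free vertices coincide the distinct-gain requirements from Definitions~\ref{def:gaingraph} and~\ref{defn: 1-extension} ensure the corresponding spoke vectors $\tau(\psi'(e_i))p(\cdot)$ are genuinely distinct points, so non-collinearity is still meaningful.

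The one place requiring care — and what I expect to be the main obstacle — is verifying step (a) uniformly across all the cases, in particular that the coefficients in the linear combination recovering the $e$-row are the $\pm\rho_j(\cdot)^{\pm1}$ factors dictated by the orbit-matrix row formula in Definition~\ref{definition of orbit matrices}, including when $e$ is a loop in $G$ (allowed when $v_1=v_2$) so that the "removed row" has the special loop form $(p_u+\rho_j(\psi(e))p_u-\tau(\psi(e))p_u-\rho_j(\psi(e))\tau(\psi^{-1}(e))p_u)^T$. This is a direct but slightly fiddly computation with the phase factors; it is the analogue of the free-action $1$-extension identity, and since the new vertex $v$ is free and the three new edges $e_1,e_2,e_3$ are free edges (their rows appear in every block $O_j$), the presence of fixed vertices only affects the $V(G)$-column blocks through the matrices $M_{v_i}^j$, $M_u^j$, which commute with the scalar phase factors and so do not interfere with the cancellation. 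Once step (a) is established, step (b) follows from the non-collinearity assumption exactly as in the $0$-extension argument, and we conclude that $O_j(G',\psi',p')$ has full row rank, i.e.\ $(G',\psi',p')$ is $\rho_j$-symmetrically isostatic; the dimension count (three rows added for three new edges, two columns added for one new free vertex, one row removed for $e$, matching $\mathrm{null}$ unchanged) confirms it is isostatic and not merely infinitesimally rigid.
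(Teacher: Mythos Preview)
Your overall blueprint (block form, row-reduce to recover the deleted $e$-row, check a $2\times2$ pivot) is the right template and is what the paper does. The gap is in your placement of $p'_v$. For a \emph{generic} $p'_v$, your steps (a) and (b) cannot both hold with the block structure you want: the unique (up to scalar) combination $\alpha_1 r_1+\alpha_2 r_2+\alpha_3 r_3$ that vanishes in the two $v$-columns will have $\alpha_3\neq 0$ (since the three spoke vectors $p'_v-\tau(\psi'(e_i))p(w_i)$ are in general position), and hence in the $V(G)$-columns this combination contaminates the $u$-block and does \emph{not} equal the row of $e$. Conversely, the combination of $r_1,r_2$ alone that reproduces the $e$-row in the $v_1$- and $v_2$-columns fails to vanish in the $v$-columns unless $p'_v$ lies on the line through $\tau(\psi'(e_1))p(v_1)$ and $\tau(\psi'(e_2))p(v_2)$. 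Without both conditions simultaneously you do not get the clean block form $\left(\begin{smallmatrix}A & *\\ 0 & O_j(G,\psi,p)\end{smallmatrix}\right)$, and invertibility of $A$ alone no longer suffices for full row rank.

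The fix is the classical Henneberg trick, and it is exactly what the paper uses: place $p'_v$ at the \emph{midpoint} of $p(v_1)$ and $\tau(\psi'(e_2))p(v_2)$ (after normalising $\psi'(e_1)=\mathrm{id}$, so $\psi'(e_2)=\psi(e)$). Then $r_1+r_2$ vanishes in the $v$-columns and, after scaling by $2$, equals the $e$-row in the $V(G)$-columns; the remaining rows $r_3$ and $r_1$, restricted to the $v$-columns, are $(p'_v-\tau(\psi'(e_3))p(u))^T$ (up to a phase) and $\tfrac12(p(v_2)-p(v_1))^T$, which are independent precisely because $\tau(\psi'(e_3))p(u)$ is not on the line through the other two images --- this is where the non-collinearity hypothesis is actually used. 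The paper carries this computation out explicitly for the new case where $v_1$ is fixed (so the $M_{v_1}^j$ factor appears), deferring the free-endpoint case to \cite{bt2015}; your observation that the $M^j$ matrices act only on the $V(G)$-side and commute with the scalar phase factors is correct and is what lets the cancellation go through unchanged. A final wrinkle the paper notes: if $\tau(\Gamma)=\mathcal{C}_s$ and the midpoint lands on the mirror line, one perturbs $p'_v$ slightly without losing rank.
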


\begin{proof}
With the same notation as that of Definition~\ref{defn: 1-extension}, let $H$ be the subgraph of $G$ obtained by removing $e$. If $v_1,v_2$ are free, then an analogous proof to that of Lemma 6.1 in \cite{bt2015} gives the result. So, without loss of generality, assume $v_1\in V_0(G)$. In particular, $v_1$ cannot coincide with either $v_2$ or $u$, and we may assume $\psi(e_1)=\psi(e_2)=\psi(e)=\textrm{id}$. Let $\psi(e_3)=\delta$.

Define $p':V(G')\rightarrow\mathbb{R}^2$ such that $p'_w=p_w$ for all $w\in V(G)$ and $p'_v$ lies on the midpoint of the line through $p(v_1)$ and $p(v_2)$. Then, $O_j(G',\psi',p')$ is
\begin{center}
        $\left(\begin{array}{@{}c|c@{}}
\begin{matrix}
\rho_j(\delta)[p_v'-\tau(\delta)p_u]^T\\
1/2[p(v_2)-p(v_1)]^T\\
1/2[p(v_1)-p(v_2)]^T
\end{matrix}
& \begin{matrix}
 0 & \star & \star \\
1/2[p(v_1)-p(v_2)]^TM_{v_1}^j & 0 & 0 \\
 0 & 1/2[p(v_2)-p(v_1)]^TM_{v_2}^j & 0
\end{matrix}\\
\cmidrule[0.4pt]{1-2}
\begin{matrix}
0\end{matrix}
& \begin{matrix}
O_j\left(H,\psi_{E(H)},p|_{V(H)}\right).
\end{matrix}
\end{array}\right)$,
    \end{center} 
where, for $1\leq i\leq2$, the columns representing $v_i$ vanish if $M_{v_i}^j$ is not defined. Adding the second row to the third, and multiplying this latter by 2, we obtain the row representing $e$ in $O_j\left(H,\psi_{E(H)},p|_{V(H)}\right)$. Now, $O_j(G',\psi',p')$ is obtained by adding two rows and two columns to $O_j(G,\psi,p)$, so it suffices to show that the first two entries of the two added rows are independent. Since $p(v_1),p(v_2)$ and $\tau(\delta)p_u$ do not lie on the same line, the line through $p'_v$ and $\tau(\delta)p_u$ is not parallel to the line through $p(v_1)$ and $p(v_2)$. Hence, the upper left $2\times 2$ matrix has full rank, and $O_j(G',\psi',p')$ has full rank. If $\tau(\Gamma)=\mathcal{C}_s$, and $p(v_1),p(v_2)$ both lie on the symmetry line, then $p_v'$ also lies on the symmetry line. In such a case, we may perturb  $p'_v$ slightly without changing the rank of $O_j(G',\psi',p')$, in order to avoid placing the free vertex $v$ on the symmetry line. 
\end{proof}

\subsection{Adding two vertices of degree 3}
The following extension is defined on $\Gamma$-gain graphs with $|V_0(G)|=1$ and with $|\Gamma|\geq2$ even. Recall that, for $k:=|\Gamma|$, the cyclic group $\Gamma=\left<\gamma\right>$ is isomorphic to $\mathbb{Z}_k$, through the isomorphism which maps $\gamma$ to 1. 
\begin{definition}
    A \textit{2-vertex-extension} adds two free vertices $v_1,v_2$ and connects them to the fixed vertex. Then, it adds two parallel edges $e_1,e_2=(v_1,v_2)$ between $v_1$ and $v_2$. We define $\psi'$ such that $\psi'(e)=\psi(e)$ for all $e\in E(G)$, the new edges incident with the fixed vertex are labelled arbitrarily, and $\psi'(e_1)=\textrm{id},\psi'(e_1)=\gamma^{k/2}$. The inverse operation of a 2-vertex-extension is called a \textit{2-vertex-reduction}. See Figure~\ref{Example of a 2-vertex extension.} for an illustration.
\end{definition}

\begin{figure}[htp]
    \centering
    \begin{tikzpicture}
  [scale=.9,auto=left]
  \draw (2.75,0) circle (1.5cm);
  \draw[fill = black](2.75,0) circle (0.15cm);
  \node[below] at (2.75,-0.15){$v_0$};
  \node[left] at (1.5, 1.2) {$(G,\psi)$};

  \draw[->, very thick] (4.5,0.25) -- (7.5,0.25);
  \node[above] at (6, 0.25) {\textit{Extension}};
  \draw[->, very thick] (7.5,-0.25) -- (4.5,-0.25);
  \node[below] at (6, -0.25) {\textit{Reduction}};

  \draw[fill = black](9.25,0) circle (0.15cm);  
  \draw (9.25,0) circle (1.5cm);
  \draw (8.5,2) circle (0.15cm);
  \draw (10,2) circle (0.15cm);
  \node[below] at (9.25,-0.15){$v_0$};
  \node[left] at (12.3, 1.2) {$(G',\psi')$};
  \draw[->] (8.53, 1.88) -- (9.15, 0.15);
  \draw[->] (9.97, 1.88) -- (9.35, 0.15);
  \node[right] at (9.55,0.75) {$\alpha$};
  \node[left] at (8.95,0.75) {$\beta$};
  \draw[->] (8.65,2) -- (9.85,2);
  \draw[->](8.6,2.1) .. controls (9.25,2.3) ..  (9.9,2.1);
  \node[left] at (8.4,2){$v_1$};
  \node[right] at (10.1,2){$v_2$};
  \node[above] at (9.25,2.3){$\gamma^{k/2}$};
  \node[below] at (9.25,2){$\textrm{id}$};
\end{tikzpicture}
    \caption{Example of a 2-vertex extension, where $\alpha,\beta$ are arbitrary gains.}
    \label{Example of a 2-vertex extension.}
\end{figure}
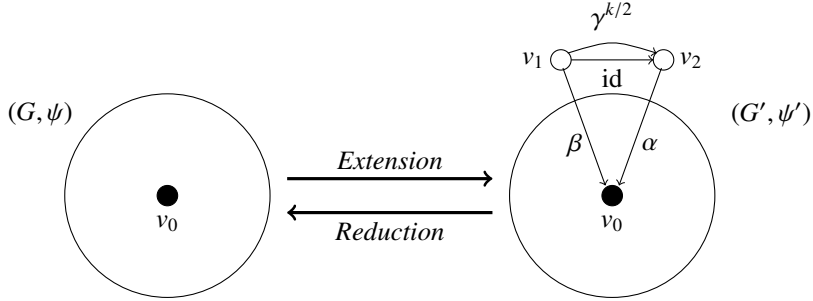

In a similar way as for Lemma~\ref{lemma: fix-0-extensions don't change the rank}, we can show that the following result holds. In this case, we define $p':V(G')\rightarrow\mathbb{R}^2$ such that $p'(v_1)$ and $p'(v_2)$ are not scalar multiples of each other. See \cite{PhDThesis} for details.

\begin{lemma}
\label{2-vertex extension maintains rank}
    Let $k\geq2$ be even. Let $(G,\psi,p)$ be a $\rho_j$-symmetrically isostatic $\mathcal{C}_k$-gain framework with $V_0(G)=\{v_0\}$. If $(G',\psi')$ is obtained by applying a 2-vertex-extension to $(G,\psi)$, then there is a map $p':V(G')\rightarrow\mathbb{R}^2$ such that $(G',\psi',p')$ is a $\rho_j$-symmetrically isostatic $\mathcal{C}_k$-gain framework.
\end{lemma}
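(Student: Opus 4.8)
The plan is to compare the orbit matrices $O_j(G',\psi',p')$ and $O_j(G,\psi,p)$ directly, exactly as in the proof of Lemma~\ref{lemma: fix-0-extensions don't change the rank}. Write $v_0$ for the unique fixed vertex, $v_1,v_2$ for the two new free vertices, and $e_1,e_2$ (with $\psi'(e_1)=\textrm{id}$, $\psi'(e_2)=\gamma^{k/2}$) for the added parallel pair joining $v_1$ to $v_2$; the two remaining new edges join $v_1$ and $v_2$ to $v_0$. Each new free vertex contributes two columns to $O_j$ (since $M^j_{v_i}=I_2$), and each of the four new edge orbits has size $k$ (since $v_1,v_2$ are free and lie in distinct vertex orbits), so all four new edges are free and contribute a row to $O_j$ for every $j$. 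Hence $O_j(G',\psi',p')$ is obtained from $O_j(G,\psi,p)$ by adjoining exactly four rows and four columns, and, since no edge of $G$ is incident with $v_1$ or $v_2$, after reordering $O_j(G',\psi',p')$ has the block form
\begin{equation*}
\begin{pmatrix} A & B \\ 0 & O_j(G,\psi,p)\end{pmatrix},
\end{equation*}
where $A$ is the $4\times 4$ submatrix recording the four new rows on the four new columns. As $(G,\psi,p)$ is $\rho_j$-symmetrically isostatic, $O_j(G,\psi,p)$ has full row rank, so $O_j(G',\psi',p')$ has full row rank if and only if $A$ is invertible. Moreover, by Proposition~\ref{prop:trivmotions} the dimension of the space of $\rho_j$-symmetric trivial infinitesimal motions depends only on $\tau$ and $j$ and not on the framework, hence is the same for $\tilde G'$ as for $\tilde G$; combining this with Lemma~\ref{lemma: kernels of orbit matrix and block equal}, Corollary~\ref{corollary: rank of orbit matrix and blocks are equal} and a rank--nullity count, once $O_j(G',\psi',p')$ has full row rank we may conclude that $(G',\psi',p')$ is $\rho_j$-symmetrically isostatic.

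It therefore remains to choose $p'$ so that $A$ is invertible. Keep $p'_w=p_w$ for all $w\in V(G)$ (so in particular $p'_{v_0}=0$), and set $a:=p'_{v_1}$ and $b:=p'_{v_2}$, chosen to be any two vectors of $\mathbb{R}^2$ that are not scalar multiples of one another (perturbing slightly if necessary to keep $(G',\psi',p')$ a genuine framework, which is a dense open condition). Using $\tau(\textrm{id})=I_2$, $\tau(\gamma^{k/2})=-I_2$, $\rho_j(\gamma^{k/2})=(-1)^j$ and $\tau(\cdot)p'_{v_0}=0$, Definition~\ref{definition of orbit matrices} gives that, up to signs and reordering of rows, the four rows of $A$ are $(a^T,0)$, $(0,b^T)$, $(a^T-b^T,\ b^T-a^T)$ and $(a^T+b^T,\ (-1)^j(a^T+b^T))$, where in each pair the first entry sits in the two $v_1$-columns and the second in the two $v_2$-columns.

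The crux is then the resulting $4\times 4$ determinant. Performing the column operations $C_3\mapsto C_3+C_1$ and $C_4\mapsto C_4+C_2$ turns the $e_1$-row into $(a^T-b^T,0)$, the $v_0v_2$-row into $(0,b^T)$, the $v_0v_1$-row into $(a^T,a^T)$, and the $e_2$-row into $(a^T+b^T,\ (1+(-1)^j)(a^T+b^T))$; for the instances of the lemma actually used (where $j$ is odd, e.g. the anti-symmetric half-turn case) this last row becomes $(a^T+b^T,0)$. Since $a-b$ and $a+b$ are linearly independent precisely when $a,b$ are, the $e_1$- and $e_2$-rows then span the two $v_1$-columns, so the $v_0v_1$-row reduces to $(0,a^T)$, and the four rows $(a^T-b^T,0),(a^T+b^T,0),(0,a^T),(0,b^T)$ are clearly independent; hence $\det A\neq 0$. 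I expect this determinant calculation to be the only real obstacle: one has to check that it is precisely the gain choice $\textrm{id},\gamma^{k/2}$ on the parallel pair, together with the placement of the fixed vertex at the origin, that makes $A$ nonsingular (for instance, two identity-gain parallel edges would force $\det A=0$ for all $a,b$). Everything else is the same bookkeeping as in the proof of Lemma~\ref{lemma: fix-0-extensions don't change the rank}.
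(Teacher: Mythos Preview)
Your approach is essentially the same as the paper's: extend $p$ by placing $v_1,v_2$ at linearly independent vectors $a,b$, observe that $O_j(G',\psi',p')$ has block-triangular form over $O_j(G,\psi,p)$, and check that the $4\times4$ block $A$ in the new rows and columns is invertible. Your row computations and the final reduction for odd $j$ are correct.

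Your caveat that the argument only goes through for odd $j$ is not a weakness but a genuine observation: for even $j$ one has $\rho_j(\gamma^{k/2})=1$, and then the four rows of $A$ satisfy the identity
\[
(\text{row }e_1)+(\text{row }e_2)=2\bigl((\text{row }v_1v_0)+(\text{row }v_2v_0)\bigr),
\]
so $A$ is singular for \emph{every} choice of $a,b$. Thus the lemma, as literally stated, fails for even $j$; the paper only ever invokes it in the anti-symmetric ($j=1$) case of Theorem~\ref{theorem: final theorem for 2-fold symmetry}, where your argument applies. You might state this explicitly rather than leaving it as a parenthetical.
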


\section{Sufficiency of the sparsity conditions}\label{sec:ext}
In this section, we will establish the characterisations of symmetry-generic infinitesimally rigid bar-joint frameworks. We first need some combinatorial preliminaries.

\subsection{General combinatorial results}
\begin{lemma}
\label{lemma: there is a vertex of degree 2 or 3}
    Let $0\leq m\leq2,1\leq l\leq2$ be such that $0\leq l-m\leq1$. 
    Let $(G,\psi)$ be a $\Gamma$-gain graph with at least one free vertex, and let $s,t\in\mathbb{N}$ be the number of free vertices in $G$ of degree 2 and 3, respectively. Assume $(G,\psi)$ is $(2,m,3,l)$-gain tight. The following hold:
    \begin{itemize}
        \item[(i)] Each free vertex has degree at least 2, and each fixed vertex has degree at least $m$.
        \item[(ii)] If each fixed vertex has degree at least $d$ for some $d\geq0$, then $2s+t\geq|V_0(G)|(d-2m)+2l$.
    \end{itemize}
\end{lemma}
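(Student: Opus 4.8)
The plan is to prove both parts by a standard counting argument on the degree sum of $(G,\psi)$, using the $(2,m,3,l)$-gain tightness in two ways: the global equality $|E(G)| = 2|\overline{V(G)}| + m|V_0(G)| - l$, and the sparsity of small subgraphs to rule out low-degree vertices.

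For part (i), the claim that each free vertex has degree at least $2$ and each fixed vertex degree at least $m$ follows from applying the sparsity count to subgraphs that isolate a single vertex together with its incident edges. Concretely, suppose a free vertex $v$ had degree at most $1$; let $H$ be the subgraph on $V(G)\setminus\{v\}$ (keeping all edges not incident to $v$). Removing $v$ deletes at most one edge and exactly one free vertex, so $|E(H)| \geq |E(G)| - 1 = 2|\overline{V(G)}| + m|V_0(G)| - l - 1 = 2|\overline{V(H)}| + m|V_0(H)| - l + 1$, contradicting $(2,m,3,l)$-gain sparsity of $H$ (note $E(H)\neq\emptyset$ provided $G$ has enough edges; the edge-case where $G$ is a single edge or a single vertex is handled directly, and when $l=3$ one must be slightly careful, but here $l\le 2$). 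The bound that a fixed vertex $v_0$ has degree at least $m$ is analogous: deleting $v_0$ removes $\deg(v_0)$ edges and one fixed vertex (with no loops, by the definition of a $\Gamma$-gain graph), so if $\deg(v_0) < m$ then $|E(H)| \geq |E(G)| - (m-1) = 2|\overline{V(H)}| + m|V_0(H)| - l + 1$, again violating sparsity. For $m=0$ this says nothing, consistent with the statement.

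For part (ii), assume every fixed vertex has degree at least $d$. Sum degrees: writing $n_i$ for the number of free vertices of degree $i$ (so $s = n_2$, $t = n_3$) and using part (i) that all free vertices have degree $\geq 2$ and all fixed vertices have degree $\geq d$, we get
\begin{equation*}
2|E(G)| = \sum_{v\in V(G)} \deg(v) \geq \sum_{v\in\overline{V(G)}} \deg(v) + d|V_0(G)|.
\end{equation*}
Now $\sum_{v\in\overline{V(G)}}\deg(v) = 2n_2 + 3n_3 + \sum_{i\geq 4} i\,n_i \geq 4|\overline{V(G)}| - 2n_2 - n_3$, since $i n_i \ge 4 n_i$ for $i\ge 4$ and $2 n_2 = 4 n_2 - 2 n_2$, $3 n_3 = 4 n_3 - n_3$. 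Substituting $|E(G)| = 2|\overline{V(G)}| + m|V_0(G)| - l$ into the left side:
\begin{equation*}
4|\overline{V(G)}| + 2m|V_0(G)| - 2l \geq 4|\overline{V(G)}| - 2n_2 - n_3 + d|V_0(G)|,
\end{equation*}
which rearranges to $2n_2 + n_3 = 2s + t \geq (d - 2m)|V_0(G)| + 2l$, as required.

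The only real obstacle is bookkeeping around degenerate subgraphs in part (i): one must ensure that the subgraph $H$ obtained by deleting a vertex still has a non-empty edge set (or else handle the tiny cases $|E(G)|\leq 1$ separately), and one must confirm that a fixed vertex carries no loop and no multi-edges when counting how many edges are removed — but both facts are guaranteed directly by Definition~\ref{def:gaingraph}. The degree-sum inequality in part (ii) is otherwise completely routine.
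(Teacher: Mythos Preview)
Your proof is correct and follows essentially the same approach as the paper. For (i) the paper also deletes a single vertex and compares the resulting edge count against the sparsity bound, and for (ii) the paper phrases the same handshaking computation in terms of the average degree rather than the total degree sum, but the arithmetic is identical; your explicit acknowledgement of the $E(H)=\emptyset$ boundary case is, if anything, slightly more careful than the paper's write-up.
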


\begin{proof}
    For (i), let $v\in V(G)$. By the sparsity of $(G,\psi)$, the subgraph $H$ obtained from $G$ by removing $v$ satisfies 
    \begin{equation*}
        |E(H)|\leq\begin{cases}
            2|\overline{V(G)}|+m|V_0(G)|-l-2 & \text{if }v\text{ is free}\\
            2|\overline{V(G)}|+m|V_0(G)|-l-m & \text{if }v\text{ is fixed.}
        \end{cases}
    \end{equation*}
    But $|E(G)|=2|\overline{V(G)}|+m|V_0(G)|-l$. So there are at least 2 edges in $G$ that are not in $H$ when $v$ is free, and there are at least $m$ edges in $G$ that are not in $H$ when $v$ is fixed. (i) follows.  
    
    For (ii), the average degree of $G$ is 
    \begin{equation*}
        \hat{\rho}=\frac{2|E(G)|}{|V(G)|}=\frac{4|\overline{V(G)}|+2m|V_0(G)|-2l}{|V(G)|}.
    \end{equation*}
    The minimum average degree $\rho_{min}$ of $G$ is attained when all free vertices, which are not the $s$ and $t$ vertices of degree 2 and 3, have degree 4, and all fixed vertices have degree $d$. So
    \begin{equation*}
        \rho_{min}=\frac{2s+3t+d\left|V_0\left(G\right)\right|+4(|\overline{V(G)}|-s-t)}{|V(G)|}.
    \end{equation*}
    By minimality, $\rho_{min}\leq\hat{\rho}$, and (ii) follows.
\end{proof}

\begin{proposition}
\label{lemma: there is no tight subgraph containing all three neighbours}
    Let $0\leq m\leq 2,0\leq l\leq3$, let $(G,\psi)$ be a $\Gamma$-gain graph and suppose there is some $v\in \overline{V(G)}$ of degree 3 with no incident loops. If $G$ is $(2,m,l)$-sparse, then there is no $(2,m,l)$-tight subgraph of $G-v$ which contains all neighbours of $v$ (the neighbours of $v$ need not be distinct).
\end{proposition}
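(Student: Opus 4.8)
The plan is to argue by contradiction via a direct edge count. Suppose, for contradiction, that $H$ is a $(2,m,l)$-tight subgraph of $G-v$ containing all three neighbours of $v$. Write the three edges at $v$ as $e_1,e_2,e_3$; since $v$ has degree $3$ and no incident loop, each $e_i$ joins $v$ to one of its neighbours, so all three have an endpoint in $V(H)\cup\{v\}$, and since $H\subseteq G-v$ none of $e_1,e_2,e_3$ lies in $E(H)$. I would then form the subgraph $H'$ of $G$ with $V(H')=V(H)\cup\{v\}$ and $E(H')=E(H)\cup\{e_1,e_2,e_3\}$.

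The key step is the bookkeeping for $H'$: because $v\in\overline{V(G)}$ we have $|\overline{V(H')}|=|\overline{V(H)}|+1$ and $|V_0(H')|=|V_0(H)|$, while $|E(H')|=|E(H)|+3$ as the three added edges are genuinely new. Substituting the tightness identity $|E(H)|=2|\overline{V(H)}|+m|V_0(H)|-l$ yields
\[
|E(H')| \;=\; 2|\overline{V(H')}|+m|V_0(H')|-l+1,
\]
which exceeds the $(2,m,l)$-sparsity bound for $H'$ by exactly $1$. Since $E(H')\neq\emptyset$, this contradicts the $(2,m,l)$-sparsity of $G$ even in the case $l=3$, completing the argument.

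I do not expect a genuine obstacle here; the work is simply the clean separation of the vertex/edge bookkeeping and a check that no case is missed. Two small points I would flag explicitly: first, the hypothesis that $v$ has no incident loop is exactly what makes the count produce a strict violation (a loop at $v$ would add only two edges against the $+2$ from the new free vertex, leaving $H'$ merely tight); and second, the fact that the neighbours of $v$ need not be distinct causes no difficulty, since whether $v$ has three single edges or a double edge together with a single edge, all three edge-slots are still edges of $H'$ absent from $E(H)$, so $|E(H')|=|E(H)|+3$ in every case.
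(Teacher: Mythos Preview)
Your proposal is correct and follows essentially the same approach as the paper's proof: form $H'=H+v$ together with the three edges incident to $v$, use tightness of $H$ and the fact that $v$ is free to compute $|E(H')|=2|\overline{V(H')}|+m|V_0(H')|-l+1$, contradicting sparsity. Your write-up simply adds more explicit bookkeeping (and the remarks about loops and possibly repeated neighbours) than the paper's terse version.
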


\begin{proof}
    Suppose such a subgraph $H$ exists. Then the subgraph $H'$ of $G$ obtained from $H$ by adding $v$ and its incident edges satisfies
    \begin{equation*}
        |E(H')|=|E(H)|+3=2|\overline{V(H)}|+m|V_0(H)|-l+3=2|\overline{V(H')}|+m|V_0(H')|-l+1
    \end{equation*}
    a contradiction.
\end{proof}

It is straightforward to check that all except two of the reductions are \textit{admissible}, i.e. they maintain the relevant sparsity counts. However, when applying a 1-reduction or a fix-1-reduction, we add an edge. This edge might give rise to a subgraph that violates the sparsity count. We call such subgraphs blockers.

\begin{definition}
\label{blocker definition}
    Let $(G,\psi)$ be a $\Gamma$-gain graph. Let $m,l$ be non-negative integers such that $m\leq2,l\leq3,m\leq l$, and suppose $(G,\psi)$ is $(2,m,3,l)$-gain tight. Let $v\in V(G)$ be a free vertex of degree 3, or a fixed vertex of degree 2. Let $(G',\psi')$ be obtained from $(G,\psi)$ by applying a 1-reduction or a fix-1-reduction at $v$, and let $e=(v_1,v_2)$ be the edge we add when we apply such reduction. Let $H$ be a subgraph of $G-v$ with $v_1,v_2\in E(H)$ and $E(H)\neq\emptyset$. We say $H$ is
    \begin{enumerate}
        \item a \textit{general-count blocker} of $v_1,v_2$ (equivalently, of $(G',\psi')$) if $H+e$ is connected and $H$ is $(2,m,l)$-tight.
        \item a \textit{balanced blocker} of $e$ (equivalently, of $(G',\psi')$) if $H$ is $(2,3)$-tight and $H+e$ is balanced under $\psi'$.
    \end{enumerate}
    Both general-count blockers and balanced blockers are referred to as \textit{blockers} of $(G',\psi')$.
\end{definition}

The following result states that, given two blockers $H_1,H_2$ with $E(H_1\cap H_2)\neq\emptyset$, their union $H_1\cup H_2$ can also be seen as a blocker. It will be used in Section~\ref{section on 1-reduction} to show that a vertex of degree 3 always admits a 1-reduction, except for two special cases (see Theorem~\ref{theorem on 1-reductions: (2,m,3,l)-tight}).

\begin{lemma}
\label{lemma: what i need for 1-red.}
   Let $m,l$ be non-negative integers such that $m\leq2,1\leq l\leq2,m\leq l$. Let $(G,\psi)$ be a $\Gamma$-gain graph, and suppose there is some $v\in \overline{V(G)}$ of degree 3 with no incident loops. Assume further that $|V_0(G)|\leq1$ if $m=2$. Suppose $(G,\psi)$ is $(2,m,3,l)$-gain tight. Assume there are graphs $(G_1,\psi_1),(G_2,\psi_2)$ obtained from $(G,\psi)$ by applying two different $1$-reductions at $v$, which add the edges $f_1$ and $f_2$, respectively. Assume that, for $i=1,2$, $(G_i,\psi_i)$ has a blocker $H_i$, and that $E(H_1\cap H_2)\neq\emptyset$. Let $H:=H_1\cup H_2$. The following hold:
    \begin{itemize}
        \item[(i)] The blockers $H_1,H_2$ are not general-count blockers.
        \item[(ii)] $H+f_1+f_2$ is balanced and $H$ is $(2,3)$-tight.
    \end{itemize}
\end{lemma}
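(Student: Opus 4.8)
The plan is to exploit two standard ingredients: the modularity of the two relevant count functions, and Proposition~\ref{lemma: there is no tight subgraph containing all three neighbours}. First I fix notation. Write $a,b,c$ for the three neighbours of $v$ (listed with multiplicity; $v$ has no incident loop). A $1$-reduction at $v$ deletes $v$ and its three incident edges and reinserts one edge joining two of $a,b,c$, carrying the gain of the corresponding two-edge walk through $v$. Since the two reductions are distinct, the (unordered) endpoint pairs of $f_1$ and $f_2$ are distinct, so the endpoint sets of $f_1$ and $f_2$ together cover $\{a,b,c\}$; hence $H=H_1\cup H_2\subseteq G-v$ and $\{a,b,c\}\subseteq V(H)$. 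I shall use repeatedly that, as $(G,\psi)$ is $(2,m,3,l)$-gain-tight, it is $(2,m,l)$-sparse and each of its balanced subgraphs with an edge is $(2,3)$-sparse; that the functions $X\mapsto 2|\overline{V(X)}|+m|V_0(X)|-l$ and $X\mapsto 2|V(X)|-3$ are modular on pairs $X,Y$ of subgraphs with $E(X\cap Y)\neq\emptyset$; and that a balanced $(2,3)$-tight subgraph has no isolated vertex (deleting one would leave a balanced subgraph violating $(2,3)$-sparsity) and hence, by Lemma~\ref{lemma: the subgraph is connected}, is connected.

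For (i) I argue by contradiction: suppose some blocker, say $H_1$, is a general-count blocker, so $H_1$ is $(2,m,l)$-tight. If $H_2$ is also a general-count blocker, then $H_2$ is $(2,m,l)$-tight as well, and since $E(H_1\cap H_2)\neq\emptyset$, modularity of the $(2,m,l)$-count together with $(2,m,l)$-sparsity of $G$ forces $H_1\cup H_2$ (and $H_1\cap H_2$) to be $(2,m,l)$-tight. If instead $H_2$ is a balanced blocker, then $H_2$ — and hence $H_1\cap H_2$ — is balanced, so $(2,3)$-sparse, whence $|E(H_1\cap H_2)|\le 2|V(H_1\cap H_2)|-3$; substituting this and the tightness of $H_1$ and $H_2$ into $|E(H_1\cup H_2)|=|E(H_1)|+|E(H_2)|-|E(H_1\cap H_2)|$ and simplifying yields
\[ |E(H_1\cup H_2)|-\bigl(2|\overline{V(H_1\cup H_2)}|+m|V_0(H_1\cup H_2)|-l\bigr)\ \geq\ (2-m)\,\bigl|\,V_0(H_2)\setminus V(H_1)\,\bigr|\ \geq\ 0, \]
which combined with $(2,m,l)$-sparsity of $G$ is an equality; so again $H_1\cup H_2$ is $(2,m,l)$-tight (and $H_1\cap H_2$ is $(2,3)$-tight). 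In both cases $H_1\cup H_2$ is a $(2,m,l)$-tight subgraph of $G-v$ containing all neighbours of $v$, contradicting Proposition~\ref{lemma: there is no tight subgraph containing all three neighbours}. Hence neither blocker is a general-count blocker, which is (i); in particular both $H_1$ and $H_2$ are balanced blockers.

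For (ii) I now have that each $H_i$ is connected, balanced and $(2,3)$-tight, and $H_i+f_i$ is balanced (under the gain function agreeing with $\psi$ off $f_i$). The core step is to show that $H+f_1+f_2$, with the gain function agreeing with $\psi$ on $E(G)$ and with $\psi_i$ on $f_i$, is balanced. The idea is that $f_1$ and $f_2$ carry precisely the gains of the two-edge walks through $v$ that they replace: reinstating $v$ gives $H^+=H+v+\{e_a,e_b,e_c\}$, and adding $f_1$ or $f_2$ to $H^+$ leaves $v$ of degree at most $1$ on one of the three paths through it, so the cycles of $H^++f_1$ and of $H^++f_2$ avoiding fixed vertices are in gain-preserving correspondence with cycles of $H_1+f_1$, of $H_2+f_2$, of $H^+$, and of $H$; using that $H_1+f_1$ and $H_2+f_2$ are balanced, and that the walk gains between $a,b,c$ inside $H$ are pinned down by connectedness and balancedness of $H_1,H_2$, one concludes via Lemmas~\ref{lemma: union of balanced is balanced} and~\ref{lemma: union of balanced and free-balanced is free-balanced} that every cycle of $H+f_1+f_2$ not through a fixed vertex has identity gain. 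In particular $H$, a subgraph of the balanced graph $H+f_1+f_2$, is balanced, hence $(2,3)$-sparse; and since $H_1\cap H_2$ is balanced (hence $(2,3)$-sparse), the modular identity gives
\[ |E(H)|-\bigl(2|V(H)|-3\bigr)\ =\ \bigl(2|V(H_1\cap H_2)|-3\bigr)-|E(H_1\cap H_2)|\ \geq\ 0, \]
so $H$ is $(2,3)$-tight, completing (ii).

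The hard part will be the balancedness claim in (ii). Since balancedness ignores cycles through fixed vertices, and the blockers may contain fixed vertices (even fixed cut-vertices), the correspondence between cycles through the reinstated vertex $v$ and cycles through $f_1,f_2$ must be set up with care, and the connectivity hypotheses of the union lemmas must be checked — this is exactly where the ``locally unbalanced'' condition on the reinserted edges of a $1$-reduction, and the assumption $|V_0(G)|\le 1$ when $m=2$, enter. The degenerate configurations (coinciding neighbours of $v$, or $f_1,f_2$ being loops or parallel) need separate, more delicate bookkeeping, and — as one should expect — are precisely the configurations that produce the exceptional cases in the subsequent theorem on $1$-reductions.
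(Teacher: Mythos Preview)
Your argument for (i) is correct and is essentially the paper's argument, phrased via modularity of the $(2,m,l)$-count and the $(2,3)$-count. The displayed inequality in the mixed case is exactly the inequality the paper derives (with the component bookkeeping collapsed to the single bound $|E(H')|\le 2|V(H')|-3$, which is valid since $E(H')\neq\emptyset$).

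Part (ii), however, has a genuine gap. To invoke Lemma~\ref{lemma: union of balanced and free-balanced is free-balanced} on $H_1+f_1$ and $H_2+f_2$ you need that $(H_1+f_1)\cap(H_2+f_2)=H_1\cap H_2$, \emph{after removing its fixed vertices}, is connected. You never establish this; your paragraph about ``reinstating $v$'' and a ``gain-preserving correspondence'' does not bypass the issue, because a cycle in $H_1\cup H_2$ (or in $H+f_1+f_2$) can cross back and forth between $H_1$ and $H_2$ through several components of $H'=H_1\cap H_2$, and controlling its gain is exactly the content of that connectedness hypothesis. Knowing that each $H_i$ is connected, and that the free walk-gains from the endpoints of $f_i$ are pinned down inside $H_i$, is not enough: the pair $a,b$ may be linked only through a path in $H_1$ that passes through a fixed vertex, in which case no free-vertex gain constraint is available and balancedness of $H$ can genuinely fail unless further structure on $H'$ is proved.

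The paper obtains that structure by counting rather than by any cycle-chasing: from $|E(H)|\ge 2|V(H)|+3c-c_0-6$ (with $c,c_0$ the number of components and isolated vertices of $H'$) and $(2,m,l)$-sparsity one forces $c=1$, so $H'$ is connected; then one shows $|V_0(H')|\le 1$, and finally that if $V_0(H')=\{v_0\}$ then $v_0$ is not a cut-vertex of $H'$ --- each step by deriving an edge count that would contradict Proposition~\ref{lemma: there is no tight subgraph containing all three neighbours} or the sparsity of $(G,\psi)$. The hypothesis ``$|V_0(G)|\le 1$ if $m=2$'' is used precisely in the step $|V_0(H')|\le 1$. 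Once $H'\setminus V_0(H')$ is connected, Lemma~\ref{lemma: union of balanced and free-balanced is free-balanced} applies and your final modular identity finishes the tightness claim as you wrote it. So the missing ingredient is not conceptual but is a concrete counting argument on the components and fixed vertices of $H_1\cap H_2$; you should replace the hand-wavy balancedness paragraph with that.
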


\begin{proof}   
    Notice that $H_1\cup H_2$ always contains all neighbours of $v$. To see this, we consider $N(v).$ If $N(v)=1$ this is clear. If $N(v)=2$, let $v_1,v_2$ be the neighbours of $v$ and $e_1=(v,v_1),e_1'=(v,v_1),e_2=(v,v_2)\in E(G)$. By Corollary~\ref{corollary: rank of orbit matrix and blocks are equal}, Proposition~\ref{prop: switching maintains rank} and Lemma~\ref{lemma: forests can have identity gain}(i), we may assume that $\psi(e_1)=\psi(e_2)=\textrm{id}$ and that $\psi(e'_1)\neq\textrm{id}$. Then, at most one 1-reduction at $v$ adds a loop at $v_1$ (with gain $\psi(e'_1)$) and no 1-reduction at $v$ adds a loop at $v_2$. It follows that one of $H_1,H_2$ contains $v_1$ and $v_2$, and so $v_1,v_2\in V(H_1\cup H_2)$. Finally, let $N(v)=3$. For $1\leq i\leq3$, let $e_i=(v,v_i)\in E(G)$.
    By Corollary~\ref{corollary: rank of orbit matrix and blocks are equal}, Proposition~\ref{prop: switching maintains rank} and Lemma~\ref{lemma: forests can have identity gain}(i) we may assume that $\psi(e_i)=\textrm{id}$ for all $1\leq i\leq3$. Then, for each pair $1\leq i\neq j\leq3$, there is at most one 1-reduction at $v$ which adds an edge between $v_i$ and $v_j$ (with gain $\textrm{id}$). It follows that $v_1,v_2,v_3\in V(H_1\cup H_2)$. 

    Throughout the proof, we let $H'=H_1\cap H_2$ and we let $H'_1,\dots, H'_c$ be the connected components of $H'$. Let $c_0\leq c-1$ be the number of isolated vertices of $H'$, so that $H'_1,\dots,H_{c_0}'$ are the isolated vertices of $H'$, and $H_{c_0+1}',\dots,H_c'$ are the connected components of $H'$ with non-empty edge set.
    
    We first prove (i). Assume, by contradiction, that one of $H_1,H_2$ is a general-count blocker. Without loss of generality, let it be $H_1$. If $H_2$ is also a general-count blocker, then, since $|E(H')|\leq2|\overline{V(H')}|+m|V_0(H')|-l$, it is easy to check that 
    \begin{equation*}
            |E(H)|=|E(H_1)|+|E(H_2)|-|E(H')|\geq2|\overline{V(H)}|+m|V_0(H)|-l.
    \end{equation*}
    By Proposition~\ref{lemma: there is no tight subgraph containing all three neighbours}, this is a contradiction.  Hence, we may assume that $H_2$ is a balanced blocker. It follows that $H'$ is balanced. Then, for each $c_0+1\leq i\leq c$, $H_i'$ must be $(2,3)$-sparse, and so
    \begin{equation*}
    \begin{split}
        |E(H')|&=\sum_{i=1}^c|E(H_i')|\leq\sum_{i=1}^{c_0}[2|V(H_i')|-2]+\sum_{i=c_0+1}^c[2|V(H_i')|-3]=2|V(H')|-(2c_0+3(c-c_0)).
    \end{split}
    \end{equation*}
     Hence, 
    \begin{equation*}
        \begin{split}
            |E(H)|&=|E(H_1)|+|E(H_2)|-|E(H')|\\
            &\geq(2|\overline{V(H_1)}|+m|V_0(H_1)|-l)+(2|V(H_2)|-3)-(2|V(H')|-(2c_0+3(c-c_0)))\\
            &=(2|\overline{V(H_1)}|+m|V_0(H_1)|-l)+(2|\overline{V(H_2)}|+2|V_0(H_2)|-3)-(2|\overline{V(H')}|+2|V_0(H')|-(2c_0+3(c-c_0)))\\
            &=(2|\overline{V(H_1)}|+m|V_0(H_1)|-l)+(2|\overline{V(H_2)}|+m|V_0(H_2)|+(2-m)|V_0(H_2)|-3)\\
            &-(2|\overline{V(H')}|+m|V_0(H')|+(2-m)|V_0(H')|-(2c_0+3(c-c_0)))\\
            &=2|\overline{V(H)}|+m|V_0(H)|-l+(2-m)(|V_0(H_2)|-|V_0(H')|)+2c_0+3(c-c_0-1)\\
            &\geq2|\overline{V(H)}|+m|V_0(H)|-l,
        \end{split}
    \end{equation*}
    where the last inequality holds because $0\leq c_0\leq c-1,m\leq2$ and $V(H')\subseteq V(H_2)$. By Proposition~\ref{lemma: there is no tight subgraph containing all three neighbours}, this is a contradiction. So $H_1,H_2$ must be balanced blockers, as required.
    
    \medskip
    We now prove (ii). By (i), $H_1,H_2$ are both balanced blockers. Hence, $H'_i$ is $(2,3)$-sparse for all $c_0+1\leq i\leq c$. It follows that $|E(H')|\leq2|V(H')|-(2c_0+3(c-c_0))$ (see the proof of (i) for details). Hence, we have
    \begin{equation}
    \label{equation (ii)}
    \begin{split}
        |E(H)|&=|E(H_1)|+|E(H_2)|-|E(H')|\\
        &\geq(2|V(H_1)|-3)+(2|V(H_2)|-3)-(2|V(H')|-(2c_0+3(c-c_0)))\\
        &=2|V(H)|+2c_0+3(c-c_0)-6=2|V(H)|+3c-c_0-6.
    \end{split}
    \end{equation}
    We show that $c=1$. Assume, by contradiction, that $c\geq2$. Then, since $c_0\leq c-1$, $3c-c_0-6\geq2c-5$ and so, since $m\leq2,1\leq l$, we have $|E(H)|\geq2|V(H)|+2c-5\geq2|V(H)|-1\geq2|\overline{V(H)}|+m|V_0(H)|-l$. By Proposition~\ref{lemma: there is no tight subgraph containing all three neighbours} and the sparsity of $(G,\psi)$, this is a contradiction. So $c=1$, and $H'$ is connected. Hence, Equation~\eqref{equation (ii)} becomes $|E(H)|\geq2|V(H)|-3$. 

    We now show that $H'$ has at most one fixed vertex. Assume, by contradiction, that $|V_0(H')|\geq2$. By assumption, this implies that $m\neq2$. By Equation~\eqref{equation (ii)}, $|E(H)|\geq2|V(H)|-3\geq2|\overline{V(H)}|+1$. If $m=0$, this contradicts the sparsity of $(G,\psi)$. Hence, $m=1$. By Equation~\eqref{equation (ii)}, 
    \begin{equation*}
        |E(H)|\geq2|V(H)|-3=2|\overline{V(H)}|+|V_0(H)|+(|V_0(H)|-3)\geq2|\overline{V(H)}|+|V_0(H)|-1\geq2|\overline{V(H)}|+|V_0(H)|-l,
    \end{equation*}
    where the last inequality holds because $m\leq l$ and $m=1$. This contradicts Proposition~\ref{lemma: there is no tight subgraph containing all three neighbours}. Hence, $H'$ has at most one fixed vertex. We look at the cases where $V_0(H')=\emptyset$ and $V_0(H')=\{v_0\}$ separately. In both cases, we show that (ii) holds.

    First, assume that $V_0(H')=\emptyset$. Then, by Lemma~\ref{lemma: union of balanced and free-balanced is free-balanced}, $H+f_1+f_2$ is balanced, since $H'$ is connected. By the sparsity of $(G,\psi)$ and Equation~\eqref{equation (ii)}, it follows that $H$ is $(2,3)$-tight, and so (ii) holds.

    Now, suppose that $V_0(H')=\{v_0\}$. Then, by Equation~\eqref{equation (ii)}, $|E(H)|\geq2|V(H)|-3=2|\overline{V(H)}|-1\geq2|\overline{V(H)}|-l$, since $l\geq1$. If $m=0$, this contradicts Proposition~\ref{lemma: there is no tight subgraph containing all three neighbours}, so $m\neq0$. By Equation~\eqref{equation (ii)}, we also know that $|E(H)|\geq2|V(H)|-3=2|\overline{V(H)}|+|V_0(H)|-2$. If $(m,l)=(1,2)$, this contradicts Proposition~\ref{lemma: there is no tight subgraph containing all three neighbours}, so $(m,l)\neq(1,2)$. Hence, $(m,l)$ is one of $(1,1)$ and $(2,2)$. We claim that, in both cases, $v_0$ is not a cut vertex of $H'$. So, assume by contradiction, that $v_0$ is a cut vertex of $H'$.

    For $t\geq2$, let $I_1,\dots,I_t$ be the connected components of $H'-v_0$. For each $1\leq i\leq t$, let $I_i+v_0$ be the graph obtained from $I_i$ by adding $v_0$, together with all edges in $H'$ incident to $v_0$ and a vertex of $I_i$. Notice that, since $H'$ is connected, for all $1\leq i\leq t$, the graph $I_i+v_0$ contains an edge incident to $v_0$. Since $I_i+v_0$ is balanced for all $1\leq i\leq t$, we have 
    \begin{equation*}        |E(H')|=\sum_{i=1}^t|E(I_i+v_0)|\leq2\sum_{i=1}^t|V(I_i+v_0)|-3t=2|V(H')|+2(t-1)-3t\leq2|V(H')|-t-2\leq 2|V(H')|-4.
    \end{equation*}
     But then, 
    \begin{equation*}
    \begin{split}
        |E(H)|&=|E(H_1)|+|E(H_2)|-|E(H')|\geq(2|V(H_1)|-3)+(2|V(H_2)|-3)-(2|V(H')|-4)\\
        &=2|V(H)|-2=2|\overline{V(H)}|+|V_0(H)|-1.
    \end{split}
    \end{equation*}
    This contradicts Proposition~\ref{lemma: there is no tight subgraph containing all three neighbours}, both when $(m,l)=(1,1)$ and when $(m,l)=(2,2)$. Hence, $v_0$ is not a cut vertex of $H'$. Then, $H+f_1+f_2$ is balanced by Lemma~\ref{lemma: union of balanced and free-balanced is free-balanced}. By the sparsity of $(G,\psi)$ and Equation~\eqref{equation (ii)}, it follows that $H$ is $(2,3)$-tight, and so (ii) holds.
    \end{proof}

\subsection{Applying a 1-reduction to a (2,m,3,l)-gain tight graph}
\label{section on 1-reduction}
The following result is crucial for our combinatorial characterisations of symmetry-generic infinitesimal rigidity. It states that, except for two specific cases, there is always an admissible reduction  at a vertex $v$ of degree 3 of a $(2,m,3,l)$-gain tight graph, where $(m,l)=(0,2),(1,1),(1,2),(2,2).$
\begin{theorem}
\label{theorem on 1-reductions: (2,m,3,l)-tight}
Let $(G,\psi)$ be a $\Gamma$-gain graph with a free vertex $v$ of degree 3 which has no loop. Suppose that $(G,\psi)$ is $(2,m,3,l)$-tight, where $(m,l)$ is one of the pairs $(0,1),(1,1),(1,2)$ or $(2,2)$. Suppose further that if $|V_0(G)|\geq2$, then $m=1$. If there is not an admissible 1-reduction at $v$, then exactly one of the following holds.
\begin{itemize}
    \item[(i)] $(G,\psi)$ is $(2,2,3,2)$-tight and $v$ has exactly one free neighbour $v_1$ and exactly one fixed neighbour $v_2$ (see Figure~\ref{degree 2 vertex problem image} (a),(b)) . 
    \item[(ii)] $(G,\psi)$ is $(2,1,3,2)$-tight and $v$ has three neighbours, all of which are fixed (see Figure~\ref{degree 2 vertex problem image} (c)). 
\end{itemize}
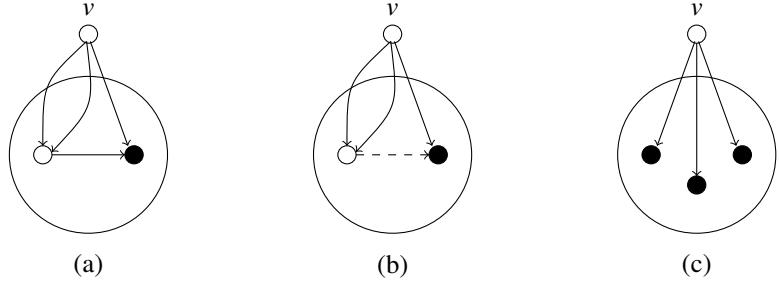
\begin{figure}[H]
    \centering
    \begin{tikzpicture}[scale=.8]
        \draw(3.5,0) circle (0.15cm);  
  \draw (4.25,0) circle (1.3cm);
  \draw[fill=black] (5,0) circle (0.15cm);
  \draw (4.25, 2) circle (0.15cm);
  \node[above] at (4.25, 2.15){$v$};
  \draw[->] (4.28, 1.88) -- (4.9, 0.15);
  \draw[->] (4.22,1.88) .. controls (4.35,0.8) .. (3.65, 0.05);
  \draw[->] (4.22,1.88) .. controls (3.5,1.2) .. (3.5, 0.15);
  \draw[->] (3.65,0) -- (4.85,0);
  \node[below] at (4.25, -1.5) {(a)};

  \draw(8.5,0) circle (0.15cm);  
  \draw (9.25,0) circle (1.3cm);
  \draw[fill=black] (10,0) circle (0.15cm);
  \draw (9.25, 2) circle (0.15cm);
  \node[above] at (9.25, 2.15){$v$};
  \draw[->] (9.28, 1.88) -- (9.9, 0.15);
  \draw[->] (9.22,1.88) .. controls (9.35,0.8) .. (8.65, 0.05);
  \draw[->] (9.22,1.88) .. controls (8.5,1.2) .. (8.5, 0.15);
  \draw[->, dashed] (8.65,0) -- (9.85,0);
  \node[below] at (9.25, -1.5) {(b)};

  \draw[fill=black](13.5,0) circle (0.15cm);  
  \draw (14.25,0) circle (1.3cm);
  \draw[fill=black](14.25,-0.5) circle (0.15cm);
  \draw[fill=black] (15,0) circle (0.15cm);
  \draw (14.25, 2) circle (0.15cm);
  \node[above] at (14.25, 2.15){$v$};
  \draw[->] (14.28, 1.88) -- (14.9, 0.15);
  \draw[->] (14.22, 1.88) -- (13.6, 0.15);
  \draw[->] (14.25, 1.85) -- (14.25, -0.35);
  \node[below] at (14.25, -1.5) {(c)};
    \end{tikzpicture}
    \caption{Three instances of a vertex $v$ of degree 3. In (a,b), $v$ has two neighbours, one of which is fixed. In (a) there is an edge between the neighbours of $v$, whereas in (b) there isn't. In (c), $v$ has three neighbours, all of which are fixed.}
    \label{degree 2 vertex problem image}
\end{figure}
\end{theorem}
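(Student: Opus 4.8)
The plan is to assume there is no admissible $1$-reduction at $v$ and to deduce that $(G,\psi)$ and $v$ must be as in (i) or (ii), by a case analysis on the number $|N(v)|\in\{1,2,3\}$ of distinct neighbours of $v$ and on which of these neighbours are fixed. As a preliminary normalisation, note that $(2,m,3,l)$-gain-tightness is preserved under switching at free vertices and under re-gaining edges incident to fixed vertices (counts are gain-independent and balancedness is unaffected), so by Lemma~\ref{lemma: forests can have identity gain}(i) I may assume that the edges joining $v$ to \emph{distinct} neighbours all carry gain $\mathrm{id}$; any two parallel edges among the three edges at $v$ then carry distinct non-identity gains. A $1$-reduction at $v$ deletes $v$ and two of its edges, retains the third, and inserts one new edge $f$ (a loop when the two deleted edges are parallel) between the relevant neighbours, with $f$ carrying the gain of the length-two path through $v$ that it replaces; this leaves the global $(2,m,3,l)$-count unchanged, so by the discussion preceding Definition~\ref{blocker definition} a $1$-reduction fails to be admissible precisely when it has a general-count blocker or a balanced blocker.

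The case $|N(v)|=1$ I would dispatch first: all three edges join $v$ to a single (necessarily free) vertex $v_1$, every $1$-reduction inserts a loop of non-identity gain at $v_1$, and no subgraph through such a loop is balanced, so a blocked reduction would supply a $(2,m,l)$-tight subgraph of $G-v$ containing $v_1=N(v)$ --- contradicting Proposition~\ref{lemma: there is no tight subgraph containing all three neighbours}. For $|N(v)|\in\{2,3\}$ the core is the following combination of blockers of two distinct blocked reductions $R,R'$, with blockers $H,H'$ and new (identity-gain) edges $f,f'$. If $E(H\cap H')\neq\emptyset$, Lemma~\ref{lemma: what i need for 1-red.} forces $H,H'$ to be balanced blockers, with $H\cup H'$ being $(2,3)$-tight and $H\cup H'+f+f'$ balanced; as in that lemma's proof $H\cup H'$ contains every neighbour of $v$, so reattaching $v$ with its three (identity-gain) edges produces a subgraph $H^{+}\subseteq G$ which is still balanced (all relevant gains are $\mathrm{id}$) and has $|E(H^{+})|=|E(H\cup H')|+3=2|V(H\cup H')|=2|V(H^{+})|-2$, exceeding the $(2,3)$-sparsity bound $2|V(H^{+})|-3$ and contradicting the $(2,3)$-sparsity of balanced subgraphs of $(G,\psi)$. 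If $E(H\cap H')=\emptyset$ but $H,H'$ are both balanced blockers, then Lemma~\ref{lemma: union of balanced and free-balanced is free-balanced} (their intersection, after deleting fixed vertices, being connected) still makes $H\cup H'$ balanced; and if at least one of $H,H'$ is a general-count blocker, a submodularity estimate for $2|\overline{V(\cdot)}|+m|V_0(\cdot)|$ --- together with the sparsity-forced inequality $(2-m)|V_0|\leq3-l$ for any balanced subgraph --- shows that $H\cup H'$ is either a balanced subgraph of $G-v$ whose reattachment to $v$ violates $(2,3)$-sparsity, or a $(2,m,l)$-tight subgraph of $G-v$ containing all of $N(v)$; in each case Proposition~\ref{lemma: there is no tight subgraph containing all three neighbours} or the $(2,3)$-bound is violated, \emph{unless} the count deficits are exactly large enough to block the estimate.

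Running this engine on the three reductions available when $|N(v)|=3$ (whose blockers pairwise share one neighbour), and on the up-to-three reductions when $|N(v)|=2$ (a double edge $e_1,e_1'$ to $v_1$ and a single edge to $v_2$: two reductions producing the parallel pair of $v_1v_2$ edges of gains $\mathrm{id}$ and $\psi(e_1')^{\pm1}$, and, when $v_1$ is free, one producing a loop at $v_1$), I would then track the residual configurations in which the estimates genuinely fail. For $|N(v)|=3$ the estimates fail only when the shared neighbour is fixed and $m<l$, i.e.\ $(m,l)\in\{(0,1),(1,2)\}$; since the standing hypothesis forbids $|V_0(G)|\geq2$ when $m\neq1$, this reduces to $(m,l)=(1,2)$, and tracing the inequalities pins the situation down to case (ii) --- all three neighbours of $v$ fixed and $(G,\psi)$ $(2,1,3,2)$-tight. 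For $|N(v)|=2$ the blockers of the two parallel-edge reductions each contain all of $N(v)=\{v_1,v_2\}$, hence by Proposition~\ref{lemma: there is no tight subgraph containing all three neighbours} are balanced blockers; their union cannot be balanced since it carries $v_1v_2$-paths of two different gains, so (combined with the blocker of the loop reduction, or, when $v_1\in V_0(G)$, with the fact that a fixed vertex admits no parallel edges) the analysis narrows the surviving situation to $v_1$ fixed, $v_2$ free and $(G,\psi)$ $(2,2,3,2)$-tight, which is case (i). Finally, (i) and (ii) are mutually exclusive, since $(2,2,3,2)$- and $(2,1,3,2)$-tightness are incompatible; hence exactly one holds.

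The step I expect to be the main obstacle is precisely the bookkeeping in the no-shared-edge branch when a shared neighbour is fixed and $m<l$: there the naive count deficit can be $-1$ rather than $0$, so one must squeeze out sharper information --- that the intersection of the two blockers contains more than one vertex, that a balanced blocker meeting $V_0$ is tightly constrained by $(2-m)|V_0|\leq3-l$, or an extra inequality from a third blocker --- and it is this delicate part of the argument that simultaneously rules out the spurious sub-cases and isolates (i) and (ii). A secondary technical point that must be handled with care (rather than by a black-box lemma) is verifying that reattaching $v$ to a union of balanced blockers yields a genuinely balanced subgraph of $G$ when some neighbour of $v$ is fixed, which follows from the gain normalisation and a direct check of cycle gains since balancedness ignores fixed vertices.
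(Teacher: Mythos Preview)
Your overall strategy matches the paper's: case split on $|N(v)|$ and combine putative blockers via Lemma~\ref{lemma: what i need for 1-red.} and Proposition~\ref{lemma: there is no tight subgraph containing all three neighbours}. But there is a real gap in precisely the step you flag at the end.

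When $|N(v)|=3$ and two blockers share an edge, you assert that reattaching $v$ (with its three identity-gain edges) to the balanced $(2,3)$-tight union $H_1\cup H_2$ yields a \emph{balanced} subgraph $H^+$. This is false in general. With $f_1=(v_1,v_2)$ and $f_2=(v_2,v_3)$, balancedness of $H_1\cup H_2+f_1+f_2$ controls free-vertex walks between $v_1,v_2$ and between $v_2,v_3$, but it controls $v_1$--$v_3$ walks only by concatenation through $v_2$ --- and that path counts for balancedness only when $v_2$ is free. If $v_2$ is fixed, a $v_1$--$v_3$ walk of non-identity gain through free vertices may well exist, so $H^+$ can be unbalanced; your remark that ``balancedness ignores fixed vertices'' actually points the wrong way here. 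The paper does not sidestep this: when $H^+$ is unbalanced it deduces that $v_2$ must be fixed and $v_1,v_3$ free, uses this to show that at most one pair among $H_1,H_2,H_3$ can share an edge, and then runs a substantial three-way count (with a Claim and three sub-cases on which blockers are balanced versus general-count) to finish. Your sketch does not supply any of this.

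A related problem occurs in your $|N(v)|=2$ analysis when the single-edge neighbour $v_2$ is fixed: since edges to a fixed vertex may carry arbitrary gain (all such choices give equivalent gain graphs), there is only \emph{one} reduction inserting $(v_1,v_2)$, not two with different gains, so your two-parallel-reductions argument is unavailable here. The paper instead pairs the single $(v_1,v_2)$ reduction with the loop-at-$v_1$ reduction and splits further on whether $(v_1,v_2)\in E(G)$. (Also, your stated surviving configuration ``$v_1$ fixed, $v_2$ free'' has the labels swapped: the double-edge neighbour must be free, since fixed vertices admit no parallel edges by Definition~\ref{def:gaingraph}.)
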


\begin{proof}
   We will look at the cases where $N(v)=1,N(v)=2$, and $N(v)=3$ separately. We prove that, in each case, we may always apply a reduction at $v$, unless one of (i),(ii) holds.

   \medskip
    \textit{\textbf{Case 1: }$N(v)=1.$}

    Let $u$ be the neighbour of $v$, and $e_1,e_2,e_3$ be the edges incident to $u$ and $v$. By Corollary~\ref{corollary: rank of orbit matrix and blocks are equal}, Proposition~\ref{prop: switching maintains rank} and Lemma~\ref{lemma: forests can have identity gain}(i), we may assume that $\psi(e_1)=\textrm{id}$. Moreover, $\psi(e_2)\psi(e_3)^{-1}\neq\textrm{id}$ by the definition of gain graph. Let $(G',\psi')$ be obtained from $G-v$ by adding a loop $f$ at $u$ with gain $\psi(e_2)\psi(e_3)^{-1}$. We show that this 1-reduction is admissible.

    Suppose, by contradiction, that $H$ is a blocker of $(G',\psi')$. Since $H+f$ contains the loop $f$, $H$ is not a balanced blocker. By Proposition~\ref{lemma: there is no tight subgraph containing all three neighbours}, $H$ is not a general-count blocker. This contradicts the fact that $H$ is a blocker. Hence, there is an admissible 1-reduction at $v$.

    \medskip
    \textit{\textbf{Case 2: }$N(v)=2.$}
    Let $v_1,v_2$ be the neighbours of $v$, let $e_1,e'_1:=(v,v_1)$ and $e_2:=(v,v_2)$, and let $g=\psi(e'_1)$. By Corollary~\ref{corollary: rank of orbit matrix and blocks are equal}, Proposition~\ref{prop: switching maintains rank} and Lemma~\ref{lemma: forests can have identity gain}(i), we may assume that $\psi(e_1)=\psi(e_2)=\textrm{id}$ and $g\neq \textrm{id}$. We will look at the cases where $v_2$ is free and fixed separately. 

    \begin{itemize}
        \item\textbf{Sub-case 2a: }$v_2$ is free.

        Let $(G_1,\psi_1),(G_2,\psi_2),(G_3,\psi_3)$ be obtained from $G-v$ by adding, respectively, the edge $f_1=(v_1,v_2)$ with gain $\textrm{id}$, the edge $f_2=(v_1,v_2)$ with gain $g$, and a loop $f_3$ at $v_1$ with gain $g$. Assume, by contradiction, that $H_1,H_2$ and $H_3$ are blockers for $(G_1,\psi_1),(G_2,\psi_2)$ and $(G_3,\psi_3)$, respectively. Let $H=H_1\cup H_2\cup H_3$ and $H'=H_1\cap H_2\cap H_3$.

        By Proposition~\ref{lemma: there is no tight subgraph containing all three neighbours}, $H_1,H_2$ are balanced blockers. Moreover, by Lemma~\ref{lemma: what i need for 1-red.}(ii), $E(H_1\cap H_2)=\emptyset$: otherwise $H_1\cup H_2+f_1+f_2$ is balanced, a contradiction since $f_1,f_2^{-1}$ is an unbalanced 2-cycle. Since $H_3+f_3$ contains the loop $f_3$, $H_3$ is a general-count blocker. It follows, from Lemma~\ref{lemma: what i need for 1-red.}(i), that $E(H_1\cap H_3)=E(H_2\cap H_3)=\emptyset$. Moreover, by Proposition~\ref{lemma: there is no tight subgraph containing all three neighbours}, $v_2\not\in V(H_3)$. Since $m\leq2$, for $i=1,2,$ $|E(H_i)|=2|V(H_i)|-3\geq2|\overline{V(H_i)}|+m|V_0(H_i)|-3$. Hence,
        \begin{equation*}
            \begin{split}
                |E(H)|&=|E(H_1)|+|E(H_2)|+|E(H_3)|\\
                &\geq(2|\overline{V(H_1)}|+m|V_0(H_1)|-3)+(2|\overline{V(H_2)}|+m|V_0(H_2)|-3)+(2|\overline{V(H_3)}|+m|V_0(H_3)|-l)\\
                &=2\Big(|\overline{V(H_1)}|+|\overline{V(H_2)}|+|\overline{V(H_3)}|\Big)+m\Big(|V_0(H_1)|+|V_0(H_2)|+|V_0(H_3)|\Big)-6-l\\
&=2\Big(|\overline{V(H)}|+\sum_{1\leq i\neq j\leq3}|\overline{V(H_i\cap H_j)}|-|\overline{V(H')}|\Big)+m\Big(|V_0(H)|+\sum_{1\leq i\neq j\leq3}|V_0(H_i\cap H_j)|-|V_0(H')|\Big)-6-l\\
&\geq2\Big(|\overline{V(H)}|+\sum_{1\leq i\neq j\leq3}|\overline{V(H_i\cap H_j)}|-|\overline{V(H')}|\Big)+m|V_0(H)|-6-l,
            \end{split}
        \end{equation*}
        where the last inequality holds because $V_0(H')\subseteq V_0(H_i\cap H_t)$ for all pairs $1\leq i\neq t\leq3$ and $m\geq0$.
        Since the free vertex $v_2$ is not contained in $H'$, it is easy to see that $\sum_{1\leq i\neq j\leq3}|\overline{V(H_i\cap H_j)}|-|\overline{V(H')}|\geq3$. Hence, $|E(H)|\geq2|\overline{V(H)}|+m|V_0(H)|-l$. This contradicts Proposition~\ref{lemma: there is no tight subgraph containing all three neighbours} and so there is an admissible 1-reduction at $v$.

    \item\textbf{Sub-case 2b: }$v_2$ is fixed.

    If $(G,\psi)$ is $(2,2,3,2)$-tight, then (i) holds. So, assume $(G,\psi)$ is not $(2,2,3,2)$-tight.
    
    Let $(G_1,\psi_1),(G_2,\psi_2)$ be the graphs obtained from $G-v$ by adding, respectively, an edge $f_1=(v_1,v_2)$ with gain $\textrm{id}$, and a loop $f_2$ at $v_1$ with gain $g$ (see Figure~\ref{1-reduction image}). Notice that, if there is already an edge $(v_1,v_2)\in E(G)$, $(G_1,\psi_1)$ is not a well-defined gain graph. We look at the cases where $(v_1,v_2)\in E(G)$ and $(v_1,v_2)\not\in E(G)$ separately. 

     \begin{figure}[H]
    \centering
    \begin{tikzpicture}
  [scale = 0.8, auto=left]
  \draw(8.5,0) circle (0.15cm);  
  \draw[fill=black] (10,0) circle (0.15cm);
  \draw (9.25, 2) circle (0.15cm);
  \node[right] at (9.4, 2){$v$};
  \node[below] at (8.15,0.15){$v_1$};
  \node[below] at (10.4, 0.15){$v_2$};
  \draw[->] (9.29, 1.86) -- (9.95, 0.15);
  \draw[->] (9.22,1.85) .. controls (9.35,0.8) .. (8.65, 0.05);
  \draw[->] (9.15,1.9) .. controls (8.5,1.2) .. (8.5, 0.15);
  \node[right] at (9.65,0.9) {$e_2$};
  \node[left] at (8.55,0.9) {$e_1$};
  \node[left] at (9.25,0.9) {$e'_1$};

  \draw(13.5,0) circle (0.15cm);  
  \draw[fill=black] (15,0) circle (0.15cm);
  \node[below] at (13.15,0.15){$v_1$};
  \node[below] at (15.4, 0.15){$v_2$};
  \draw[->] (13.65,0) -- (14.85,0);
  \node[above] at (14.25,0){$f_1$};

  \draw(18.5,0) circle (0.15cm);  
  \draw[fill=black] (20,0) circle (0.15cm);
  \node[below] at (18.15,0.15){$v_1$};
  \node[below] at (20.4, 0.15){$v_2$};
  \draw[->] (18.65,0) .. controls (19.05,0.5) and (17.95,0.5) .. (18.35,0); 
  \node[above] at (18.5,0.3) {$f_2$};
\end{tikzpicture}
    \caption{Two possible 1-reductions at $v$.}
    \label{1-reduction image}
\end{figure}
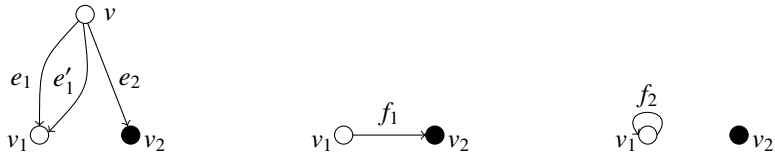

    \medskip
    First, assume $(v_1,v_2)\in E(G)$. Then it is easy to check that the graph induced by $v,v_1,v_2$ violates both $(2,0,3,1)$-gain sparsity and $(2,1,3,2)$-gain sparsity. Hence, we may assume that $(G,\psi)$ is $(2,1,3,1)$-gain tight. 

    Assume, by contradiction, that $(G_2,\psi_2)$ has a blocker $H_2$. Since $H_2+f_2$ contains the loop $f_2$, it is unbalanced. Hence, $H_2$ is a general-count blocker. It follows, from Proposition~\ref{lemma: there is no tight subgraph containing all three neighbours}, that $v_2\not\in V(H_2)$. Hence, the graph $H$ obtained from $H_2$ by adding $v,v_2,$ together with the edges $e_1,e_1',e_2,(v_1,v_2)$, satisfies
    \begin{equation*}
            |E(H)|=|E(H_2)|+4=2|\overline{V(H_2)}|+|V_0(H_2)|+3=2|\overline{V(H)}|+|V_0(H)|,
        \end{equation*}
        contradicting the sparsity of $(G,\psi)$. Thus, the 1-reduction at $v$ which yields $(G_2,\psi_2)$ is admissible.

    \medskip
    Now, let $(v_1,v_2)\not\in E(G)$. Assume that $H_1$ and $H_2$ are blockers for $(G_1,\psi_1)$ and $(G_2,\psi_2)$, respectively. By Proposition~\ref{lemma: there is no tight subgraph containing all three neighbours}, $H_1$ is a balanced blocker. Since $H_2+f_2$ contains the loop $f_2$, it is a general-count blocker. By Proposition~\ref{lemma: there is no tight subgraph containing all three neighbours}, $v_2\not\in V(H_2)$. Moreover, by Lemma~\ref{lemma: what i need for 1-red.}(i), $E(H_1\cap H_2)=\emptyset$. Let $H:=H_1\cup H_2$ and $H':=H_1\cap H_2$. We have
    \begin{equation*}
    \begin{split}
         |E(H)|&=(2|V(H_1)|-3)+(2|\overline{V(H_2)}|+m|V_0(H_2)|-l)\\
         &=(2|\overline{V(H_1)}|+2|V_0(H_1)|-3)+(2|\overline{V(H_2)}|+m|V_0(H_2)|-l)\\
         &=2|\overline{V(H)}|+m|V_0(H)|-l+[2|\overline{V(H')}|+m|V_0(H')|+(2-m)|V_0(H_1)|-3]\\
         &=2|\overline{V(H)}|+m|V_0(H)|-l,
    \end{split}
    \end{equation*}
    where the last inequality holds because $|\overline{V(H')}|\geq 1$, $|V_0(H_1)|\geq 1$ and $m\leq1$. This contradicts Proposition~\ref{lemma: there is no tight subgraph containing all three neighbours}. Hence, there is an admissible 1-reduction at $v$.
    \end{itemize}

    So for Case 2, we have shown that there always is an admissible 1-reduction at $v$, unless (i) holds. 
    
   \medskip
   \textit{\textbf{Case 3: }$N(v)=3$.}

    For $i=1,2,3$, let $e_i=(v,v_i)$ be the edges incident with $v$. Let $f_1=(v_1,v_2),f_2=(v_2,v_3)$ and $f_3=(v_3,v_1)$. By Corollary~\ref{corollary: rank of orbit matrix and blocks are equal}, Proposition~\ref{prop: switching maintains rank} and Lemma~\ref{lemma: forests can have identity gain}(i), we may assume $\psi(e_1)=\psi(e_2)=\psi(e_3)=\textrm{id}$.
    
    For $1\leq i\leq3$, let $(G_i,\psi_i)$ be obtained by applying a 1-reduction at $v$, during which we add the edge $f_i$ with gain $\textrm{id}$, and assume that $(G_i,\psi_i)$ has a blocker $H_i$. Let $H:=H_1\cup H_2\cup H_3$ and $H':=H_1\cap H_2\cap H_3$.

    We first show that $E(H_i\cap H_j)=\emptyset$ for all pairs $i\neq j$. As a first step, we show that  $E(H_i\cap H_j)\neq\emptyset$ for at most one pair $i\neq j$. Without loss of generality, let $E(H_1\cap H_2)\neq\emptyset$ and $E(H_1\cap H_3)\neq\emptyset$. By Lemma~\ref{lemma: what i need for 1-red.}(ii), $H_1\cup H_2$ is $(2,3)$-tight, and $H_1\cup H_2+f_1+f_2$ is balanced. Moreover, 
    \begin{equation*}
        |E(H_1\cup H_2+v)|=|E(H_1\cup H_2)|+3=2|V(H_1\cup H_2)|=2|V(H_1\cup H_2+v)|-2.
    \end{equation*}
    If $H_1\cup H_2+v$ is balanced, this contradicts the sparsity of $(G,\psi)$. So, we may assume that $H_1\cup H_2+v$ (equivalently, $H_1\cup H_2+f_1+f_2+f_3$) is unbalanced. The group $\left<H_1\cup H_2+v\right>\simeq\left<H_1\cup H_2+f_1+f_2+f_3\right>$ is given by the elements of $\left<H_1\cup H_2+f_1+f_2\right>$, together with the gains of the walks from $v_1$ to $v_3$ which do not contain fixed vertices. Since $H_1\cup H_2+f_1+f_2$ is balanced and $H_1\cup H_2+v$ is unbalanced, there must be a path $P$ from $v_3$ to $v_1$ in $H_1\cup H_2$ with gain $g\neq\text{id}$, which contains only free vertices.
       In particular, $v_1,v_3$ are free. Moreover, $v_2$ must be fixed, for otherwise, $f_1,f_2,P$ is a closed path in $H_1\cup H_2+f_1+f_2$ with gain $g\neq\text{id}$ and with 
       no fixed vertex, contradicting the fact that $H_1\cup H_2+f_1+f_2$ is balanced. 

    Applying the same argument to $H_1\cup H_3$, we may conclude that $v_1$ is fixed and $v_2,v_3$ are free. But this contradicts the fact that $v_1$ is free and $v_2$ is fixed. Hence, $E(H_i\cap H_j)\neq\emptyset$ for at most one pair $1\leq i\neq j\leq3$. Without loss of generality, let $E(H_1\cap H_2)\neq\emptyset$ and
     $E(H_1\cap H_3)=E(H_2\cap H_3)=\emptyset$. Note that in this case, as shown above, $v_2$ is fixed and $v_1$ and $v_3$ are free, which is a fact we will use later.    

    If $H_3$ is a balanced blocker, then
    \begin{equation}
    \label{eq. gcb}
        \begin{split}
            |E(H)|&=|E(H_1\cup H_2)|+|E(H_3)|=(2|V(H_1\cup H_2)|-3)+(2|V(H_3)|-3)\\
            &=2|V(H)|+2|V((H_1\cup H_2)\cap H_3)|-6.
        \end{split}
    \end{equation}
    If $|V((H_1\cup H_2)\cap H_3)|\geq3$, then $|E(H)|\geq2|V(H)|>2|\overline{V(H)}|+m|V_0(H)|-l$, contradicting the sparsity of $(G,\psi)$. Hence, $V((H_1\cup H_2)\cap H_3)=\{v_1,v_3\}$, and so $H$ is balanced (every closed walk in $H$ is composed of closed walks in $H_1\cup H_2$, of closed walks in $H_3$, and of concatenations of walks from $v_1$ to $v_3$ in $H_1\cup H_2$ together with walks from $v_3$ to $v_1$ in $H_3$; all such walks must have identity gain, since $H_1\cup H_2+f_1+f_2,H_3+f_3$ are balanced). However, by Equation~\eqref{eq. gcb}, we have $|E(H)|=2|V(H)|-2$, contradicting the sparsity of $(G,\psi)$. So, we may assume that $H_3$ is a general-count blocker. Then, it is easy to see that
    \begin{equation*}
        \begin{split}
            |E(H)|&=|E(H_1\cup H_2)|+|E(H_3)|=(2|V(H_1\cup H_2)|-3)+(2|\overline{V(H_3)}|+m|V_0(H_3)|-l)\\
            &=2|\overline{V(H)}|+m|V_0(H)|-l+(2-m)|V_0(H_1\cup H_2)|+m|V_0((H_1\cup H_2)\cap H_3)|+2|\overline{V((H_1\cup H_2)\cap H_3)}|-3\\&\geq2|\overline{V(H)}|+m|V_0(H)|-l+1,
        \end{split}
    \end{equation*}
    where the inequality holds because $v_1,v_3\in\overline{V((H_1\cup H_2)\cap H_3)}$ and $0\leq m\leq2$. This contradicts the sparsity of $(G,\psi)$. Hence, $E(H_i\cap H_j)=\emptyset$ for all pairs $i\neq j$. 
        
    We now show that $H_1,H_2,H_3$ cannot all be balanced blockers. Assume, by contradiction, that $H_1,H_2,H_3$ are all balanced blockers. If $|V(H_i\cap H_j)|=1$ for all $1\leq i\neq j\leq3$, then it is easy to see that $H+v$ is balanced (since every closed walk in $H$ is composed of closed walks in $H_i$ for $1\leq i\leq 3$, and the concatenation of walks from $v_1$ to $v_2$ in $H_1$,  walks from $v_2$ to $v_3$ in $H_2$, and walks from $v_3$ to $v_1$ in $H_3$; all such walks either include a fixed vertex or must have identity gain) and
            \begin{equation}
            \label{eq: bal.block.}
            \begin{split}
                |E(H)|&=\sum_{i=1}^3|E(H_i)|=2\sum_{i=1}^3|V(H_i)|-9=2|V(H)|+2\sum_{1\leq i\neq j\leq3}|V(H_i\cap H_j)|-2|V(H')|-9\\
                &=2|V(H)|+2(1+1+1)-9=2|V(H)|-3.
            \end{split}
            \end{equation}    
            This contradicts Proposition~\ref{lemma: there is no tight subgraph containing all three neighbours}. Hence, $|V(H_i\cap H_j)|\geq2$ for some $1\leq i\neq j\leq3$. Without loss of generality, let $|V(H_1\cap H_2)|\geq2$. Then,
              \begin{equation*}
            |E(H_1\cup H_2)|=2|V(H_1\cup H_2)|+2|V(H_1\cap H_2)|-6\geq2|V(H_1\cup H_2)|-2\geq2|\overline{V(H_1\cup H_2)}|+m|V_0(H_1\cup H_2)|-2,
        \end{equation*}
        where the last inequality holds because $m\leq2$. If $l=2$, this contradicts Proposition~\ref{lemma: there is no tight subgraph containing all three neighbours}. Hence, $l=1.$ Moreover, rearranging Equation~\eqref{eq: bal.block.}, we know that 
           \begin{equation*}
            \begin{split}
                |E(H)|&=2|V(H)|-1+2\left(\sum_{1\leq i\neq j\leq3}|V(H_i\cap H_j)|-|V(H')|-4\right)\\
                &\geq2|\overline{V(H)}|+m|V_0(H)|-1+2\left(\sum_{1\leq i\neq j\leq3}|V(H_i\cap H_j)|-|V(H')|-4\right),
            \end{split}
            \end{equation*}
        since $m\leq2$.
        If we show that $f:=\sum_{1\leq i\neq j\leq3}|V(H_i\cap H_j)|-|V(H')|\geq4$, this contradicts Proposition~\ref{lemma: there is no tight subgraph containing all three neighbours}. Notice that $|V(H')|$ is at most the minimum of $|V(H_i\cap H_j)|$, where $i\neq j$ run from 1 to 3. Call this number $min$. Hence, $f\geq\sum_{1\leq i\neq j\leq3}|V(H_i\cap H_j)|-min$. If $min=|V(H_1\cap H_2)|$, then $|V(H_2\cap H_3)|,|V(H_1\cap H_3)|\geq2$, and so $f\geq2+2\geq4$. So assume, without loss of generality, that $min=|V(H_2\cap H_3)|$, and hence that $f\geq|V(H_1\cap H_2)|+|V(H_1\cap H_3)|.$ If $|V(H_1\cap H_3)|\geq2$, then $f\geq4$. So, assume that $V(H_1\cap H_3)=\{v_1\}$. By minimality, $V(H_2\cap H_3)=\{v_3\}.$ It follows that $V(H')=\emptyset$, and so $f\geq2+1+1=4$. Since we reached a contradiction, $H_1,H_2,H_3$ are not all balanced blockers. Assume, without loss of generality, that $H_1$ is a general-count blocker.

        \begin{claim}
            For $2\leq i\leq3$, we have $|V(H_1\cap H_i)|=1$.
        \end{claim}
        
        \begin{claimproof}
            If $H_i$ is also a general-count blocker, then, since $|E(H_1\cup H_i)|=
                |E(H_1)|+|E(H_i)|$, we have
            \begin{equation*}
        \label{claim 1 equation}
            \begin{split}
                |E(H_1\cup H_i)|
                &=2|\overline{V(H_1\cup H_i)}|+m|V_0(H_1\cup H_i)|-l+(2|\overline{V(H_1\cap H_i)}|+m|V_0(H_1\cap H_i)|-l)\\
                &\geq2|\overline{V(H_1\cup H_i)}|+m|V_0(H_1\cup H_i)|-l+(2|\overline{V(H_1\cap H_i)}|+m|V_0(H_1\cap H_i)|-2),
            \end{split}
        \end{equation*}
        since $l\leq2$.
        If $|\overline{V(H_1\cap H_i)}|\geq1$, or if $|V_0(H_1\cap H_i)|\geq2$ (and so $m=1$ by assumption), it is easy to see that this is at least $2|\overline{V(H_1\cup H_i)}|+m|V_0(H_1\cup H_i)|-l$. This contradicts Proposition~\ref{lemma: there is no tight subgraph containing all three neighbours}. Hence, $|V(H_1\cap H_i)|=|V_0(H_1\cap H_i)|=1$, and the claim holds. If $H_i$ is a balanced blocker, it is easy to see that
        \begin{equation*}
        \begin{split}
             |E(H_1\cup H_i)|&=2|\overline{V(H_1\cup H_i)}|+m|V_0(H_1\cup H_i)|-l+(2|\overline{V(H_1\cap H_i)}|+m|V_0(H_1\cap H_i)|+(2-m)|V_0(H_i)|-3)\\
             &\geq2|\overline{V(H_1\cup H_i)}|+m|V_0(H_1\cup H_i)|-l+(2|\overline{V(H_1\cap H_i)}|+2|V_0(H_1\cap H_i)|-3)\\
             &=2|\overline{V(H_1\cup H_i)}|+m|V_0(H_1\cup H_i)|-l+(2|V(H_1\cap H_i)|-3),
        \end{split}
        \end{equation*}
        where the inequality holds because $V_0(H_1\cap H_i)\subseteq V_0(H_i)$ and $m\leq2$. If $|V(H_1\cap H_i)|\geq2$, then this contradicts the sparsity of $(G,\psi)$. Hence, the claim holds.
        \end{claimproof} 
        
        \medskip
        By the Claim, $V(H_1\cap H_2)=\{v_2\}$ and $V(H_1\cap H_3)=\{v_1\}$. Hence, $v_1,v_2,v_3$ do not lie in $V(H')$. 
        Let $n$ be the number of free vertices in $\{v_1,v_2,v_3\}$. Since each vertex in $\{v_1,v_2,v_3\}$ lies in $H_i\cap H_j$ for some $0\leq i\neq j\leq1$, this implies that
            \begin{eqnarray*}
                \overline{S}:=\sum_{1\leq i\neq j\leq3}|\overline{V(H_i\cap H_j)}|-|\overline{V(H'})|\geq n \qquad\textrm{and} \qquad
                S_0:=\sum_{1\leq i\neq j\leq3}|V_0(H_i\cap H_j)|-|V_0(H')|\geq 3-n.
            \end{eqnarray*}
        We look at the following sub-cases separately: $H_2,H_3$ are balanced blockers; $H_2$ is a general-count blocker and $H_3$ is a balanced blocker; $H_2,H_3$ are general-count blockers;

    \begin{itemize}
        \item \textbf{Sub-case 3a: } $H_2,H_3$ are balanced blockers. Then,
        \begin{equation*}
            \begin{split}
|E(H)|&=(2|\overline{V(H_1)}|+m|V_0(H_1)|-l)+(2|V(H_2)|-3)+(2|V(H_3)|-3)\\
&=2[|\overline{V(H)}|+\overline{S}]+m[|V_0(H)|+S_0]+(2-m)(|V_0(H_2)|+|V_0(H_3)|)-6-l\\
    &\geq2[\overline{V(H)}|+n]+m[|V_0(H)|+(3-n)]+(2-m)(|V_0(H_2)|+|V_0(H_3)|)-6-l\\
    &=2|\overline{V(H)}|+m|V_0(H)|-l+[2n+m(3-n)+(2-m)(|V_0(H_2)|+|V_0(H_3)|)-6].
           \end{split}
           \end{equation*}
           Let $f:=2n+m(3-n)+(2-m)(|V_0(H_2)|+|V_0(H_3)|)-6$. If $f\geq0$, Proposition \ref{lemma: there is no tight subgraph containing all three neighbours} leads to a contradiction, and so there is an admissible 1-reduction at $v$. We will show that indeed $f\geq0$.

           This is clear if $n=3$. Suppose $n=2$, so that $f=m+(2-m)(|V_0(H_2)|+|V_0(H_3)|)-2$. Since $n=2$, there is at least one fixed vertex in $\{v_1,v_2,v_3\}$, and so $|V_0(H_2)|+|V_0(H_3)|\geq1$. Hence, $f\geq m+2-m-2=0$. 

           So, we may assume $n\leq1$. Hence, there are at least two fixed vertices in $\{v_1,v_2,v_3\}\subset V(G)$, and so $|V_0(H_2)|+|V_0(H_3)|\geq 2$. By assumption, this implies that $m=1$. Hence, $f=n-3+|V_0(H_2)|+|V_0(H_3)|\geq n-1$. When $n=1$, $f\geq0$. So, let $n=0$. Then $|V_0(H_2)|,|V_0(H_3)|\geq2$, so $f\geq1$. 

           \item\textbf{Sub-case 3b: } $H_2$ is a general-count blocker and $H_3$ is a balanced blocker.

           We have
           \begin{equation*}
            \begin{split}
|E(H)|&=(2|\overline{V(H_1)}|+|V_0(H_1)|-l)+(2|\overline{V(H_2)}|+|V_0(H_2)|-l)+(2|V(H_3)|-3)\\
&=2[|\overline{V(H)}|+\overline{S}]+m[|V_0(H)|+S_0]+(2-m)|V_0(H_3)|-3-2l\\
    &\geq2[|\overline{V(H)}|+n]+m[|V_0(H)|+(3-n)]+(2-m)|V_0(H_3)|-3-2l\\
    &=2|\overline{V(H)}|+m|V_0(H)|-l+[2n+m(3-n)+(2-m)|V_0(H_3)|-3-l]
           \end{split}
           \end{equation*}
           If  $f:=2n+m(3-n)+(2-m)|V_0(H_3)|-3-l\geq0$, then we obtain a contradiction by Proposition~\ref{lemma: there is no tight subgraph containing all three neighbours}. We will show that indeed $f\geq0$. If $n=3$, then $f\geq3-l>0$, since $l\leq2$. If $n=2$, then $f\geq1+m-l\geq0$, since $l-m\leq1$. Hence, we may assume that $n\leq1$. So, at least two of the elements in $\{v_1,v_2,v_3\}\subset V(G)$ are fixed. It follows that $m=1$ and $f=n-l+|V_0(H_3)|$. If $n=1$, then $|V_0(H_3)|\geq1$ and $f=1-l+|V_0(H_3)|\geq2-l\geq0$, since $l\leq2$. If $n=0$, then $|V_0(H_3)|\geq2$ and $f\geq2-l\geq0$. 

           \item \textbf{Sub-case 3c: }$H_2,H_3$ are general-count blockers.
        
           We have
            \begin{equation*}
                \begin{split}
                |E(H)|&=\sum_{i=1}^3|E(H_i)|=2\sum_{i=1}^3|\overline{V(H_i)}|+m\sum_{i=1}^3|V_0(H_i)|-3l\\
                &=2(|\overline{V(H)}|+\overline{S})+m(|V_0(H)|+S_0)-3l\\
                &\geq2|\overline{V(H)}|+m|V_0(H)|-l+[2n+m(3-n)-2l].
            \end{split}
            \end{equation*}
            If  $f:=2n+m(3-n)-2l\geq0$, then we obtain a contradiction by Proposition~\ref{lemma: there is no tight subgraph containing all three neighbours}. We will show that $f\geq0$ unless (ii) holds.
            If $n=3$, $f=6-2l=2(3-l)>0$, since $l\leq2$. If $n=2$, then $f=4+m-2l=2(2-l)+m\geq0$, since $l\geq2$ and $m\geq0$. So, we may assume that $n\leq1$, which implies that $m=1$. Hence, $f=n+3-2l$. If $n=1$, then
            $f=2(2-l)\geq0$. If $n=0$, then $f=3-2l$. So if $l\leq1$, then $f\geq1$. This leaves the case that $l=2$. In this case (ii) holds. 
    \end{itemize}
    This proves the result.
\end{proof}

\subsection{Reflection group}
Let $(\tilde{G},\tilde{p})$ be a $\mathcal{C}_s$-generic framework. Recall that the $\Gamma$-gain graph $(G,\psi)$ of $\tilde{G}$ is $(2,1,3,1)$-gain tight whenever $(\tilde{G},\tilde{p})$ is fully-symmetrically isostatic, and $(G,\psi)$ is $(2,1,3,2)$-gain tight whenever $(\tilde{G},\tilde{p})$ is anti-symmetrically isostatic (see Proposition~\ref{Necessary conditions for reflection} in Section~\ref{sec:nec(2)}). In this section, we show that the converse statements are also true. 

To do so, we employ a proof by induction on $|V(G)|$, which uses the vertex extension and reduction moves described in Section~\ref{sec:red}. Hence, we first need to show that there is an admissible reduction of $(G,\psi)$, whose corresponding extension does not break fully-symmetric or anti-symmetric infinitesimal rigidity. Let $v\in V(G)$ be a free vertex of degree 3 with no loop. By Theorem~\ref{theorem on 1-reductions: (2,m,3,l)-tight}, there is always an admissible 1-reduction at $v$, unless all neighbours of $v$ are fixed and $(G,\psi)$ is $(2,1,3,2)$-gain tight. Lemma~\ref{lemma: 1-extension maintain rank} shows that a 1-extension maintains the fully-symmetric and anti-symmetric infinitesimal rigidity of a framework. However, the result assumes that all neighbours of the added vertex do not lie on the same line, and hence they cannot all be fixed. This issue arises both in the fully-symmetric and the anti-symmetric cases. Hence, our proof by induction cannot rely on applying a 1-reduction to a vertex whose neighbours are all fixed.

In the following result we show that, if $G$ has at least two free vertices, and all free vertices of degree 3 in $V(G)$ have three fixed neighbours, then there is another vertex in $V(G)$ at which we may apply an admissible reduction.

\begin{lemma}
\label{there is an admissible reduction: reflection}
    For $1\leq l\leq 2$, let $(G,\psi)$ be a $(2,1,3,l)$-gain tight graph with $|\overline{V(G)}|\geq2$. Then there is a reduction of $(G,\psi)$ which yields a $(2,1,3,l)$-gain tight graph $(G',\psi')$. The reduction which yields $(G',\psi')$ is one of the following: a fix-0-reduction, a 0-reduction, a loop-1-reduction, a 1-reduction at a vertex with at least one free neighbour, or a fix-1-reduction.
\end{lemma}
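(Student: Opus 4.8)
The plan is to argue by a degree/counting analysis on the free vertices of $(G,\psi)$, using Lemma~\ref{lemma: there is a vertex of degree 2 or 3} and Theorem~\ref{theorem on 1-reductions: (2,m,3,l)-tight} as the two workhorses. First I would observe that by Lemma~\ref{lemma: there is a vertex of degree 2 or 3}(i), every free vertex has degree at least $2$ and every fixed vertex has degree at least $1$ (since $m=1$). If some free vertex $v$ has degree $2$, then the corresponding reduction is a fix-0-reduction (if one neighbour is fixed and the move removes a degree-2 vertex adjacent to a fixed vertex — actually a degree-2 free vertex gives a $0$-reduction or a fix-0-reduction depending on the type of its neighbours), and one checks directly that removing $v$ and its two incident edges keeps the $(2,1,3,l)$-sparsity counts (this is the ``admissible'' check: no new edge is added, so only the global count changes, decreasing $|E|$ by $2$ and $2|\overline V|+|V_0|$ by $2$). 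A degree-$2$ free vertex with a loop gives a loop-$1$-reduction, again clearly admissible. So I may assume every free vertex has degree at least $3$.

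Next, suppose there is a free vertex $v$ of degree $3$. If $v$ has a loop, then it has degree-$1$ worth of non-loop incidence plus a loop, which is a loop-$1$-reduction — admissible since deleting $v$, its incident non-loop edge and the loop decreases $|E|$ by $3$ and the count $2|\overline V|+|V_0|$ by $2$, wait that is wrong, so I need to be careful: a loop adds $2$ to the degree, so a degree-$3$ free vertex with a loop has exactly one non-loop edge; removing it removes $2$ edges (the loop counts once as an edge) — here I should recall the paper's convention that a loop is one edge contributing $2$ to degree, so removing $v$ deletes $2$ edges, decreasing $2|\overline V|+|V_0| - l$ correctly by $2$; the remaining graph is $(2,1,3,l)$-tight. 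If $v$ has no loop, I invoke Theorem~\ref{theorem on 1-reductions: (2,m,3,l)-tight} with $(m,l)=(1,l)$: either there is an admissible $1$-reduction at $v$ (and since $m=1$ the problematic case (i) with $(2,2,3,2)$-tightness does not occur), in which case we are done unless all three neighbours of $v$ are fixed, or case (ii) holds, i.e. $l=2$ and $v$ has three fixed neighbours. So the only obstruction is the situation where every free vertex of degree $3$ has all three neighbours fixed, and all other free vertices have degree at least $4$.

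The crux is then to rule this out using a global count when $|\overline{V(G)}|\geq 2$. The hard part will be this counting argument. Let $s$ be the number of degree-$2$ free vertices (now $s=0$ by the above) and $t$ the number of degree-$3$ free vertices. By Lemma~\ref{lemma: there is a vertex of degree 2 or 3}(i), every fixed vertex has degree at least $1$, so applying Lemma~\ref{lemma: there is a vertex of degree 2 or 3}(ii) with $d=1$ gives $2s+t \geq |V_0(G)|(1-2) + 2l = -|V_0(G)| + 2l$, which when $s=0$ yields $t \geq 2l - |V_0(G)|$. This alone is not a contradiction; the real leverage comes from the assumption that each of the $t$ degree-$3$ free vertices sends all $3$ of its edges into $V_0(G)$. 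I would count the edges incident to $V_0(G)$: each fixed vertex has no loop and no parallel edges (Definition~\ref{def:gaingraph}), so a fixed vertex $v_0$ is incident to at most $|\overline V(G)| + |V_0(G)| - 1$ edges. Summing, the $3t$ edges from degree-$3$ free vertices plus whatever else forces $3t \leq \sum_{v_0}\deg(v_0) = |V_0(G)|\cdot(\text{average fixed degree})$. Combined with the global edge count $|E(G)| = 2|\overline V(G)| + |V_0(G)| - l$ and the fact that degree-$3$ free vertices contribute at least $3t$ to $2|E(G)|$ while other free vertices contribute at least $4(|\overline V(G)| - t)$, I would derive $4|\overline V(G)| + 2|V_0(G)| - 2l = 2|E(G)| \geq 3t + 4(|\overline V(G)|-t) + (\text{fixed-vertex degrees}) \geq 4|\overline V(G)| - t + |V_0(G)|$ (using each fixed vertex has degree $\geq 1$), hence $t \geq |V_0(G)| - 2l + $ (something), and on the other hand the parallel-edge restriction at fixed vertices caps how many of the $3t$ edges can actually land, forcing $|V_0(G)|$ large relative to $|\overline V(G)|$, eventually contradicting $(2,1,3,l)$-sparsity applied to the subgraph $H$ spanned by $V_0(G)$ together with the $t$ degree-$3$ free vertices: that subgraph would have $|E(H)| \geq 3t$ but $2|\overline V(H)| + |V_0(H)| - l = 2t + |V_0(G)| - l$, so $3t > 2t + |V_0(G)| - l$ iff $t > |V_0(G)| - l$; pushing the degree inequalities to show $t > |V_0(G)| - l$ under $|\overline V(G)| \geq 2$ gives the contradiction, except possibly in small boundary cases which I would handle by hand (e.g.\ $|\overline V(G)| = 2$, $t = 2$, each adjacent to a small fixed set), showing that there one can still find a $0$-reduction or fix-$0$-reduction at a degree-$4$ vertex after all, or that the configuration simply violates sparsity. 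This last boundary analysis is where I expect the real work to be, and I would present it as a short case check after the main inequality.
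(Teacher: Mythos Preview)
Your overall setup is fine through the point where you reduce to the case ``every free vertex of degree $3$ has all three neighbours fixed, and all other free vertices have degree $\geq 4$.'' The gap is in what you do next: you try to show this configuration is \emph{impossible} by a counting contradiction, but it is not impossible. For a concrete witness (with $l=1$), take $\overline{V(G)}=\{u_1,u_2\}$, $V_0(G)=\{w_1,w_2,w_3\}$, and let $G=K_{2,3}$ with all six edges between a $u_i$ and a $w_j$ (all gains $\textrm{id}$). This graph is $(2,1,3,1)$-gain tight, has $|\overline{V(G)}|=2$, and both free vertices are degree-$3$ with three fixed neighbours. Your inequality $3t\leq 2t+|V_0(G)|-l$ (from $(2,1,l)$-sparsity on the subgraph induced by the degree-$3$ free vertices and their fixed neighbours) gives $t\leq |V_0(G)|-l$, which is exactly the \emph{opposite} of the inequality $t>|V_0(G)|-l$ you say you will push towards; in the example, $t=2=|V_0(G)|-l$. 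So no contradiction is available, and the ``boundary cases'' you defer are in fact the entire remaining content of the lemma.

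What the paper does instead is use the fix-$1$-reduction, which is on your list of allowed moves but which you never deploy. The key step is a counting claim showing that, under your standing assumptions, there are at least three \emph{fixed} vertices of degree $2$; one then takes such a fixed vertex $v$ with neighbours $u_1,u_2$ and proves that some fix-$1$-reduction at $v$ (adding either an edge $(u_1,u_2)$ with a suitable gain, or a loop at one of the $u_i$) is admissible. This admissibility check is itself nontrivial: one must rule out both general-count blockers and balanced blockers of the added edge, and the two cases $l=1$ and $l=2$ require slightly different arguments. (A side remark: your description of the degree-$2$ free vertex case conflates moves --- a degree-$2$ free vertex is always removed by a $0$-reduction, never a fix-$0$-reduction, which by definition removes a degree-$1$ fixed vertex.)
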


\begin{proof}
    The case where there are no fixed vertices is known (see e.g., Theorem 6.3 in \cite{bt2015}), so we may assume $V_0(G)\neq\emptyset$. Suppose $G$ has a vertex $v$ which is either a fixed vertex of degree 1, or a free vertex of degree 2, or a free vertex of degree 3 with a loop (notice that if $v$ has a loop, then $l=1$). Then, we may apply a fix-0-reduction, or a 0-reduction, or a loop-1-reduction at $v$. All such reductions are clearly admissible. Hence, we may assume that there is no such vertex $v$. By Theorem~\ref{theorem on 1-reductions: (2,m,3,l)-tight}, we may also assume that all free vertices of degree 3 in $V(G)$ have three distinct neighbours, all of which are fixed. Let $n$ be the number of vertices of degree 2 in $V_0(G)$.

    \begin{claim}
        Under the above assumptions, we have $n\geq3$.
    \end{claim}

    \medskip

    \textit{Proof. }To see this, let $v_1,\dots,v_t$ be the free vertices in $G$ of degree 3 and assume that for all $1\leq i\leq t$, the edges incident with $v_i$ are directed to $v_i$. Notice that $t\geq2$, by Lemma~\ref{lemma: there is a vertex of degree 2 or 3}. Define the set $V':=\{v\in V_0(G):(v,v_i)\in E(G) \text{ for some }1\leq i\leq t\}$. Let $n'=|V'|$ and consider the subgraph $H$ of $G$ induced by $\{v_1,\dots,v_t\}\cup V'$. By the sparsity of $(G,\psi)$, $3t\leq|E(H)|\leq2t+n'-l$ and hence $n'\geq t+l$. 
    
    Now, the average degree of $G$ is $\hat{\rho}=\frac{4\left|\overline{V(G)}\right|+2|V_0(G)|-2l}{|V(G)|}.$ 
    This average is smallest when all vertices in $\overline{V(G)}\setminus\{v_1,\dots,v_t\}$ have degree 4, and all fixed vertices in $V(G)$ which do not have degree 2, have degree 3. This gives $\hat{\rho}\geq \frac{4\overline{|V(G)|}+3|V_0(G)|-n-t}{|V(G)|}$. Hence, 
    \begin{equation*}
        n\geq|V_0(G)|+2l-t\geq n'+2l-t\geq(t+l)+2l-t=3l\geq3,
    \end{equation*}
    as required. $\square$

    \medskip
    So, there is a fixed vertex $v$ of degree 2. Let $u_1,u_2$ be the neighbours of $v$. Notice that there is no $(2,1,3,l)$-gain tight subgraph $H$ of $G$ with $u_1,u_2\in V(H),v\not\in V(H)$, as otherwise the graph $H':=H+v$ satisfies
    \begin{equation}
    \label{eq: no tight graph}
        |E(H')|=|E(H)|+2=2|\overline{V(H)}|+|V_0(H)|-l+2=2|\overline{V(H')}|+|V_0(H')|-l+1,
    \end{equation}
    contradicting the sparsity of $(G,\psi)$. We show that there is an admissible fix-1-reduction at $v$.
    
    First, suppose that $l=2$. By the sparsity of $(G,\psi)$, $u_1,u_2$ are free. Let $(G_1,\psi_1),(G_2,\psi_2)$ be obtained from $(G,\psi)$ by removing $v$ and adding the edge $e=(u_1,u_2)$ with gains $\text{id}$ and $\gamma$, respectively. Assume, by contradiction, that for $1\leq i\leq2$, $(G_i,\psi_i)$ has a blocker $H_i$. By Equation~\eqref{eq: no tight graph}, $H_1,H_2$ are balanced blockers. If $E(H_1\cap H_2)=\emptyset$, then 
    \begin{equation*}
    \begin{split}
        |E(H_1\cup H_2)|&=2|V(H_1)|-3+2|V(H_2)|-3=2|V(H_1\cup H_2)|+2|V(H_1\cap H_2)|-6\geq2|V(H_1\cup H_2)|-2,
    \end{split}
    \end{equation*}
    where the inequality holds because $u_1,u_2\in V(H_1\cap H_2)$. But then the graph $H':=H_1\cup H_2+v$ satisfies
    \begin{equation*}
        |E(H')|=2|V(H')|-2=2|\overline{V(H')}|+|V_0(H')|-2+|V_0(H')|\geq2|\overline{V(H')}|+|V_0(H')|-1,
    \end{equation*}
    where the inequality holds because $v\in V_0(H')$. This contradicts the sparsity of $(G,\psi)$. So $E(H_1\cap H_2)\neq\emptyset$. Since $H_1,H_2$ are balanced blockers, all paths from $u_1$ to $u_2$ in $H_1$ have gain $\textrm{id}$ and all paths from $u_1$ to $u_2$ in $H_2$ have gain $\gamma$. By the sparsity count, $H_1\cap H_2$ is connected (see, e.g., the proof of Lemma~\ref{lemma: what i need for 1-red.}(ii)). So, there is a path from $u_1$ to $u_2$ in $H_1\cap H_2$ with two different gains, a contradiction. Hence, at least one of $(G_1,\psi_1),(G_2,\psi_2)$ is $(2,1,3,2)$-gain tight.

    \medskip
    Now, let $l=1$. Let $(G_1,\psi_1)$ be obtained from $(G,\psi)$ by removing $v$ and adding the edge $e_1=(u_1,u_2)$ with gain $\text{id}$. Assume that $(G_1,\psi_1)$ has a blocker $H_1$. By Equation~\eqref{eq: no tight graph}, $H_1$ is a balanced blocker. Hence, $H_1+v$ satisfies $|E(H_1+v)|=2|V(H_1+v)|-3=(2|\overline{V(H_1+v)}|+|V_0(H_1+v)|-3)+|V_0(H_1+v)|.$
    If $H_1+v$ contains three fixed vertices, this contradicts the sparsity of $(G,\psi)$. Since $v$ is fixed, this implies that at most one of $u_1,u_2$ is fixed. Assume, without loss of generality, that $u_1$ is free.

    Let $(G_2,\psi_2)$ be obtained from $(G,\psi)$ by removing $v$ and adding a loop $e_2$ at $u_1$ with gain $\gamma$. Assume that $(G_2,\psi_2)$ has a blocker $H_2$. Since $H_2+e_2$ contains the unbalanced loop $e_2$, $H_2$ is a general-count blocker. Hence, by Equation~(\ref{eq: no tight graph}), $u_2\not\in V(H_2)$. If $E(H_1\cap H_2)\neq\emptyset$, then $|E(H_1\cap H_2)|\leq2|V(H_1\cap H_2)|-3$ and so $H_{12}:=H_1\cup H_2$ satisfies
    \begin{equation*}
    \begin{split}
        |E(H_{12})|       &\geq2|V(H_1)|-3+2|\overline{V(H_2)}|+|V_0(H_2)|-1-2|V(H_1\cap H_2)|+3\\
        &=2|\overline{V(H_{12})}|+|V_0(H_{12})|-1+(|V_0(H_1)|-|V_0(H_1\cap H_2)|)\geq2|\overline{V(H_{12})}|+|V_0(H_{12})|-1.
    \end{split}
    \end{equation*}
    By Equation~(\ref{eq: no tight graph}), this contradicts the sparsity of $(G,\psi)$. So, $E(H_1\cap H_2)=\emptyset$. Hence, 
    \begin{equation*}
    \begin{split}
        |E(H_{12})|&=2|V(H_1)|-3+2|\overline{V(H_2)}|+|V_0(H_2)|-1\\
        &=2|\overline{V(H_{12})}|+|V_0(H_{12})|-4+(|V_0(H_1)|+2|\overline{V(H_1\cap H_2)}|+|V_0(H_1\cap H_2)|)\\
        &\geq2|\overline{V(H_{12})}|+|V_0(H_{12})|-2+(|V_0(H_1)|+|V_0(H_1\cap H_2)|).
    \end{split}
    \end{equation*}
    where the inequality holds because $u_1\in\overline{V(H_1\cap H_2)}$. If $|V_0(H_1)|\geq1$, then $H_{12}$ is $(2,1,3,1)$-gain tight which, by Equation~(\ref{eq: no tight graph}), contradicts the sparsity of $(G,\psi)$. Hence, $V_0(H_1)=\emptyset$. In particular, $u_2$ is free.
    
    Let $(G_3,\psi_3)$ be obtained from $(G,\psi)$ by removing $v$ and adding a loop $e_3$ at $u_2$ with gain $\gamma$. Assume that $(G_3,\psi_3)$ has a blocker $H_3$. Similarly as we did with $H_2$, it is easy to see that $H_3$ is a general-count blocker, that $u_1\not\in V(H_3)$ and that $E(H_1\cap H_3)=\emptyset$. Moreover, $E(H_2\cap H_3)=\emptyset$, as otherwise $H_2\cup H_3$ is $(2,1,3,1)$-gain tight which, by Equation~(\ref{eq: no tight graph}), contradicts the sparsity of $(G,\psi)$. Let $\overline{S}=\sum_{1\leq i\neq j\leq3}|\overline{V(H_i\cap H_j)}|-|\overline{V(H_1\cap H_2\cap H_3)}|$ and $S_0=\sum_{1\leq i\neq j\leq3}|V_0(H_i\cap H_j)|-|V_0(H_1\cap H_2\cap H_3)|$. Since $u_1,u_2\not\in \overline{V(H_1\cap H_2\cap H_3)}$, we have $\overline{S}\geq2$. So the graph $H:=H_1\cup H_2\cup H_3$ satisfies
    \begin{equation*}
        \begin{split}
            |E(H)|&=2|V(H_1)|-3+2|\overline{V(H_2)}|+|V_0(H_2)|-1+2|\overline{V(H_3)}|+|V_0(H_3)|-1\\
            &=2|\overline{V(H)}|+|V_0(H)|-5+(|V_0(H_1)|+2\overline{S}+S_0)\geq2|\overline{V(H)}|+|V_0(H)|-1.
        \end{split}
    \end{equation*}
    By Equation~(\ref{eq: no tight graph}), this contradicts the sparsity of $(G,\psi)$. Hence, there is an admissible fix-1-reduction at $v$.
\end{proof}

\begin{theorem}
\label{theorem: final result for reflection}
Let $\Gamma$ be a cyclic group of order 2, let $(G,\psi)$ be a $\Gamma$-gain framework, $\tau:\Gamma\rightarrow\mathcal{C}_s$ be a faithful representation, and $p:V(G)\rightarrow\mathbb{R}^2$ be $\mathcal{C}_s$-generic. The following hold:
\begin{itemize}
    \item If $(G,\psi)$ is $(2,1,3,1)$-gain-tight, then the covering framework $(\Tilde{G},\tilde{p})$ is fully-symmetrically isostatic. 
    \item If $(G,\psi)$ is $(2,1,3,2)$-gain-tight, then the covering framework $(\Tilde{G},\tilde{p})$ is anti-symmetrically isostatic.
    \end{itemize}
\end{theorem}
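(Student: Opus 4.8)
The plan is to prove both statements simultaneously by induction on $|V(G)|$, setting $j=0$ in the $(2,1,3,1)$ case and $j=1$ in the $(2,1,3,2)$ case, so that the goal becomes: if $(G,\psi)$ is $(2,1,3,l)$-gain-tight then its $\mathcal{C}_s$-generic covering is $\rho_j$-symmetrically isostatic (with $(l,j)=(1,0)$ or $(2,1)$). This is the converse of Proposition~\ref{Necessary conditions for reflection}. The observation driving the induction is that, by definition of $\mathcal{C}_s$-genericity (cf. the remark following the definition of $\rho_j$-symmetric isostaticity), it suffices to exhibit for the covering graph $\tilde G$ a \emph{single} $\mathcal{C}_s$-symmetric realisation that is $\rho_j$-symmetrically isostatic; the generic one is then $\rho_j$-symmetrically isostatic as well. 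That such a realisation has the right block sizes is consistent with Proposition~\ref{proposition: dimension of the blocks} and Proposition~\ref{prop:trivmotions}(i) together with the tightness count $|E(G)|=2|\overline{V(G)}|+|V_0(G)|-1$ (resp. $|\overline{E(G)}|=2|\overline{V(G)}|+|V_0(G)|-2$).

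For the base of the induction we take $|\overline{V(G)}|\le 1$, where Lemma~\ref{there is an admissible reduction: reflection} does not apply. Here $(2,1,3,l)$-gain-sparsity leaves only finitely many small gain graphs (for $l=2$ there are neither loops nor edges between fixed vertices, and for $l=1$ at most one loop), and for each one a $\mathcal{C}_s$-symmetric configuration with $O_j$ of full row rank is written down directly; for instance, if $\overline{V(G)}=\emptyset$ and $l=1$ then $(G,\psi)$ is a spanning tree on $V_0(G)$, realised as an isostatic structure with a single remaining degree of freedom along the mirror line.

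For the inductive step, assume $|\overline{V(G)}|\ge 2$. By Lemma~\ref{there is an admissible reduction: reflection}, $(G,\psi)$ reduces to a $(2,1,3,l)$-gain-tight graph $(G',\psi')$ with $|V(G')|=|V(G)|-1$ via a fix-$0$-, $0$-, loop-$1$-, $1$- (at a vertex with at least one free neighbour), or fix-$1$-reduction. By the inductive hypothesis the $\mathcal{C}_s$-generic covering $(\tilde G',\tilde p')$ is $\rho_j$-symmetrically isostatic; in particular $(G',\psi',p')$ is a $\rho_j$-symmetrically isostatic $\mathcal{C}_s$-gain framework, and since $\tilde p'$ is $\mathcal{C}_s$-generic it is in general position. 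Now $(G,\psi)$ is obtained from $(G',\psi')$ by the inverse extension, and one applies the matching rank-preservation lemma — Lemma~\ref{lemma: fix-0-extensions don't change the rank}, Lemma~\ref{lemma 0-extension maintain rank}, Lemma~\ref{lemma: loop-1-extensions don't change the rank}, Lemma~\ref{lemma: 1-extension maintain rank}, or Lemma~\ref{lemma fix-1-extension maintain rank} — to produce a configuration $p''$ on $V(G)$ with $(\tilde G,\tilde p'')$ $\rho_j$-symmetrically isostatic. One only has to check that $\tilde p'$ meets the mild genericity hypotheses of the chosen lemma: distinct $y$-coordinates for the fix-$0$- and loop-$1$-extensions; for the fix-$1$-extension, that the line through the endpoints of the removed edge and its mirror image across the $y$-axis meet in one point (these two lines are reflections of each other and fail to meet only when the original is horizontal or a vertical line off the axis, configurations excluded by generality); and non-collinearity of the three attachment points for the $1$-extension (the reduction is taken at a vertex with a free neighbour, so at most two of the three attachment points are fixed, hence lie at distinct generic points of the mirror line, so collinearity is non-generic). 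For the loop-$1$-extension note that loops occur only when $l=1$, i.e. $j=0$, in which case hypotheses (i)–(iii) of Lemma~\ref{lemma: loop-1-extensions don't change the rank} are vacuous for $\mathcal{C}_s$; for the fix-$0$-extension in the anti-symmetric case ($j=1$) the reduction is at a fixed vertex of degree $1$ adjacent to a free vertex, exactly the situation covered by Lemma~\ref{lemma: fix-0-extensions don't change the rank}. Having produced $p''$, the opening observation gives that the $\mathcal{C}_s$-generic $(\tilde G,\tilde p)$ is $\rho_j$-symmetrically isostatic, completing the induction.

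The combinatorial heart of the argument — that an admissible reduction always exists (Theorem~\ref{theorem on 1-reductions: (2,m,3,l)-tight}, Lemma~\ref{there is an admissible reduction: reflection}) and that the extensions preserve $\rho_j$-symmetric isostaticity — is already in hand, so the main remaining obstacle is the bookkeeping that glues these together: verifying that a $\mathcal{C}_s$-generic realisation of the reduced graph $(G',\psi')$ is automatically in the general position needed to serve as the base of the corresponding extension (no further perturbation being required, and perturbation being harmless since $\rho_j$-symmetric isostaticity is an open condition), with the fix-$1$-extension's ``two mirror-symmetric lines meet in exactly one point'' condition the case most worth spelling out, and with the handful of base configurations to be matched in detail.
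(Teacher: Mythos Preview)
Your overall strategy matches the paper's: induct on $|V(G)|$, use Lemma~\ref{there is an admissible reduction: reflection} to find an admissible reduction when $|\overline{V(G)}|\ge 2$, and invoke the matching extension lemma (Lemmas~\ref{lemma: fix-0-extensions don't change the rank}, \ref{lemma 0-extension maintain rank}, \ref{lemma fix-1-extension maintain rank}, \ref{lemma: loop-1-extensions don't change the rank}, \ref{lemma: 1-extension maintain rank}) on a generic realisation of the smaller graph. Your discussion of the genericity hypotheses for each extension is correct in substance.

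The gap is in your base case. You assert that ``$(2,1,3,l)$-gain-sparsity leaves only finitely many small gain graphs'' when $|\overline{V(G)}|\le 1$, but this is false: there is no bound on $|V_0(G)|$. For $l=1$ and $\overline{V(G)}=\emptyset$ you already note that $(G,\psi)$ is an arbitrary tree on fixed vertices --- an infinite family. For $|\overline{V(G)}|=1$ the situation is similar (e.g.\ for $l=2$ one obtains stars with an arbitrary number of fixed leaves). These cases cannot be ``written down directly''; they require their own inductive treatment. The paper handles this by folding them into the main induction: within the inductive step it first checks for a fixed vertex of degree~$1$ and applies a fix-$0$-reduction (this disposes of the tree case and the star case), and then, if all fixed vertices have degree $\ge 2$ but $|\overline{V(G)}|=1$, a degree-averaging argument forces $l=1$ and $\deg(u)=2$ for the sole free vertex $u$, after which a $0$-reduction applies. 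Only once these two sub-cases are stripped off does one assume $|\overline{V(G)}|\ge 2$ and invoke Lemma~\ref{there is an admissible reduction: reflection}. With that correction your argument becomes essentially the paper's proof; the genuine base cases reduce to the handful of graphs with $|V(G)|\le 2$ shown in the paper's figure.
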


\begin{proof}
We use a proof by induction on $|V(G)|$. 
First, assume that $V(G)$ has no free vertex.

If $(G,\psi)$ is $(2,1,3,1)$-gain-tight, then $G$ is a tree. The base case consists of exactly one single vertex and no edge, which is clearly fully-symmetrically isostatic. Assume that the statement is true for all graphs on $m$ vertices and let $G$ be a graph on $m+1$ vertices. Since $G$ is a tree, it has a vertex $v$ of degree 1. Thus, we may apply a fix-0-reduction at $v$ to obtain a $(2,1,3,1)$-gain tight graph $(G',\psi')$ on $m$ vertices. By the inductive hypothesis, all $\mathcal{C}_s$-generic realisations of $\tilde{G}'$ are fully-symmetrically isostatic. Choose a $\mathcal{C}_s$-generic realisation $(\tilde{G'},\tilde{q}')$ of $\tilde{G}'$. By Lemma~\ref{lemma: fix-0-extensions don't change the rank}, there is a $\mathcal{C}_s$-symmetric realisation $(\tilde{G},\tilde{q})$ of $\tilde{G}$ which is fully-symmetrically isostatic. By $\mathcal{C}_s$-genericity, $(\tilde{G},\tilde{p})$ is also fully-symmetrically isostatic.

If $(G,\psi)$ is $(2,1,3,2)$-gain tight, then $G$ consists of exactly two isolated vertices, with no edges, since any edge would violate the sparsity count. In this case, $(\tilde{G},\tilde{p})$ is clearly anti-symmetrically isostatic, since any anti-symmetric motion must be trivial.

Hence, we may assume $|\overline{V(G)}|\geq1$.
  All base graphs are given in Figure~\ref{Base graphs for reflection.}. It is easy to check that 
$\mathcal{C}_s$-symmetric realisations of these base graphs are fully-symmetrically and anti-symmetrically isostatic, respectively.

\begin{figure}[H]
    \centering
    \begin{tikzpicture}
  [scale=.8,auto=left]

  \draw(-2.25,1.8) -- (9.65,1.8);
  \node at (0.25,1.4) {Fully-symmetric};
  \node at (6.255,1.4) {Anti-symmetric};
  \draw(-2.25,1) -- (9.65,1);
  \draw(-2.25,-1) -- (-2.25,1.8);
  
  \draw[fill =  black](-1,0) circle (0.15cm);
  
  \draw(1.5,0) circle (0.15cm);
  \draw[->] (1.65,0) .. controls (2.05,0.5) and (0.95,0.5) .. (1.35,0);

  \draw (3,-1) -- (3,1.8);
  \draw (0.25,-1) -- (0.25,1);

  \draw(4.25,0) circle (0.15cm);

  \draw (5.5,-1) -- (5.5,1);

  \draw[fill = black](6.75,0) circle (0.15cm);
  \draw[fill = black](8.5,0) circle (0.15cm);

  \draw (9.65,-1) -- (9.65,1.8);
  \draw(-2.25,-1) -- (9.65,-1);
\end{tikzpicture}
    \caption{Base graphs for reflection.} 
    \label{Base graphs for reflection.}
\end{figure}

For the inductive step, assume the result holds whenever $|V(G)|=m$ for some $m\in\mathbb{N}$. Let $1\leq l\leq2$ and suppose $(G,\psi)$ is $(2,1,3,l)$-gain tight with $|V(G)|=m+1$. If $G$ has a fixed vertex $v$ of degree 1, then we may apply a fix-0-reduction at $v$ to obtain a $(2,1,3,l)$-gain tight graph $(G',\psi')$ on $m$ vertices. 
By induction, all $\mathcal{C}_s$-generic realisations of $\tilde{G'}$ are fully-symmetrically isostatic if $l=1$, and they are anti-symmetrically isostatic if $l=2$. Then, our result follows from Lemma~\ref{lemma: fix-0-extensions don't change the rank}. So, assume that all fixed vertices of $G$ have degree at least $2$. 

Suppose that $\overline{V(G)}=\{u\}$, and let $V_0(G)=\{v_1,\dots,v_t\}$ for some $t\geq1$. The average  degree of $G$, denoted $\hat{\rho}$, satisfies 
$\hat{\rho}=\frac{2|E(G)|}{|V(G)|}=\frac{4+2t-2l}{|V(G)|}$.
The  average degree  of $G$ is smallest when all vertices in $V_0(G)$ have degree 2, and so $2t+\text{deg}(u)\leq 4+2t-2l$. Hence $\text{deg}(u)\leq4-2l$. By Lemma~\ref{lemma: there is a vertex of degree 2 or 3}(i), $l=1$ and $\text{deg}(u)=2$. Then we may apply a 0-reduction at $u$ to obtain a $(2,1,3,1)$-gain tight graph $(G',\psi')$ on $m$ vertices. By induction, all $\mathcal{C}_s$-generic realisations of $\tilde{G}'$ are fully-symmetrically isostatic. Then the result holds by Lemma~\ref{lemma 0-extension maintain rank}. So, assume $|\overline{V(G)}|\geq2.$ 

By Lemma~\ref{there is an admissible reduction: reflection}, $(G,\psi)$ admits a reduction using one of the moves listed in the statement of the lemma. Let $(G',\psi')$ be a $(2,1,3,l)$-gain tight graph obtained by applying such a reduction to $(G,\psi)$. By induction, all $\mathcal{C}_s$-generic realisations of $\tilde{G'}$ are fully-symmetrically isostatic if $l=1$ and anti-symmetrically isostatic if $l=2$. Let $\tilde{q}'$ be a $\mathcal{C}_s$-generic configuration of $\tilde{G'}$ which also satisfies the conditions of Lemma~\ref{lemma: 1-extension maintain rank} (respectively, Lemma~\ref{lemma fix-1-extension maintain rank}) if $\tilde{G'}$ is obtained from $\tilde{G}$ by applying a 1-reduction (respectively, a fix-1-reduction). Such a configuration exists: if necessary, we may apply a small symmetry-preserving perturbation to  the points of a $\mathcal{C}_s$-generic framework, 
which will maintain $\mathcal{C}_s$-genericity. By Lemmas~\ref{lemma 0-extension maintain rank},~\ref{lemma fix-1-extension maintain rank},~\ref{lemma: loop-1-extensions don't change the rank} and~\ref{lemma: 1-extension maintain rank}, there is a realisation $(\tilde{G},\tilde{q})$ of $\tilde{G}$ which is fully-symmetrically isostatic if $l=1$ and anti-symmetrically isostatic if $l=2$. Since $\tilde{p}$ is $\mathcal{C}_s$-generic, the result follows.
\end{proof} 

The following  main result for $\mathcal{C}_s$ is now a consequence of Proposition~\ref{Necessary conditions for reflection} and Theorem~\ref{theorem: final result for reflection}.

\begin{theorem}
Let $(\tilde{G},\tilde{p})$ be a $\mathcal{C}_s$-generic framework with $\mathcal{C}_s$-gain framework $(G,\psi,p)$. $(\tilde{G},\tilde{p})$ is infinitesimally rigid if and only if the following hold:
\begin{itemize}
\item $(G,\psi)$ has a $(2,1,3,1)$-gain tight spanning subgraph.
\item $(G,\psi)$ has a $(2,1,3,2)$-gain tight spanning subgraph.
\end{itemize}
\end{theorem}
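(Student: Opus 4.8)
The plan is to split the infinitesimal rigidity of $(\tilde G,\tilde p)$ along the block-decomposition of the rigidity matrix and match each block with one of the two gain-tight spanning subgraph conditions. Since $\tilde p$ affinely spans $\mathbb{R}^2$, the framework $(\tilde G,\tilde p)$ is infinitesimally rigid precisely when $\mathrm{rank}(\tilde R(\tilde G,\tilde p))=2|V(\tilde G)|-3$, and because $\tilde R(\tilde G,\tilde p)=\tilde R_0(\tilde G,\tilde p)\oplus\tilde R_1(\tilde G,\tilde p)$ this rank is the sum of the two block ranks. By Proposition~\ref{proposition: dimension of the blocks}(i) each block has $2|\overline{V(G)}|+|V_0(G)|$ columns, so $2|V(\tilde G)|-3=2\bigl(2|\overline{V(G)}|+|V_0(G)|\bigr)-3$; and by Proposition~\ref{prop:trivmotions}(i) the $3$-dimensional space of trivial infinitesimal motions splits into a $1$-dimensional $\rho_0$-part and a $2$-dimensional $\rho_1$-part, forcing $\mathrm{rank}(\tilde R_0)\le 2|\overline{V(G)}|+|V_0(G)|-1$ and $\mathrm{rank}(\tilde R_1)\le 2|\overline{V(G)}|+|V_0(G)|-2$ for every $\mathcal{C}_s$-symmetric configuration. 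Adding these, equality in the total rank holds iff it holds in each block; moreover, since $\mathrm{rank}\,\tilde R=\mathrm{rank}\,\tilde R_0+\mathrm{rank}\,\tilde R_1$ and each block rank is maximised over $\mathcal{C}_s$-symmetric configurations on a dense open set, $\mathcal{C}_s$-genericity of $\tilde p$ forces each $\tilde R_j(\tilde G,\tilde p)$ to attain its own maximal rank. Hence it suffices to prove that $\tilde R_0(\tilde G,\tilde p)$ has rank $2|\overline{V(G)}|+|V_0(G)|-1$ iff $(G,\psi)$ has a $(2,1,3,1)$-gain tight spanning subgraph, and $\tilde R_1(\tilde G,\tilde p)$ has rank $2|\overline{V(G)}|+|V_0(G)|-2$ iff $(G,\psi)$ has a $(2,1,3,2)$-gain tight spanning subgraph.

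For the first equivalence I would work with $O_0(G,\psi,p)$, whose rank equals that of $\tilde R_0(\tilde G,\tilde p)$ by Corollary~\ref{corollary: rank of orbit matrix and blocks are equal} (and is independent of the gains chosen on edges incident to a fixed vertex). If $(H,\psi|_{E(H)})$ is a $(2,1,3,1)$-gain tight spanning subgraph it has the same vertex set as $(G,\psi)$, so $O_0(H,\psi|_{E(H)},\cdot)$ is a row-submatrix of $O_0(G,\psi,\cdot)$ on the same columns; by Theorem~\ref{theorem: final result for reflection} the covering $\tilde H$ is fully-symmetrically isostatic for every $\mathcal{C}_s$-generic configuration, so this submatrix attains rank $2|\overline{V(G)}|+|V_0(G)|-1$ on a dense open set of symmetric configurations, whence $O_0(G,\psi,\cdot)$ attains at least that rank on a dense open set and exactly that rank at the generic point $\tilde p$. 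Conversely, if $O_0(G,\psi,p)$ has rank $2|\overline{V(G)}|+|V_0(G)|-1$, choose a maximal linearly independent set of rows; the corresponding edges, together with all of $V(G)$ (adjoining the remaining vertices as isolated ones, which Definition~\ref{def:gaingraph} permits), form a spanning subgraph $(H,\psi|_{E(H)})$ with $|E(H)|=2|\overline{V(H)}|+|V_0(H)|-1$. Restricting to any subgraph of $H$ keeps the rows independent, so the two counting arguments in the proof of Proposition~\ref{Necessary conditions for reflection} — the general count for arbitrary subgraphs and the standard-rigidity-matrix argument for balanced subgraphs — show $(H,\psi|_{E(H)})$ is $(2,1,3,1)$-gain sparse, hence $(2,1,3,1)$-gain tight. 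The second equivalence follows by the identical argument with $O_1$ in place of $O_0$, the parameter pair $(2,1,3,2)$, ``anti-symmetrically isostatic'' in place of ``fully-symmetrically isostatic'', and the $2$-dimensional $\rho_1$-part of the trivial motions (here the rows of $O_1$ already correspond only to free edges, which is consistent since a $(2,1,3,2)$-tight gain graph has no loops and no edges between fixed vertices).

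I expect the main obstacle to be a clean treatment of the genericity transfer: Theorem~\ref{theorem: final result for reflection} only guarantees the existence of \emph{some} $\mathcal{C}_s$-generic realisation of $\tilde H$ that is isostatic, not that $\tilde p$ restricted to the sub-framework on $\tilde H$ is itself generic, so the ``if'' directions must go through semicontinuity of matrix rank (maximal rank of a row-submatrix on a dense open set of parameters forces maximal rank of the ambient matrix at every generic parameter). The remaining points are routine: promoting the extracted edge set to a genuinely spanning $\Gamma$-gain subgraph, checking that the block ranks are, as claimed, independently maximised on dense open sets so that $\mathcal{C}_s$-genericity distributes across the two blocks, and reusing verbatim the ``sparsity-from-row-independence'' arguments already established in Proposition~\ref{Necessary conditions for reflection}.
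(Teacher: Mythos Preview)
Your proposal is correct and takes essentially the same approach as the paper, which simply states that the theorem ``is now a consequence of Proposition~\ref{Necessary conditions for reflection} and Theorem~\ref{theorem: final result for reflection}'' without further detail. You have correctly unpacked the implicit steps---the block-decomposition of the rank condition, the genericity transfer to each block via dense-open-set intersection, the extraction of a gain-tight spanning subgraph from a maximal independent row set, and the reuse of the row-independence arguments from Proposition~\ref{Necessary conditions for reflection}---and your handling of the semicontinuity issue in the ``if'' direction is exactly the right way to bridge Theorem~\ref{theorem: final result for reflection} (stated for a generic realisation of the subgraph) with genericity of the ambient framework.
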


\subsection{Half-turn group}
Let $(\tilde{G},\tilde{p})$ be a $\mathcal{C}_2$-generic framework. Recall that $(G,\psi)$ is $(2,0,3,1)$-gain tight whenever $(\tilde{G},\tilde{p})$ is fully-symmetrically isostatic, and  $(G,\psi)$ is $(2,2,3,2)$-gain tight whenever $(\tilde{G},\tilde{p})$ is anti-symmetrically isostatic (see Proposition~\ref{necessary conditions for 2-fold rotation} in Section~\ref{sec:nec(2)}).
In this section, we show that the converse statements are also true. 

We do so by strong induction on $|\overline{V(G)}|$, using the vertex reduction moves shown in Section~\ref{sec:ext}. Hence, we first need to show that there is an admissible reduction of $(G,\psi)$. Let $v\in V(G)$ be a free vertex of degree 3. By Theorem~\ref{theorem on 1-reductions: (2,m,3,l)-tight}, there is always an admissible 1-reduction at $v$, unless $(G,\psi)$ is $(2,2,3,2)$-gain tight, $v$ has exactly one free neighbour and exactly one fixed neighbour. In the following Lemma, we take care of this remaining case.

\begin{lemma}
\label{lemma: there is an admissible reduction (rotation order 2)}
    Let $(G,\psi)$ be a $(2,2,3,2)$-gain tight graph with $|V_0(G)|\leq1$ and $|V(G)|\geq2$. Then $(G,\psi)$ admits a reduction.
\end{lemma}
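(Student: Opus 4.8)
The plan is to show that a $(2,2,3,2)$-gain tight graph $(G,\psi)$ with at most one fixed vertex and at least two vertices always admits one of the reductions introduced in Section~\ref{sec:red}: a $0$-reduction, a loop-$1$-reduction, a $1$-reduction (at a vertex with at least one free neighbour), or a $2$-vertex-reduction. The starting point is Lemma~\ref{lemma: there is a vertex of degree 2 or 3} with $(m,l)=(2,2)$: every free vertex has degree at least $2$, every fixed vertex has degree at least $2$, and $2s+t\geq |V_0(G)|(d-4)+4$ where $s,t$ count free vertices of degree $2,3$. First I would dispose of the easy cases: if there is a free vertex of degree $2$, a $0$-reduction applies; if there is a free vertex of degree $3$ carrying a loop, a loop-$1$-reduction applies (and the resulting gain graph is still $(2,2,3,2)$-gain sparse, since removing a degree-$3$ vertex with a loop removes $3$ edges and $2$ from the count $2|\overline{V}|$, and the loop on $v$ kills one balanced-count obstruction); so we may assume every free vertex has degree at least $3$, every free vertex of degree $3$ has no loop, and by Theorem~\ref{theorem on 1-reductions: (2,m,3,l)-tight}, every free vertex of degree $3$ has exactly one free and exactly one fixed neighbour (since $|V_0(G)|\le 1$, option (ii) of that theorem cannot occur, and option (i) is the only remaining obstruction).

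Next I would split on $|V_0(G)|$. If $V_0(G)=\emptyset$, then $G$ is $(2,3)$-gain tight with no fixed vertices, and the result is the known free-action statement (e.g.\ Theorem~6.3 in~\cite{bt2015} together with the reduction analysis there), so assume $V_0(G)=\{v_0\}$. Now the counting inequality from Lemma~\ref{lemma: there is a vertex of degree 2 or 3}(ii) forces, when all fixed vertices (i.e.\ just $v_0$) have degree at least $d$, the bound $2s+t\ge (d-4)+4 = d$; taking $d=\deg(v_0)$ this says $2s+t\ge\deg(v_0)$. Combined with the fact that all degree-$3$ free vertices have exactly one free neighbour and one fixed neighbour, I would argue that $v_0$ must be adjacent to several of these degree-$3$ vertices. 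The key structural observation is that if $v$ is a free vertex of degree $3$ adjacent to $v_0$ via a double edge $e_1=(v,v_0),e_1'=(v,v_0)$ — which cannot happen since $v_0\in V_0(G)$ has no parallel edges — so in fact $v$ has a single edge to $v_0$ and a double edge $e_2=(v,w),e_2'=(v,w)$ to its unique free neighbour $w$, with $\psi(e_2)\ne\psi(e_2')$; then the pair $\{v,w\}$ together with $v_0$ looks exactly like the configuration produced by a $2$-vertex-extension applied at $v_0$, provided $w$ also has degree $3$ and is adjacent to $v_0$. So the heart of the argument is: locate two free vertices $v_1,v_2$, each of degree $3$, joined to each other by a double edge with gains $\{\mathrm{id},\gamma^{k/2}\}=\{\mathrm{id},\gamma\}$ (here $k=2$, so the double edge must have gains $\mathrm{id}$ and $\gamma$, forced by $(2,3)$-sparsity of the balanced part together with the edge-multiplicity axiom of a gain graph), each joined to $v_0$ by a single edge, and such that deleting $v_1,v_2$ yields a $(2,2,3,2)$-gain tight graph — i.e.\ perform a $2$-vertex-reduction.

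I expect the main obstacle to be verifying admissibility of the $2$-vertex-reduction: after deleting $v_1,v_2$ and their five incident edges, one must check the resulting graph $(G',\psi')$ is still $(2,2,3,2)$-gain sparse, which amounts to ruling out "blocker"-type subgraphs $H$ of $G'$ that become over-count once we would re-add $v_1,v_2$. The edge-count bookkeeping is clean ($5$ edges removed, $2|\overline{V}|$ drops by $4$, so the global count $|E|=2|\overline V|+2|V_0|-2$ is preserved), but one must also confirm that no balanced subgraph of $G'$ violates $(2,3)$-sparsity; since we only delete vertices and edges, balanced sparsity can only improve, so this is automatic. The real work is the existence part: showing that the degree/neighbour constraints force the $2$-vertex-reducible configuration to appear. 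I would handle this by a careful case analysis on $\deg(v_0)$ and on how the (at least $\deg(v_0)$-many, counted with multiplicity $2s+t$) low-degree free vertices attach to $v_0$, using that each degree-$3$ free vertex contributes exactly one edge to $v_0$ and that two such vertices sharing their unique free-neighbour edge must be joined by a double edge — and if instead the unique free neighbour $w$ of such a $v$ has degree $\ge 4$ or is not adjacent to $v_0$, I would show a $1$-reduction at $v$ with the free neighbour as one endpoint is admissible after all (re-examining the blocker analysis of Theorem~\ref{theorem on 1-reductions: (2,m,3,l)-tight} in this restricted situation, where option (i) can be circumvented because the other endpoint is now a genuine free vertex of high degree, not the fixed vertex $v_0$). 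Closing off that last escape route is where I anticipate the bulk of the technical effort.
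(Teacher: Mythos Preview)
Your setup is correct and matches the paper: dispose of free vertices of degree~$2$ by $0$-reduction, invoke Theorem~\ref{theorem on 1-reductions: (2,m,3,l)-tight} to reduce to the case where every free degree-$3$ vertex has exactly one free neighbour and one fixed neighbour, and assume $V_0(G)=\{v_0\}$. (A small aside: in a $(2,2,3,2)$-gain tight graph a loop at a free vertex already violates the $(2,2,2)$-count, so the loop-$1$-reduction case is vacuous; the paper simply notes this and moves on. Your edge-counting for the $2$-vertex-reduction also has an arithmetic slip --- four edges are removed, not five.)

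The genuine gap is in your step~7. Once you are in case~(i) of Theorem~\ref{theorem on 1-reductions: (2,m,3,l)-tight} for a degree-$3$ vertex $v$ with free neighbour $w$ and fixed neighbour $v_0$, the only $1$-reduction that is not immediately killed by the no-loop constraint adds the edge $(w,v_0)$. So ``the other endpoint'' is always the fixed vertex $v_0$; there is no $1$-reduction at $v$ whose added edge has two free endpoints, and the degree of $w$ does nothing to prevent a balanced $(2,3)$-tight blocker $H\ni w,v_0$ from existing in $G-v$. Your proposed local re-examination of the blocker analysis therefore has no traction.

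What the paper does instead is global. List the degree-$3$ free vertices $v_1,\dots,v_t$ with free neighbours $u_1,\dots,u_t$; a degree count gives $\deg(v_0)\le t$, so $v_0$'s neighbours are exactly $\{v_1,\dots,v_t\}$. If some $u_i$ is adjacent to $v_0$ then $u_i=v_j$ for some $j$, and $\{v_i,v_j\}$ form a $2$-vertex-reducible pair. Otherwise no $u_i$ is adjacent to $v_0$, and one attempts the $1$-reduction at each $v_i$ adding $(u_i,v_0)$. Assuming every one is blocked by a balanced blocker $H_i$, one shows $v_j\notin V(H_i)$ for $j\ne i$, that the $H_i$ are pairwise edge-disjoint with $V(H_i\cap H_j)=\{v_0\}$, and hence that $|E(\bigcup_i H_i)|=2|V(\bigcup_i H_i)|-t-2$. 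Adding back the $v_i$'s gives a subgraph $G'$ with $|E(G')|=2|V(G')|-2$; a further count shows $G'=G$. But then a standard $(2,3)$-tight degree argument inside some $H_i$ produces a free vertex of degree $\le 3$ distinct from $v_0$ and $u_i$, contradicting the assumption that $v_1,\dots,v_t$ were the only such vertices. This union-of-blockers contradiction is the missing idea in your plan.
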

\begin{proof}
The case where there is no fixed vertex is already known (see e.g., Theorem 6.8 in \cite{bt2015}). Hence, we may assume $V_0(G)=\{v_0\}.$ By Lemma~\ref{lemma: there is a vertex of degree 2 or 3}, there is a free vertex in $V(G)$ of degree 2 or 3. By the sparsity of $(G,\psi)$, no vertex of $G$ has a loop. We may assume that $G$ has no free vertex of degree 2. Otherwise, we may apply a 0-reduction to $(G,\psi)$ (clearly, any 0-reduction is admissible). Further, we may assume that all free vertices of degree 3 have exactly 2 distinct neighbours, one of which is $v_0$: otherwise, we may apply a 1-reduction to $(G,\psi)$, by Theorem~\ref{theorem on 1-reductions: (2,m,3,l)-tight}.

So let $v_1,\dots,v_t$ be the free vertices in $G$ of degree 3. For $1\leq i\leq t$ let $u_i$ be the free neighbour of $v_i$, and $e_i:=(u_i,v_0)$. By Lemma~\ref{lemma: there is a vertex of degree 2 or 3}(ii), $\textrm{deg}(v_0)\leq t$. So, if the edge $e_i$ is present for some $1\leq i\leq t,$ then $u_i$ must be a vertex of degree 3. Hence, we can apply a 2-vertex reduction at $u_i,v_i$. So, we may assume that $e_i\not\in E(G)$ for all $1\leq i\leq t$.

For $1\leq i\leq t,$ let $(G_i,\psi_i)$ be obtained from $(G,\psi)$ by removing $v_i$ and adding $e_i$ with gain $\textrm{id}$. We will show that, for some $1\leq i\leq t$, $(G_i,\psi_i)$ is an admissible 1-reduction. Assume, by contradiction, that for all $1\leq i\leq t$ there is a blocker $H_i$ for $(G_i,\psi_i)$. By Proposition~\ref{lemma: there is no tight subgraph containing all three neighbours}, each $H_i$ is a balanced blocker. 

Moreover, for each $1\leq i\neq j\leq t$, $v_j\not\in V(H_i)$. To see this, suppose, by contradiction, that $v_j\in V(H_i)$. Since $H_i$ is a balanced blocker, all of its vertices have degree at least $2$ (see the first paragraph of the proof of Lemma~\ref{lemma: there is a vertex of degree 2 or 3} for an argument). Hence, two of the edges incident to $v_j$ lie in $E(H_i)$. Moreover, since $H_i$ is balanced, it cannot contain parallel edges. Hence, $H_i$ contains exactly $2$ of the edges incident to $v_j$. Let $e$ be the edge incident to $v_j$ such that $e\not\in E(H_i)$. Then
\begin{equation*}
    |E(H_i+v_i+e)|=|E(H_i)|+4=2|V(H_i)|+1=2|V(H_i+v_i+e)|-1,
\end{equation*}
contradicting the sparsity of $(G,\psi)$. So $v_j\not\in V(H_i)$ for all $1\leq i\neq j\leq t$.

\medskip
\textit{\textbf{Claim: }$E(H_i\cap H_j)=\emptyset$ and $V(H_i\cap H_j)=\{v_0\}$ for all $1\leq i\neq j\leq t$}.

\medskip

\textit{Proof. }Choose some $1\leq i\neq j\leq t$. First, assume by contradiction that $E(H_i\cap H_j)\neq\emptyset$. In a similar way as we did in the proof of Lemma~\ref{lemma: what i need for 1-red.}(ii), we can see that $|E(H_i\cup H_j)|\geq2|V(H_i\cup H_j)|+3c-c_0-6$, where $c,c_0$ are, respectively, the number of connected components and isolated vertices of $H_i\cap H_j$. Notice that $c_0\leq c-1$ (since all isolated vertices of $H'$ are also connected components of $H'$, and $H'$ has at least one connected component with non-empty edge set), and so $|E(H_i\cup H_j)|\geq2|V(H_i\cup H_j)|+2c-5$. By the sparsity of $(G,\psi)$, $c=1$ and $|E(H_i\cup H_j)|=2|V(H_i\cup H_j)|-3$. But the graph $H$ obtained from $H_i\cup H_j$ by adding $v_i,v_j$ and its incident edges satisfies $|E(H)|=2|V(H)|-1$, contradicting the sparsity of $(G,\psi).$ Thus, $E(H_i\cap H_j)=\emptyset$ for all $1\leq i\neq j\leq t$. 

Now, if $V(H_i\cap H_j)\neq\{v_0\}$, then $|E(H_i\cup H_j)|=|E(H_i)|+|E(H_j)|=2|V(H_i\cup H_j)|+2|V(H_i\cap H_j)|-6\geq2|V(H_i\cup H_j)|-2.$ But then the graph $H$ obtained from $H_i\cup H_j$ by adding $v_i,v_j$ and its incident edges satisfies $|E(H)|=2|V(H)|,$ contradicting the sparsity of $(G,\psi)$. So $V(H_i\cap H_j)=\{v_0\}$. Since $i,j$ were arbitrary, the claim holds. $\square$

\medskip
Let $H:=\bigcup_{i=1}^tH_i$. By the Claim,
\begin{equation*}
        |E(H)|=\sum_{i=1}^t|E(H_i)|
        =2\sum_{i=1}^t|V(H_i)|-3t
        =2(|V(H)|+(t-1))-3t
        =2|V(H)|-t-2.
\end{equation*}
So for the graph $G'$ obtained from $H$ by adding the vertices $v_i$, $i=1,\ldots, t$, and their incident edges, we have $|E(G')|=2|V(G')|-2.$ This implies that there is no edge $e\in E(G)\setminus E(H)$ that joins two vertices in $V(H)$.

Next we show that there is no non-empty subgraph $H'$ of $G$ such that $V(G)$ is the disjoint union of $V(G')$ and $V(H')$. Assume, by contradiction, that such $H'$ exists. By assumption, all vertices of $H'$ have degree at least 4 in $G.$ Let $d(G',H')$ be the number of edges joining a vertex in $G'$ with one in $H'$.

We know $|E(H')|=2|V(H')|-\alpha$ for some $\alpha\geq2$. We have that $4|V(H')|\leq \sum_{v\in V(H')} deg_G(v)=2|E(H')|+d(G',H')=4|V(H')|-2\alpha+d(G',H'),$ and so $d(G',H')\geq2\alpha.$
Hence, 
\begin{equation*}
        |E(G)|=|E(G')|+|E(H')|+d(G',H')\geq2|V(G')|-2+2|V(H')|-\alpha +2\alpha=2|V(G)|-2+\alpha,
\end{equation*}
which contradicts the sparsity of $(G,\psi)$, since $\alpha\geq2$. So, $H'$ does not exist, and $G=G'$.

\medskip
Finally, fix some $1\leq i\leq t$ and let $n,m$ be the vertices of $H$ which have degree 2 and 3 in $H_i$. The average degree of $H_i$ is $\hat{\rho}=\frac{2|E(G)|}{|V(H_i)|}=\frac{4|V(H_i)|-6}{|V(H_i)|}$. The minimum average degree of $H_i$ is $\frac{4|V(H_i)|-2n-m}{|V(H_i)|}.$ Hence, $2n+m\geq6.$ In particular, there are at least 3 vertices of degree 2 or 3 in $V(H_i)$, and so there is a free vertex $v$ of degree 2 or 3 that is not $v_0$ or $u_i$. This means that $v$ has degree 2 or 3 in $G=G'$. But this is not possible, since we assumed there are no free vertices of degree 2 in $G$, and that all free vertices of degree 3 are $v_1,\dots,v_t$. The result follows.
\end{proof}

The following results will be proved in a very similar way to Theorem~\ref{theorem: final result for reflection}. However, we now work with the half-turn group. So $|V_0(G)|\leq 1$.

\begin{theorem}
\label{theorem: final theorem for 2-fold symmetry}
    Let $\Gamma$ be a cyclic group of order 2, let $(G,\psi)$ be a connected $\Gamma$-gain framework with $|V_0(G)|\leq 1,$ $\tau:\Gamma\rightarrow\mathcal{C}_2$ be a faithful representation, and $p:V(G)\rightarrow\mathbb{R}^2$ be $\mathcal{C}_2$-generic. The following hold:
    \begin{itemize}
    \item If $(G,\psi)$ is $(2,0,3,1)$-gain-tight, then the covering framework    $(\Tilde{G},\tilde{p})$  is fully-symmetrically isostatic.
    \item  If $(G,\psi)$ is $(2,2,3,2)$-gain tight, then  the covering framework $(\Tilde{G},\tilde{p})$ is anti-symmetrically isostatic.
    \end{itemize}
\end{theorem}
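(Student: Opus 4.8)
The plan is to mirror the structure of the proof of Theorem~\ref{theorem: final result for reflection}, running an induction on $|\overline{V(G)}|$ (equivalently $|V(G)|$, since $|V_0(G)|\leq 1$). First I would dispose of the case $\overline{V(G)}=\emptyset$: then $|V(G)|\leq 1$, and either $(G,\psi)$ is a single fixed vertex with no edges (forcing $(2,0,3,1)$-gain-tightness, base case for the fully-symmetric claim) or the $(2,2,3,2)$ count cannot be met, so we only need the trivial observation that a one-vertex framework is $\rho_j$-symmetrically isostatic for the relevant $j$. Next, assuming $|\overline{V(G)}|\geq 1$, I would list and check the finitely many base graphs directly (a $\mathcal{C}_2$-symmetric triangle-type configuration for $(2,0,3,1)$; the appropriate small graph with one fixed vertex for $(2,2,3,2)$), verifying by an explicit rank computation of $\tilde R_0$ or $\tilde R_1$ — equivalently $O_0(G,\psi,p)$ or $O_1(G,\psi,p)$ via Corollary~\ref{corollary: rank of orbit matrix and blocks are equal} — that a $\mathcal{C}_2$-generic realisation is fully-symmetrically, respectively anti-symmetrically, isostatic.

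For the inductive step I would first handle the cheap reductions. If there is a free vertex $u$ of degree $2$ (there always is one of degree $2$ or $3$ by Lemma~\ref{lemma: there is a vertex of degree 2 or 3}), apply a $0$-reduction to get a $(2,0,3,1)$- (resp.\ $(2,2,3,2)$-) gain-tight graph $(G',\psi')$ on fewer vertices; by induction its covering has a fully- (resp.\ anti-) symmetrically isostatic $\mathcal{C}_2$-generic realisation, and Lemma~\ref{lemma 0-extension maintain rank} lifts this back to $(\tilde G,\tilde p)$ after invoking $\mathcal{C}_2$-genericity. Otherwise every free vertex of degree $\leq 3$ has degree exactly $3$; by Theorem~\ref{theorem on 1-reductions: (2,m,3,l)-tight} such a vertex $v$ admits an admissible $1$-reduction unless $(G,\psi)$ is $(2,2,3,2)$-gain-tight with $v$ having exactly one free and one fixed neighbour, and in that exceptional case Lemma~\ref{lemma: there is an admissible reduction (rotation order 2)} guarantees that $(G,\psi)$ nonetheless admits \emph{some} reduction (a $0$-reduction, a $1$-reduction with at least one free neighbour, or a $2$-vertex-reduction). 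In all surviving cases we have a reduction to a smaller $(2,0,3,1)$- or $(2,2,3,2)$-gain-tight graph $(G',\psi')$; the inductive hypothesis applies, and the corresponding extension lemma (Lemma~\ref{lemma 0-extension maintain rank}, Lemma~\ref{lemma: 1-extension maintain rank}, or Lemma~\ref{2-vertex extension maintains rank}, with $j=0$ or $j=1$ as appropriate, noting that for $k=2$ condition (i) of the loop lemma is not needed since $\mathcal{C}_2$-gain-tight graphs with $|V_0|\le1$ have no loops) produces a $\mathcal{C}_2$-symmetric realisation $(\tilde G,\tilde q)$ with the desired isostaticity. Since the hypotheses of Lemma~\ref{lemma: 1-extension maintain rank} (the three relevant points not collinear) are open and $\mathcal{C}_2$-genericity-compatible, we may pick $\tilde q'$ on the smaller graph $\mathcal{C}_2$-generic and satisfying them — a small symmetry-preserving perturbation of a $\mathcal{C}_2$-generic framework stays $\mathcal{C}_2$-generic — and then $\mathcal{C}_2$-genericity of $\tilde p$ transfers the conclusion from $(\tilde G,\tilde q)$ to $(\tilde G,\tilde p)$.

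One subtlety to address carefully is the $\overline{V(G)}=\{u\}$ case with $V_0(G)=\{v_0\}$: an averaging argument on the degree sequence (exactly as in the proof of Theorem~\ref{theorem: final result for reflection}, using Lemma~\ref{lemma: there is a vertex of degree 2 or 3}) shows that in the $(2,0,3,1)$ situation $u$ must have degree $2$, so a $0$-reduction applies, while the $(2,2,3,2)$ count with a single free vertex is infeasible once $v_0$ is forced to degree $\geq 2$ — so this case collapses into one already handled. I also need to confirm that the reductions produced by Lemma~\ref{lemma: there is an admissible reduction (rotation order 2)} all have matching extension lemmas with the right symmetry representation: the $2$-vertex-reduction is covered by Lemma~\ref{2-vertex extension maintains rank} (with $k=2$), and the connectivity hypothesis of the theorem statement is preserved or irrelevant since all the extensions used add vertices of degree $\geq 2$ attached within a connected graph. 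The main obstacle, as in the reflection case, is not the induction bookkeeping but ensuring that whenever $(G,\psi)$ is $(2,2,3,2)$-gain-tight and the generic $1$-reduction is obstructed, Lemma~\ref{lemma: there is an admissible reduction (rotation order 2)} really does supply an alternative reduction whose inverse extension is covered by one of our rank-preserving lemmas — this is exactly the content of that lemma, so the work has effectively been front-loaded there, and the proof of this theorem is then a relatively routine assembly.
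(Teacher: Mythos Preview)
Your overall strategy matches the paper's almost exactly: induction on $|\overline{V(G)}|$, disposing of the smallest cases directly, and for the inductive step invoking Lemma~\ref{lemma: there is a vertex of degree 2 or 3}, Theorem~\ref{theorem on 1-reductions: (2,m,3,l)-tight}, and Lemma~\ref{lemma: there is an admissible reduction (rotation order 2)} to produce an admissible reduction, then lifting back via the extension lemmas. However, there are two concrete mistakes.

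First, your analysis of the case $\overline{V(G)}=\emptyset$ is inverted. A single fixed vertex has $|E|=0$, $|\overline{V}|=0$, $|V_0|=1$; the $(2,0,3,1)$-tight count would require $|E|=2\cdot 0+0\cdot 1-1=-1$, which is impossible, while the $(2,2,3,2)$-tight count gives $|E|=2\cdot 0+2\cdot 1-2=0$, which is satisfied. So the single fixed vertex is the \emph{anti-symmetric} base case, not the fully-symmetric one, exactly opposite to what you wrote.

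Second, your claim that ``$\mathcal{C}_2$-gain-tight graphs with $|V_0|\le1$ have no loops'' is false for the $(2,0,3,1)$ count: a single free vertex with a loop has $|E|=1=2\cdot 1+0\cdot 0-1$ and the loop is unbalanced, so no sparsity condition is violated. Indeed, this is one of the paper's base graphs for the fully-symmetric case (see Figure~\ref{Base graphs for 2-fold rotation}). Consequently, in the $(2,0,3,1)$ inductive step you cannot invoke Theorem~\ref{theorem on 1-reductions: (2,m,3,l)-tight} for every degree-$3$ free vertex, since that theorem assumes $v$ has no loop; you must separately handle the case of a degree-$3$ free vertex with a loop via a loop-$1$-reduction and Lemma~\ref{lemma: loop-1-extensions don't change the rank} (with $j=0$, where condition (i) of that lemma is vacuous and condition (ii) is ensured by the $(2,0,3,1)$ sparsity, which forbids an edge between the looped free vertex and a fixed vertex). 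Your list of extension lemmas omits this one for the fully-symmetric branch.
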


\begin{proof}
First, notice that if there is no free vertex, then $\tilde{G}$ is a single fixed vertex. In this case $\tilde G$ is not $(2,0,3,1)$-gain-tight. It is $(2,2,3,2)$-gain-tight and clearly also anti-symmetrically isostatic.

Hence, we may assume $|\overline{V(G)}|\geq1$. We prove the result by 
induction on $|\overline{V(G)}|$. Assume $|\overline{V(G)}|=1$. If $(G,\psi)$ is $(2,0,3,1)$-gain tight, $G$ is either composed of a free vertex and a loop, or a free vertex, a fixed vertex, and an edge connecting them. In either case, $O_0(G,\psi,p)$ is a non-zero row with one-dimensional kernel, and so $(\tilde{G},\tilde{p})$ is fully-symmetrically isostatic. If $(G,\psi)$ is $(2,2,3,2)$-gain tight, $G$ must be a single free vertex. Any anti-symmetric motion of any realisation $(\tilde{G},\tilde{p})$ of $\tilde{G}$ must be a translation of the whole framework, and so $(\tilde{G},\tilde{p})$ is anti-symmetrically isostatic. The base cases for the fully-symmetric and anti-symmetric case are given in Figure~\ref{Base graphs for 2-fold rotation}.

\begin{figure}[H]
    \centering
    \begin{tikzpicture}
  [scale=.8,auto=left]

  \draw(-1,1.8) -- (9,1.8);
  \node at (2.15,1.4) {Fully-symmetric};
  \node at (7,1.4) {Anti-symmetric};
  \draw(-1,1) -- (9,1);
  \draw(-1,-1) -- (-1,1.8);
  
  \draw(0,0) circle (0.15cm);
  \draw[->] (0.15,0) .. controls (0.55,0.5) and (-0.55,0.5) .. (-0.15,0);

  \draw (1,-1) -- (1,1);

  \draw(2,0) circle (0.15cm);
  \draw[fill = black](4,0) circle (0.15cm);
  \draw[->] (2.15,0) -- (3.85,0);

  \draw (5,-1) -- (5,1.8);

  \draw(6,0) circle (0.15cm);

  \draw (7,-1) -- (7,1);

  \draw[fill = black](8,0) circle (0.15cm);

  \draw (9,-1) -- (9,1.8);
  \draw(-1,-1) -- (9,-1);
\end{tikzpicture}
    \caption{Base graphs for 2-fold rotation.}
    \label{Base graphs for 2-fold rotation}
\end{figure}
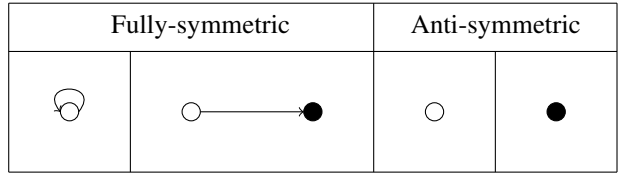

Assume the result holds whenever $|\overline{V(G)}|\leq m$ for some $m\in\mathbb{N}$ and consider the case where $|\overline{V(G)}|=m+1$. 

If $(G,\psi)$ is $(2,0,3,1)$-gain tight, $G$ has a free vertex $v$ of degree $2$ or $3$, by Lemma~\ref{lemma: there is a vertex of degree 2 or 3}. If $v$ has degree $2$, or if it has degree $3$ with a loop, then we may apply a 0-reduction or loop-1-reduction at $v$ to obtain a $(2,0,3,1)$-gain tight graph $(G',\psi')$, since 0-reductions and loop-1-reductions are always admissible. Moreover, if $v$ has degree 3 with a loop, then it is not incident to a fixed vertex, by the sparsity of $(G,\psi)$. By the inductive hypothesis, all $\mathcal{C}_2$-generic realisations of $\tilde{G'}$ are fully-symmetrically isostatic. Then, our result follows from Lemmas~\ref{lemma 0-extension maintain rank} and~\ref{lemma: loop-1-extensions don't change the rank}.

So, assume that $v$ has degree $3$ and no loops. By Lemma~\ref{theorem on 1-reductions: (2,m,3,l)-tight}, there is a $(2,0,3,1)$-gain tight graph $(G',\psi')$ obtained from $(G,\psi)$ by applying a $1$-reduction at $v$. By induction, all $\mathcal{C}_2$-generic realisations of $\tilde{G'}$ are fully-symmetrically isostatic, so take a $\mathcal{C}_2$-generic realisation $(\tilde{G'},\tilde{q}')$ of $\tilde{G'}$ such that the conditions in Lemma~\ref{lemma: 1-extension maintain rank} are satisfied. Then, our result holds by Lemma~\ref{lemma: 1-extension maintain rank}.

If $(G,\psi)$ is $(2,2,3,2)$-tight, then, by Lemma~\ref{lemma: there is an admissible reduction (rotation order 2)}, there is a $(2,2,3,2)$-gain tight graph $(G',\psi')$ on at most $m$ vertices (exactly $m$ if we apply a 0-reduction, loop-1-reduction, or 1-reduction, and exactly $m-1$ if we apply a 2-vertex reduction) obtained by applying a reduction to $(G,\psi)$. 

By the inductive hypothesis, all $\mathcal{C}_2$-generic realisations of $\tilde{G'}$ are anti-symmetrically isostatic. Let $\tilde{q}'$ be a $\mathcal{C}_2$-generic configuration of $\tilde{G'}$, which also satisfies the conditions of Lemma~\ref{lemma: 1-extension maintain rank} if $\tilde{G'}$ is obtained from $\tilde{G}$ by applying a 1-reduction. By Lemmas~\ref{lemma 0-extension maintain rank},~\ref{lemma: 1-extension maintain rank} and~\ref{2-vertex extension maintains rank}, our result holds.
\end{proof}

From Proposition~\ref{necessary conditions for 2-fold rotation} and Theorem~\ref{theorem: final theorem for 2-fold symmetry}, we obtain the following main result for $\mathcal{C}_2$.

\begin{theorem}
\label{theorem: final for 2-fold rotation}
Let $(\tilde{G},\tilde{p})$ be a  $\mathcal{C}_2$-generic framework with associated $\mathcal{C}_2$-gain framework $(G,\psi,p)$. $(\tilde{G},\tilde{p})$ is infinitesimally rigid if and only if:
\begin{itemize}
\item there is a spanning subgraph of $(G,\psi)$ which is $(2,0,3,1)$-gain tight; and
\item there is a spanning subgraph of $(G,\psi)$ which is $(2,2,3,2)$-gain tight.
\end{itemize}
\end{theorem}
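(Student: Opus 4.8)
The final statement (Theorem~\ref{theorem: final for 2-fold rotation}) characterises infinitesimal rigidity of a $\mathcal{C}_2$-generic framework in terms of the existence of two spanning gain-tight subgraphs, one $(2,0,3,1)$ and one $(2,2,3,2)$. The plan is to combine the block-decomposition of the rigidity matrix (Lemma~\ref{Lemma: rigidity matrix block diagonalises}, and its translation to orbit matrices in Corollary~\ref{corollary: rank of orbit matrix and blocks are equal}) with the two converse results just proved. Recall that for $\mathcal{C}_2$ the rigidity matrix of $(\tilde G,\tilde p)$ decomposes as $\tilde R_0 \oplus \tilde R_1$, and $(\tilde G,\tilde p)$ is infinitesimally rigid precisely when every infinitesimal motion is trivial, i.e. when $\tilde R_0$ has nullity equal to the dimension of the space of $\rho_0$-symmetric trivial motions and $\tilde R_1$ has nullity equal to the dimension of the space of $\rho_1$-symmetric trivial motions. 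By Proposition~\ref{prop:trivmotions}(ii), the $\rho_0$-part carries exactly the rotations (a $1$-dimensional space) and the $\rho_1$-part carries exactly the translations (a $2$-dimensional space). So infinitesimal rigidity of $(\tilde G,\tilde p)$ is equivalent to: $\tilde R_0(\tilde G,\tilde p)$ has rank $2|\overline{V(G)}|-1$, and $\tilde R_1(\tilde G,\tilde p)$ has rank $2|V(G)|-2 = 2|\overline{V(G)}|+2|V_0(G)|-2$. Via Corollary~\ref{corollary: rank of orbit matrix and blocks are equal} these become rank conditions on $O_0(G,\psi,p)$ and $O_1(G,\psi,p)$ respectively (using the column counts from Proposition~\ref{proposition: dimension of the blocks}(ii)).

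First I would reduce to the two independent maximum-rank problems. Since $p$ is $\mathcal{C}_2$-generic, the ranks of $O_0$ and $O_1$ are maximal over all $\mathcal{C}_2$-symmetric configurations, so it suffices to characterise when each of these maxima attains the target value. For the forward direction ($(\tilde G,\tilde p)$ infinitesimally rigid implies the two spanning subgraphs exist): maximum rank of $O_0(G,\psi,p)$ being $2|\overline{V(G)}|-1$ means some set of $2|\overline{V(G)}|-1$ rows (edges) is independent; the sub-gain-graph $G'$ spanned by these edges then has an orbit matrix $O_0(G',\psi|_{E(G')},p)$ of full row rank $2|\overline{V(G')}|-1 = 2|\overline{V(G)}|-1$ with no row dependence, so by the necessity result Proposition~\ref{necessary conditions for 2-fold rotation}(1), applied to the $\mathcal{C}_2$-generic realisation of $\tilde G'$ (which is fully-symmetrically isostatic since $O_0$ restricted has the right rank and nullity), $G'$ is $(2,0,3,1)$-gain tight, and it spans $V(G)$ because every vertex is either incident to one of the chosen edges or can be added as an isolated vertex without affecting tightness — actually one must be slightly careful here: $G'$ spans all the vertices that are endpoints of chosen edges, and isolated vertices can be appended since $(2,0,3,1)$-tightness is insensitive to isolated vertices (cf. Lemma~\ref{lemma: the subgraph is connected}). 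The same argument with $O_1$ and Proposition~\ref{necessary conditions for 2-fold rotation}(2) produces the $(2,2,3,2)$-gain tight spanning subgraph.

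For the converse direction (the two spanning subgraphs exist implies infinitesimal rigidity): let $G_0$ be the $(2,0,3,1)$-gain tight spanning subgraph and $G_1$ the $(2,2,3,2)$-gain tight spanning subgraph. By Theorem~\ref{theorem: final theorem for 2-fold symmetry} applied to $G_0$ (after restricting to its unique connected component with non-empty edge set, using Lemma~\ref{lemma: the subgraph is connected}, and noting isolated vertices impose no constraint but also no motion that is non-trivial), the $\mathcal{C}_2$-generic realisation $(\tilde G_0, \tilde p|_{V(\tilde G_0)})$ is fully-symmetrically isostatic, hence $O_0(G_0,\psi,p)$ has row rank $2|\overline{V(G_0)}|-1 = 2|\overline{V(G)}|-1$; since $G_0$ is a spanning subgraph of $G$, the rows of $O_0(G_0,\psi,p)$ are a subset of the rows of $O_0(G,\psi,p)$, so $\mathrm{rank}\,O_0(G,\psi,p) \geq 2|\overline{V(G)}|-1$, and by Proposition~\ref{prop:trivmotions}(ii) this is also an upper bound, giving equality. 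Likewise Theorem~\ref{theorem: final theorem for 2-fold symmetry} applied to $G_1$ gives $\mathrm{rank}\,O_1(G,\psi,p) = 2|V(G)|-2$. Then $O_0$ has nullity $1$ and $O_1$ has nullity $2$, which by Lemma~\ref{lemma: kernels of orbit matrix and block equal} (equivalently Corollary~\ref{corollary: rank of orbit matrix and blocks are equal}) transfers to the blocks $\tilde R_0, \tilde R_1$, so the full space of infinitesimal motions of $(\tilde G,\tilde p)$ has dimension $1+2 = 3 = \binom{3}{2}$, i.e. all infinitesimal motions are trivial and $(\tilde G,\tilde p)$ is infinitesimally rigid.

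The main obstacle I expect is the bookkeeping around spanning subgraphs, isolated vertices, and connectedness: Theorem~\ref{theorem: final theorem for 2-fold symmetry} is stated for \emph{connected} gain graphs, whereas a $(2,0,3,1)$- or $(2,2,3,2)$-gain tight spanning subgraph need not be connected, and the vertex sets $\overline{V(\cdot)}$, $V_0(\cdot)$ must be tracked carefully (in particular a $(2,0,3,1)$-tight spanning subgraph with $V_0(G)=\{v_0\}$ has $v_0$ of degree at least $0$, so if $v_0$ is isolated in $G_0$ this needs separate handling, though it contributes neither a constraint nor a non-trivial $\rho_0$-motion). The clean way to deal with this is to invoke Lemma~\ref{lemma: the subgraph is connected}: a $(2,m,l)$-tight graph with non-empty edge set has exactly one connected component with edges, and the rest are isolated vertices; restrict Theorem~\ref{theorem: final theorem for 2-fold symmetry} to that component, observe the orbit matrix of the whole spanning subgraph equals (up to zero columns for isolated vertices) the orbit matrix of the component, and hence the rank statement survives. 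A second, minor, point is to justify that in the forward direction the independent row set can be chosen to span all of $V(G)$ — this follows by the same isolated-vertex argument, appending any vertex not yet covered. Once these routine set-theoretic details are discharged, the equivalence is immediate from Proposition~\ref{prop:trivmotions}(ii), Corollary~\ref{corollary: rank of orbit matrix and blocks are equal}, Proposition~\ref{necessary conditions for 2-fold rotation} and Theorem~\ref{theorem: final theorem for 2-fold symmetry}.
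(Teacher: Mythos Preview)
Your proposal is correct and follows exactly the approach the paper intends: the paper simply states that the result follows from Proposition~\ref{necessary conditions for 2-fold rotation} and Theorem~\ref{theorem: final theorem for 2-fold symmetry}, and you have correctly unpacked this into the block-decomposition argument via Proposition~\ref{prop:trivmotions}(ii) and Corollary~\ref{corollary: rank of orbit matrix and blocks are equal}. Your handling of the bookkeeping is also right: for the forward direction, every free vertex (and, in the $\rho_1$-case, the fixed vertex too) is automatically incident to some edge in a maximum independent row set, since otherwise its zero columns would force the rank strictly below the target; only the fixed vertex in the $(2,0,3,1)$ case may need to be appended as an isolated vertex, which is harmless because $m=0$. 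For the converse, Lemma~\ref{lemma: the subgraph is connected} together with a trivial count shows a $(2,2,3,2)$-gain tight graph with at least one edge is already connected, and a $(2,0,3,1)$-gain tight graph has at most the fixed vertex isolated, which contributes no columns to $O_0$; so Theorem~\ref{theorem: final theorem for 2-fold symmetry} applies to the edge-carrying component as you say.
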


\subsection{Rotation group of order 3}

Let $k\geq3$, and $(\tilde{G},\tilde{p})$ be a $\mathcal{C}_k$-generic framework. Recall that $(G,\psi)$ is $(2,0,3,1)$-gain tight whenever $(\tilde{G},\tilde{p})$ is fully-symmetrically isostatic, and $(G,\psi)$ is $(2,1,3,1)$-gain tight whenever $(\tilde{G},\tilde{p})$ is $\rho_1$-symmetrically isostatic and $\rho_{k-1}$-symmetrically isostatic. Here, we prove that the converse is also true, which will give us the desired characterisation for $\mathcal{C}_3$-generic frameworks.

\begin{theorem}
\label{theorem: final theorem for k-fold symmetry pt.1}
     Let $\Gamma$ be a cyclic group of order $k\geq3$, $(G,\psi)$ be a connected $\Gamma-$gain framework with $|V_0(G)|\leq 1$, $\tau:\Gamma\rightarrow\mathcal{C}_k$ be a faithful representation and $p:V(G)\rightarrow\mathbb{R}^2$ be $\mathcal{C}_k-$generic. The following hold:
    \begin{itemize}
        \item  If $(G,\psi)$ is $(2,0,3,1)-$gain tight, then the covering framework $(\Tilde{G},\tilde{p})$ is fully-symmetrically isostatic.
        \item  If $(G,\psi)$ is $(2,1,3,1)$-gain tight, then the covering framework $(\Tilde{G},\tilde{p})$ is $\rho_1$-symmetrically isostatic and $\rho_{k-1}-$symmetrically isostatic.
    \end{itemize} 
\end{theorem}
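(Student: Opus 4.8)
The plan is to prove both statements by induction on $|\overline{V(G)}|$, following the template of Theorems~\ref{theorem: final result for reflection} and~\ref{theorem: final theorem for 2-fold symmetry}. Note first that, since $\tau(\Gamma)=\mathcal{C}_k$ with $k\geq 3$, every vertex of $V_0(G)$ must be placed at the origin, so $|V_0(G)|\leq 1$ and (for $k=3$, since $3$ is odd) every edge of $G$ is free; hence $O_0(G,\psi,p)$ and $O_1(G,\psi,p)$ have exactly $|E(G)|$ rows and the numbers of columns given by Proposition~\ref{proposition: dimension of the blocks}(iii), which is precisely why the counts $(2,0,3,1)$ and $(2,1,3,1)$ are the relevant ones. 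The $\rho_{k-1}$ case is obtained from the $\rho_1$ case by complex conjugation, exactly as in the proof of Proposition~\ref{necessary conditions for higher rotation}(2), so I would only treat $\rho_1$. For the base of the induction I would observe that $|\overline{V(G)}|=0$ can occur only in the $(2,1,3,1)$ case, where $\tilde G$ is a single vertex at the origin with no edge and every $\rho_1$-symmetric motion is a translation, hence trivial; and then list the finitely many connected $(2,0,3,1)$- and $(2,1,3,1)$-gain tight graphs with $|\overline{V(G)}|=1$ (a free vertex with a non-trivial loop; a free vertex joined to the fixed vertex; a free vertex joined to the fixed vertex and carrying a non-trivial loop) and check directly, using the row formulas of Definition~\ref{definition of orbit matrices} together with Proposition~\ref{prop:trivmotions}(iii), that their $\mathcal{C}_k$-generic realisations have the required symmetric isostaticity.

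For the inductive step, suppose $|\overline{V(G)}|=m+1\geq 2$. By Lemma~\ref{lemma: there is a vertex of degree 2 or 3} (whose second count gives $2s+t\geq 2l-m|V_0(G)|>0$ for $(m,l)\in\{(0,1),(1,1)\}$ with $|V_0(G)|\leq 1$), $G$ has a free vertex $v$ of degree $2$ or $3$. If $v$ has degree $2$ I would apply a $0$-reduction; if $v$ has degree $3$ with a loop, a loop-$1$-reduction; if $v$ has degree $3$ with no loop, a $1$-reduction. The first two are always admissible, and the third is admissible by Theorem~\ref{theorem on 1-reductions: (2,m,3,l)-tight}: its two exceptional outcomes are a $(2,2,3,2)$-tight graph, or a $(2,1,3,2)$-tight graph in which $v$ has three fixed neighbours, and neither can arise for $(m,l)\in\{(0,1),(1,1)\}$ with $|V_0(G)|\leq 1$. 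Write $(G',\psi')$ for the resulting $(2,m,3,1)$-gain tight graph. Admissibility together with Lemma~\ref{lemma: the subgraph is connected} forces $(G',\psi')$ to have exactly one component with edges, and its sparsity rules out an isolated free vertex, so $(G',\psi')$ is connected up to a possibly isolated fixed vertex, which (lying in $U$ for $j=0$, and carrying no constraint for $j=1$) does not affect the analysis; this is handled exactly as in the proof of Theorem~\ref{theorem: final theorem for 2-fold symmetry}. By the inductive hypothesis every $\mathcal{C}_k$-generic realisation of $\tilde{G}'$ is fully-symmetrically, respectively $\rho_1$-symmetrically, isostatic. I would then choose a $\mathcal{C}_k$-generic configuration $\tilde q'$ of $\tilde G'$ which also meets the non-degeneracy hypothesis of the appropriate extension lemma (Lemma~\ref{lemma 0-extension maintain rank}, Lemma~\ref{lemma: loop-1-extensions don't change the rank}, or Lemma~\ref{lemma: 1-extension maintain rank}); such a configuration exists by a small symmetry-preserving perturbation, which preserves $\mathcal{C}_k$-genericity. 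Here the side conditions (i)--(iii) of Lemma~\ref{lemma: loop-1-extensions don't change the rank} are either vacuous for $j\in\{0,1,k-1\}$ or, in the case of (ii), automatic, since an edge or loop joining the lone fixed vertex to a degree-$3$ vertex would violate $(2,0,3,1)$-sparsity. The extension lemma then yields a $\mathcal{C}_k$-symmetric realisation of $\tilde G$ with the desired symmetric isostaticity, and the $\mathcal{C}_k$-genericity of $p$ transfers this to $(\tilde G,\tilde p)$.

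The main obstacle is the purely combinatorial step of producing a reduction that preserves $(2,m,3,1)$-gain tightness, and that is exactly what Theorem~\ref{theorem on 1-reductions: (2,m,3,l)-tight} supplies; so what actually has to be done here is only to verify that the two exceptional cases of that theorem are excluded by our hypotheses, after which everything else (the bookkeeping of orbit-matrix dimensions, the perturbation argument for the genericity hypotheses, and the handling of connectedness) is routine and parallels the $\mathcal{C}_s$ and $\mathcal{C}_2$ arguments. One point to keep track of is that in the fully-symmetric orbit matrix $O_0$ an edge joining the fixed vertex to a free vertex imposes the same constraint as a loop on that free vertex corresponding to a regular $k$-gon in the covering (cf.\ \cite[Section~4.3]{mtforced}); the sparsity inequalities make this interpretation consistent with the combinatorics, so no separate argument is required. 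Finally, combining the two implications just proved with Proposition~\ref{necessary conditions for higher rotation} gives the promised combinatorial characterisation of $\mathcal{C}_3$-generic infinitesimally rigid frameworks, in the same way that the main results for $\mathcal{C}_s$ and $\mathcal{C}_2$ follow from their necessity and sufficiency halves.
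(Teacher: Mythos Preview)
Your proposal follows essentially the same inductive approach as the paper's proof: induction on $|\overline{V(G)}|$, with a 0-reduction, loop-1-reduction, or 1-reduction at a free vertex of degree $2$ or $3$, justified by Lemma~\ref{lemma: there is a vertex of degree 2 or 3} and Theorem~\ref{theorem on 1-reductions: (2,m,3,l)-tight}, followed by the corresponding extension lemmas and a genericity argument. Your treatment is in fact slightly more careful than the paper's in a couple of places (handling connectedness of $(G',\psi')$ after the reduction via Lemma~\ref{lemma: the subgraph is connected}, and the observation that the $\rho_{k-1}$ case follows from the $\rho_1$ case by complex conjugation), but the overall strategy and the key ingredients are identical.
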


\begin{proof}
We prove the result by induction on $|\overline{V(G)}|,$ with the base cases given in Figure~\ref{Base graphs for 3-fold rotation}. It is easy to check that, in the first two examples of Figure~\ref{Base graphs for 3-fold rotation}, the $\rho_0-$orbit matrix has full rank and nullity 1, whereas, in the latter two cases, the $\rho_1-$orbit matrix and the $\rho_{k-1}-$orbit matrix have full rank and nullity 1. The base cases for the $\rho_0$-symmetric, $\rho_1$-symmetric and $\rho_{k-1}$-symmetric case are given in Figure~\ref{Base graphs for 3-fold rotation}.

For the inductive step, assume the result holds when $|\overline{V(G)}|=t$ for some $t\geq1$, and let $(G,\psi)$ be a $(2,m,3,1)-$gain tight graph with $|\overline{V(G)}|=t+1$, for some $0\leq t\leq1$. Suppose that $m=0$, and that $V(G)$ has an isolated fixed vertex. Then, we may remove it to obtain a $(2,0,3,1)-$gain tight graph $(G',\psi')$ on $t$ vertices. By the inductive hypothesis, all $\mathcal{C}_k$-generic realisations of $\tilde{G'}$ are fully-symmetrically isostatic. 
Let $\tilde{q}'$ be a $\mathcal{C}_k$-generic configuration of $\tilde{G'}$. For any extension $\tilde{q}:V(G)\rightarrow\mathbb{R}^2$ of $\tilde{q}'$, we have $O_0(G,\psi,q)=O_0(G',\psi',q')$. So, $(\tilde{G},\tilde{p})$ is also fully-symmetrically isostatic. By $\mathcal{C}_k$-genericity of $(\tilde{G},\tilde{p})$, the result follows. So, we may assume that each fixed vertex of $(G,\psi)$ has degree at least 1.

By Lemma~\ref{lemma: there is a vertex of degree 2 or 3}, $G$ has  a free vertex $v$ of degree 2 or 3 (both when $m=0$ and when $m=1$). If $v$ has degree $2$, or if it has degree $3$ with a loop, then we may apply a 0-reduction or loop-1-reduction at $v$ to obtain a $(2,m,3,1)-$tight graph $(G',\psi')$ on $t$ vertices. By the inductive hypothesis, all $\mathcal{C}_k$-generic realisations of $\tilde{G}$ are fully-symmetrically isostatic when $m=0$, and $\rho_1-$symmetrically isostatic, $\rho_{k-1}-$symmetrically isostatic when $m=1$. Moreover, when $m=0$, the vertex incident to $v$ is free, by the sparsity of $(G,\psi)$. Then, by Lemmas~\ref{lemma 0-extension maintain rank} and~\ref{lemma: loop-1-extensions don't change the rank}, the result holds. So, assume that $v$ has degree $3$ and no loop. By Theorem~\ref{theorem on 1-reductions: (2,m,3,l)-tight}, there is $(2,m,3,1)-$tight graph $(G',\psi')$ obtained from $(G,\psi)$ by applying a $1-$reduction at $v$. 

By the inductive hypothesis, all $\mathcal{C}_k$-generic realisations of $\tilde{G'}$ are fully-symmetrically isostatic when $m=0$, and anti-symmetrically isostatic when $m=1$. Let $(\tilde{G},\tilde{q}')$ be any $\mathcal{C}_k$-generic realisation of $\tilde{G'}$ which satisfies the conditions of Lemma~\ref{lemma: 1-extension maintain rank}.
Then, our result holds by 
Lemma~\ref{lemma: 1-extension maintain rank}. 
\end{proof}

    \begin{figure}[htp]
    \centering
    \begin{tikzpicture}
  [scale=.8,auto=left]

  \draw(-1,1.8) -- (9,1.8);
  \node at (2.15,1.4) {Fully-symmetric};
  \node at (7,1.4) {$\rho_0,\rho_{k-1}$-symmetric};
  \draw(-1,1) -- (9,1);
  \draw(-1,-1) -- (-1,1.8);
  
  \draw(0,0) circle (0.15cm);
  \draw[->] (0.15,0) .. controls (0.55,0.5) and (-0.55,0.5) .. (-0.15,0);

  \draw (1,-1) -- (1,1);

  \draw(2,0) circle (0.15cm);
  \draw[fill = black](4,0) circle (0.15cm);
  \draw[->] (2.15,0) -- (3.85,0);

  \draw (5,-1) -- (5,1.8);

  \draw(6,0) circle (0.15cm); 
  \draw[->] (6.15,0) .. controls (6.55,0.5) and (5.45,0.5) .. (5.85,0);

  \draw (7,-1) -- (7,1);

  \draw[fill = black](8,0) circle (0.15cm);

  \draw (9,-1) -- (9,1.8);
  \draw(-1,-1) -- (9,-1);
\end{tikzpicture}
    \caption{Base graphs for 3-fold rotation (and $k$-fold rotation for $\rho_0,\rho_1$ and $\rho_{k-1}$).}
    \label{Base graphs for 3-fold rotation}
\end{figure} 

We finally have our main combinatorial characterisation for $\mathcal{C}_3$, which is a direct result of Theorem~\ref{theorem: final theorem for k-fold symmetry pt.1}.

\begin{theorem}
\label{final theorem for rotation}
Let $(\tilde{G},\tilde{p})$ be a $\mathcal{C}_3-$generic framework with $\Gamma-$gain framework $(G,\psi,p)$ (here, $\Gamma$ is a cyclic group of order 3) and $|V_0(G)|\leq1$. $(\tilde{G},\tilde{p})$ is infinitesimally rigid if and only if:
\begin{itemize}
\item there is a spanning subgraph $(H,\psi|_{E(H)})$ of $(G,\psi)$ which is $(2,0,3,1)-$gain tight; and
\item there is a spanning subgraph $(H,\psi|_{E(H)})$ of $(G,\psi)$ which is $(2,1,3,1)-$gain tight.
\end{itemize}
\end{theorem}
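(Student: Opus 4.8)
The plan is to reduce the statement to a rank count on the block-decomposed rigidity matrix and then feed the gain-tight spanning subgraphs into Theorem~\ref{theorem: final theorem for k-fold symmetry pt.1}. Write $k=3$, so $\Gamma$ has irreducible representations $\rho_0,\rho_1,\rho_2$ with $\rho_2=\rho_{k-1}$. By Lemma~\ref{Lemma: rigidity matrix block diagonalises} the rigidity matrix of $(\tilde G,\tilde p)$ decomposes as $\tilde R_0\oplus\tilde R_1\oplus\tilde R_2$, and by Corollary~\ref{corollary: rank of orbit matrix and blocks are equal} we have $\textrm{rank}\,\tilde R_j(\tilde G,\tilde p)=\textrm{rank}\,O_j(G,\psi,p)$ for $j=0,1,2$. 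By Proposition~\ref{proposition: dimension of the blocks}(iii), $O_0$ has $2|\overline{V(G)}|$ columns and $O_1,O_2$ have $2|\overline{V(G)}|+|V_0(G)|$ columns each; by Proposition~\ref{prop:trivmotions}(iii) the space of trivial $\rho_0$-symmetric motions is $1$-dimensional (the infinitesimal rotation about the origin) and the spaces of trivial $\rho_1$- and $\rho_2$-symmetric motions are each $1$-dimensional (a translation), and by Lemma~\ref{lemma: kernels of orbit matrix and block equal} these lie in the respective kernels. Hence the maximum possible ranks are $m_0:=2|\overline{V(G)}|-1$ and $m_1=m_2:=2|\overline{V(G)}|+|V_0(G)|-1$. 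Since each free vertex of $G$ lifts to an orbit of size $3$ and each fixed vertex to a single vertex, $|V(\tilde G)|=3|\overline{V(G)}|+|V_0(G)|$, so $m_0+m_1+m_2=2|V(\tilde G)|-3$. Therefore $(\tilde G,\tilde p)$ is infinitesimally rigid if and only if every block $\tilde R_j(\tilde G,\tilde p)$ attains its maximum rank $m_j$, and it remains to translate this into the stated spanning-subgraph conditions.

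For the ``if'' direction, suppose $(G,\psi)$ has a spanning $(2,0,3,1)$-gain tight subgraph $H_0$ and a spanning $(2,1,3,1)$-gain tight subgraph $H_1$. A short count, using Lemma~\ref{lemma: the subgraph is connected} and the fact that these tight counts forbid isolated free vertices (and, for $(2,1,3,1)$, also isolated fixed vertices), shows that $H_1$ is connected and that $H_0$ is connected after discarding at most one isolated fixed vertex, which does not affect $O_0$. Theorem~\ref{theorem: final theorem for k-fold symmetry pt.1} then gives that a $\mathcal C_3$-generic realisation of $\tilde H_0$ is fully-symmetrically isostatic and a $\mathcal C_3$-generic realisation of $\tilde H_1$ is $\rho_1$- and $\rho_{k-1}$-symmetrically isostatic; in particular $O_0(H_0,\psi,\cdot)$, $O_1(H_1,\psi,\cdot)$ and $O_2(H_1,\psi,\cdot)$ attain the ranks $m_0,m_1,m_2$ respectively at some symmetric configurations. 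Choosing a single symmetric configuration $q$ that is simultaneously $\mathcal C_3$-generic for $\tilde G$, $\tilde H_0$ and $\tilde H_1$ (each of these conditions defines a dense open subset of the irreducible variety of symmetric configurations, so their intersection is nonempty), and noting that $O_j(H_i,\psi,q)$ is a row-submatrix of $O_j(G,\psi,q)$, we obtain $\textrm{rank}\,R(\tilde G,q)=m_0+m_1+m_2=2|V(\tilde G)|-3$. Thus $\tilde G$ has an infinitesimally rigid symmetric realisation; since all $\mathcal C_3$-generic configurations of $\tilde G$ realise the same (maximum) rank of $R(\tilde G,\cdot)$, the given $(\tilde G,\tilde p)$ is infinitesimally rigid.

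For the ``only if'' direction, assume $(\tilde G,\tilde p)$ is infinitesimally rigid, so each $O_j(G,\psi,p)$ attains rank $m_j$. Take a maximal row-independent subset of the rows of $O_0(G,\psi,p)$ and let $H_0$ be the spanning subgraph on the corresponding $m_0=2|\overline{V(G)}|-1$ edges; then $O_0(H_0,\psi,p)$ has full row rank and nullity $1$, which equals the dimension of the trivial $\rho_0$-motions, so $(\tilde H_0,\tilde p)$ is fully-symmetrically isostatic, and Proposition~\ref{necessary conditions for higher rotation}(1) yields that $(H_0,\psi)$ is $(2,0,3,1)$-gain tight. The same argument applied to $O_1$ produces a spanning subgraph $H_1$ on $m_1=2|\overline{V(G)}|+|V_0(G)|-1$ edges with $(\tilde H_1,\tilde p)$ $\rho_1$-symmetrically isostatic, and Proposition~\ref{necessary conditions for higher rotation}(2) gives that $(H_1,\psi)$ is $(2,1,3,1)$-gain tight; no separate argument for $\rho_2$ is needed because Proposition~\ref{necessary conditions for higher rotation}(2) already covers $\rho_{k-1}=\rho_2$.

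I do not expect a genuine obstacle: the result is essentially a repackaging of the block decomposition, the maximum-rank identity $m_0+m_1+m_2=2|V(\tilde G)|-3$, Theorem~\ref{theorem: final theorem for k-fold symmetry pt.1}, and the necessity Proposition~\ref{necessary conditions for higher rotation}. The only points that need a little care are the bookkeeping ensuring the three maximum block ranks sum exactly to $2|V(\tilde G)|-3$ (which uses Proposition~\ref{prop:trivmotions}(iii) and the lift count $|V(\tilde G)|=3|\overline{V(G)}|+|V_0(G)|$), the standard genericity transfer --- that a $\mathcal C_3$-generic configuration realises the generic rank of each block and that one may choose a symmetric configuration generic for $\tilde G$, $\tilde H_0$ and $\tilde H_1$ at once --- and the minor caveat that Theorem~\ref{theorem: final theorem for k-fold symmetry pt.1} is stated for connected gain graphs while a $(2,0,3,1)$-gain tight spanning subgraph may carry an isolated fixed vertex, which is harmless since it contributes no rows or columns to $O_0$.
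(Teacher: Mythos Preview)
Your proposal is correct and follows the same approach as the paper, which simply records the theorem as ``a direct result of Theorem~\ref{theorem: final theorem for k-fold symmetry pt.1}'' (together with the necessity in Proposition~\ref{necessary conditions for higher rotation}). You have spelled out in detail the rank bookkeeping, the genericity transfer, and the connectedness caveat for the isolated fixed vertex in the $(2,0,3,1)$ case that the paper leaves implicit, but the underlying argument is identical.
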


Note that for $|\Gamma|>3$, there are irreducible representations of $\Gamma$ that lead to additional necessary sparsity counts for $\rho_j$-symmetric isostaticity. Hence we will discuss these groups in a separate paper \cite{lasch}.

\section{Further work}\label{sec:fut}

In this paper, we have given necessary conditions for reflection and rotationally symmetric frameworks to be infinitesimally rigid in the plane, regardless of whether the action of the group is free on the vertices or not. Moreover, for the groups of order 2 and 3, we have shown that these conditions are also \emph{sufficient} for symmetry-generic infinitesimal rigidity. 
In a second paper \cite{lasch}, we establish the analogous combinatorial characterisations of symmetry-generic infinitesimally rigid plane frameworks for the cyclic groups of odd order up to 1000 and of order 4 and 6. The proofs for these groups follow the same pattern as the ones given in this paper, but become  more complex due to an even more refined sparsity count.

A natural direction for future research is the completion of the study of symmetric plane frameworks by considering  cyclic groups of odd order at least 1000, even order cyclic groups of order at least 8, and dihedral groups. It is conjectured that the proofs of this paper extend to all odd order cyclic groups. The only issue here is to deal with the growing number of base graphs. In our second paper we will show that for cyclic groups of even order at least 8, the necessary sparsity conditions are no longer sufficient. Thus, it seems very challenging to try to characterise symmetry-generic infinitesimal rigidity for those groups. This leaves the case of the dihedral groups.


In \cite{jzt2016}, there is a combinatorial characterisation of forced-symmetric infinitesimal rigidity for frameworks that are generic with respect to a dihedral group $D_{2k}$, where $k$ is odd, and the group acts freely on the vertex set. However, no such characterisation has been found for the case when $k$ is even, despite significant efforts. (See Section 7.2.4 in \cite{jzt2016} and Section 4.4.3 in \cite{bt2015} for examples of graphs that satisfy the desired combinatorial counts described in \cite{jzt2016}, but are fully-symmetrically flexible.) So even in the free action case, for even $k$, a characterisation for $D_{2k}$-generic infinitesimal rigidity also does not exist.

For odd $k$, the key obstacle in obtaining a characterisation for $D_{2k}$-generic infinitesimal rigidity (in either the free or non-free action case) is that these groups are non-abelian and hence have irreducible representations of dimension greater than 1. For such representations, it is unclear how to define the corresponding gain graph or orbit rigidity matrix. However, we expect that the methods of this paper extend to characterising fully-symmetric  infinitesimal rigidity for $D_{2k}$, where $k$ is odd and the action is not free on the vertices. Similarly, we expect to be able to deal with  $\rho_i$-symmetric infinitesimal rigidity, where $\rho_i$ is $1$-dimensional, in either the free or non-free action case. This is all work in progress \cite{PhDThesis}.

It is a famous open problem to find a combinatorial characterisation of infinitesimally rigid generic bar-joint frameworks (without symmetry) in dimension at least 3. Hence, we can also not yet combinatorially characterise infinitesimal rigidity for symmetry-generic bar-joint frameworks in $\mathbb{R}^d$ for $d\geq3$. 
However, there are classes of graphs which are known to be generically rigid in $\mathbb{R}^3$. These include frameworks obtained  by a recursive constructions, starting with a simplex and applying a series of Henneberg 0- and  1-extensions, or triangulated simplicial polytopes in 3-space. Since Lemma~\ref{Lemma: rigidity matrix block diagonalises} holds in all dimensions, and our construction of orbit matrices generalises to higher dimensions, these classes of frameworks are amenable to  our approach, a least when the group is abelian. 

We note that initial higher-dimensional  results  have very recently been obtained (via a different approach) for the special class of $d$-pseuodmanifolds in $(d+1)$-space with a free $\mathbb{Z}_2$-action \cite{crjata}. There are also results for special classes of symmetric body-bar and body-hinge frameworks in $d$-space; see \cite{schtan_bodies}.

\bibliography{monster}{}
\bibliographystyle{plain}
\end{document}